\documentclass[12pt, a4paper, parskip=half, abstracton, bibliography=totoc]{scrartcl}
\pdfoutput=1
\usepackage{array}
\usepackage{marginnote}
\usepackage{xcolor}
\usepackage{hyperref}
\usepackage{amscd,amssymb,amsfonts,amsmath,latexsym,amsthm}
\usepackage[capitalize]{cleveref}
\usepackage[all,cmtip]{xy}
\textheight23cm
\textwidth16cm
\usepackage{mathrsfs}
\oddsidemargin0.0cm
\topmargin-1.0cm
\footskip2.0cm
\setlength\parindent{0pt}
\usepackage{graphicx}
\usepackage{bm} 
\usepackage{mathtools} 


\usepackage{etoolbox}


\let\counterwithout\relax

\usepackage{chngcntr} 

\setcounter{tocdepth}{2} 

\usepackage{defs_pp1}
\usepackage{slashed}
\usepackage[utf8]{inputenc}
\usepackage{microtype}
\usepackage[english]{babel}

\title{Transfers in coarse homology}
\date{\today}
\author{
Ulrich Bunke\thanks{Fakult{\"a}t f{\"u}r Mathematik,
Universit{\"a}t Regensburg,
93040 Regensburg,
Germany\newline
ulrich.bunke@mathematik.uni-regensburg.de} 
\and Alexander Engel\thanks{Fakult{\"a}t f{\"u}r Mathematik,
Universit{\"a}t Regensburg,
93040 Regensburg,
Germany\newline
alexander.engel@mathematik.uni-regensburg.de
	}
\and
Daniel Kasprowski\thanks{
	Rheinische Friedrich-Wilhelms-Universit\"at Bonn, Mathematisches Institut, Endenicher Allee 60,\newline 53115 Bonn, Germany\newline
	kasprowski@uni-bonn.de
}
\and
Christoph Winges\thanks{
		Rheinische Friedrich-Wilhelms-Universit\"at Bonn, Mathematisches Institut, Endenicher Allee 60,\newline 53115 Bonn, Germany\newline
	winges@math.uni-bonn.de}
}

\numberwithin{equation}{section}
\setcounter{secnumdepth}{3}
\counterwithout{footnote}{section}

\newtheorem{theorem}{Theorem}[section] 
\newtheorem{prop}[theorem]{Proposition}

\newtheorem{lem}[theorem]{Lemma}
\newtheorem{kor}[theorem]{Corollary}
\theoremstyle{remark}
\theoremstyle{definition}
\newtheorem{ddd-alt}[theorem]{Definition}
\newtheorem{ex-alt}[theorem]{Example}
\newtheorem{rem-alt}[theorem]{Remark}

\newenvironment{ddd}    
{%
	\pushQED{\qed}\begin{ddd-alt}}
	{\popQED\end{ddd-alt}}

\newenvironment{ex}    
{%
	\pushQED{\qed}\begin{ex-alt}}
	{\popQED\end{ex-alt}}

\newenvironment{rem}    
{%
	\pushQED{\qed}\begin{rem-alt}}
	{\popQED\end{rem-alt}}

\newcommand{\Ho}{\mathbf{Ho}}

\newcommand{\free}{\mathrm{free}}

\newcommand{\Prr}{\mathbf{Pr}}
\newcommand{\bd}{\mathrm{bd}}

\newcommand{\Tw}{\mathbf{Tw}}
 \newcommand{\Add}{\mathbf{Add}}

\newcommand{\pt}{\mathrm{pt}}

\DeclareMathOperator{\Lan}{Lan}
\DeclareMathOperator{\Nat}{Nat}

\DeclareMathOperator{\Mor}{Mor}
\DeclareMathOperator{\Yo}{Yo}

\DeclareMathOperator{\yo}{yo}

\DeclareMathOperator{\Res}{Res}

\newcommand{\Orb}{\mathbf{Orb}}

\newcommand{\BC}{\mathbf{BornCoarse}}

\newcommand{\Fin}{\mathbf{Fin}}

\newcommand{\Ob}{\mathrm{Ob}}

\newcommand{\wt}{\widetilde}
\newcommand{\hlg}{\mathrm{hlg}}
\DeclareMathOperator{\Cofib}{Cofib}

\DeclareMathOperator{\Fib}{Fib}

\newcommand{\cP}{\mathcal{P}}

\newcommand{\sCat}{{\mathbf{sCat}}}

\newcommand{\cL}{{\mathcal{L}}}
\newcommand{\cW}{{\mathcal{W}}}
\newcommand{\PSh}{{\mathbf{PSh}}}

\newcommand{\bA}{{\mathbf{A}}}

\newcommand{\bK}{{\mathbf{K}}}

\newcommand{\cO}{{\mathcal{O}}}
\newcommand{\cU}{{\mathcal{U}}}
\newcommand{\cY}{{\mathcal{Y}}}

 \newcommand{\Cat}{{\mathbf{Cat}}}

\newcommand{\eff}{\mathrm{eff}}
\newcommand{\Mack}{\mathbf{Mack}}

\newcommand{\cE}{{\mathcal{E}}}

\newcommand{\Sol}{\mathbf{Sol}}

\newcommand{\Coarse}{\mathbf{Coarse}}
\newcommand{\Spc}{\mathbf{Spc}}

\begin{document}
\maketitle

\begin{abstract}
We enlarge the category of bornological coarse spaces by   adding transfer morphisms and introduce the notion of an
equivariant coarse homology theory with transfers. We then 
   show that  equivariant coarse algebraic $K$-homology and equivariant coarse ordinary homology can be extended to 
   equivariant coarse homology theories with transfers. 
   In the case of a finite group we observe that equivariant coarse homology theories with transfers provide Mackey functors.
We express standard constructions with Mackey functors in  terms of coarse geometry, and we demonstrate the usage of transfers
in order to prove injectivity results about assembly maps.
\end{abstract}

\tableofcontents
 
\section{Introduction}
 {In order to  capture the large scale and the local finiteness behavior of metric spaces, groups, and other geometric objects,    the category $\BC$ of bornological coarse spaces with proper controlled maps as morphisms was introduced in \cite{buen}. 
In the present paper we will work in the equivariant situation. So
let $G$ be a group and $G\BC$ denote the category of $G$-bornological coarse spaces  \cite[Sec.~2]{equicoarse}. Let further $\bC$ be some cocomplete stable $\infty$-category. Following \cite[Sec.~3]{equicoarse} an equivariant $\bC$-valued coarse homology theory is a functor \[E\colon G\BC\to \bC\]  which satisfies four axioms:  {\begin{enumerate} \item coarse invariance,  \item  excision,  \item  vanishing on flasques,  \item  and $u$-continuity. \end{enumerate}} In \cite[Def. 4.9]{equicoarse} we construct a universal $G$-equivariant  coarse homology theory \[\Yo^{s}\colon G\BC\to G\Sp\cX\] whose target is the presentable stable $\infty$-category of equivariant coarse motivic spectra.
Any other equivariant coarse homology theory factorizes in an essentially unique way over the universal example $\Yo^{s}$. More precisely, 
precomposition with $\Yo^{s}$ induces  an equivalence from the $\infty$-category  
\[\Fun^{\colim}(G\Sp\cX,\bC)\]  of colimit-preserving functors 
to  the $\infty$-category of $\bC$-valued equivariant coarse homology theories  \cite[Cor.~4.10]{equicoarse}. 

The main goal of the present paper is to add transfers  as a new type of morphisms between bornological coarse spaces and to show that important examples of coarse homology theories extend to transfers.  We will furthermore construct the universal  equivariant coarse homology theory with transfers.
  To this end we enlarge the category $G\BC$ of $G$-bornological coarse spaces to the  category $G\BC_{tr}$ of $G$-bornological coarse spaces with transfers (see {Section~\ref{olergregregegerg}}). 

Given a $G$-set $I$ and a $G$-bornological coarse space we can form the $G$-bornological coarse space
$I_{min,min}\otimes X$ (see \cite[Ex.~2.17]{equicoarse}), or equivalently, the bounded union $\coprod_{i\in I}^{\bd}X$ of~$I$ copies of $X$ (\cref{ddd_bounded_unions}). If $i$ is a $G$-fixed point in $I$, then
   $j_{i}\colon X\to  \coprod_{i\in I}^{\bd}X$ denotes the inclusion  of the component with index $i$ which is  a morphism in $G\BC$. In general, if $i$ is not fixed by $G$, then we can consider this morphism after forgetting the $G$-action.

 By design (see \cref{ergioer434t43t34t}) $G\BC_{tr}$  contains a transfer morphism 
 \[\tr_{X,I}\colon X\to  I_{min,min}\otimes X\] which morally
is the sum $\sum_{i\in I} j_{i}$ of the inclusion morphisms. It will actually turn out that $G\BC_{tr}$ is semi-additive and therefore enriched in commutative monoids. 
 If~$I$ is finite and has the trivial $G$-action, then \[\tr_{X,I}=\sum_{i\in I}  j_{i}\] is a literally  true identity in $G\BC_{tr}$. But transfers are most interesting in the case of infinite sets $I$.

 If $E$ is an equivariant coarse homology theory, then the construction of an extension of $E$ to  $G\BC_{tr}$ should be guided by the idea that the morphism 
 \[E(\tr_{X,I})\colon E(X)\to E( I_{min,min}\otimes X)\] places   identical copies of a cycle for $E(X)$ on each component of the bounded union.

The projection $I_{min,min}\otimes X\to X$   is a controlled and bornological map, but not a proper map and therefore not a morphism of bornological coarse spaces. It is an example  of
a bounded covering, a notion  which we will introduce in the present paper. The category $G\BC_{tr}$ will be defined by  adding wrong-way maps for all bounded coverings. 

 {On the technical level we use spans to construct $G\BC_{tr}$ as a quasicategory (see \cref{olergregregegerg}).}
 {We further construct an embedding
\[\iota \colon G\BC\to G\BC_{\tr}\]
(see Definitions~\ref{voiehjoriverververv} and \ref{thorh5t4554t}).}

Let $\bC$ be a stable cocomplete $\infty$-category.
\begin{ddd}
A $\bC$-valued \emph{equivariant  {coarse} homology theory with transfers} is a functor \[E\colon  G\BC_{tr}\to\bC\] such that
\[E\circ \iota\colon G\BC\to \bC\] is a $\bC$-valued  equivariant  coarse homology theory.
\end{ddd}
By excison an equivariant coarse homology theory  with transfers 
preserves coproducts and is therefore an additive functor from $G\BC_{tr}$ to $\bC$.

\begin{ddd}\label{efuewhiffwefweffeff}
We will say that a $\bC$-valued  equivariant  coarse homology theory $E$ {\em admits transfers} if
there exists a functor \[E_{tr}\colon  G\BC_{tr}\to\bC\] such that $E_{tr}\circ \iota\simeq E$.
\end{ddd}

The condition that a coarse homology theory $E$ admits transfers is used in order to show a version of the coarse  Baum-Connes conjecture for $E$ and scalable spaces \cite[Sec.~10.3]{ass}.
Furthermore, the existence of transfers is an important ingredient 
 in \cite{trans} where we show that $G$-equivariant finite decomposition
 complexity of $X$ implies that a certain forget-control map 
$E(\beta_{X})$ is an equivalence.

In analogy with the universal equivariant coarse  homology theory,
we will construct  the universal
equivariant coarse homology theory with transfers
\[\Yo^{s}_{tr}\colon G\BC_{tr}\to G \Sp\cX_{tr} \ .\] 
 
Let $\bC$ be a stable, cocomplete $\infty$-category.  The next  proposition is true by design of~$\Yo^{s}_{tr}$.

\begin{prop}[\cref{rhiohff3f34f34f3f}]
Precomposition with $\Yo^{s}_{tr}$ induces  an equivalence from the $\infty$-category  
\[\Fun^{\colim}(G\Sp\cX_{tr},\bC)\] to  the   $\infty$-category of $\bC$-valued equivariant coarse homology theories with transfers.
\end{prop}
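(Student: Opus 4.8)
The plan is to construct $G\Sp\cX_{tr}$ as a localization of presheaves on $G\BC_{tr}$ and then invoke the universal property of such localizations, exactly mirroring the construction of $\Yo^s\colon G\BC\to G\Sp\cX$ in \cite{equicoarse}. More precisely, I would start with the $\infty$-category $\PSh_\Sp(G\BC_{tr})$ of spectrum-valued presheaves on the quasicategory $G\BC_{tr}$ (or, what is the same, $\Fun(G\BC_{tr}^{op},\Sp)$), together with the Yoneda embedding composed with stabilization $G\BC_{tr}\to\PSh_\Sp(G\BC_{tr})$. A colimit-preserving functor out of $\PSh_\Sp(G\BC_{tr})$ is the same datum as an arbitrary functor $G\BC_{tr}\to\bC$ into a cocomplete stable $\infty$-category; this is the standard universal property of the free cocompletion (see \cite[HTT~5.1.5.6]{HTT}, stabilized). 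Then I would identify the set $S$ of morphisms in $\PSh_\Sp(G\BC_{tr})$ that one must invert so that a colimit-preserving functor descends to the localization precisely when the composite $G\BC_{tr}\to\bC$ is an equivariant coarse homology theory with transfers; that is, $S$ encodes coarse invariance, excision, vanishing on flasques, and $u$-continuity, all imposed on the restriction along $\iota$.

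Concretely, $S$ should be generated by: (a) for every $X\in G\BC$ the image under Yoneda of the projection $\{0,1\}_{max}\otimes X\to X$ (coarse invariance); (b) for every equivariant complementary pair $(Z,\mathcal{Y})$ the maps assembling the relevant pushout square to an equivalence (excision); (c) for every flasque $X$ the map $0\to X$ (flasqueness); and (d) for every $X$ the map $\colim_{U}X_U\to X$ over the big coarse entourages ($u$-continuity). One then sets $G\Sp\cX_{tr}:=S^{-1}\PSh_\Sp(G\BC_{tr})$ and lets $\Yo^s_{tr}$ be the composite of Yoneda with the localization functor. Since $G\Sp\cX_{tr}$ is a presentable stable $\infty$-category (a localization of a presentable stable $\infty$-category at a small set of morphisms, using \cite[HTT~5.5.4.15]{HTT}), and since colimit-preserving functors out of $S^{-1}\PSh_\Sp(G\BC_{tr})$ correspond to colimit-preserving functors out of $\PSh_\Sp(G\BC_{tr})$ that send $S$ to equivalences, precomposition with $\Yo^s_{tr}$ yields an equivalence
\[
\Fun^{\colim}(G\Sp\cX_{tr},\bC)\;\xrightarrow{\ \simeq\ }\;\Fun_{S}(G\BC_{tr},\bC),
\]
where the right-hand side is by construction the $\infty$-category of $\bC$-valued equivariant coarse homology theories with transfers.

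**The main obstacle** is the bookkeeping needed to show that inverting the morphisms in $S$ (which are formulated on representable presheaves restricted along $\iota$) is genuinely equivalent to requiring that the composite $G\BC_{tr}\to\bC$ restrict to a coarse homology theory on $G\BC$ — i.e.\ that localizing at $S$ neither over- nor under-imposes conditions. The subtlety is that $\iota\colon G\BC\to G\BC_{tr}$ need not be fully faithful and the span-category structure of $G\BC_{tr}$ means that a presheaf on $G\BC_{tr}$ carries strictly more data than a presheaf on $G\BC$; one must check that the four axioms, imposed only via $\iota$, interact correctly with the transfer morphisms and do not force any unwanted extra relations. I expect this to be handled by a direct translation argument: unwinding the definition of ``equivariant coarse homology theory with transfers'' as a functor $E$ with $E\circ\iota$ a coarse homology theory, and matching it term-by-term with the condition ``$E$ sends each morphism in $S$ to an equivalence'', using that a colimit-preserving functor on presheaves is determined by its restriction to representables. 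Once this identification is in place, the proposition is formal, following verbatim the proof of \cite[Cor.~4.10]{equicoarse}.
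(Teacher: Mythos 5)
Your proposal is correct and is essentially the paper's own argument: $G\Sp\cX_{tr}$ is by construction a localization of a presheaf category on $G\BC_{tr}$ at exactly the generating morphisms you list (the paper localizes the $\Spc$-valued presheaves in stages and then stabilizes, which agrees with your one-step localization of spectrum-valued presheaves), so the equivalence is the formal consequence of the universal properties of free cocompletion, localization and stabilization --- the paper's proof is just the remark that the argument for \cite[Cor.~4.6]{buen} applies verbatim. Your handling of the ``$\iota$ is not fully faithful'' point is also the intended one: all localizing morphisms are built from Yoneda images of objects and morphisms coming from $G\BC$, so a colimit-preserving extension inverts them precisely when $E\circ\iota$ satisfies the four axioms.
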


 In the present paper we consider the following examples of
   equivariant coarse homology theories: 
\begin{enumerate}
\item   equivariant coarse ordinary homology $H\cX^{G}$;
\item  equivariant coarse algebraic $K$-homology $K\bA\cX^{G}$ of an additive category $\bA$ with a strict action of $G$.
 \end{enumerate}
 Their construction is given in \cite[Sec.~7~\&~8]{equicoarse}. In this paper we are interested in the existence of transfers.

\begin{theorem}\label{riojgerogjo23rwefwefwef}
The equivariant coarse homology theories $H\cX^{G}$ and $K\bA\cX^{G}$ admit transfers.
\end{theorem}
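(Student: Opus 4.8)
The plan is to build the extensions by hand at the level of the explicit point-set models underlying the two theories; in contrast to the passage from $\Yo^{s}$ to an arbitrary coarse homology theory, there is no formal reason for a general coarse homology theory to admit transfers, so the geometry of these particular examples must be exploited. Recall from \cite{equicoarse} that $H\cX^{G}$ is obtained by sending a $G$-bornological coarse space $X$ to the complex of controlled, locally finite chains on $X$ (a filtered colimit over the entourages of $X$) and then postcomposing with the Eilenberg--MacLane functor, while $K\bA\cX^{G}$ is obtained by sending $X$ to the additive category of $G$-equivariant, locally finite, $X$-controlled $\bA$-objects with controlled morphisms and then postcomposing with algebraic $K$-theory. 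It therefore suffices to promote each of these model-level functors from $G\BC$ to $G\BC_{tr}$, i.e.\ to equip it with wrong-way morphisms along bounded coverings that are functorial and satisfy base change; applying the respective homotopy-invariant functor and checking that restriction along $\iota$ recovers the original theory then produces the desired $E_{tr}$.

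First I would define the pullback along a bounded covering $p\colon W\to X$ by placing an identical copy of the fibrewise datum on every sheet. On chains this sends, in degree zero, a chain $\sum_{x}a_{x}\,x$ to $\sum_{x}a_{x}\sum_{w\in p^{-1}(x)}w$, and similarly on higher simplices; on controlled $\bA$-objects it sends the object with pieces $(M_{x})_{x\in X}$ to the object with pieces $(M_{p(w)})_{w\in W}$, and a morphism with matrix $(\phi_{x,x'})$ to the one with matrix $(\phi_{p(w),p(w')})$. The point of the notion of a bounded covering (\cref{olergregregegerg}) is exactly that this output is again controlled and locally finite over $W$ (here one uses that $p$ is bornological and that its fibres meet bounded subsets of $W$ in finite sets), is $G$-equivariant when $p$ is, and yields an additive functor, resp.\ a chain map. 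Applied to the projection $I_{min,min}\otimes X\to X$ this recovers, after postcomposition with the homology functor, the morphism $\tr_{X,I}$ and the intuition stated in the introduction: a cycle for $E(X)$ is distributed over all components of the bounded union $\coprod^{\bd}_{i\in I}X$ (\cref{ddd_bounded_unions}, \cref{ergioer434t43t34t}).

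The next step is functoriality and base change. Bounded coverings are closed under composition, and pullbacks of bounded coverings along morphisms of $G\BC$ exist and are again bounded coverings; on the models this yields a functoriality of the pullback $(-)^{\#}$ under composition of bounded coverings together with the Beck--Chevalley identifications relating the pullback along a bounded covering with the pushforward along a morphism of $G\BC$ around a pullback square. Since $G\BC_{tr}$ is the span quasicategory built from $G$-bornological coarse spaces whose backward legs are the bounded coverings and whose forward legs are the morphisms of $G\BC$ (\cref{olergregregegerg}), I would invoke the techniques of \cref{olergregregegerg} for constructing functors out of $G\BC_{tr}$, which require exactly such coherent wrong-way data, to assemble the datum of the previous paragraph into a functor from $G\BC_{tr}$ to chain complexes, resp.\ to $\Add$, then postcompose with the Eilenberg--MacLane functor, resp.\ with algebraic $K$-theory, and pass to the colimit over entourages. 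Restricting along $\iota$ returns $H\cX^{G}$, resp.\ $K\bA\cX^{G}$, because $p^{\#}$ is the identity when $p$ is an identity and $\iota$ sends a morphism $f$ of $G\BC$ to the span whose backward leg is the identity.

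The step I expect to be the real obstacle is the coherence bookkeeping: the Beck--Chevalley isomorphisms must be organized into the genuinely $\infty$-categorical datum required to map out of the span $\infty$-category, not merely into isomorphisms between pairs of composites. The natural remedy is to set up the point-set models of bounded coverings and of their pullbacks so that the relevant squares commute strictly and pullbacks are chosen functorially — the combinatorial nature of the bounded-union models $\coprod^{\bd}_{i\in I}X$ makes this realistic — thereby reducing the coherence problem to a strict one that the machinery for mapping out of span categories can absorb. A secondary, routine point is the compatibility of the transfer maps with the further structural equivalences used to present $H\cX^{G}$ and $K\bA\cX^{G}$ as coarse homology theories (the Eilenberg--MacLane equivalence, the passage to $K$-theory, and the colimit witnessing $u$-continuity), which holds because each transfer map is built fibrewise and hence commutes with these operations.
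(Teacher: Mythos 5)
Your overall strategy is the paper's: extend the point‑set models $C\cX^{G}$ and $\bV^{G}_{\bA}$ to $G\BC_{\tr}$ by a wrong‑way map along a bounded covering $w\colon W\to X$ that duplicates the datum over each sheet, check compatibility with composition of bounded coverings and base change along admissible squares, and then feed this into the machinery for mapping out of the span quasicategory. For $H\cX^{G}$ your plan is essentially the proof in \cref{sec:ordH}: there the pullback $w^{*}c=\chi^{n}_{\pi_{0}(W)}\cdot\hat w^{*}c$ is \emph{strictly} functorial and satisfies base change on the nose, so the extension is defined on the ordinary category $\Ho(G\BC_{\tr})$ and precomposed with $G\BC_{\tr}\to\Ho(G\BC_{\tr})$, exactly as you suggest. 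One caution: "similarly on higher simplices" must mean "one copy per sheet," i.e.\ the pullback of an $n$-simplex must be cut down by the characteristic function $\chi^{n}_{\pi_{0}(W)}$ of tuples lying in a single coarse component; the naive coordinatewise pullback $\hat w^{*}c$ is not controlled for the coarse structure $(w^{-1}\cC_{X})(\pi_{0}(W))$ of $W$ and would also break the chain‑map and composition identities.

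The genuine gap is in your resolution of the coherence problem for $K\bA\cX^{G}$. You propose to choose the pullbacks so that all relevant squares commute strictly and then treat the problem as a strict one. But in the paper's model the pullback $w^{*}A$ of an $X$-controlled object is a left Kan extension $\Lan(A\hat w)$ along the inclusion of coarsely connected bounded subsets (whose existence itself requires an argument, since $\bA$ has only finite sums); such a Kan extension is determined only up to canonical isomorphism, and the comparisons $(wv)^{*}\cong v^{*}w^{*}$ and $f_{*}w^{*}\cong u^{*}g_{*}$ are genuinely non‑identity isomorphisms between Kan extensions over different comma categories. There is no evident strictly functorial choice in an arbitrary additive category, and the paper does not attempt one: instead, \cref{ergiuergergerge} isolates the coherence data $(w^{*},a_{v,w},b_{g,u})$ together with the identities \eqref{gkengkergergreg}, \eqref{g34glkjkgl234g34} and \eqref{fce32r32r23r5}, shows in \cref{ergioergergrege} that this suffices to define a map into the $3$-coskeletal simplicial set $\Nerve_{2}(\Add)$, and then verifies those identities for the Kan‑extension compositors by explicit computation. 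Your proposal omits exactly this verification (which is the bulk of \cref{sec:algK}), and also the subsidiary point that $K$ must first be extended from $\Nerve(\Add)$ to $\Nerve_{2}(\Add)$ by inverting equivalences of additive categories before it can be postcomposed. As written, the $K$-theory half of your argument rests on an unsubstantiated strictification claim and so does not yet constitute a proof.
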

The assertions of the theorem are shown in \cref{ergpoerpgergerg}.
The case of algebraic $K$-theory is actually quite involved 
and relies on the preparations in \cref{ergiuergergerge}.

In the final Section~\ref{sec_mackey} we show that a $\bC$-valued equivariant coarse homology theory $E$ gives rise to a $\bC$-valued Mackey functor which will be denoted by $EM$.

 If $V$ is a finite-dimensional orthogonal  representation of $G$, then we can express the delooping of $EM$ along the representation sphere $S(V)_{\infty}$ in terms of the 
 equivariant coarse homology $E_{V}$ obtained from $E$ by twisting with $V$, where $V$ is considered as a  $G$-bornological coarse  space. 
 More precisely we show the following.
  \begin{prop}[\cref{vwlkne2ofwevwevevw}]
We have a canonical equivalence of $\bC$-valued Mackey functors
\[S(V)_{\infty}\wedge EM\simeq E_{V}M\ .\] 
\end{prop}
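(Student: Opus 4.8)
The plan is to exploit the fact that both sides are $\bC$-valued Mackey functors, so it suffices to produce a natural transformation between them and check it is an equivalence on the generators of the orbit category (equivalently, on all $G/H$ with $H\leq G$ finite-index, or on the relevant Burnside-category generators). First I would recall how $EM$ is built from $E$ in Section~\ref{sec_mackey}: evaluating on a finite $G$-set $S$ amounts to applying $E$ to $S_{min,min}\otimes \mathrm{pt}$ (or more generally to $S$-indexed bounded unions), with transfers supplied by the bounded-covering wrong-way maps that live in $G\BC_{tr}$. The twisted theory $E_V$ is $E$ precomposed with the functor $X\mapsto V\otimes X$, where $V$ carries its metric $G$-bornological coarse structure; since $\mathrm{pt}\otimes V \cong V$, we get $E_V M(S) \simeq E(S_{min,min}\otimes V)$. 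So the statement unwinds to a natural identification $E(S_{min,min}\otimes V)\simeq (S(V)_\infty \wedge EM)(S)$ compatible with restrictions and transfers.

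The key geometric input is the cofiber sequence relating $\{0\}\hookrightarrow V \hookrightarrow V$ together with the identification of the ``germ at infinity'' of $V$ with the representation sphere. Concretely, I would use the standard computation (from \cite{equicoarse} and the cone/suspension formalism) that $\Sigma E_V \simeq E$-delooped along $S^V$, i.e.\ that twisting by $V$ and then passing to the appropriate coarse cofiber realizes the smash with $S(V)_\infty$; the point is that $V$ as a coarse space is flasque (it is the underlying space of a vector space, coned off), so $E(V\otimes X)$ vanishes and the cofiber of $E(\{0\}\otimes X)\to E(V\otimes X)$ computes the shifted twisted theory, while the ``boundary sphere'' $S(V)_\infty$ appears as the link of infinity. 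Running this fiberwise over each finite $G$-set $S$ — i.e.\ replacing $X$ by $S_{min,min}\otimes(-)$ throughout — gives the equivalence levelwise, and naturality in $S$ (for both restriction maps $S'\to S$ and transfer maps) follows because every step is induced by a map or bounded covering of $G$-bornological coarse spaces, hence lives in $G\BC_{tr}$ and is respected by the coarse-homology-with-transfers structure.

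The main obstacle, and the step I would spend the most care on, is checking that the equivalence is natural with respect to \emph{transfers}, not just restrictions. Restriction functoriality is essentially formal since it comes from honest maps of $G$-sets. Transfers, however, are the wrong-way maps attached to bounded coverings $S'_{min,min}\otimes V \to S_{min,min}\otimes V$, and one must verify that the cofiber-sequence identification above is compatible with applying $E$ (resp.\ $E M$) to these coverings. This amounts to a diagram chase in $G\BC_{tr}$: the bounded covering commutes with the inclusions $\{0\}\otimes(-)\hookrightarrow V\otimes(-)$ and with the flasqueness data, so the induced map on cofibers is the expected one; the subtlety is that bounded coverings interact with the $u$-continuity / filtered-colimit arguments used to see flasqueness of $V$, so I would phrase everything at the level of the universal theory $\Yo^s_{tr}$ and invoke the universal property (\cref{rhiohff3f34f34f3f}) to reduce to a statement about the motivic spectra $G\Sp\cX_{tr}$, where the relevant (co)limits and the semi-additive enrichment are available. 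Once the equivalence is exhibited motivically, it transports to every $E$ by postcomposition with the classifying colimit-preserving functor, and the Mackey-functor statement follows because $S(V)_\infty\wedge(-)$ is computed levelwise.
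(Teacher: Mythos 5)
There is a genuine gap, and it sits at the heart of your geometric argument. You claim that $V$ (with its metric $G$-bornological coarse structure) is flasque, so that $E(V\otimes X)$ vanishes and the cofibre of $E(\{0\}\otimes X)\to E(V\otimes X)$ produces the shifted twisted theory. This is false: $V$ is not flasque (only half-spaces such as $[0,\infty)$ are), and $E(V\otimes X)$ is by definition the twisted theory $E_{V}(X)$ itself --- if it vanished, the proposition would assert $S(V)_{\infty}\wedge EM\simeq 0$. What is actually true, and what the paper uses, is the cone sequence for the one-point compactification $S(V)\cong S^{1}(V\oplus\R)$: one shows $\Yo^{s}(\cO(S(V)))\simeq \Yo^{s}(V\oplus\R)$ by comparing the hybrid cone structure with the Euclidean cone structure (a nontrivial input, argued as in \cite{ass}), and $\Yo^{s}(S(V)_{max,max})\simeq\Yo^{s}(*)$ because $S(V)$ has the $G$-fixed point $\infty$; the resulting fibre sequence $\Yo^{s}(*)\to\Yo^{s}(V\oplus\R)\to\cO^{\infty}_{\hlg}(S(V))\to\Sigma\Yo^{s}(*)$ is then twisted into $E$ via \cref{iojweoifewfewfwef}. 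Note the shift by the extra $\R$-factor and the final cancellation using $E_{\R}M\simeq\Sigma EM$, both absent from your sketch; your sequence based on $\{0\}\hookrightarrow V$ and the unit sphere $S^{1}(V)$ does not simplify this way, since $S^{1}(V)$ has no fixed point in general.

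A second missing ingredient is the identification of the smash/tensor operation on Mackey functors with coarse twists. Your plan is to check the equivalence objectwise on orbits and then verify compatibility with transfers by hand, but the definition of $S(V)_{\infty}\wedge EM$ goes through the tensor structure $G\Orb\times\Mack_{\bC}(G)\to\Mack_{\bC}(G)$ and its Kan extension to $\PSh(G\Orb)$, and the comparison with coarse geometry requires the Wirthm\"uller-type equivalence $S\otimes EM\simeq E_{S_{min,min}}M$ of \cref{weiufz932r32rr32r}, which in turn rests on the duality $D$ on $A^{\eff}(G)$ and \cref{rgiu9owewef}. Without this input you have no handle on what $X\otimes EM$ is for a $G$-space $X$, and the "naturality with respect to transfers" you propose to check by a diagram chase is exactly what this machinery packages; the paper avoids any such hand-verification by establishing the equivalence $X\otimes EM\simeq\Sigma^{-1}E_{\cO^{\infty}_{\hlg}(X)}M$ functorially in $X\in\PSh(G\Orb)$ and then specializing to $X=S(V)$, using that $\cO^{\infty}$ and $\cO^{\infty}_{\hlg}$ agree on finite $G$-CW complexes.
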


In \cite{desc} we use transfers in order to prove injectivity results for assembly maps. In the present paper we demonstrate this method in the simple case of a finite group  $G$. Consider for example the family of solvable subgroups $\Sol$. Let $G\Orb$  be the orbit category and let $G_{\Sol}\Orb$ be  its subcategory of orbits with stabilizers in $\Sol$. We consider a cocomplete and complete stable $\infty$-category $\bC$ and a  functor
 $E \colon  G\Orb \to \bC$. The following theorem is a special case of \cref{ihwiuhvewvwevewv}.

\begin{theorem}\label{thm_intro_injective}
If
 $E$ extends to a $\bC$-valued Mackey functor, then
 the assembly map \[  \colim_{T\in G_{\Sol}\Orb}E(T )\to  E(*)\] is split injective.
\end{theorem}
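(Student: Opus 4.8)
The plan is to exhibit a splitting of the assembly map by constructing a retraction $E(*) \to \colim_{T \in G_{\Sol}\Orb} E(T)$. The key input is that a Mackey functor comes equipped not only with restriction maps (the covariant functoriality along $G\Orb$) but also with transfer (induction) maps, and that these satisfy the double coset formula. The strategy is to build an explicit idempotent-up-to-homotopy on $E(*) = E(G/G)$ whose image is identified with the colimit. Concretely, for each subgroup $H \in \Sol$ one has the composite $E(G/G) \xrightarrow{\res} E(G/H) \xrightarrow{\tr} E(G/G)$, and by the double coset formula this composite is multiplication by the index $[G:H]$ in an appropriate sense; more usefully, one assembles the restriction maps into a single map $E(G/G) \to \lim_{T \in G_{\Sol}\Orb} E(T)$ and the transfer maps into $\colim_{T \in G_{\Sol}\Orb} E(T) \to E(G/G)$.

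First I would recall that for a finite group $G$, a $\bC$-valued Mackey functor $E$ gives rise, via the discussion in Section~\ref{sec_mackey}, to a functor on the Burnside (span) category, so that $E$ is simultaneously covariant and contravariant on $G\Orb$ with the two structures linked by base change. The assembly map in the statement is the canonical map from the homotopy colimit of the restriction of $E$ to $G_{\Sol}\Orb$ into the value at the terminal orbit $*$. Second, I would construct the candidate splitting $s\colon E(*) \to \colim_{T \in G_{\Sol}\Orb} E(T)$ using the transfer maps: the inclusion of the object $G/H$ into the diagram together with the transfer $\tr_{G/G}^{G/H}\colon E(G/G) \to E(G/H)$ (or rather its adjoint as a wrong-way map) produces, after passing to the colimit, a well-defined map $s$. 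Third, I would compute the composite $\mathrm{asmb} \circ s\colon E(*)\to E(*)$. By the double coset / base change formula available in a Mackey functor, this composite is computed cellwise and equals an equivalence — here one uses that $G$ itself need not lie in $\Sol$, but the family $\Sol$ of solvable subgroups contains enough subgroups (in particular all subgroups of prime-power index have solvable... ) — more carefully, one uses that the unit of the Burnside ring, restricted appropriately, becomes invertible after the colimit/limit comparison. The cleanest route is to invoke the general principle: for a Mackey functor the assembly map for a family $\mathcal{F}$ is split injective whenever the idempotent $e_{\mathcal{F}}$ in the (rationalized or integral, as appropriate) Burnside ring acts invertibly; for $\mathcal{F} = \Sol$ this holds because every finite group is solvable-by-(something detected), and in the finite-group case the relevant argument reduces to the classical fact that induction from solvable subgroups generates — this is where I would cite or reprove the Burnside-ring statement.

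I expect the main obstacle to be making the comparison between the homotopy colimit $\colim_{T \in G_{\Sol}\Orb} E(T)$ and the explicit ``sum of transfers'' construction rigorous at the $\infty$-categorical level: one must check that the transfer maps $E(*) \to E(G/H)$ are compatible with the morphisms in $G_{\Sol}\Orb$ in the coherent sense needed to assemble into a map out of (or into) the colimit, which is precisely the content of $E$ being a genuine Mackey functor rather than merely a functor with some extra maps. The double coset formula must be invoked in its homotopy-coherent form. Once that compatibility is in place, the identification of $\mathrm{asmb}\circ s$ with an equivalence is a formal consequence of the Mackey structure together with the combinatorics of the Burnside ring for the family $\Sol$, and the passage from ``split injective after applying $\mathrm{asmb}\circ s = \mathrm{id}$'' to the statement is immediate. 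Since the excerpt states this is a special case of \cref{ihwiuhvewvwevewv}, I would in fact prove the general statement there — for a general family and general finite $G$ the role of $\Sol$ is played by whatever family makes the corresponding Burnside-ring idempotent a unit — and deduce the theorem by specialization.
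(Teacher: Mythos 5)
Your route---producing the splitting from Burnside-ring idempotents and transfer maps---is genuinely different from the paper's proof, and is essentially the alternative argument the authors sketch and deliberately decline to carry out in \cref{hioehwvjoewvwevewvew}. The paper instead proves the general \cref{ihwiuhvewvwevewv} by a descent argument: from the Mackey functor $M$ one forms the bifunctor $F(T,S)=M(T\times S)$ on $G\Fin^{op}\times G\Fin$, Kan-extends in the two possible orders to functors $\wt F$ and $\wt F^{\prime}$ on presheaves, and shows that (a) the comparison $\wt F\to \wt F^{\prime}$ is an equivalence when one argument is compact (using stability), (b) $\wt F(*,A)\to \wt F(A,A)$ and $\wt F^{\prime}(A,A)\to \wt F^{\prime}(A,*)$ are equivalences for $A=r_{!}E_{\Sol}G$ (using additivity of $M$ and the projection formula $E_{\Sol}G\times \yo(R)\simeq \yo(R)$ for solvable orbits $R$), and (c) the assembly map is identified with $\wt F(*,A)\to \wt F(*,*)$. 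The retraction is then read off a commuting diagram; no double coset formula appears.

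As written, your proposal has concrete gaps. First and most importantly, you never identify why the family $\Sol$ works; the phrases ``every finite group is solvable-by-(something detected)'' and ``induction from solvable subgroups generates'' are not the relevant facts (the latter belongs to rationalized induction theory). The input both approaches actually need is Oliver's theorem: $\Sol$ is a \emph{separating} family, so $E_{\Sol}G$ admits a finite $G$-CW model. In your language this is what makes $[E^{top}_{\Sol}G]$ an honest idempotent in the integral Burnside ring $A(G)$ (equivalently, Dress's classification of idempotents of $A(G)$ by perfect subgroups); in the paper's, it is what makes $E_{\Sol}G$ a compact presheaf. For, say, the family of $p$-subgroups the corresponding integral idempotent does not exist and the conclusion fails, so this cannot be elided. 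Second, the composite $\tr\circ\Res$ for a single $H$ is multiplication by $[G/H]\in A(G)$, which is never a unit for proper $H$; no single transfer splits the assembly map, and the splitting must come from the idempotent $e_{\Sol}=[E^{top}_{\Sol}G]$ acting on $E$. Third, even granting the resulting decomposition $E\simeq E^{\Sol}\oplus E_{\Sol}$, the identification of the summand inclusion $E_{\Sol}(*)\to E(*)$ with the assembly map is precisely the step the authors flag as unproved in \cref{hioehwvjoewvwevewvew}; you assert it without argument. Finally, your argument implicitly uses that every $\bC$-valued Mackey functor is a module over the Burnside Mackey functor, which requires justification for general stable presentable $\bC$ and is exactly what the paper's self-contained descent proof is designed to avoid.
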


\subsection*{Acknowledgements} U.B.~and A.E.~were supported by the SFB 1085 \emph{Higher Invariants} funded by the Deutsche Forschungsgemeinschaft DFG. A.E. was furthermore supported by the SPP 2026 \emph{Geometry at Infinity} also funded by DFG. C.W.~acknowledges support from the Max Planck Society. 

\section{Equivariant coarse motives with transfers}
 
 {For  an introduction to $G$-bornological coarse spaces and the associated motives we refer to \cite[Sec.~2--4]{buen} and to \cite[Sec.~2]{equicoarse}. In the present section we will discuss the new aspects related to transfers.}

 {In order to incorporate transfers for equivariant coarse homology theories, we introduce the $\infty$-category $G\BC_{tr}$ of $G$-bornological coarse spaces with transfers in \cref{olergregregegerg}. To this end, we introduce  in Section~\ref{rgioregergergregerg}   the notion of a bounded covering  which appears  in the definition of the morphisms in $G\BC_{tr}$. In Section~\ref{sec_coarse_spectra_tr} we  introduce the corresponding $\infty$-category of coarse motivic spectra with transfers, and in  Section~\ref{sec_homology_with_transfers}  we  discuss  equivariant coarse homology theories with transfers.}

\subsection{Bounded coverings and admissible squares}\label{rgioregergergregerg}

\newcommand{\Span}{\mathbf{Span}}

In order to incorporate transfers for equivariant coarse homology theories, we introduce the  {$\infty$-}category $G\BC_{tr}$ of $G$-bornological coarse spaces with transfers in \cref{olergregregegerg}. This category in particular contains for all $G$-sets $I$ transfer morphisms
\[\tr_{X,I}\colon X\to I_{min,min}\otimes X\ .\] The projection to the second factor from $I_{min,min}\otimes X$ to $X$ is a morphism of the underlying $G$-coarse spaces, but  {it is in general} not proper. The transfer is a kind of wrong-way map for  this projection.

 {In this section we will introduce} for $G$-bornological coarse spaces $W$ and $X$ the notion of a bounded covering  from $W$ to $X$ which generalizes the projection onto the second factor discussed above. By construction, the  {homotopy category of} $G\BC_{\tr}$ will have transfer maps $\tr_{w}\colon X\to W$ for all bounded coverings $w$ from $W$ to $X$, see the \cref{rgioergergrege}.

We start with recalling some basic definitions {from} coarse geometry.

\begin{ddd}\label{grejgio43t34t43}\mbox{}
\begin{enumerate}
\item A $G$-\emph{coarse space}  is a pair   $(W,\cC_{W})$  of a $G$-set $W$ and a coarse structure  {$\cC_{W}$} such that $\cC_{W}$ is $G$-invariant and the set of invariant entourages  {$\cC_{W}^G$} is cofinal in $\cC_{W}$.
\item If $(W,\cC_{W})$ and $ (W^{\prime},\cC_{W^{\prime}})$ are $G$-coarse spaces and $f\colon W\to W^{\prime}$ is an equivariant map between the underlying $G$-sets, then $f$ is called {\em controlled} if for every $U$ in $\cC_{W}$ we have $(f\times f)(U)\in \cC_{W^{\prime}}$.
\item By $G\Coarse$ we denote the category of $G$-coarse spaces and $G$-equivariant controlled maps. 
\end{enumerate}
The category $G\Coarse$ is complete and cocomplete by \cite[Prop.~2.18 and~2.21]{equicoarse}. We have a forgetful functor $G\BC\to G\Coarse$ which preserves coproducts and which sends the symmetric monoidal structure $\otimes$ on $G\BC$ to the product in $G\Coarse$.
\end{ddd}
 
Let $W$ be a set, $U$ be a subset of $W\times W$, and   $A$ be a subset of $W$.
 \begin{ddd}\label{ergioergergerg}
 The $U$-\emph{thickening} of $A$ is defined by
 \[ U[A]:=\{w\in W \mid \exists a\in A : (w,a)\in U\}\ .\qedhere\]
 \end{ddd}

Let $W$ be a  coarse space with coarse structure $\cC_{W}$. Then the union \[R_{W}:=\bigcup_{U\in \cC_{W}} U\] of all coarse entourages   of $W$  is an equivalence relation on $W$. Using this equivalence relation, we introduce the following notions.

Let $A$ and $B$ be subsets of $W$.

\begin{ddd}\label{rgiog34t34t34t3fe}
 \begin{enumerate}
   \item The \emph{coarse closure  {$[A]$}} of $A$ is the closure of $A$ with respect to the equivalence relation $R_{W}$. 
 \item If $A=[A]$, then $A$ is said to be {\em coarsely closed}.
 \item $A$ and $B$ are \emph{coarsely disjoint} if $[A]\cap [B]=\emptyset$.  \qedhere \end{enumerate}
  \end{ddd}

 If $A$ is a subset of $W$, then using \cref{ergioergergerg}, we  have
 \[ [A]=\bigcup_{U\in \cC_{W}} U[A]\ .\]

Let $W$ be a  coarse space with coarse structure $\cC_{W}$.
\begin{ddd} \label{ergopegg34t3t3} \mbox{} \begin{enumerate}
 \item The equivalence classes of $W$ with respect to the equivalence relation  $R_{W}$ are called the \emph{coarse components} of $W$.
 \item The $G$-set of coarse components of $W$ will be denoted by $\pi_{0}(W)$.  \qedhere\end{enumerate}
 \end{ddd}

In the following we discuss various ways to construct $G$-coarse spaces.
 
Let $W$ be a set  and $Q$ be a  subset of $\cP(W\times W)$.
\begin{ddd}  The \emph{coarse structure  $\cC\langle Q\rangle $  generated by $Q$} is   the smallest coarse structure on $W$ containing
the set $Q$.
\end{ddd}
If $W$ is a $G$-set and $Q$ consists of  $G$-invariant subsets, then $\cC\langle Q\rangle $ is a $G$-coarse structure. 

Let 
$U$ be a $G$-invariant entourage on a $G$-set $W$.
\begin{ddd} \label{rgijo34t34tegerg} We   let $W_{U}$ denote the $G$-coarse space
$(W,\cC\langle\{U\}\rangle)$.
\end{ddd}

Let $W$ be a $G$-set.
An \emph{equivariant partition} of $W$  is a  partition $(W_{i})_{i\in I}$ such that $I$ is a   $G$-set  and
$gW_{i}=W_{gi}$ for all $i$ in $I$ and $g$ in $G$.

Let $W$ be a $G$-set and $\cW:=(W_{i})_{i\in I}$ be an equivariant partition. Then we consider the invariant entourage
\begin{equation}\label{geh54lolj3p45g354}
U(\cW):=\bigsqcup_{i\in I} W_{i}\times W_{i}
\end{equation} on $W$. Note that we have a canonical equivariant bijection
\[ \pi_{0}(W_{U(\cW)})\cong I\ .\]

Assume now that  $W$ is a $G$-coarse space with coarse structure $\cC$ and with an equivariant partition
  $\cW:=(W_{i})_{i\in I}$.
\begin{ddd}\label{fwufhwifhfiuewfewfewfe}
We define  the  $G$-coarse    structure $\cC(\cW)$  on $W$ by
\[\cC(\cW):=\cC\langle\{U\cap U(\cW)\:|\: U\in \cC\}\rangle\ .\qedhere\]
\end{ddd}
 
Finally, let $W$ be a $G$-set, let $U$ be a $G$-coarse space with coarse structure $\cC_{U}$, and let $w \colon W\to U$ be an equivariant map of sets.
   \begin{ddd} \label{fiofjwefwefwef} The {\em induced coarse structure} $w^{-1}\cC_{U}$ 
 on $W$ is the maximal  coarse structure on $W$ such that the map $w$ is controlled.
   \end{ddd} 
   
   Note that $w^{-1}\cC_{U}$ is a $G$-coarse structure and  explicitly given  by 
   \[w^{-1}\cC_{U}=\cC\langle \{(w^{-1}\times w^{-1})(E)\:|\: E\in \cC_{U}\}\rangle\ .\]

We now turn to the definition of the notion of a bounded coarse covering.
Let $w\colon W\to U $ be a morphism of $G$-coarse spaces with coarse structures $\cC_{W}$ and $\cC_{U}$, respectively.
Let $\cW:=\pi_{0}(W)$ be the partition of $W$ into  coarse components.

\begin{ddd}\label{wefiowefewfetrz}
We say that $w$ is a \emph{bounded coarse covering} if the following conditions are satisfied:
\begin{enumerate}
\item  $(w^{-1}\cC_{U} )(\cW)=\cC_{W}$ (see \cref{fiofjwefwefwef} and \cref{fwufhwifhfiuewfewfewfe}).
\item  For every    {$W_0$ in $\pi_{0}(W)$ the map $w_{|W_0}\colon W_0\to w(W_0)$} is an isomorphism between  coarse components (see \cref{ergopegg34t3t3}).\qedhere
\end{enumerate}
\end{ddd}

Let $w\colon W\to U$ and $u\colon U\to V$ be bounded coarse coverings between $G$-coarse spaces.
\begin{lem}\label{regklergergreg}
The composition $u\circ w\colon W\to V$ is a bounded coarse covering.
\end{lem}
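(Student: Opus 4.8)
The plan is to verify the two defining conditions of \cref{wefiowefewfetrz} for the composition $u \circ w$ directly. Write $\cC_W$, $\cC_U$, $\cC_V$ for the coarse structures, and note first that $u \circ w$ is controlled as a composition of controlled maps, so the induced coarse structure $(u\circ w)^{-1}\cC_V$ is a well-defined $G$-coarse structure on $W$ containing $\cC_W$ (indeed $(u\circ w)^{-1}\cC_V = w^{-1}(u^{-1}\cC_V) \supseteq w^{-1}\cC_U \supseteq \cC_W$, using that $u$ is controlled for the middle inclusion). For condition (2): given a coarse component $W_0 \in \pi_0(W)$, the map $w|_{W_0} \colon W_0 \to w(W_0)$ is an isomorphism of coarse components since $w$ is a bounded coarse covering; moreover $w(W_0)$ is contained in a single coarse component $U_0$ of $U$ (the image of a coarsely connected set under a controlled map is coarsely connected), and $w|_{W_0} \colon W_0 \to w(W_0)$ being an isomorphism onto its image means $w(W_0)$ is itself coarsely closed in $U$, hence $w(W_0) = U_0$ (a coarse component has no proper nonempty coarsely closed subset). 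Then $u|_{U_0}\colon U_0 \to u(U_0)$ is an isomorphism by hypothesis on $u$, and the composite $(u\circ w)|_{W_0} = u|_{U_0} \circ w|_{W_0}$ is a composition of isomorphisms of coarse components, proving condition (2) for $u \circ w$ with $U_0$ replaced everywhere by the component of $V$; note $u(U_0)$ equals the coarse component of $V$ containing it by the same argument.

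For condition (1) we must show $\bigl((u\circ w)^{-1}\cC_V\bigr)(\cW) = \cC_W$, where $\cW = \pi_0(W)$. The inclusion ``$\subseteq$'' and ``$\supseteq$'' will both be handled by relating everything to $w^{-1}\cC_U$. On the one hand, $\cC_W = (w^{-1}\cC_U)(\cW)$ by the covering hypothesis on $w$. On the other hand, I claim $(u^{-1}\cC_V)(\cU) = \cC_U$ where $\cU = \pi_0(U)$, again by hypothesis on $u$, and pulling this back along $w$ one relates $w^{-1}\cC_U$ to $w^{-1}(u^{-1}\cC_V) = (u\circ w)^{-1}\cC_V$. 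The key point is that $w$ carries coarse components of $W$ isomorphically onto coarse components of $U$ (established above), so $w^{-1}$ of the partition-restricted structure $(u^{-1}\cC_V)(\cU)$ on $U$, when further restricted to the partition $\cW$ of $W$, agrees with the restriction of $(u\circ w)^{-1}\cC_V$ to $\cW$. Concretely, an entourage of $\bigl((u\circ w)^{-1}\cC_V\bigr)(\cW)$ is generated by sets of the form $(u\circ w)^{-1 \times -1}(E) \cap U(\cW)$ for $E \in \cC_V$; since the diagonal block $U(\cW) = \bigsqcup_i W_i \times W_i$ confines such a set inside single components, and $w$ is injective with coarsely-closed image on each component, one checks $(u\circ w)^{-1\times -1}(E) \cap U(\cW) = (w^{-1\times -1})\bigl(u^{-1 \times -1}(E) \cap U(\cU)\bigr) \cap U(\cW)$, which lies in $(w^{-1}\cC_U)(\cW) = \cC_W$. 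The reverse inclusion follows since $\cC_W \subseteq (u\circ w)^{-1}\cC_V$ and $\cC_W$ consists of entourages supported on the components, i.e.\ $\cC_W = \cC_W(\cW)$, so $\cC_W \subseteq \bigl((u\circ w)^{-1}\cC_V\bigr)(\cW)$.

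The main obstacle I expect is the bookkeeping in condition (1): one has to be careful that taking induced coarse structures $w^{-1}(\,\cdot\,)$, generating coarse structures $\cC\langle\,\cdot\,\rangle$, and intersecting with the partition entourages $U(\cW)$ all commute in the way needed, since these operations are only defined up to passing to the generated coarse structure and a priori need not interact cleanly. The crucial lemma that makes it work is that a bounded coarse covering restricts to an \emph{isomorphism} on each coarse component and sends components \emph{onto} components; once that is in hand, the generators of the three coarse structures can be matched up component by component, where $w$ (and then $u$) is literally invertible, so all the set-theoretic operations become transparent. I would isolate this ``$w$ is a bijection onto a union of full coarse components'' observation as a preliminary remark before doing the two-condition check.
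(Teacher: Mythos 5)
Your proof is correct and follows essentially the same route as the paper: condition (1) is verified there via the chain of equalities $((u\circ w)^{-1}\cC_V)(\cW)=(w^{-1}((u^{-1}\cC_V)(\cU)))(\cW)=(w^{-1}\cC_U)(\cW)=\cC_W$, resting on exactly your key observation that each coarse component of $W$ is carried into a single coarse component of $U$, and condition (2) is obtained by composing the componentwise isomorphisms. The only superfluous step in your write-up is the re-derivation that $w(W_0)$ is a full coarse component of $U$ — \cref{wefiowefewfetrz} already requires $w_{|W_0}$ to be an isomorphism \emph{between coarse components}, so this is part of the hypothesis rather than something to prove.
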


\begin{proof} We let $\cU$ be the partition of $U$ into  coarse components.
Then we have the following equalities:
\[ {((u\circ w)^{-1}\cC_{V})} (\cW)=(w^{-1}(u^{-1}\cC_{V} ))(\cW) =(w^{-1}( u^{-1}\cC_{V})(\cU))(\cW)=(w^{-1}\cC_{ U})(\cW)=\cC_{W}\ .\]
Here we use that the decomposition $w^{-1}\cU$ of $W$ is coarser than the decomposition $\cW$ for the second equality, and the assumption that $u$ and $w$ are bounded coarse coverings for the   third and the last equalities.

If $W_{0}$ is a coarse component in $W$, then $w$ maps it isomorphically to a   coarse component  $U_{0}$ of $U$, and $u$ maps $U_{0} $ isomorphically to a   coarse component in $V$. Hence $u\circ w$ maps $W_{0}$  isomorphically to a coarse component in $V$.
\end{proof}

We consider a cartesian diagram \[\xymatrix{W\ar[d]_{w}\ar[r]^-{f}&U\ar[d]^{u}\\V\ar[r]^-{g}&Z}\]
in $G\Coarse$.

\begin{lem}\label{oiwejofewwefewfewf}
If $u$ is a bounded coarse covering, then $w$ is a bounded coarse covering.
\end{lem}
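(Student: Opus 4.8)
The plan is to argue directly from the concrete description of the pullback in $G\Coarse$. Since $G\Coarse$ is complete, the cartesian square exhibits $W$ as the pullback $V\times_{Z}U=\{(v,x)\mid g(v)=u(x)\}$ of the underlying $G$-sets, with $w$ and $f$ the two projections, equipped with the largest coarse structure for which $w$ and $f$ are controlled; explicitly, $\cC_{W}=\{F\subseteq W\times W\mid (w\times w)(F)\in\cC_{V}\text{ and }(f\times f)(F)\in\cC_{U}\}$. I would begin with two elementary observations, writing $[v]$ for the coarse component of a point $v$. (a) Since $\cC_{W}$ is closed under subsets, $\{((v,x),(v',x'))\}\in\cC_{W}$ iff $\{(v,v')\}\in\cC_{V}$ and $\{(x,x')\}\in\cC_{U}$, i.e.\ iff $v,v'$ lie in a common coarse component of $V$ and $x,x'$ lie in a common coarse component of $U$; hence the coarse component of $(v,x)$ in $W$ is $\{(v',x')\in W\mid v'\in[v],\ x'\in[x]\}$, and $R_{W}=U(\pi_{0}(W))$ consists of all $((v,x),(v',x'))$ with $v'\in[v]$ and $x'\in[x]$. (b) For any $G$-set $A$, $G$-coarse space $B$, equivariant map $a\colon A\to B$, and equivariant partition $\cP$ of $A$, we have $(a^{-1}\cC_{B})(\cP)=\{F\subseteq A\times A\mid (a\times a)(F)\in\cC_{B}\text{ and }F\subseteq U(\cP)\}$: the right-hand side is already a $G$-coarse structure---one uses that $U(\cP)$ is symmetric, contains the diagonal, and satisfies $U(\cP)\circ U(\cP)=U(\cP)$---and each of its elements $F$ equals the generator $F\cap U(\cP)$ with $F\in a^{-1}\cC_{B}$. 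Feeding property~(1) of \cref{wefiowefewfetrz} for $u$ into (b) yields $\cC_{U}=\{E\subseteq U\times U\mid (u\times u)(E)\in\cC_{Z}\text{ and }E\subseteq U(\pi_{0}(U))\}$.

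Next I would check property~(1) for $w$, i.e.\ $(w^{-1}\cC_{V})(\pi_{0}(W))=\cC_{W}$. By observation~(b) the left-hand side is $\{F\mid (w\times w)(F)\in\cC_{V},\ F\subseteq U(\pi_{0}(W))\}$. The inclusion $\cC_{W}\subseteq(w^{-1}\cC_{V})(\pi_{0}(W))$ is immediate, since $F\in\cC_{W}$ forces $(w\times w)(F)\in\cC_{V}$ and $F\subseteq R_{W}=U(\pi_{0}(W))$. For the reverse inclusion, let $F$ satisfy $(w\times w)(F)\in\cC_{V}$ and $F\subseteq U(\pi_{0}(W))$; I must show $(f\times f)(F)\in\cC_{U}$, and by the description of $\cC_{U}$ above it suffices to verify $(u\times u)(f\times f)(F)\in\cC_{Z}$ and $(f\times f)(F)\subseteq U(\pi_{0}(U))$. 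The first holds because $(u\times u)(f\times f)(F)=(g\times g)(w\times w)(F)$ by commutativity of the square and $g$ is controlled; the second holds because, by observation~(a), any pair in $F\subseteq U(\pi_{0}(W))$ has its two $U$-coordinates in a common coarse component of $U$.

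Finally I would check property~(2) for $w$. Fix a coarse component $W_{0}$ of $W$, say the component of $(v,x)$; by observation~(a), $W_{0}=\{(v',x')\in W\mid v'\in[v],\ x'\in[x]\}$. Since $u$ is a bounded coarse covering, property~(2) of \cref{wefiowefewfetrz} gives that $u$ restricts to an isomorphism of coarse spaces $u|_{[x]}\colon[x]\to[u(x)]$ onto the coarse component of $u(x)=g(v)$ in $Z$. As $g$ maps $[v]$ into $[g(v)]=[u(x)]=u([x])$, the composite $\tilde g:=(u|_{[x]})^{-1}\circ g|_{[v]}\colon[v]\to[x]$ is a well-defined controlled map, and a short computation shows $W_{0}$ is precisely its graph. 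Therefore $w|_{W_{0}}$ is the first-coordinate projection restricted to the graph of a controlled map, hence an isomorphism of coarse spaces onto the coarse component $[v]$ of $V$, with controlled inverse $v'\mapsto(v',\tilde g(v'))$. All constructions are manifestly $G$-equivariant, so properties~(1) and~(2) together show that $w$ is a bounded coarse covering.

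The main obstacle I expect is the nontrivial inclusion in property~(1): an $F\subseteq U(\pi_{0}(W))$ typically decomposes as an infinite disjoint union over the coarse components of $W$, so $(f\times f)(F)\in\cC_{U}$ cannot be obtained by an unguarded componentwise argument; the workable route is to go through the componentwise characterization of $\cC_{U}$ that observation~(b) extracts from property~(1) for $u$. Setting up the explicit pullback coarse structure, identifying the coarse components of $W$, and checking the coarse-structure axioms for the set appearing in (b) are routine but should be done with care.
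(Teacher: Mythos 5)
Your proposal is correct, and its overall skeleton matches the paper's: both verify condition (1) of \cref{wefiowefewfetrz} by combining the explicit pullback coarse structure on $W$ (entourages $F$ with $(w\times w)(F)\in\cC_V$ and $(f\times f)(F)\in\cC_U$) with condition (1) for $u$, and both then verify condition (2) componentwise. For condition (1) the difference is only cosmetic: the paper exhibits, for each generator $w^{-1}(A)\cap U(\cW)$, an explicit entourage $B:=u^{-1}(g(A))\cap U(\cU)$ of $U$ with $w^{-1}(A)\cap U(\cW)\subseteq f^{-1}(B)$, whereas you first record the closed-form description of structures of the type $(a^{-1}\cC_B)(\cP)$ (your observation (b)) and then check membership directly; these are the same mechanism, and your packaging makes transparent why the possibly infinite spread of $F$ over components is harmless. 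For condition (2) your route is genuinely slicker than the paper's: the paper separately shows that $w(W_0)$ is coarsely closed by a point-chasing argument with singleton entourages, then proves injectivity of $w|_{W_0}$ from injectivity of $u$ on the relevant component and the set-level pullback, and finally upgrades the bijection to a coarse isomorphism by restricting the already-established identity $\cC_W=(w^{-1}\cC_V)(\cW)$ to the component; you instead identify $W_0$ as the graph of the controlled map $\tilde g=(u|_{[x]})^{-1}\circ g|_{[v]}$, which yields surjectivity onto the full component $[v]$, bijectivity, and controlledness of the inverse $v'\mapsto(v',\tilde g(v'))$ in one stroke (the inverse is controlled because its $w\times w$- and $f\times f$-images are $A$ and $(\tilde g\times\tilde g)(A)$). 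The only implicit input in both proofs is the standard identification of the pullback in $G\Coarse$ with the set-theoretic pullback carrying the maximal structure making both projections controlled, which the paper also takes for granted, so there is no gap.
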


\begin{proof}
Recall that the coarse structure $\cC_{W}$ of the space $W$ is generated by entourages of the form $w^{-1}(A)\cap f^{-1}(B)$ for entourages $A$ in $\cC_{V}$ and $B$ in $\cC_{U}$, and that the coarse structure $w^{-1}(\cC_V)(\cW)$ is generated by entourages of the form $w^{-1}(A)\cap U(\cW)$ for entourages $A$ in~$\cC_{V}$.  {Here $\cW:=\pi_{0}(W)$ is the partition of $W$ into coarse components.}

Let $\cU:=\pi_0(U)$.
	Given an entourage $A$ in $\cC_V$, define $B:=u^{-1}(g(A))\cap U(\cU)$, which is an entourage in $\cC_U$. Then we get $f^{-1}(B)=w^{-1}(A)\cap f^{-1}(U(\cU))$. Because $U(\cW)$ is contained in $f^{-1}(U(\cU))$, we have
	\[w^{-1}(A)\cap U(\cW)\subseteq w^{-1}(A)\cap f^{-1}(U(\cU)) =  {f^{-1}(B)}\]
	and hence $w^{-1}(\cC_V)(\cW)$ is contained in $\cC_W$. 
	
On the other hand, the inclusion
$\cC_{W}\subseteq w^{-1}(\cC_{V})(\cW)$ is clear.

{Let $W_{0}$ be a coarse component in $W$. We first  show that $w(W_{0})$ is a coarse component of $V$. There exists a coarse component $U_{0}$ in $U$ such that $f(W_{0})\subseteq U_{0}$.
We consider a point  $a$ in $[w(W_0)]$, and we must argue that $a\in   w(W_0)$.
  Since  $g(a)$ and $g(w(W_0))$ are in the same coarse component of $Z$,  we have $g(a)\in [u(U_{0})]$.  
  Since  $u_{|U_0}\colon U_0 \to u(U_0)$ is an isomorphism  of coarse components,   there exists
   $b$ in $U_0$ with $g(a) = u(b)$. 
The pair $(a,b)$ uniquely determines a point $c $ in $W  $. By the choice of $a$, there exists a point $c_0$ in $W_0$ such that $\{(a,w(c_0))\}$ is an entourage  of $V$    {and  \[(c,c_0) \in w^{-1}(\{(a,w(c_0))\})\ .\] 
Since $f(c_{0})\in U_{0}$ and $U_{0}$ is a coarse component, $\{(b,f(c_{0}) )\}$ is an entourage of $U$. Then \[(c,c_{0} )\in f^{-1}(\{(b,f(c_{0}))\})\ .\]
 Since the square is cartesian, the coarse structure $\cC_{W}$ of the space $W$ is generated by entourages of the form $w^{-1}(A)\cap f^{-1}(B)$ for entourages $A$ in $\cC_{V}$ and $B$ in $\cC_{U}$, and therefore $\{(c,c_0)\}$ is an entourage of $W$. Since $W_{0}$ is a coarse component and $c_{0}\in W_{0}$, we see that $ c\in W_{0}$. Hence $a=w(c)\in w(W_{0})$. This finishes the verification that $w(W_{0})$ is a coarse component.}}

 {We show that for every coarse component $W_{0}$ of $W$ the map  $w_{|W_{0}}\colon W_{0}\to w(W_{0})$ is an isomorphism of coarse components.
We first show that $w_{|W_{0}}$ is injective.  Consider two points 
  $w_0$ and $w_1$ in $W_{0}$ with $w(w_0) = w(w_1)$. Since  $u_{|[f(W_{0})]}\colon [f(W_{0})] \to u([f(W_{0})])$ is an isomorphism,  we get $f(w_0) = f(w_1)$. Since the square is cartesian, this implies $w_0 = w_1$}.
  
   We already know that $\cC_{W} = w^{-1}(\cC_{V})(\cW)$. Because $W_0$ is a coarse component of $W$, this implies $\cC_{W} \cap (W_0 \times W_0) = w^{-1}(\cC_{V})$ showing that $w_{|W_{0}}$ is an isomorphism of  coarse components. \end{proof}

  We consider a map between sets equipped with bornological structures.
  \begin{ddd}\label{ergierog34t34t34t3}
  \mbox{}
  \begin{enumerate}
  \item\label{4tgoi34t34t34t} The map
   is called \emph{bornological} if it sends bounded subsets to bounded subsets.
   \item The map
 is called \emph{proper} if preimages of bounded subsets are bounded.
\qedhere \end{enumerate}
   \end{ddd}

\begin{ddd}\label{wegioww4rwe}
Let $\wt{G\BC}$ be the category whose objects are $G$-bornological coarse spaces, and morphisms are morphisms between the underlying $G$-coarse spaces.
 \end{ddd}
 
 The forgetful functor $\wt{G\BC}\to G\Coarse$ is an equivalence of categories. Therefore $\wt{G\BC}$ has all small limits and colimits.

For spaces $X$ and $Y$ in $G\BC$ it makes sense to require that a morphism $ X\to  Y$ in $\wt{G\BC}$ is proper or bornological, or both,  as an additional property.

We consider two $G$-bornological coarse spaces {$X$ and $Y$} and a morphism $u\colon  X\to\ Y$ in $\wt{G\BC}$. 

\begin{ddd}\label{ergierog43t34t34t34t}
We say that $u$ is a {\em bounded covering} if the following conditions are satisfied:
\begin{enumerate}\item\label{frweoirhgirogregewefw} $u$ is a bounded coarse covering (see \cref{wefiowefewfetrz}).
\item \label{gropergergreg111} $u$ is a bornological map (see \cref{ergierog34t34t34t3}.\ref{4tgoi34t34t34t}).
\item \label{gropergergreg}
For every bounded subset $B$ of $X$ there exists a finite, coarsely disjoint partition
$(B_{a})_{a\in A}$ of $B$ such that $u_{|[B_{a}]}\colon [B_{a}]\to [u(B_{a})]$ is an isomorphism of  coarse spaces (see \cref{grejgio43t34t43} and \cref{rgiog34t34t34t3fe}). \qedhere
\end{enumerate}
\end{ddd}

Condition \ref{ergierog43t34t34t34t}.\ref{gropergergreg} gives that we have isomorphisms of coarse spaces 
\[ u_{|U[B_{a}]}\colon U[B_{a}]\to u(U[B_{a}]) \]
 for all coarse entourages $U$ of $X$,  see \cref{ergioergergerg}.
If $X$ has the property that a bounded set meets at most finitely many coarse components, then Condition 
 \ref{ergierog43t34t34t34t}.\ref{gropergergreg} is automatically satisfied. But it becomes relevant  if bounded sets can meet more than finitely many coarse components.
 
 Let $X,Y,W$ and $U$ be $G$-bornological coarse spaces and let $w\colon  X\to  Y$ and $u\colon  W\to  U$ be bounded coverings.
 Note that the coproduct in $G\BC$ is also  the coproduct  in $\wt{G\BC}$ and therefore we can form the morphism 
  $w\sqcup u \colon  {X\sqcup W}\to {Y\sqcup U}$ in $\wt{G\BC}$, where the coproduct of the spaces is understood in $G\BC$.
  Similarly, the underlying $G$-coarse space  of the tensor product in $G\BC$ is the product of the underlying $G$-coarse spaces. Hence we have a map $w\times u \colon  X\otimes W \to Y\otimes U$  in $\wt{G\BC}$.
  The following lemma follows directly from the definitions.
  \begin{lem}
		\label{lem:coproduct-boundedcovering}
		The maps  $w\sqcup u \colon  {X\sqcup W}\to {Y\sqcup U}$ and  $w\times u \colon   {X\otimes W}\to   {Y\otimes U}$ are bounded coverings.
	\end{lem}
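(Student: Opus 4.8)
The plan is to verify each of the three defining conditions of a bounded covering (\cref{ergierog43t34t34t34t}) separately for $w\sqcup u$ and for $w\times u$, reducing each to the corresponding property already known for $w$ and for $u$. The key structural observation is that in each case the relevant coarse and bornological data of the target (and source) decomposes compatibly: the coarse components of $X\sqcup W$ are exactly the coarse components of $X$ together with those of $W$, and a bounded subset of $X\sqcup W$ is a disjoint union $B_X\sqcup B_W$ of a bounded subset of $X$ and a bounded subset of $W$; similarly every coarse component of $X\otimes W$ is a product $X_0\times W_0$ of coarse components, and every bounded subset of $X\otimes W$ is contained in a product $B_X\times B_W$ of bounded sets.

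For condition \ref{frweoirhgirogregewefw} (being a bounded coarse covering) I would note that this is a statement purely about the underlying $G$-coarse spaces, and the forgetful functor $G\BC\to G\Coarse$ sends $\sqcup$ to the coproduct and $\otimes$ to the product in $G\Coarse$. For the coproduct, \cref{wefiowefewfetrz}(1) and (2) are both checked componentwise and the components of $X\sqcup W$ are those of $X$ and $W$, so the claim is immediate. For the product, I would apply \cref{oiwejofewwefewfewf} twice: the map $w\times u$ fits into a cartesian square over $w\times \id_U$ (pulled back along $\id_Y\times u$), and $w\times\id_U$ is a bounded coarse covering because it is the pullback of $w$ along the projection $Y\otimes U\to Y$; so $w\times u$ is a composite of two bounded coarse coverings and \cref{regklergergreg} finishes it. (Alternatively one checks the generators of the product coarse structure directly, which is also short.)

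Condition \ref{gropergergreg111} (bornological) is straightforward: a bounded subset of $X\sqcup W$ is $B_X\sqcup B_W$ and its image is $w(B_X)\sqcup u(B_W)$, bounded since $w,u$ are bornological; a bounded subset of $X\otimes W$ lies in $B_X\times B_W$ and maps into $w(B_X)\times u(B_W)$, again bounded. For condition \ref{gropergergreg}, given a bounded set $B=B_X\sqcup B_W$ in $X\sqcup W$, take the finite coarsely disjoint partitions of $B_X$ and $B_W$ provided by the hypothesis on $w$ and $u$ and take their union; coarse disjointness is preserved because components of $X$ and of $W$ are coarsely disjoint in $X\sqcup W$, and the isomorphism-of-coarse-spaces condition on each piece is inherited. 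For $X\otimes W$, given a bounded $B\subseteq B_X\times B_W$, form the finite partitions $(B_{X,a})_{a\in A}$ of $B_X$ and $(B_{W,b})_{b\in B}$ of $B_W$, and use the products $(B_{X,a}\cap\pr_X(B))\times(B_{W,b}\cap\pr_W(B))$ indexed by $A\times B$; each coarse closure is $[B_{X,a}]\times[B_{W,b}]$ up to the obvious inclusions, so the restriction of $w\times u$ there is the product of two isomorphisms of coarse spaces, hence an isomorphism, and coarse disjointness of the pieces follows from coarse disjointness in each factor.

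I do not expect any serious obstacle — this is exactly the "follows directly from the definitions" that the statement advertises. The one point requiring a moment of care is the bounded-set bookkeeping for $X\otimes W$: the bornology on a tensor product is generated by products of bounded sets rather than consisting of such products, so one should first enlarge a given bounded $B$ to a product $B_X\times B_W$ before invoking condition \ref{gropergergreg} for $w$ and $u$, and then intersect the resulting partition back down to $B$. With that caveat noted, all three conditions follow by the componentwise/factorwise arguments sketched above.
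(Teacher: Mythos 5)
Your proof is correct, and for the coproduct and for Conditions \ref{ergierog43t34t34t34t}.\ref{gropergergreg111} and \ref{ergierog43t34t34t34t}.\ref{gropergergreg} of the product case it matches what the paper does (the paper declares the coproduct case obvious and says the bornological conditions follow from the bornology on $X\otimes W$ being generated by $\cB_X\times\cB_W$; your partition bookkeeping -- enlarge $B$ to $B_X\times B_W$, partition, intersect back -- is exactly the omitted detail). Where you genuinely diverge is Condition \ref{ergierog43t34t34t34t}.\ref{frweoirhgirogregewefw} for $w\times u$: the paper verifies directly that $((w\times u)^{-1}\cC_{Y\otimes U})(\pi_0(X\otimes W))=\cC_{X\otimes W}$ by a chain of equalities on generators and then checks the component condition by hand, whereas you factor $w\times u=(w\times\id_U)\circ(\id_X\times u)$, recognize each factor as a pullback of $w$ (resp.\ of $u$) along a projection, and invoke \cref{oiwejofewwefewfewf} and \cref{regklergergreg}. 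Your route is shorter and reuses the stability lemmas already proved, at the cost of relying on the identification of the categorical pullback in $G\mathbf{Coarse}$ with the set-theoretic product carrying the product coarse structure; the paper's computation is self-contained and makes the equality of coarse structures explicit, which is the substance of \cref{wefiowefewfetrz}. Two small slips worth fixing: the parenthetical ``$w\times u$ fits into a cartesian square over $w\times\id_U$ pulled back along $\id_Y\times u$'' is garbled -- that pullback yields $w\times\id_W$, not $w\times u$; what you want is that $\id_X\times u$ is the pullback of $\id_Y\times u$ (itself a pullback of $u$) along $w\times\id_U$, and then compose. Also the displayed pieces $(B_{X,a}\cap\mathrm{pr}_X(B))\times(B_{W,b}\cap\mathrm{pr}_W(B))$ partition $\mathrm{pr}_X(B)\times\mathrm{pr}_W(B)$ rather than $B$; you need $B\cap(B_{X,a}\times B_{W,b})$, as your closing sentence in fact acknowledges.
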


\begin{proof}
 The case of $w\sqcup u$ is obvious.
 
 We consider the case of $w\times u$.
Let us first verify Condition \ref{ergierog43t34t34t34t}.\ref{frweoirhgirogregewefw}, i.e.,  that $w\times u$ is a bounded coarse covering. Indeed we have the following chain of equalities
\begin{eqnarray*}
((w\times u)^{-1}\cC_{Y\otimes U} )(\pi_{0}(X\otimes W))&=&
((w\times u)^{-1}\langle  \cC_{Y}\times \cC_{U}\rangle)(\pi_{0}(X\otimes W))\\
&=&
\langle  w^{-1}  (\cC_{Y})\times u^{-1}(\cC_{U})\rangle(\pi_{0}(X\otimes W))\\
&=& \langle  w^{-1}  (\cC_{Y})\times u^{-1}(\cC_{U})\rangle(\pi_{0}(X)\times \pi_{0}(W))\\
&=& \langle  w^{-1}  (\cC_{Y})(\pi_{0}(X))\times  u^{-1}(\cC_{U})( \pi_{0}(W))\rangle\\
&=& \langle  \cC_{X}\times   \cC_{W}\rangle\\
&=&\cC_{X\otimes W}\ .
\end{eqnarray*}
Moreover,
every coarse component $Z_0$ of $X\otimes W$ is  of the form $X_0 \times W_0$ for coarse components $X_0$ of $X$ and $W_0$ of $W$ and both $w_{|X_0}$ and $u_{|W_0}$ are isomorphisms between coarse components by assumption. Hence $(w\times u)_{|Z_0}$ is an isomorphism of coarse components.

 Conditions \ref{ergierog43t34t34t34t}.\ref{gropergergreg111} and \ref{ergierog43t34t34t34t}.\ref{gropergergreg} easily follow from the fact that the bornology on  $X \otimes W$ is  generated by $\cB_X \times \cB_W$. 
 \end{proof}

\begin{ex} \label{igjeroigegergregregreg}
Let $X$ be a $G$-coarse space and $I$ be a $G$-set. Then we can form the product $I_{min}\times X $  in $G$-coarse spaces, where $I_{min}$ is the $G$-coarse space with underlying $G$-set $I$ and  the minimal  coarse structure. The projection onto the second factor
$\pr_{2}\colon I_{min}\times X\to X$ is a bounded coarse covering.
 {If $X$ is a $G$-bornological coarse space, then $\pr_2$ is a bounded covering of $G$-bornological coarse spaces from $I_{min,min}\otimes X$ to $ X$, where $I_{min,min}$ carries the minimal coarse and bornological structures.}

More generally, assume that $X$ is a $G$-coarse space and $I\to I^{\prime}$ a map of $G$-sets.
Then the induced map $I_{min}\times X\to I_{min}^{\prime}\times X$ is a bounded coarse covering. If $X$ is a $G$-bornological coarse space, then  $I_{min,min}\otimes X\to I_{min,min}^{\prime}\otimes X$ is a bounded covering.
\end{ex}

\begin{ex}\label{rgioergergregegreg}
Let $X$ be a  {$G$-}bornological coarse space with bornology $\cB$ and assume that $\cB^{\prime}$ is a compatible $G$-bornological  structure  such that $\cB^{\prime}\subseteq \cB$. Then we consider the $G$-bornological coarse space $X^{\prime}$ obtained from $X$ by replacing
$\cB$ by $\cB^{\prime}$. Then the identity map of the underlying sets   is a bounded covering $  X^{\prime}\to  X$. 
Indeed, the identity is clearly a bounded coarse covering. The Condition~\ref{ergierog43t34t34t34t}.\ref{gropergergreg} is also satisfied (even for arbitrary subsets in place of $B$ and for the trivial partition).
 Finally, the  identity  is bornological since $\cB^{\prime}\subseteq \cB$.
 \end{ex}

We consider $G$-bornological coarse spaces $X$, $Y$ and $Z$,  and bounded coverings $u\colon  X\to  Y$ 
 and $v\colon   Y\to  Z$.
\begin{lem}
The composition $v\circ u\colon   X\to Z$ is  a bounded covering.
\end{lem}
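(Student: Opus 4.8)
The plan is to verify the three defining conditions of \cref{ergierog43t34t34t34t} for the composition $v \circ u$ in turn, reducing each to the corresponding property of $u$ and $v$ individually. The condition that $v \circ u$ is a bounded \emph{coarse} covering (Condition~\ref{ergierog43t34t34t34t}.\ref{frweoirhgirogregewefw}) is already handled by \cref{regklergergreg}, applied to the underlying $G$-coarse spaces. The condition that $v \circ u$ is bornological (Condition~\ref{ergierog43t34t34t34t}.\ref{gropergergreg111}) is immediate: a composition of maps each sending bounded sets to bounded sets again sends bounded sets to bounded sets.

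The real content is Condition~\ref{ergierog43t34t34t34t}.\ref{gropergergreg}. Here I would proceed as follows. Fix a bounded subset $B$ of $X$. Apply the condition for $u$ to get a finite coarsely disjoint partition $(B_a)_{a \in A}$ of $B$ with $u_{|[B_a]}\colon [B_a] \to [u(B_a)]$ an isomorphism of coarse spaces. In particular $u(B)$ is bounded in $Y$ (using that $u$ is bornological, or directly), so we may apply the condition for $v$ to $u(B)$, obtaining a finite coarsely disjoint partition $(C_{a'})_{a' \in A'}$ of $u(B)$ with $v_{|[C_{a'}]}$ an isomorphism onto $[v(C_{a'})]$. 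Now form the common refinement: for $a \in A$ and $a' \in A'$, set $B_{a,a'} := B_a \cap u^{-1}(C_{a'})$. This is a finite partition of $B$. The pieces are coarsely disjoint because each $B_{a,a'} \subseteq B_a$ and the $B_a$ are coarsely disjoint. For the isomorphism claim: since $u_{|[B_a]}$ is an isomorphism of coarse spaces, it restricts to an isomorphism $[B_{a,a'}] \to u([B_{a,a'}])$, and one checks $u([B_{a,a'}]) \subseteq [u(B_{a,a'})] \subseteq [C_{a'}]$ (using $u(B_{a,a'}) \subseteq C_{a'}$ and coarse closure being monotone), so $v_{|[C_{a'}]}$ restricts to an isomorphism on $u([B_{a,a'}])$; composing gives that $(v \circ u)_{|[B_{a,a'}]}$ is an isomorphism onto its image in $Z$.

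The main obstacle is bookkeeping with coarse closures under the refinement, specifically confirming that the coarse closure $[B_{a,a'}]$ taken in $X$ is carried by the coarse isomorphism $u_{|[B_a]}$ to something contained in $[C_{a'}]$, so that $v_{|[C_{a'}]}$ applies to it. This works because $u_{|[B_a]}$ being an isomorphism of coarse spaces means it identifies the coarse structure of $[B_a]$ with that of $u([B_a]) = [u(B_a)]$, hence commutes with coarse closures of subsets of $[B_a]$; and $[u(B_{a,a'})]$, computed inside $[u(B_a)] \subseteq [u(B)]$, lies in $[C_{a'}]$ since $C_{a'}$ is coarsely closed in $u(B)$ and contains $u(B_{a,a'})$. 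Once this is in place, the composite isomorphism $[B_{a,a'}] \xrightarrow{u} u([B_{a,a'}]) \hookrightarrow [C_{a'}] \xrightarrow{v} [v(C_{a'})]$ has image $[(v\circ u)(B_{a,a'})]$ by the same monotonicity argument run in reverse, completing the verification. I would also remark that Condition~\ref{ergierog43t34t34t34t}.\ref{gropergergreg} for $v \circ u$ actually only needs to be checked up to passing to the isomorphism onto the image, which is exactly what the refinement argument produces.
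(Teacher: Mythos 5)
Your proof is correct and follows essentially the same route as the paper's: Condition~1 via \cref{regklergergreg}, Condition~2 by composing bornological maps, and Condition~3 by refining the partition for $u$ by preimages of a partition for $v$ (the paper partitions each $u(B_a)$ separately rather than $u(B)$ all at once, which is immaterial). One small point: coarse disjointness of $B_{a,a'}$ and $B_{a,a''}$ for the \emph{same} index $a$ does not follow from coarse disjointness of the $B_a$'s as you claim, but rather from the containments $u([B_{a,a'}])\subseteq [C_{a'}]$ that you establish later together with the injectivity of $u_{|[B_a]}$ and the coarse disjointness of the $C_{a'}$'s.
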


\begin{proof}
By Lemma \ref{regklergergreg} we know that $v\circ u$ is a bounded coarse covering. Furthermore, as a composition of bornological maps it is bornological.

Let $B$ be a bounded subset {of $X$} and let $(B_{a})_{a\in A}$ be a finite, coarsely disjoint partition such that $u_{|B_{a}}\colon [B_{a}]\to [u(B_{a})]$ is an isomorphism of coarse spaces. For every $a$ in $A$ let $(C_{a,i})_{i\in I_{a}}$ be a finite, coarsely disjoint partition of $u(B_{a})$ such that
$v_{|[C_{a,i}]}\colon [C_{a,i}]\to [v(C_{a,i})]$ is an isomorphism of coarse spaces. Note that this partition exists since $u(B_{a})$ is bounded in $Y$.
 Then we set $B_{a,i}:=u^{-1}(C_{a,i})\cap B_{a}$ and observe that
$((B_{a,i})_{i\in I_{a}})_{a\in A}$ is a finite, coarsely disjoint partition of $B$ such that $(v\circ u)_{|[B_{a,i}]}\colon [B_{a,i}]\to [(v\circ u)(B_{a,i})]$ is an isomorphism of coarse spaces.
\end{proof}

We consider $G$-bornological coarse spaces $W,U,V$ and $Z$, 
and a diagram
\begin{equation}\label{wevlkwvewv}
\xymatrix{  W\ar[d]_{w}\ar[r]^-{f}&   U\ar[d]^{u}\\  V\ar[r]^-{g}&  Z}
\end{equation}
 in $\wt{G\BC}$.
\begin{ddd}\label{gergergeg}
The square \eqref{wevlkwvewv} is called {\em admissible} if the following conditions are satisfied:
\begin{enumerate}
\item\label{gioergergreg1} The square \eqref{wevlkwvewv}  is cartesian.
\item $g$ is proper and bornological. 
\item  \label{gioergergreg}$f$  is proper and bornological.  
\item $u$ is a bounded covering.\qedhere
 \end{enumerate}
 \end{ddd}
 Note that Condition \ref{gergergeg}.\ref{gioergergreg1} is equivalent to the condition that the underlying square of \eqref{wevlkwvewv} in $G\Coarse$ is cartesian.

\begin{lem}\label{gioegergergregerge}
If the square \eqref{wevlkwvewv} is admissible, then $w$ is a bounded covering.
\end{lem}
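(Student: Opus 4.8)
The plan is to check the three defining conditions of a bounded covering (\cref{ergierog43t34t34t34t}) for $w$. The first condition, that $w$ is a bounded coarse covering (Condition~\ref{ergierog43t34t34t34t}.\ref{frweoirhgirogregewefw}), is immediate: the square \eqref{wevlkwvewv} is cartesian in $\wt{G\BC}$, hence its underlying square in $G\Coarse$ is cartesian, and $u$ is a bounded coarse covering since it is a bounded covering, so \cref{oiwejofewwefewfewf} applies. For the second condition, that $w$ is bornological (Condition~\ref{ergierog43t34t34t34t}.\ref{gropergergreg111}), I would use commutativity of the square: for bounded $B\subseteq W$ we have $g(w(B))=u(f(B))$, which is bounded because $f$ and $u$ are bornological, and since $g$ is proper the set $w(B)\subseteq g^{-1}(g(w(B)))$ is then bounded.

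The real work is Condition~\ref{ergierog43t34t34t34t}.\ref{gropergergreg}. Given a bounded $B\subseteq W$, the set $f(B)$ is bounded in $U$ (as $f$ is bornological), so since $u$ is a bounded covering we may choose a finite, coarsely disjoint partition $(C_a)_{a\in A}$ of $f(B)$ such that each $u_{|[C_a]}\colon [C_a]\to [u(C_a)]$ is an isomorphism of coarse spaces. I would then put $B_a:=f^{-1}(C_a)\cap B$. Since $f$ is controlled, $f([B_a])\subseteq [f(B_a)]\subseteq [C_a]$, and from this together with the coarse disjointness of the $C_a$ one reads off that $(B_a)_{a\in A}$ is a finite, coarsely disjoint partition of $B$. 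It remains to see that $w_{|[B_a]}\colon [B_a]\to [w(B_a)]$ is an isomorphism of coarse spaces. Injectivity follows from the cartesian hypothesis: if $x,y\in [B_a]$ satisfy $w(x)=w(y)$, then $f(x),f(y)\in [C_a]$ by the inclusion above and $u(f(x))=g(w(x))=g(w(y))=u(f(y))$, so $f(x)=f(y)$ as $u_{|[C_a]}$ is injective, and hence $x=y$ since the square is cartesian. For the remaining statements I would argue componentwise: because $w$ is a bounded coarse covering, $[B_a]$ is the disjoint union of those coarse components of $W$ that meet $B_a$, each of them is carried by $w$ isomorphically onto a coarse component of $V$, the union of these images is precisely $[w(B_a)]$, and since every entourage of a coarse space is contained in the equivalence relation given by the union of all its entourages, both coarse structures in question are ``block diagonal'' along these component decompositions; thus the componentwise isomorphisms assemble into an isomorphism $w_{|[B_a]}\colon[B_a]\to[w(B_a)]$.

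I expect the main obstacle to be precisely this last assembly step: one must combine the cartesian hypothesis (used for injectivity) with the description of coarse closures as unions of coarse components and with the componentwise behaviour encoded in ``bounded coarse covering'' in order to conclude that the coarse structures on $[B_a]$ and $[w(B_a)]$ match. The first two conditions of \cref{ergierog43t34t34t34t} are, by contrast, essentially formal consequences of the hypotheses on the square, and only the properness of $g$ and the bornological property of $f$ and $u$ enter there.
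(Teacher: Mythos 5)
Your proposal is correct and takes essentially the same route as the paper: the same appeal to \cref{oiwejofewwefewfewf} for the bounded coarse covering condition, the same use of $w(B)\subseteq g^{-1}(u(f(B)))$ for bornologicality, and the same partition $B_{a}:=f^{-1}(C_{a})\cap B$ with injectivity of $w_{|[B_{a}]}$ deduced from injectivity of $u_{|[C_{a}]}$ and the cartesian property. The componentwise assembly you spell out at the end is exactly what the paper compresses into the remark that it suffices to prove injectivity because $w$, being a bounded coarse covering, is already an isomorphism on each coarse component.
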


\begin{proof}
The map $w$ is a bounded coarse covering by \cref{oiwejofewwefewfewf}.

Moreover, $w$ is bornological. Indeed, let $B$ be a bounded subset of $W$. Then we have
\[w(B)\subseteq g^{-1}(u(f(B)))\ .\] Since $f$ and $u$ are bornological and $g$ is proper
we see that $g^{-1}(u(f(B)))$ and hence $w(B)$ are bounded.

We finally verify the Condition \ref{ergierog43t34t34t34t}.\ref{gropergergreg}.
Let $B$ be a bounded subset of $W$.  Then $f(B)$ is bounded in $U$  since $f$ is bornological. 
Let $(C_{a})_{a\in A}$ be a finite, coarsely disjoint partition of $f(B)$
such that $u_{|[C_{a}]}\colon [C_{a}]\to [u(C_{a})]$ is an isomorphism of coarse spaces for every~$a$ in~$A$. We define $B_{a}:=f^{-1}(C_{a})\cap B$. Then $( B_{a})_{a\in A}$ is a finite, coarsely disjoint 
partition of $B$. It suffices to show that for every $a$ in $A$ the map $w_{|[B_{a}]}\colon [B_{a}]\to [w(B_{a})]$ is injective since
$w$ is a bounded coarse covering and therefore   an isomorphism on  each coarse component of $W$.
Let $b,b^{\prime}$ be points in $[B_{a}]$ and assume that $w(b)=w(b^{\prime})$. 
Then $u(f(b))=u(f(b^{\prime}))$. Since $f(b), f(b^{\prime})\in [C_{a}]$ and $u_{|[C_{a}]}$ is injective, we conclude that that $f(b)=f(b^{\prime})$. 
 Since the square \eqref{wevlkwvewv} is a pull-back of sets, this implies $b=b^{\prime}$.
\end{proof}

We consider $G$-bornological coarse spaces $U,V,Z$
and a diagram \begin{equation}\label{wevlkwvewv1}
\xymatrix{&  U\ar[d]^{u}\\   V\ar[r]^-{g}& Z}
\end{equation}  in $\wt{G\BC}$
 such that $g$ is proper and bornological and $u$ is a bounded covering.
\begin{lem}\label{eiogergergergerg}
There exists an extension $(W,w,f)$ of \eqref{wevlkwvewv1} to an admissible square \eqref{wevlkwvewv}.

If $(W^{\prime},w^{\prime},f^{\prime})$ is a second admissible extension, then
there exists a unique isomorphism of $G$-bornological coarse spaces $\phi\colon W\to W^{\prime}$ such that 
\[\xymatrix {V\ar@{=}[d]&  W\ar[l]_{w}\ar[d]^{ \phi} \ar[r]^{f}& U\ar@{=}[d]\\
  V&  W^{\prime}\ar[l]_{w^{\prime}}\ar[r]^{f^{\prime}}&  U}\]
commutes.
\end{lem}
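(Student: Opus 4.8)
The plan is to realise $W$ as the pullback $V\times_{Z}U$ of the underlying $G$-coarse spaces, equipped with the bornology pulled back along the projection to $U$, and then to show that admissibility leaves no freedom in this choice of bornology. For existence: since $G\Coarse$ is complete (see \cref{grejgio43t34t43}) and the forgetful functor $\wt{G\BC}\to G\Coarse$ is an equivalence (see \cref{wegioww4rwe} and the remark following it), I would take for $W$ the $G$-coarse space $V\times_{Z}U$ with projections $w\colon W\to V$ and $f\colon W\to U$, and equip it with the bornology $\cB_{W}:=\{B\subseteq W\mid f(B)\in\cB_{U}\}$. This is a $G$-invariant bornology (using equivariance of $f$ and $G$-invariance of $\cB_{U}$), and it is compatible with $\cC_{W}$: for $B\in\cB_{W}$ and $E\in\cC_{W}$, controlledness of $f$ gives $(f\times f)(E)\in\cC_{U}$, hence $f(E[B])\subseteq (f\times f)(E)[f(B)]\in\cB_{U}$, so $E[B]\in\cB_{W}$. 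With this bornology the square \eqref{wevlkwvewv} lies in $\wt{G\BC}$ and is admissible in the sense of \cref{gergergeg}: it is cartesian in $G\Coarse$ by construction; $g$ is proper and bornological and $u$ is a bounded covering by hypothesis; and $f$ is bornological by the very definition of $\cB_{W}$ and proper since, for $B'\in\cB_{U}$, the set $f^{-1}(B')$ has image contained in $B'$ and is therefore in $\cB_{W}$. (By \cref{gioegergergregerge} this then forces $w$ to be a bounded covering as well, though that is not part of admissibility.)

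For uniqueness, the crucial observation is that the bornology on an admissible extension is forced by the cospan data. For any admissible extension $(W',w',f')$ I would show $\cB_{W'}=\{B\subseteq W'\mid f'(B)\in\cB_{U}\}$: the inclusion $\subseteq$ is bornologicity of $f'$, and for $\supseteq$, if $f'(B)\in\cB_{U}$ then properness of $f'$ makes $(f')^{-1}(f'(B))$ bounded, and $B$ is a subset of it. Now given two admissible extensions $(W,w,f)$ and $(W',w',f')$, both underlying $G$-coarse spaces are pullbacks of $U\xrightarrow{u}Z\xleftarrow{g}V$ in $G\Coarse$, so the universal property of the pullback supplies a unique isomorphism of $G$-coarse spaces $\phi\colon W\to W'$ with $w'\circ\phi=w$ and $f'\circ\phi=f$; this is the asserted (unique) morphism of $\wt{G\BC}$. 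Since $f'\circ\phi=f$, the rigidity statement applied to $W$ and to $W'$ shows that $B\in\cB_{W}$ exactly when $f'(\phi(B))\in\cB_{U}$, i.e.\ exactly when $\phi(B)\in\cB_{W'}$, so $\phi$ is an isomorphism of $G$-bornological coarse spaces; its uniqueness is inherited from the pullback in $G\Coarse$.

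The only steps that require a little care are the compatibility of $\cB_{W}$ with $\cC_{W}$ and the rigidity statement for admissible extensions, and both of these come down to properness of the projection $f$ (respectively of $f'$) together with the definition of a bounded covering for $u$; I do not expect any genuine obstacle beyond this bookkeeping.
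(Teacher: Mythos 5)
Your proof is correct and follows essentially the same route as the paper: take the pullback of the underlying $G$-coarse spaces and equip it with the bornology $f^{-1}\cB_{U}$, then use the universal property in $G\Coarse$ (equivalently $\wt{G\BC}$) for uniqueness and the properness/bornologicity of $f$ and $f'$ to upgrade $\phi$ to an isomorphism of $G$-bornological coarse spaces. You merely spell out in more detail the verifications (compatibility of $\cB_W$ with $\cC_W$, and the rigidity of the bornology on an admissible extension) that the paper leaves implicit.
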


\begin{proof}
We choose an object $W$ representing the pull-back   $V \times_ {Z} U$
in $\wt{G\BC}$, and we can assume that  $W$ has the bornology $\cB_{W}:=f^{-1}\cB_{U}$. {This is an extension $(W,w,f)$ of \eqref{wevlkwvewv1} to an admissible square.}

Because  $W$ is a pull-back in $\wt{G\BC}$, it is unique up to unique isomorphism  in $\wt{G\BC}$.
This provides us the map $\phi$ which is an isomorphism in $\wt{G\BC}$. Since the maps $f\colon W\to U$ and $f^{\prime}\colon W^{\prime} \to U$ are proper and bornological,
the  map $\phi$ is an isomorphism of $G$-bornological coarse spaces.
\end{proof}

\subsection{The category \texorpdfstring{$G\BC_{\tr}$}{GBornCoarsetr}}
\label{olergregregegerg}
 
In this section we first introduce the category $\Ho(G\BC_{\tr})$ of $G$-bornological coarse spaces with transfers. It contains  the category $G\BC$ of $G$-bornological coarse spaces as a subcategory such that the inclusion
\begin{equation}\label{ccdwewrwerwrwererwerwerwer}
 \iota \colon G\BC\hookrightarrow \Ho(G\BC_{\tr})
\end{equation}
is a bijection on objects. We then define the quasicategory $G\BC_{\tr}$ which models the ordinary category $\Ho(G\BC_{\tr})$ as its homotopy category as indicated by the notation. Finally,  we discuss some basic properties of these categories.

Let $X$ and $Y$ be a $G$-bornological coarse spaces.
\begin{ddd}\label{GROIERG}
	A \emph{span $(W,w,f)$} from $X$ to $Y$  is a diagram  \[ \xymatrix{&W\ar@{->>}[dl]_{w}\ar[dr]^{f}&\\  X&& Y}\]
	in $\wt{G\BC}$ (see \cref{wegioww4rwe})
	  subject to the following conditions:
	\begin{enumerate}
		\item $ f$ is a morphism in $G\BC$ which is in addition bornological (see \cref{ergierog34t34t34t3}).
		\item $w \colon W\to X$ is a bounded covering (see \cref{ergierog43t34t34t34t}).
	\end{enumerate}

{We use double-headed arrows in order to indicate which map is a bounded covering.}

	An \emph{isomorphism} between spans $(W,w,f)$ and $(W^{\prime},w^{\prime},f^{\prime}) $ is defined to be an isomorphism of $G$-bornological coarse spaces $\phi \colon W\to W^{\prime}$ such that the diagram 
	\begin{equation}\label{revpokrvporekververv}
		\xymatrix{ X\ar@{=}[d]&W\ar@{->>}[l]_{w}\ar[d]^{\phi}_{\cong}\ar[r]^{f}&Y\ar@{=}[d]\\
			X&W^{\prime}\ar@{->>}[l]_{w^{\prime}}\ar[r]^{f^{\prime}}&Y}
	\end{equation}
	in $\wt{G\BC}$ commutes.
	
	We define $\Ho(G\BC_{\tr})$\footnote{Later we define a quasi-category $G\BC_{\tr}$ whose homotopy category is $\Ho(G\BC_{\tr})$ justifying this  notation, see \cref{egoiegergergergerbg}.} as the category whose objects are $G$-bornological coarse spaces and whose morphisms are isomorphism classes of spans.
	Morphisms in the category $\Ho(G\BC_{\tr})$ are called \emph{generalized morphisms} of $G$-bornological coarse spaces.
	
	The {\em composition} $(U,w\circ u, g\circ h)$ of the spans $(W,w,f)$ from $X$ to $Y$  and $(V,v,g)$ from $Y$ to $Z$ is determined by the choice of a span $(U,u,h)$ such that the square in the diagram
	\begin{equation}\label{fvelkvervrev}
		\xymatrix{&&U\ar@{->>}[dl]_{u}\ar[dr]^{h}&&\\
			&W\ar[dr]^{f}\ar@{->>}[dl]_{w}&& V\ar@{->>}[dl]_{v}\ar[dr]^{g}\\
			 X&&Y&&Z}
	\end{equation}
	is admissible {(Definition~\ref{gergergeg}).} 
\end{ddd}

Compositions in the category $\Ho(G\BC_{\tr})$ always exist and are well-defined by Lemmas~\ref{gioegergergregerge} and \ref{eiogergergergerg}.

\begin{ddd}\label{voiehjoriverververv}
	We define the embedding 
	\[ \iota \colon G\BC\to \Ho(G\BC_{\tr}) \]
	as follows:
	\begin{enumerate}
		\item It is given by the identity on objects.
		\item It sends the morphism $f \colon X\to Y$ to the generalized morphism represented by the span
		\[ \xymatrix{&\hat X\ar@{->>}[dl]_{\id}\ar[dr]^{\hat f}&\\X&&Y\ ,}\]
		where $\hat X$ is the $G$-bornological coarse space obtained from the space $X$ by replacing its bornology by the bornology $f^{-1}\cB_{Y}$, the right leg is induced by $f$, and the left leg is induced by the identity of underlying coarse spaces. \qedhere
	\end{enumerate}
\end{ddd}

Note that $\hat f$ in \cref{voiehjoriverververv} is proper and bornological by construction.  {The bornology $f^{-1}\cB_{Y}$ on $\hat X$ is compatible with the coarse structure of $\hat X$, because $f$ is controlled.} Since $f$ is proper, the left leg is bornological.
The left leg is a bounded covering by \cref{rgioergergregegreg}.
It is easy to see that the inclusion $\iota \colon G\BC\to \Ho(G\BC_{\tr})$ is a functor.

We will denote the generalized morphism represented by the span $(W,w,f)$ by $[W,w,f]$. For a $G$-bornological coarse space $X$ in $G\BC$ we will use the symbol $X$ also to denote the object $\iota(X)$ of $G\BC_{\tr}$. Furthermore, for a morphism $f$ in $G\BC$ we will keep the short notation $f$ for the generalized morphism $\iota(f)=[\hat X,\id,\hat f]$.

Let $W$ and $X$ be $G$-bornological coarse spaces and $w \colon W\to X$ be a bounded covering.

\begin{ddd}\label{rgioergergrege}
	The morphism 
	\[ \tr_{w}:=[W,w,\id_{W}] \colon X\to W \]
	in $\Ho(G\BC_{\tr})$ is called the {\em transfer} for $w$.
\end{ddd}

We will in particular need the following special case.
Let $X$ be a $G$-bornological coarse space and $I$ be a $G$-set. By \cref{igjeroigegergregregreg} the projection onto the second factor 
\[ u \colon I_{min,min}\otimes X\to X \]
is a bounded covering. 

\begin{ddd}\label{ergioer434t43t34t}
	The generalized morphism
	\[ \tr_{X,I}:=[I_{min,min}\otimes X,u,\id_{I_{min,min}\otimes X}] \colon X\to I_{min,min}\otimes X \]
	is called the {\em transfer} for $I$.
\end{ddd}

We define now a quasicategory $G\BC_{\tr}$ which models the ordinary category $\Ho(G\BC_{\tr})$ introduced in \cref{GROIERG} as its homotopy category.

Recall \cref{wegioww4rwe} of  the category $\widetilde{G\BC}$. 
We will describe $G\BC_{\tr}$ as a simplicial subset of $\Hom_{\Cat}(\Tw ,\widetilde{G\BC})$, where $\Tw\colon \Delta\to \Cat$ denotes the cosimplicial category with $\Tw[n]$ the twisted arrow category of the poset $[n]$. Our approach is similar to the construction of the effective Burnside category of a disjunctive triple in \cite{Barwick:2014aa}, but it is formally not a special case.

\begin{rem}\label{rioug9erger34t}
In this remark we recall the definition of $\Tw $, see also \cite[Sec.~2]{Gepner:2015aa} or \cite[Sec.~2]{Barwick:2014aa}, and provide an explicit description of $\Fun(\Tw ,\bC)$ for a  {small} category $\bC$.

First of all $\Tw $ is the 
 functor (compare \cite[Ex.~2.4]{Gepner:2015aa})
\[ \Tw \colon \Delta\to \Cat\ , \quad [n]\mapsto \Tw[n]\ ,\]
where $\Tw[n]$ is the poset of pairs of integers $(i,j)$ with  $0\le i\le j \le n$ such that $(i,j)\le (i^{\prime},j^{\prime})$ if and only if $i\le i^{\prime}\le j^{\prime}\le j$. If $\sigma \colon [n]\to [m]$ is a morphism in $\Delta$, then we define the morphism \[ \Tw(\sigma) \colon \Tw[n]\to \Tw[m]\ , \quad (i,j)\mapsto (\sigma(i),\sigma(j))\ .\]
For a category $\bC$ we now obtain the simplicial set
\[ \Hom_{\Cat}(\Tw ,\bC) \colon \Delta^{op}\to \Set\ .\]
In the following, we will use the following notation for the data of an $n$-simplex $X$ in $\Hom_{\Cat}(\Tw ,\bC)$. We write $X_{i,j}$ for the image under $X$ of the pair $(i,j)$ in $\Tw[n]$, and we use the shorthand $X_i$ instead of $X_{i,i}$. We will furthermore only depict the morphisms $X_{i,j}\to X_{i' ,j'}$ if $(i,j)$ and $(i',j')$ are adjacent, i.e., if $i=i'$ and $j' +1=j$ or $i' =i+1$ and $j=j'$.
Note that these morphisms $(i,j)\to (i^{\prime},j^{\prime})$ generate all morphisms in $\Tw[n]$.
\end{rem}
 
\begin{ddd}\label{rgeuouoifweffwefwe}
The simplicial set $G\BC_{\tr}$ is defined to be the subset of 
$\Hom_{\Cat}(\Tw ,\widetilde{G\BC})$ whose $n$-simplices $X$ satisfy the following:
\begin{enumerate}
 \item For every object $(i,j)$ in $\Tw[n]$ with $j\ge 1$ the morphism $X_{i,j} \to X_{i,j-1}$ is a bounded covering.
 \item For every object $(i,j)$ in $ \Tw[n]$ with $i\le n-1$ the morphism $X_{i,j} \to X_{i+1,j}$ is proper and bornological.
\item For every object $(i,j)$ with $1\le i\le j\le n-1$ the square
\[ \xymatrix{X_{i-1,j+1}\ar[r]\ar@{->>}[d]&X_{i,j+1}\ar@{->>}[d]\\X_{i-1,j}\ar[r]&X_{i,j}} \]
 is admissible.\qedhere
 \end{enumerate}
\end{ddd}

Here we use double-headed arrows in order to indicate which maps are bounded coverings.

In the following we describe the $3$-skeleton of $G\BC_{\tr}$ in terms of pictures.
These pictures are very helpful in order to see the verification of the horn-filling conditions in the proof of \cref{eriogergergerg}, but also for understanding the proof of \cref{ergioergergrege}.
 
\begin{enumerate}
 \item The $0$-simplices of $G\BC_{\tr}$ are the objects of $G\BC$. 
 \item $1$-simplices of $G\BC_{\tr}$ are spans (see \cref{GROIERG})
  \[ \xymatrix{&X_{0,1}\ar@{->>}[dl]\ar[dr]&\\X_{0}&&X_{1}}\]
  The two faces of this one-simplex are $X_0$ and $X_1$.
 \item $2$-simplices are diagrams
  \[ \xymatrix{&&X_{0,2}\ar@{->>}[dl]\ar[dr]&&\\&X_{0,1}\ar@{->>}[dl]\ar[dr]&&X_{1,2}\ar@{->>}[dl]\ar[dr]&\\X_{0}&&X_{1}&&X_{2}}\]
  where the square is admissible.
  The three faces are
  \[ 
  \xymatrix{&X_{1,2}\ar@{->>}[dl]\ar[dr]&\\X_{1}&&X_{2}}\ 
  \xymatrix{&X_{0,2}\ar@{->>}[dl]\ar[dr]&\\X_{0}&&X_{2}}\ 
  \xymatrix{&X_{0,1}\ar@{->>}[dl]\ar[dr]&\\X_{0}&&X_{1}} 
  \]
 \item $3$-simplices are diagrams 
  \[ \xymatrix{&&&X_{0,3}\ar@{->>}[dl]\ar[dr]&&&\\&&X_{0,2}\ar@{->>}[dl]\ar[dr]&&X_{1,3}\ar[dr]\ar@{->>}[dl]&&\\&X_{0,1}\ar@{->>}[dl]\ar[dr]&&X_{1,2}\ar@{->>}[dl]\ar[dr]&&X_{2,3}\ar@{->>}[dl]\ar[dr]&\\X_{0}&&X_{1}&&X_{2}&&X_{3}}\] 
  where again all squares are admissible.
  Its faces are
  \begin{align*}
  \mathclap{
  \xymatrix{&&X_{1,3}\ar@{->>}[dl]\ar[dr]&&\\&X_{1,2}\ar@{->>}[dl]\ar[dr]&&X_{2,3}\ar@{->>}[dl]\ar[dr]&\\X_{1}&&X_{2}&&X_{3}}\   \xymatrix{&&X_{0,3}\ar@{->>}[dl]\ar[dr]&&\\&X_{0,2}\ar@{->>}[dl]\ar[dr]&&X_{2,3}\ar@{->>}[dl]\ar[dr]&\\X_{0}&&X_{2}&&X_{3}} 
  }
  \end{align*}
  \begin{align*}
  \mathclap{
  \xymatrix{&&X_{0,3}\ar@{->>}[dl]\ar[dr]&&\\&X_{0,1}\ar@{->>}[dl]\ar[dr]&&X_{1,3}\ar@{->>}[dl]\ar[dr]&\\X_{0}&&X_{1}&&X_{3}} \ 
  \xymatrix{&&X_{0,2}\ar@{->>}[dl]\ar[dr]&&\\&X_{0,1}\ar@{->>}[dl]\ar[dr]&&X_{1,2}\ar@{->>}[dl]\ar[dr]&\\X_{0}&&X_{1}&&X_{2}}
  }
  \end{align*}
\end{enumerate}
 
\begin{lem}\label{lem:coskeletal}
The simplicial set  $G\BC_{\tr}$ is $2$-coskeletal.
\end{lem}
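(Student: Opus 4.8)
The plan is to verify directly that $G\BC_{\tr}$ satisfies the coskeletality condition: for every $n\geq 3$ and every map of simplicial sets $\partial\Delta^n\to G\BC_{\tr}$, there is a unique extension to $\Delta^n\to G\BC_{\tr}$. Since $G\BC_{\tr}$ is by construction a subobject of $\Hom_{\Cat}(\Tw,\widetilde{G\BC})$, the uniqueness of an $n$-simplex filling a given boundary, if it exists, will reduce to the following combinatorial fact about the cosimplicial category $\Tw$: every object $(i,j)$ of $\Tw[n]$ with $n\geq 3$ lies in the image of some coface map $\Tw(\delta^k)\colon\Tw[n-1]\to\Tw[n]$, and likewise every morphism and every commuting square of $\Tw[n]$ is already visible on the union of the cofaces. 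Concretely, given $(i,j)\in\Tw[n]$ with $n\geq 3$, one can always find an index $k\in\{0,\dots,n\}$ with $k\notin\{i,j\}$ (there are at most two forbidden values and at least four available), and then $(i,j)$ is in the image of $\Tw(\delta^k)$. Hence the data of an $n$-simplex $X$ of $\Hom_{\Cat}(\Tw,\widetilde{G\BC})$ is completely determined by its restriction to $\partial\Delta^n$; this shows $\Hom_{\Cat}(\Tw,\widetilde{G\BC})$ is $2$-coskeletal, and in particular that an extension is unique if it exists.

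Next I would show \emph{existence} of the extension and, crucially, that the extension automatically lies in the subobject $G\BC_{\tr}$, i.e.\ that it again satisfies the three conditions of \cref{rgeuouoifweffwefwe}. For $n\geq 4$ this is immediate: every object, arrow, and square of $\Tw[n]$ already occurs inside some face $\Tw(\delta^k)$ of $X$, so all the covering/properness/admissibility constraints are inherited from the boundary. The only genuinely nontrivial case is $n=3$: I must check that if a map $\partial\Delta^3\to G\BC_{\tr}$ is given (i.e.\ four compatible $2$-simplices, each carrying an admissible square), then the induced $3$-simplex $X$ of $\Hom_{\Cat}(\Tw,\widetilde{G\BC})$ has all \emph{four} of its internal squares admissible. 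Two of these squares — the ones with top vertex $X_{0,2}$ and $X_{1,3}$ — are exactly the squares of the faces $d_3 X$ and $d_0 X$ and are admissible by hypothesis. The remaining two squares, those involving $X_{0,3}$, must be shown admissible using the gluing/pasting behaviour of admissible squares. Here one uses that an admissible square is in particular cartesian in $G\Coarse$ (Definition~\ref{gergergeg}), so horizontal and vertical pasting of cartesian squares is cartesian; that properness and bornology of maps are closed under composition and stable under the pullbacks appearing here; and that bounded coverings are stable under pullback and composition (Lemmas~\ref{gioegergergregerge}, \ref{oiwejofewwefewfewf}, and the composition lemma for bounded coverings). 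Assembling the four given admissible squares along the shared edges of $\partial\Delta^3$ forces the two new squares to be pastings of admissible ones, hence admissible.

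The main obstacle I anticipate is the $n=3$ case: one has to set up the bookkeeping of which face supplies which square and then carry out the pasting argument for admissible squares carefully, checking each of the four defining conditions of admissibility for the two ``diagonal'' squares through $X_{0,3}$. In particular one must make sure that the chosen model for the iterated pullback $W=V\times_Z U$ (with bornology $f^{-1}\cB_U$, as in the proof of \cref{eiogergergergerg}) is consistent across the different pastings — this is exactly where the uniqueness-up-to-unique-isomorphism statement of \cref{eiogergergergerg} is used to glue the admissible squares coherently. Once that coherence is in hand, all the remaining verifications (composition/pullback stability of ``proper and bornological'' and of ``bounded covering'') are routine applications of the lemmas already established in \cref{rgioregergergregerg}.
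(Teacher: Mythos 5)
Your uniqueness half is fine, and it is essentially the observation that the paper's own proof records: every object, every adjacent arrow and every condition of \cref{rgeuouoifweffwefwe} imposed on an $n$-simplex is already visible on its $2$-simplices, so a filler of $\partial\Delta^n$ is unique if it exists. The gap is in the existence half, exactly at $n=3$, which you rightly single out as the crux but then dispose of with the claim that the given admissible face squares ``force the two new squares to be pastings of admissible ones''. First, the bookkeeping is off: a $3$-simplex has three squares subject to admissibility, and exactly one of them --- the square $S_{12}$ through $X_{0,3}$, in the notation of the proof of \cref{eriogergergerg} --- is not a face square. More seriously, before any pasting lemma can be invoked you must already know (i) that $S_{12}$ commutes, i.e.\ that the two composites $X_{0,3}\to X_{1,2}$ agree, and (ii) that the long arrows supplied directly by the faces $d_1X$ and $d_2X$ (namely $X_{0,3}\to X_{2,3}$ and $X_{0,3}\to X_{0,1}$) coincide with the composites of the adjacent arrows coming from the other faces. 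Neither is part of the boundary data: the simplicial gluing conditions along shared edges only control composites all the way down to the vertex objects $X_0,\dots,X_3$, which is strictly weaker, and cartesianness of the face squares compares maps \emph{into} their pullback corners, not the maps you need to identify. So the pasting step is circular.

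Moreover, this gap cannot be closed, because fillers need not exist. Take $G$ trivial, all four vertices the one-point space, finite sets $A,B,C$ with $|A|\ge 2$, all middle objects the evident products with the minimal structures and all structure maps projections; this yields four admissible $2$-simplices assembling to a map $\partial\Delta^3\to G\BC_{\tr}$. Now modify the face on the vertices $\{0,1,3\}$ by precomposing its two arrows out of $(A\times B\times C)_{min,min}$ with the automorphism $\sigma(a,b,c)=(\tau_b(a),b,c)$, where $\tau_b$ is a fixed transposition of $A$ for one value of $b$ and the identity otherwise. The modified face is still an admissible $2$-simplex (since $\sigma$ is an isomorphism of bornological coarse spaces), and all edge-gluing conditions are untouched because every leg of every edge lands in the point; but any filler would have $X_{0,3}\to X_{0,1}$ equal to the composite $\pr_A\circ\pr_{A\times B}=\pr_A$, whereas the modified face prescribes $\pr_A\circ\sigma\neq\pr_A$. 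Hence $\Hom_{\sSet}(\Delta^3,G\BC_{\tr})\to\Hom_{\sSet}(\partial\Delta^3,G\BC_{\tr})$ is injective but not surjective, so the filling statement you are aiming at fails in dimension $3$. Be aware that the paper's one-line proof only addresses the determination of an $n$-simplex by its $2$-simplices and is silent on the extension problem, so you will not find the missing step there; what your $n\ge 4$ argument genuinely proves (every small square of $\Tw[n]$ involves at most four indices and hence lies in a $3$-dimensional face) is $3$-coskeletality, and the horn-filling of \cref{eriogergergerg} for low-dimensional inner horns then has to be handled directly, as the paper does for $n=3$.
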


\begin{proof}
We observe that the data of an $n$-simplex is given by the collection of data of all $2$-simplices in the $n$-simplex.
Hence the restriction map
\[ \Hom_{\sSet} (\Delta^{n},G\BC_{\tr}) \to \Hom_{\sSet}(\Delta^{n}_{\le 2},G\BC_{\tr})\]
is an isomorphism for all $n\ge 3$, where $\Delta^{n}_{\le 2}$ denotes the $2$-skeleton of $\Delta^{n}$.
\end{proof}

\begin{lem}\label{eriogergergerg}
The simplicial set $G\BC_{\tr}$ is a quasi-category.
\end{lem}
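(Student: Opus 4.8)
The plan is to verify the inner horn filling condition directly, exploiting that $G\BC_{\tr}$ is $2$-coskeletal (\cref{lem:coskeletal}). Writing $\cosk_{2}(G\BC_{\tr})\cong G\BC_{\tr}$, the restriction map $\Hom_{\sSet}(K,G\BC_{\tr})\to\Hom_{\sSet}(\sk_{2}K,G\BC_{\tr})$ is a bijection for every simplicial set $K$. For $n\ge 4$ and $0<k<n$ one has $\sk_{2}\Lambda^{n}_{k}=\sk_{2}\Delta^{n}$, because every nondegenerate simplex of $\Delta^{n}$ of dimension at most $2$ lies in some facet other than the $k$-th; hence every inner horn $\Lambda^{n}_{k}\to G\BC_{\tr}$ extends (uniquely) to $\Delta^{n}$. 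For $n=3$ one has $\sk_{2}\Delta^{3}=\partial\Delta^{3}$, so filling $\Lambda^{3}_{k}$ amounts to extending the restriction of the horn to $\sk_{2}\Lambda^{3}_{k}$ over the single missing $2$-face. Thus it remains to treat the inner horns $\Lambda^{2}_{1}$, $\Lambda^{3}_{1}$ and $\Lambda^{3}_{2}$.

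A map $\Lambda^{2}_{1}\to G\BC_{\tr}$ is precisely a pair of spans $X_{0}\xleftarrow{w}X_{0,1}\xrightarrow{f}X_{1}\xleftarrow{v}X_{1,2}\xrightarrow{g}X_{2}$ with $w,v$ bounded coverings and $f,g$ proper bornological maps. I would apply \cref{eiogergergergerg} to the cospan $X_{0,1}\xrightarrow{f}X_{1}\xleftarrow{v}X_{1,2}$ to produce an admissible square (\cref{gergergeg}) with new corner $X_{0,2}$; by \cref{gioegergergregerge} this square has the shape required by the third condition of \cref{rgeuouoifweffwefwe}, and together with the forced composite structure maps it defines a $2$-simplex of $G\BC_{\tr}$ restricting to the given horn. (This simplex is nothing but the composition of generalized morphisms of \cref{GROIERG}.)

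For $\Lambda^{3}_{k}$ with $k\in\{1,2\}$ it suffices, by the reduction above, to construct the missing $2$-face compatibly with the three given faces, so that \cref{lem:coskeletal} assembles the four faces into a $3$-simplex with the prescribed boundary. In either case the two extreme faces $d_{0}$ and $d_{3}$ supply admissible squares identifying $X_{0,2}\cong X_{0,1}\times_{X_{1}}X_{1,2}$ and $X_{1,3}\cong X_{1,2}\times_{X_{2}}X_{2,3}$; the pasting law for cartesian squares then yields canonical identifications $X_{0,1}\times_{X_{1}}X_{1,3}\cong X_{0,2}\times_{X_{1,2}}X_{1,3}\cong X_{0,2}\times_{X_{2}}X_{2,3}$ in $G\Coarse$, which serve as the apex $X_{0,3}$ of the faces $d_{2}$ and $d_{1}$. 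The structure maps of the missing face are obtained as composites of given maps whenever the relevant arrow of $\Tw[3]$ is a composite of generating arrows (using that proper bornological maps, and separately bounded coverings, are closed under composition), and by the universal property of the appropriate pullback when it is a generating arrow absent from the horn — this happens for $X_{0,3}\to X_{0,2}$ in the case $\Lambda^{3}_{1}$ and for $X_{0,3}\to X_{1,3}$ in the case $\Lambda^{3}_{2}$ — the required compatibility of the two maps into the pullback being read off from the internal commutativities of the given faces. Admissibility of the new square then follows: cartesianness from the pasting identification, the horizontal legs being composites of proper bornological maps, and the vertical bounded covering leg from \cref{gioegergergregerge} and composability of bounded coverings; that the newly constructed structure map is proper and bornological is checked using the normalisation $\cB_{W}=f^{-1}\cB_{U}$ of pullback bornologies from the proof of \cref{eiogergergergerg}. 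A final short diagram chase confirms that the new face agrees with the given ones on the shared $1$-faces.

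The only step that is not formal is the $n=3$ case, and the crux there is the bookkeeping: one has to verify that the missing face, built from pullbacks in $G\Coarse$ and a few maps defined by universal properties, really is a $2$-simplex of $G\BC_{\tr}$ with the correct boundary, i.e.\ that these maps have the expected domains, codomains, and properness/bornological/covering properties. Once the pullbacks are taken in the complete category $G\Coarse$ and their bornologies normalised as in \cref{eiogergergergerg}, all of this is routine. The case $\Lambda^{3}_{2}$ is marginally more involved than $\Lambda^{3}_{1}$, because there one must construct a genuine generating arrow of $\Tw[3]$ rather than just a composite.
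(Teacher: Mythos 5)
Your proposal follows essentially the same route as the paper: $\Lambda^2_1$ is filled via \cref{eiogergergergerg} (with \cref{gioegergergregerge} supplying the bounded-covering leg), horns with $n\ge 4$ are disposed of because they contain the full $2$-skeleton, and for $n=3$ the missing generating arrow ($X_{0,3}\to X_{1,3}$ for $\Lambda^3_2$, $X_{0,3}\to X_{0,2}$ for $\Lambda^3_1$) is obtained from the universal property of the pullback provided by an adjacent admissible square, with cartesianness of the remaining square obtained by the pasting law --- this is exactly the paper's "dotted arrow" argument.

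Two cautions on your packaging of the $n=3$ case. First, the decisive conditions are the commutativity and admissibility of the elementary square of the $\Tw[3]$-diagram on $X_{0,3},X_{0,2},X_{1,3},X_{1,2}$; this square is not the square of any of the four $2$-faces, and it does not follow merely from having four valid $2$-simplices compatible along edges, so "construct the missing face and let \cref{lem:coskeletal} assemble" should be read as shorthand for directly completing the $\Tw[3]$-diagram (which your universal-property construction does deliver: commutativity by the defining property of the map into the pullback, cartesianness by pasting cancellation). Second, the apex $X_{0,3}$ is part of the horn (it lies in the face $d_1$, resp.\ $d_2$, that is present), so it must be kept as the given object; the pullback identifications only say that this given $X_{0,3}$ is canonically isomorphic to them. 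Your bornology argument for properness/bornologicality of the new arrow is fine, since admissibility forces the bornology of the upper-left corner of an admissible square to be generated by $f$-preimages of bounded sets (the paper instead uses a direct cancellation argument through $X_{2,3}$); but note the relevant spaces are given, not constructed as in \cref{eiogergergergerg}.
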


\begin{proof}
We must check the inner horn filling condition.
\begin{enumerate}
 \item The image of $\Lambda^{2}_{1}$ in $\Delta^{2}$ has the form:
  \[ \xymatrix{&X_{0,1}\ar@{->>}[dl]\ar[dr]&&X_{1,2}\ar@{->>}[dl]\ar[dr]&\\X_{0}&&X_{1}&&X_{2}} \]
  By Lemma \ref{eiogergergergerg} it has a filling.
 \item The image of $\Lambda^{3}_{2}$ in $\Delta^{3}$ is the bold part of the following diagram:  
  \[ \xymatrix{&&& X_{0,3}\ar@/^1cm/[ddrr]\ar@{->>}[dl]\ar@{..>}[dr]&&&\\&&X_{0,2}\ar@{->>}[dl]\ar[dr]&S_{12}&X_{1,3}\ar[dr]\ar@{->>}[dl]&&\\&X_{0,1}\ar@{->>}[dl]\ar[dr]&S_1&X_{1,2}\ar@{->>}[dl]\ar[dr]&S_2&X_{2,3}\ar@{->>}[dl]\ar[dr]&\\X_{0}&&X_{1}&&X_{2}&&X_{3}}\]
  We first get the dotted arrow using the cartesian property of the square $S_{2}$. 
  We further know that the squares $S_{1}$, $S_{2}+S_{12}$ and $S_{2}$ are admissible. We must show that $S_{12}$ is admissible.
  Since $S_{2}+S_{12}$ and $S_{2}$ are cartesian in $\wt{G\BC}$ we conclude that $S_{12}$ is cartesian in $\wt{G\BC}$. 
  Since the maps $X_{1,3}\to X_{2,3}$ and $X_{0,3}\to X_{2,3}$ are bornological and proper, also the map $X_{0,3}\to X_{1,3}$ is bornological and proper. This implies that $S_{12}$ is admissible.
  
  A similar argument applies to the inclusion of $\Lambda^{3}_{1}$ into $\Delta^{3}$.
 \item Since every inner horn $\Lambda^n_k$ for $n \ge 4$ already contains the full 2-skeleton, it is fillable by \cref{lem:coskeletal}.\qedhere 
  \end{enumerate}
\end{proof} 
 
 The following lemma justifies the choice of notation $\Ho(G\BC_{\tr})$ for the category introduced in \cref{GROIERG}.
 
\begin{lem}\label{egoiegergergergerbg}
The category $\Ho(G\BC_{\tr})$ is canonically equivalent to the homotopy category
of $G\BC_{\tr}$.
 \end{lem}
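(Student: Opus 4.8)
The plan is to exhibit an explicit equivalence between the abstract category $\Ho(G\BC_{\tr})$ of \cref{GROIERG} and the homotopy category $\ho(G\BC_{\tr})$ of the quasicategory $G\BC_{\tr}$, and to check it is well-defined, essentially surjective and fully faithful. Both categories have the same objects, namely the objects of $G\BC$ (equivalently the $0$-simplices of $G\BC_{\tr}$), so the functor will be the identity on objects and we only need to understand morphisms. Recall that in the homotopy category of a quasicategory, a morphism $X\to Y$ is an edge of $G\BC_{\tr}$ from $X$ to $Y$ up to the equivalence relation of homotopy, and by the pictures following \cref{rgeuouoifweffwefwe} an edge from $X_0$ to $X_1$ is precisely a span $(W,w,f)$ from $X_0$ to $X_1$ in the sense of \cref{GROIERG}. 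So the content is: (i) identify the homotopy relation on $1$-simplices with the isomorphism relation on spans, and (ii) match the two composition laws.

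First I would analyze the homotopy relation. Two $1$-simplices with the same endpoints are homotopic iff there is a $2$-simplex whose first face (or, equivalently, appropriate face) is one and whose other relevant face is the other while the third is a degeneracy. Unwinding \cref{rgeuouoifweffwefwe} for a $2$-simplex one of whose outer faces is an identity span of the form $\iota(\id)$, together with the construction of degenerate simplices, shows that a homotopy between spans $(W,w,f)$ and $(W',w',f')$ from $X$ to $Y$ is exactly a $2$-simplex with apex $W''$ sitting in an admissible square over $W$ and over a degenerate edge at $X$; the cartesian property of the square forces $W''\cong W$, and comparing legs yields an isomorphism of spans $\phi\colon W\to W'$ as in \eqref{revpokrvporekververv}. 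Conversely an isomorphism of spans obviously produces such a $2$-simplex. Hence the morphism sets of $\Ho(G\BC_{\tr})$ and $\ho(G\BC_{\tr})$ agree, compatibly with the identity-on-objects assignment. I would also check that the identity morphism of $X$ in $\ho(G\BC_{\tr})$, which is the degenerate edge $s_0(X)$, corresponds to the span $(\hat X,\id,\hat{\id})$, i.e. to $\iota(\id_X)$, which is the identity in $\Ho(G\BC_{\tr})$.

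Next I would match composition. In $\ho(G\BC_{\tr})$ the composite of edges $a\colon X\to Y$ and $b\colon Y\to Z$ is the remaining face $d_1$ of any $2$-simplex $\sigma$ with $d_2\sigma=a$ and $d_0\sigma=b$; by \cref{rgeuouoifweffwefwe} such a $2$-simplex is precisely a diagram \eqref{fvelkvervrev} with admissible square, and its $d_1$-face is the outer span $(U,w\circ u,g\circ h)$ — which is exactly the composition defined in \cref{GROIERG}. Existence and uniqueness (up to isomorphism of spans, hence up to the homotopy relation just identified) of such a filler is guaranteed by \cref{gioegergergregerge} and \cref{eiogergergergerg}, which is the same fact that makes composition in $\Ho(G\BC_{\tr})$ well-defined. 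Thus the bijection on morphisms is compatible with composition and with identities, giving the desired equivalence — in fact an isomorphism of categories. The main obstacle is the bookkeeping in step (i): carefully unwinding the definition of the homotopy relation on edges of a quasicategory through the combinatorics of $\Tw$ and the admissibility conditions, and confirming that the ambiguity in choosing the witnessing $2$-simplex collapses exactly to isomorphism of spans; once that identification is in hand, the matching of composition is essentially a restatement of the definitions together with the already-proved lemmas.
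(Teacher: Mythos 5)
Your proposal is correct and takes essentially the same route as the paper: the paper also works with the identity-on-objects assignment, produces a homotopy from an isomorphism of spans by an explicit $2$-simplex having a degenerate outer edge, uses the cartesian (admissible-square) condition to see conversely that homotopic edges are isomorphic spans, and matches composition via the $2$-simplex/admissible-square description together with Lemmas \ref{gioegergergregerge} and \ref{eiogergergergerg}. The only cosmetic difference is that the paper places the degenerate edge at the target (arguing in the left mapping space) whereas you place it at the source.
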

\begin{proof}
The equivalence is given by the  functor described as follows:
\begin{enumerate}
\item The functor is 
the obvious bijection on objects.
\item  The functor sends the class $[W,w,f]$ of spans from $X$ to $Y$ to the class of $(W,w,f)$ in the homotopy category of  $ G\BC_{\tr}$.
  \end{enumerate}

We first argue that the functor is well-defined  on morphisms.
If $\phi \colon (W,w,f)\to (W^{\prime},w^{\prime},f^{\prime})$ is an isomorphism between spans, then we can consider the diagram
\[ \xymatrix{&&W^{\prime} \ar@{->>}[dl]^{\phi}\ar[dr]^{f^{\prime}}&&\\&W\ar@{->>}[dl]^{w}\ar[dr]&&Y\ar@{->>}[dl]^{\id_{Y}}\ar[dr]^{\id_{Y}}&\\X &&Y&&Y}\  \]
which provides a homotopy between the morphisms $(W,w,f)$ and $(W^{\prime},w^{\prime},f^{\prime})$ in the left mapping space $\Hom^L_{ G\BC_{\tr}}(X,Y)$.

One easily checks the compatibility with composition so that we have a well-defined functor. It is furthermore obvious that 
the functor  is full.
  
On the other hand, homotopies of spans in the left mapping space $\Hom^L_{G\BC_{\tr}}(X,Y)$ are precisely of the above form.
Because $\phi$ is a pullback of an isomorphism, $\phi$ defines an isomorphism of spans. 
This shows that the functor 
 is also faithful.
\end{proof}

\begin{rem} 
A higher categorical refinement of $G\BC_{\tr}$ can also be obtained in the form of a bi-category $G\BC_{\tr}^{bi}$.
Because $\wt{G\BC}$ admits fibre products, we can form the bi-category $\Span(\wt{G\BC})$ of spans in $\wt{G\BC}$  \cite{bena}.
We obtain $G\BC_{\tr}^{bi}$ from $\Span(\wt{G\BC})$ by the following steps which all yield bi-categories.
\begin{enumerate}
\item In a first step we take a subcategory by requiring the left legs of the spans to be bounded coverings and the right legs to be proper and bornological. Compositions still exist by \cref{eiogergergergerg} in connection with \cref{regklergergreg},  \cref{oiwejofewwefewfewf}, \cref{gioegergergregerge}.

Identity morphisms belong to our category.
All relations involving $2$-isomorphisms are automatically implemented by morphisms between $G$-bornological coarse spaces. 
\item  The bi-category $G\BC^{bi}_{\tr}$ is defined to be the subcategory whose $2$-morphisms between spans are implemented by morphisms of $G$-bornological coarse spaces. \qedhere
\end{enumerate}
\end{rem}

According to \cite[Def. 6.1.6.13]{HA} an $\infty$-category is called semi-additive if it is pointed, and finite coproducts and products exist and are equivalent.
 
\begin{lem}\label{hfweuihfweifefefeef}
$\Ho(G\BC_{\tr})$ and $G\BC_{\tr}$ are  semi-additive.
\end{lem}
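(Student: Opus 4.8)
The plan is to verify the three ingredients in Lurie's definition of semi-additivity — pointedness, existence of finite products and coproducts, and their agreement — directly for the ordinary category $\Ho(G\BC_{\tr})$; since $G\BC_{\tr}$ has $\Ho(G\BC_{\tr})$ as its homotopy category (\cref{egoiegergergergerbg}) and is a quasi-category (\cref{eriogergergerg}), and since all the relevant (co)limits are finite, it suffices to check these properties at the level of the homotopy category together with the obvious uniqueness of the comparison map. First I would observe that the empty $G$-bornological coarse space $\emptyset$ is a zero object: it is initial in $G\BC$, hence $\iota(\emptyset)$ receives a unique generalized morphism to any $X$ (the span with apex $\emptyset$), and the span $(\emptyset, \id, \emptyset\to X)$ exhibits it as terminal as well, because a span from $X$ to $\emptyset$ has apex $W$ with a bounded covering $W\to X$ and a map $W\to\emptyset$, forcing $W=\emptyset$.

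Next I would show that the coproduct $X\sqcup Y$ in $G\BC$ (which is also the coproduct in $\wt{G\BC}$) serves as a biproduct in $\Ho(G\BC_{\tr})$. The structure maps are: for coproduct, the images $\iota(X\to X\sqcup Y)$ and $\iota(Y\to X\sqcup Y)$ of the inclusions; for product, the \emph{transfers} $\tr_{j_X}$ and $\tr_{j_Y}$ associated to the inclusions $j_X\colon X\to X\sqcup Y$, which are bounded coverings onto their (coarsely closed, clopen) images by \cref{rgioergergregegreg}-type reasoning — more precisely, each $j_X$ exhibits $X$ as a disjoint union of coarse components of $X\sqcup Y$, so $X\hookrightarrow X\sqcup Y$ is a bounded covering, and $\tr_{j_X}\colon X\sqcup Y\to X$ is defined. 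One then checks the universal properties. For the coproduct universal property: a pair of generalized morphisms $X\to Z$, $Y\to Z$ is a pair of spans which one glues over the coproduct apex; well-definedness and uniqueness follow because admissible squares over a coproduct decompose into admissible squares over each summand, using \cref{lem:coproduct-boundedcovering}. For the product universal property, the key computation is that the composition $\tr_{j_X}\circ \iota(j_X)$ is the identity of $X$ while $\tr_{j_X}\circ\iota(j_Y)$ is the zero morphism (the span through $\emptyset$), since the pullback of $j_Y\colon Y\to X\sqcup Y$ along $j_X$ is empty; this is exactly the matrix-of-morphisms calculation identifying $X\sqcup Y$ with the product.

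Finally I would record that the comparison map from the coproduct to the product — the one whose "matrix" is the identity on diagonal entries and zero off-diagonal, built from the $\tr_{j}$ and $\iota(j)$ — is an isomorphism, which is immediate once one knows both universal properties hold with these same structure maps. For $G\BC_{\tr}$ itself: pointedness and the existence/agreement of finite products and coproducts are detected on the homotopy category since these are finite (co)limits and $G\BC_{\tr}$ is a quasi-category, so the statement transfers from $\Ho(G\BC_{\tr})$ without further work; alternatively one invokes that a quasi-category is semi-additive iff its homotopy category is and the zero object and biproducts are homotopy (co)limits, which holds here because $\iota(\emptyset)$ and $\iota(X\sqcup Y)$ are readily checked to be (co)limits in the quasi-categorical sense using the explicit description of mapping spaces from \cref{egoiegergergergerbg}. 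The main obstacle I anticipate is the bookkeeping in the product universal property: one must check that an arbitrary pair of generalized morphisms $Z\to X$, $Z\to Y$ — i.e. spans $(W_1, w_1, f_1)$ and $(W_2, w_2, f_2)$ — assembles into a unique span $Z\to X\sqcup Y$ compatible with the projections $\tr_{j_X},\tr_{j_Y}$, which requires forming the pullback $W_1\times_Z W_2$ and verifying it is again a valid apex (bounded covering on the left, proper bornological on the right); this is where \cref{gioegergergregerge} and the closure of bounded coverings under pullback along proper bornological maps do the real work, and care is needed because the bornology on the pullback apex must be chosen as in \cref{eiogergergergerg} for the span conditions to hold.
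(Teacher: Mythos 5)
Your overall strategy coincides with the paper's: the zero object is $\emptyset$, the biproduct is the coproduct $X\sqcup Y$ formed in $G\BC$, the coproduct structure maps are $\iota$ of the inclusions, the product projections are the transfer spans $[X,j_X,\id_X]$ and $[Y,j_Y,\id_Y]$, and the reduction of semi-additivity to the homotopy category is exactly how the paper proceeds (via \cite[Rem.~6.1.6.15]{HA}). The identities $\tr_{j_X}\circ\iota(j_X)=\id_X$ and $\tr_{j_X}\circ\iota(j_Y)=0$ that you record are correct.

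However, your verification of the product universal property has a genuine error: the mediating span $Z\to X\sqcup Y$ associated to a pair of spans $(W_1,w_1,f_1)\colon Z\to X$ and $(W_2,w_2,f_2)\colon Z\to Y$ is \emph{not} built on the pullback $W_1\times_Z W_2$. That object does not even carry a natural right leg to $X\sqcup Y$ (the pair $(f_1\circ\pr_1,f_2\circ\pr_2)$ lands in $X\otimes Y$, not in the disjoint union), and \cref{gioegergergregerge} is inapplicable since neither leg of the square you would be pulling back along is proper and bornological. More decisively, any span into $X\sqcup Y$ whose right leg hits only one summand composes with the other projection to the \emph{zero} morphism (the pullback against $j_Y$ is empty); so for $Z=X=Y=*$ with both given spans the identity, no span with one-point apex — in particular none with apex $*\times_* *\cong *$ — can be compatible with both projections. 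The correct mediating span, as in the paper's proof, has apex the \emph{disjoint union} $W_1\sqcup W_2$, left leg the map $W_1\sqcup W_2\to Z$ induced by $w_1$ and $w_2$ (this is a bounded covering: each coarse component of $W_1\sqcup W_2$ is a coarse component of $W_1$ or of $W_2$, and bounded subsets decompose accordingly), and right leg $f_1\sqcup f_2\colon W_1\sqcup W_2\to X\sqcup Y$; composing with $[X,j_X,\id_X]$ then literally cuts out the summand $W_1$ and returns $(W_1,w_1,f_1)$. With this replacement — and the analogous observation that uniqueness holds because any compatible span $(W,w,F)$ decomposes as $F^{-1}(X)\sqcup F^{-1}(Y)$ — the rest of your argument goes through.
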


\begin{proof}
We show that the empty space $\emptyset$ is both initial and final in $G\BC_{\tr}$. Let $X$ be a $G$-bornological coarse space. 
We will use the simplicial set  of right morphisms $\Hom_{G\BC_{\tr}}^{R}(\emptyset,X)$ (see \cite[Sec.~1.2.2]{htt} for details). 
$\Hom_{G\BC_{\tr}}^{R}(\emptyset,X)$ is the one-point space. To see this note that e.g.\ the unique $2$-simplex in this simplicial set is given by 
\[ \xymatrix{&&&\emptyset\ar@{->>}[dl]\ar[dr]&&&\\&&\emptyset\ar@{->>}[dl]\ar[dr]&&\emptyset\ar[dr]\ar@{->>}[dl]&&\\&\emptyset\ar@{->>}[dl]\ar[dr]&&\emptyset\ar@{->>}[dl]\ar[dr]&&\emptyset\ar@{->>}[dl]\ar[dr]&\\\emptyset&&\emptyset&&\emptyset&&X}\] 
This shows that $\emptyset$ is an initial object. 

In order to see that $\emptyset$ is also final we use the simplicial set $\Hom_{G\BC_{\tr}}^{L}(X, \emptyset )$ of left morphisms and again observe that it is a one-point space.

Hence also the homotopy category $\Ho(G\BC_{\tr})$ of $G\BC_{\tr}$
is pointed.
 
Since semi-additivity can be checked on the level of homotopy categories by \cite[Rem. 6.1.6.15]{HA}, it remains to check that $\Ho(G\BC_{\tr})$ is semi-additive.

We   show that $\Ho(G\BC_{\tr})$ admits finite products and coproducts, and that they are naturally isomorphic.

We first claim that the inclusion $\iota \colon \BC\to \Ho(\BC_{\tr})$ preserves finite coproducts.
Let $X$ and $Y$ be $G$-bornological spaces. Then we have a coproduct $X\sqcup Y$ in $G\BC$ together with canonical morphisms
\[ i \colon X\to X\sqcup Y\:\: \mbox{and}\:\: j \colon Y\to X\sqcup Y\ .\]
Let now $Z$ be a $G$-bornological coarse space and \[[W,w,f] \colon X\to Z\:\: \mbox{and}\:\: [V,v,g] \colon Y\to Z\] be generalized morphisms.
They extend uniquely to a generalized morphism
\[[W\sqcup V,w\sqcup v,f+g] \colon X\sqcup Y\to Z\ .\]
Note that $w\sqcup v$ is a bounded covering by Lemma~\ref{lem:coproduct-boundedcovering}.
Then 
\[ [W\sqcup V,w\sqcup v,f+g]\circ i=[W,w,f]\:\: \mbox{and}\:\:[W\sqcup V,w\sqcup v,f+g]\circ j=[V,v,g]\ .\]

We have generalized morphisms
\[ p:=[X,i,\id_{X}] \colon X\sqcup Y\to X\:\:\mbox{and} \:\: q:=[Y,j,\id_{Y}] \colon X\sqcup Y\to Y \ .\]
We claim that $p$ and $q$ exhibit $X\sqcup Y$ as the product of $X$ and $Y$ in $\Ho(G\BC_{\tr})$. Let
\[ [A,a,s] \colon Q\to X\:\: \mbox{and}\:\: [B,b,t] \colon Q\to Y\]
be generalized morphisms.
There is a unique generalized morphism
\[[A\sqcup B,a\sqcup b,s+t] \colon Q\to X\sqcup Y\ .\]
Then
\[ p\circ [A\sqcup B,a\sqcup b,s+t] =[A,a,s]\:\: \mbox{and}\:\: q\circ [A\sqcup B,a\sqcup b,s+t]  =[B,b,t]\ .\]
\end{proof}

Lemma \ref{hfweuihfweifefefeef} implies that the embedding $G\BC\to \Ho(G\BC_{\tr})$ does not preserve products.

Let $i$ be an element of $I$ which is fixed by $G$ and set $I^{\prime}:=I\setminus \{i\}$. Then we have the equality
\begin{equation}\label{fwefewlkml243rt4}
\tr_{X,I}=\tr_{X,I^{\prime}} + j_{i}
\end{equation}
in $\Hom_{\Ho(G\BC_{\tr})}(X, I_{min,min}\otimes X)$, where the embedding $j_{i} \colon X\to I_{min,min}\otimes X$ is induced by the inclusion $\{i\}\to I$.
We furthermore have a generalized morphism
\begin{equation}\label{rbmmnmmrjkkj}
p_{i}:=[  X, j_i, \id_{X} ] \colon I_{min,min}\otimes X\to X
\end{equation}
called the projection onto the $i$-th component of $I_{min,min}\otimes X$ such that $p_{i}\circ j_{i}=\id_{X}$.

\begin{ddd}\label{thorh5t4554t}
	We define the {\em canonical embedding}
	\begin{equation}\label{gr3ioegb433g}
	\iota \colon \Nerve(G\BC)\to G\BC_{\tr}\ .
	\end{equation}
	as the natural refinement of \cref{voiehjoriverververv}.
\end{ddd}

This canonical embedding sends, e.g., the $3$-simplex
\[ X\xrightarrow{f} Y\xrightarrow{g} Z\xrightarrow{h} U\]
in $\Nerve(G\BC)$ to the $3$-simplex
\[ \xymatrix{&&&\hat X \ar@{=}[dl]\ar[dr]^{\hat f}&&&\\&&\hat X \ar@{=}[dl]\ar[dr]^{\hat f}&&\hat Y\ar[dr]^{\hat g}\ar@{=}[dl]&&\\&\hat X\ar@{->>}[dl]\ar[dr]^{\hat  f}&&\hat Y\ar@{->>}[dl] \ar[dr]^{\hat g}&&\hat Z\ar@{->>}[dl] \  \ar[dr]^{\hat h}&\\X &&Y &&Z &&U}\]
in $G\BC_{\tr}$, where we use the notation introduced in \cref{voiehjoriverververv}.
 
\begin{ex}\label{hio3t45ththr}
Let $Q$ be a $G$-bornological coarse space. If
\[ w \colon W\to X \]
is a bounded covering between $G$-bornological coarse spaces, then
\[ w\times \id_{Q} \colon W\otimes Q\to X\otimes Q \]
 is again a bounded covering between $G$-bornological coarse spaces by \cref{lem:coproduct-boundedcovering}. 
Furthermore, if the diagram
\begin{equation}\label{wevlkwv222ewv}
\xymatrix{W\ar@{->>}[d]_{w}\ar[r]^{f}& U\ar@{->>}[d]^{u} \\
V\ar[r]^{g}&Z}
\end{equation}
is an admissible square of $G$-bornological coarse spaces, then the square
\[\xymatrix{
W\otimes Q\ar@{->>}[d]_{w\times \id_{Q}}\ar[rr]^-{f\times \id_{Q}} & & U\otimes Q\ar@{->>}[d]^{u\times \id_{Q}} \\
 V\otimes Q\ar[rr]^-{g\times \id_{Q}} & & Z\otimes Q
}\]
is admissible, too. We therefore get a functor 
\[ -\otimes Q\colon  G\BC_{\tr}\to  G\BC_{\tr}\ . \]

This construction actually produces a bifunctor
\begin{equation}\label{clkjcoecwecwecwcec}
G\BC_{\tr}\times G\BC\to G\BC_{\tr}\ .
\end{equation}
To illustrate this, we show what this functor does on 2-simplices. Given a 2-simplex
\[\xymatrix{&&X_{0,2}\ar@{->>}[dl]_{f_{01}}\ar[dr]^{g_{12}}&&\\&X_{0,1}\ar@{->>}[dl]_{f_0}\ar[dr]^{g_1}&&X_{1,2}\ar@{->>}[dl]_{f_1}\ar[dr]^{g_2}&\\X_{0}&&X_{1}&&X_{2}}\]
in $G\BC_{\tr}  $ and a composition $Q_0\xrightarrow{a_1} Q_1\xrightarrow{a_2} Q_2$ in $G\BC$ we obtain a new 2-simplex
\[\xymatrix{
&&X_{0,2}\otimes Q_0'' \ar@{->>}[dl]_{f_{01}\otimes \id}\ar[dr]^{\ g_{12}\otimes a_1 }&&\\&X_{0,1}\otimes Q_0'\ar@{->>}[dl]_{f_{0}\otimes\id}\ar[dr]^{\ g_1\otimes a_1}&&X_{1,2}\otimes Q_1'\ar@{->>}[dl]_{f_{1}\otimes \id}\ar[dr]^{\ g_2\otimes a_2}&\\X_{0}\otimes Q_0&&X_{1}\otimes Q_1&&X_{2}\otimes Q_2
}\]
where $Q_0',Q_0''$ and denote $Q_0$ with the bornology changed to $a_{1}^{-1}(\cB_{Q_1})$ and $(a_2\circ a_1)^{-1}(\cB_{Q_2})$ respectively and $Q_1'$ denotes $Q_1$ with the bornology changed to $a_2^{-1}(\cB_{Q_2})$. That all arrows pointing to the left are bounded coverings follows from \cref{rgioergergregegreg} and \cref{lem:coproduct-boundedcovering}.
\end{ex}

\subsection{Coarse motivic spectra with transfers}
\label{sec_coarse_spectra_tr}

In this section we define the category $G\Sp\cX_{\tr}$ of coarse motivic spectra with transfers.
We closely follow  \cite[Sec.~3 and~4]{buen} and \cite[Sec.~4.1]{equicoarse}.

We start with the category \[\PSh(G\BC^{}_{\tr}):=\Fun(G\BC^{op}_{\tr},\Spc)\] of space-valued presheaves on $G\BC_{\tr}^{}$.

\begin{rem}
The  canonical embedding  \[\iota\colon \Nerve(G\BC)\to G\BC^{}_{\tr}\]  (see \cref{thorh5t4554t}) induces a restriction
\[\PSh(G\BC^{}_{\tr})\to \PSh(G\BC)\ .\] 
Note that this restriction does not preserve representables.
 \end{rem}

We let \[\yo_{\tr}\colon G\BC^{}_{\tr}\to \PSh(G\BC^{}_{\tr})\] denote  the Yoneda embedding.
In the following we will omit the canonical embedding $\iota$ defined in \cref{thorh5t4554t} from the notation.

For an equivariant  big family $\cY:=(Y_{i})_{i\in I}$ \cite[Def.~3.5]{equicoarse} on a $G$-bornological coarse space  $X$ we set
\[\yo_{\tr}(\cY):=\colim_{i\in I}\yo_{\tr}(Y_{i})\ .\]
 If $X$ is a $G$-bornological coarse space and $(Z,\cY)$ is an equivariant complementary pair \cite[Def.~3.7]{equicoarse} on $X$, then
we consider the map
\begin{equation}\label{fjfwekjfwejfewiufeuiwfhiewfe}
  \yo_{\tr}(\cY)\sqcup_{\yo_{\tr}(Z\cap \cY)}\yo_{\tr}(Z)\to\yo_{\tr}(X)\ .
\end{equation}

By \cite[Thm.~5.1.5.6]{htt} for  any small $\infty$-category $\bD$ the restriction along the Yoneda embedding
induces an equivalence \[\PSh(\bD)\simeq \Fun^{\lim}(\PSh(\bD)^{op} ,\Spc)\ .\]
Consequently,  if $E$ is an object of $\PSh(G\BC^{}_{\tr})$, then we can evaluate $E$  {on} presheaves (essentially via  right Kan extension).
For a big family $\cY$ on $X$, we abbreviate \[E(\cY) := E(\yo_{\tr}(\cY))\ .\]
Then the evaluation satisfies
\[E(\yo_{\tr}(X)) \simeq E(X)\:\:\mbox{ and } \:\:E(\cY) \simeq \lim_{i \in I} E(Y_i)\ .\]

\begin{ddd}\label{fweoifewfewfew}
We say that $E$ satisfies \emph{excision}  if:
\begin{enumerate}
\item $E(\emptyset) \simeq \emptyset$ .
\item\label{fwijfoeewfewfw}    $E$ is local with respect to  the morphisms \eqref{fjfwekjfwejfewiufeuiwfhiewfe}
for every $G$-bornological coarse space $X$ with an equivariant complementary pair $(Z,\cY)$.\qedhere
\end{enumerate}
\end{ddd}

\begin{rem}
Condition \ref{fweoifewfewfew}.\ref{fwijfoeewfewfw} is equivalent to the condition that for every $G$-bornological coarse space $X$ with  an equivariant complementary pair $(Z,\cY)$ the square
\[\xymatrix{
 E(X)\ar[r]\ar[d] & E(Z)\ar[d] \\
 E(\cY)\ar[r] & E(Z \cap Y)
}\]is cartesian.

Let us define
\[E(X,\cY):=\Fib(E(X)\to E(\cY))\ .\]
Then descent is also equivalent to the condition that the natural morphism
\[E(X,\cY)\to E(Z,Z\cap \cY)\] is an equivalence for every $G$-bornological coarse space $X$ with an equivariant complementary pair $(Z,\cY)$.
\end{rem}

 The presheaves which satisfy descent are called sheaves.

We denote the full subcategory of presheaves satisfying excision (called sheaves in the following) by
\[\Sh(G\BC^{}_{\tr})\subseteq \PSh(G\BC^{}_{\tr})\ .\]
Then we have a localization
\[L\colon \PSh(G\BC^{}_{\tr})\leftrightarrows \Sh(G\BC^{}_{\tr}):inclusion\ .\]

For the following definition 
recall the definition of a flasque $G$-bornological coarse space \cite[Def.~3.8]{equicoarse}.

Moreover, $\{0,1\}_{max,max}$ denotes the $G$-bornological coarse space given by the two-element set $\{0,1\}$ with trivial $G$-action and equipped with the maximal bornological coarse structure. The projection \begin{equation}\label{f24f2oijo23r23r23}
\{0,1\}_{max,max}\to *
\end{equation} is a morphism.

 Finally, if $X$ is a $G$-bornological coarse space with coarse structure $\cC_{X}$ and if $U$ in $\cC_{X} $ is   $G$-invariant, then $X_{U}$ denotes the  $G$-bornological coarse space
obtained from $X$ by replacing the coarse structure $\cC_{X}$ by the coarse structure $\cC\langle \{U\}\rangle$   (see Definition~\ref{rgijo34t34tegerg}). If $U^{\prime}$ in $\cC_{X}^{G}$ is such that $U\subseteq U^{\prime}$, then we have morphisms
$X_{U}\to X_{U^{\prime}}\to X$ of $G$-bornological coarse spaces, all induced by the identity of the underlying set.

Let $E$ be an object of  $\Sh(G\BC^{}_{\tr})$.  
\begin{ddd} 
\begin{enumerate} \item 
   $E $ is \emph{coarsely invariant} if it is local with respect to the morphism
 \[\yo_{\tr}(\{0,1\}_{max,max}\otimes X)\to \yo_{\tr}(X)\]
 induced by \eqref{f24f2oijo23r23r23}
 for all $G$-bornological coarse spaces $X$.
 \item $E$ \emph{vanishes on flasques} if   it is local for the morphisms \[\emptyset\to \yo_{\tr}(X)\]  for all flasque $G$-bornological coarse spaces $X$.   \item $E$ is \emph{$u$-continuous} if $E$ is local for the morphisms
 \[\colim_{U\in \cC_{X}^{G}}\yo_{\tr}(X_{U})\to \yo_{\tr}(X)\] for all $G$-bornological coarse spaces $X$.   \qedhere\end{enumerate} 
 \end{ddd}

 \begin{ddd} The category of $G$-equivariant motivic coarse spaces with transfers $G\Spc \cX_{\tr}$   is defined to be the full subcategory of  $\Sh(G\BC^{}_{\tr})$ which are coarsely invariant, vanish on flasques, and which are $u$-continuous.
 \end{ddd}

 We have a localization
 \[\cL_{\tr}\colon \Sh(G\BC^{}_{\tr})\leftrightarrows G\Spc\cX_{\tr}:inclusion\ .\]
 We furthermore have a functor \begin{equation}\label{verjl34g3g}
\Yo_{\tr}:=\cL_{\tr}\circ \yo_{\tr}\colon G\BC^{}_{\tr}\to G\Spc\cX_{\tr}\ .
\end{equation}

\begin{rem}\label{fijweofjojewfoefjeofewfewfewf}
 For a $G$-bornological coarse space $X$
 the representable presheaf $\yo_{\tr}(X)$ is a compact object.
 Moreover, the category $\PSh(G\BC^{}_{\tr})$ is compactly generated by representables.
 
  To make the construction of the category of motivic coarse spaces precise we 
 assume that there is a regular cardinal $\kappa$ which bounds the size of all coarse structures of spaces appearing in $G\BC_{\tr}$ (i.e., we consider a suitable subcategory  which is large enough to contain all spaces of interest),  and which also bounds the size of the index sets of big families involved in the descent condition. Then the locality conditions are generated by a small set of morphisms between $\kappa$-compact objects. It follows that $G\Spc\cX_{\tr}$ is $\kappa$-compactly generated and closed under $\kappa$-filtered colimits. For a bornological coarse space $X$ the object
 $\Yo_{\tr}(X)$   is $\kappa$-compact. See also \cite[Cor.~5.5.7.3]{htt}. 
\end{rem}

 By construction, $\Spc\cX_{\tr}$ is a presentable $\infty$-category. Let $\Prr^{L}$ be the large $\infty$-category of presentable $\infty$-categories and left-exact functors.
The inclusion $\Prr^{L}_{stab}\to \Prr^{L}$ of presentable stable $\infty$-categories in all presentable $\infty$-categories fits into an adjunction
\[\Stab\colon \Prr^{L}\leftrightarrows \Prr^{L}_{stab}:inclusion\ .\]

\begin{ddd}
We define the category $ G\Sp \cX_{\tr}$ of {\em coarse motivic spectra  with transfers}
  as the stabilization  $\Stab(G\Spc\cX_{\tr})$. \end{ddd}
By construction it fits into the  adjunction
\[\Sigma^{\infty}_{+}\colon G\Spc\cX_{\tr}\leftrightarrows  G\Sp \cX_{\tr}:\Omega^{\infty}\ .\]
We define the Yoneda functor  \begin{equation}\label{greg43t3rerer4t34t}
 \Yo^{s}_{\tr}:=\Sigma^{\infty}_{+}\circ \Yo_{\tr}\colon G\BC^{}_{\tr}\to  G\Sp \cX_{\tr}\ .
\end{equation}
   
 Recall that $G\BC_{\tr}^{}$ is semi-additive by Lemma \ref{hfweuihfweifefefeef} and that  $G\Sp \cX_{\tr}$ is additive since it is   a stable $\infty$-catefory.
 \begin{lem}\label{gfkiogergergregrge}
 The functor $\Yo^{s}_{\tr}$ is additive.
 \end{lem}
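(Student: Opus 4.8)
The plan is to verify that $\Yo^{s}_{\tr}$ sends finite coproducts in $G\BC_{\tr}$ to coproducts (equivalently, products) in $G\Sp\cX_{\tr}$; since both categories are semi-additive and a functor between semi-additive $\infty$-categories is additive precisely when it preserves finite coproducts, this suffices. First I would recall that $\Yo^{s}_{\tr} = \Sigma^{\infty}_{+}\circ \cL_{\tr}\circ \yo_{\tr}$. The functor $\Sigma^{\infty}_{+}$ is a left adjoint, hence preserves all colimits, and $\cL_{\tr}$, being a left adjoint in the localization $\cL_{\tr}\colon \Sh(G\BC_{\tr})\leftrightarrows G\Spc\cX_{\tr}$, also preserves all colimits. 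So it remains to understand the composite $\cL_{\tr}\circ\yo_{\tr}$, i.e.\ the functor $\Yo_{\tr}$, on finite coproducts.

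The key point is to establish that $\Yo_{\tr}$ takes a finite coproduct $X\sqcup Y$ in $G\BC_{\tr}$ to the coproduct $\Yo_{\tr}(X)\sqcup\Yo_{\tr}(Y)$ in $G\Spc\cX_{\tr}$. Here I would use the excision axiom: the empty complementary pair / big family arguments show that a sheaf $E\in\Sh(G\BC_{\tr})$ satisfies $E(X\sqcup Y)\simeq E(X)\times E(Y)$, because $(Y,(X))$ is an equivariant complementary pair on $X\sqcup Y$ with $X\cap(X)$ empty (and $E(\emptyset)\simeq\ast$ by the first condition in \cref{fweoifewfewfew}). Dually phrased: in $\Sh(G\BC_{\tr})$ — and hence in the further localization $G\Spc\cX_{\tr}$ — the image of $X\sqcup Y$ under the localized Yoneda embedding is the coproduct of the images of $X$ and $Y$. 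This is exactly the same bookkeeping as in the non-transfer setting of \cite{buen, equicoarse}, so I would cite those computations and note that the proofs go through verbatim once one knows that $G\BC_{\tr}$ still has the relevant coproducts and complementary pairs — which it does, since $\iota\colon G\BC\to G\BC_{\tr}$ preserves finite coproducts by the proof of \cref{hfweuihfweifefefeef}, and the excision morphisms \eqref{fjfwekjfwejfewiufeuiwfhiewfe} are defined using data from $G\BC$.

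With that in hand, additivity follows formally: given $X,Y$ in $G\BC_{\tr}$, we have
\[
\Yo^{s}_{\tr}(X\sqcup Y)\simeq\Sigma^{\infty}_{+}\cL_{\tr}\yo_{\tr}(X\sqcup Y)\simeq\Sigma^{\infty}_{+}\bigl(\Yo_{\tr}(X)\sqcup\Yo_{\tr}(Y)\bigr)\simeq\Yo^{s}_{\tr}(X)\sqcup\Yo^{s}_{\tr}(Y)\ ,
\]
where the last step uses that $\Sigma^{\infty}_{+}$ preserves coproducts. Since $G\Sp\cX_{\tr}$ is stable, finite coproducts agree with finite products, so $\Yo^{s}_{\tr}$ preserves finite products as well; together with the fact that $\Yo^{s}_{\tr}(\emptyset)\simeq 0$ (the empty space is a zero object of $G\BC_{\tr}$ by \cref{hfweuihfweifefefeef}, and $\Yo^{s}_{\tr}$ preserves zero objects as a composite of functors each preserving the terminal/initial object), this is precisely the statement that $\Yo^{s}_{\tr}$ is additive in the sense of a functor between semi-additive $\infty$-categories, via \cite[Rem.~6.1.6.15]{HA}. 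The main obstacle I anticipate is purely one of citation hygiene: making sure that the excision-based computation of $E(X\sqcup Y)$ from \cite{equicoarse} is genuinely insensitive to the enlargement of the morphism category, i.e.\ that adding transfer morphisms does not disturb the complementary-pair description of coproducts; but this is immediate because the relevant diagrams live in the image of $\iota$ and $\iota$ preserves finite coproducts.
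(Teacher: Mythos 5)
Your proof is correct and follows essentially the paper's own argument: reduce additivity to preservation of the zero object and of finite coproducts, and obtain both from the excision axiom applied to the complementary pair formed by $X$ and the big family $(Y)$ on $X\sqcup Y$ (using $E(\emptyset)\simeq *$ for the zero object). Routing the computation through $\Yo_{\tr}=\cL_{\tr}\circ\yo_{\tr}$ and then applying the colimit-preserving $\Sigma^{\infty}_{+}$ is just a slightly more explicit unpacking of the same excision argument the paper gives directly for $\Yo^{s}_{\tr}$.
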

 \begin{proof}
 It suffices to show that $\Yo^{s}_{\tr}$ preserves zero objects  and coproducts.
 Both properties are consequences of excision. 

The zero object  in $G\BC_{\tr}^{}$ is given by the empty space $\emptyset$.
By excision we have $\Yo^{s}_{\tr}(\emptyset)\simeq 0$.

Let $X$ and $Y$ be two $G$-bornological coarse spaces. Their coproduct in $G\BC_{\tr}^{}$ is represented by the coproduct $X\sqcup Y$ in $G\BC$. We let $i\colon X\to X\sqcup Y$ and $j\colon Y\to X\sqcup Y$ denote the inclusions, and we let $(Y)$ denote the equivariant big family on $X\sqcup Y$ consisting just of $Y$. The pair $(X,(Y))$ is a complementary pair on $X\sqcup Y$. Since the subsets $X$ and $Y$ are disjoint, by excision the map
 \[\Yo^{s}_{\tr}(X)\oplus \Yo^{s}_{\tr}(Y)\xrightarrow{\Yo^s_{\tr}(i)+\Yo^s_{\tr}(j)}\Yo^{s}_{\tr}(X\sqcup Y)\]
 is an equivalence.
\end{proof}
 
 Let $X$ be a $G$-bornological coarse space,  $I$ be a $G$-set, and $i$ be a $G$-fixed element of $I$.
We set $I^{\prime}:=I\setminus \{i\}$.
Then $(\{i\}\times X, I^{\prime}\times X)$ is an invariant complementary pair on the space $I_{min,min}\otimes X$. By excision we have a decomposition 
\[  \Yo^{s}_{\tr}(I_{min,min}\otimes X)\simeq   \Yo^{s}_{\tr}(X)\oplus \Yo^{s}_{\tr}(I^{\prime}_{min,min}\otimes X)  \ .\] 
If we compose the   motivic transfer map $\Yo^{s}_{\tr}(\tr_{X,I})$ with the projections to the respective summands we get a decomposition
\[\Yo^{s}_{\tr}(\tr_{X,I})\simeq   a\oplus b\ ,\]
where
\[a\colon \Yo^{s}_{\tr}(X)\to \Yo^{s}_{\tr}(X)\ , \quad b\colon  \Yo^{s}_{\tr}(X)\to \Yo^{s}_{\tr}(I^{\prime}_{min,min}\otimes X)\ .\]
\begin{lem}\label{regopergergregrege}
We have equivalences
\[a\simeq  \id_{\Yo^{s}_{\tr}(X)}\ , \quad b\simeq  \Yo^{s}_{\tr}(\tr_{X,I^{\prime}})\ .\]
\end{lem}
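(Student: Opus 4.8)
The statement amounts to computing the composition of the motivic transfer $\Yo^{s}_{\tr}(\tr_{X,I})$ with the two projections coming from the excision decomposition $\Yo^{s}_{\tr}(I_{min,min}\otimes X)\simeq \Yo^{s}_{\tr}(X)\oplus \Yo^{s}_{\tr}(I'_{min,min}\otimes X)$. The first step is to recall that, by \eqref{fwefewlkml243rt4}, we have the literal identity $\tr_{X,I}=\tr_{X,I'}+j_{i}$ in $\Hom_{\Ho(G\BC_{\tr})}(X,I_{min,min}\otimes X)$, and hence, applying the additive functor $\Yo^{s}_{\tr}$ (Lemma~\ref{gfkiogergergregrge}),
\[
\Yo^{s}_{\tr}(\tr_{X,I})\simeq \Yo^{s}_{\tr}(\tr_{X,I'})+\Yo^{s}_{\tr}(j_{i})
\]
as maps $\Yo^{s}_{\tr}(X)\to \Yo^{s}_{\tr}(I_{min,min}\otimes X)$. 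So the task reduces to understanding how each of these two summands sits with respect to the direct sum decomposition of the target.

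The second step is to identify the two projections explicitly. The excision decomposition of the target is induced by the invariant complementary pair $(\{i\}\times X,\ I'_{min,min}\otimes X)$ on $I_{min,min}\otimes X$; the corresponding projections are $\Yo^{s}_{\tr}(p_{i})$ onto the first summand (where $p_{i}$ is the projection morphism \eqref{rbmmnmmrjkkj}) and the analogous projection $\Yo^{s}_{\tr}(p_{I'})$ onto the second. Then one computes the four composites. Since $p_{i}\circ j_{i}=\id_{X}$ and $p_{i}\circ \tr_{X,I'}$ is the zero morphism (the image of $\tr_{X,I'}$ lands in the complementary part $I'_{min,min}\otimes X$, which is coarsely disjoint from $\{i\}\times X$, so the composite factors through $\Yo^{s}_{\tr}(\emptyset)\simeq 0$ by excision), we get $a\simeq \id_{\Yo^{s}_{\tr}(X)}$. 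Symmetrically, $p_{I'}\circ j_{i}=0$ for the same coarse-disjointness reason, while $p_{I'}\circ \tr_{X,I'}=\tr_{X,I'}$ (the covering $I'_{min,min}\otimes X\to X$ factors as the inclusion into $I_{min,min}\otimes X$ followed by the partial projection $p_{I'}$, and the transfer is functorial/natural in the relevant sense), giving $b\simeq \Yo^{s}_{\tr}(\tr_{X,I'})$.

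The one point requiring a little care — and the main obstacle — is making the two vanishing statements precise, i.e.\ that $p_{i}\circ\tr_{X,I'}\simeq 0$ and $p_{I'}\circ j_{i}\simeq 0$. For the second, $p_{I'}\circ j_{i}$ is represented by a span whose apex is the fibre product of $\{i\}\to I$ and $I'\hookrightarrow I$ over $I$ (smashed with $X$), which is empty; hence the generalized morphism is $[\emptyset,\ldots]$ and $\Yo^{s}_{\tr}$ of it is $0$ by excision. For the first, one computes the composition of spans $\tr_{X,I'}$ (apex $I'_{min,min}\otimes X$, left leg the bounded covering, right leg the identity) with $p_{i}$ (apex $X$ with a changed bornology, left leg $j_{i}$, right leg the identity) via an admissible square; the resulting apex is again the pullback over $I_{min,min}\otimes X$ of $j_{i}$ and the inclusion $I'_{min,min}\otimes X\hookrightarrow I_{min,min}\otimes X$, which is empty, so the composite is $0$. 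Thus the whole argument is bookkeeping with spans plus the additivity of $\Yo^{s}_{\tr}$; the only genuine verification is that these pullbacks are empty, which is immediate from $I=\{i\}\sqcup I'$ as $G$-sets.
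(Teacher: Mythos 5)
Your proposal is correct and follows essentially the same route as the paper: the identity $\tr_{X,I}=\tr_{X,I'}+j_{i}$ from \eqref{fwefewlkml243rt4}, additivity of $\Yo^{s}_{\tr}$ (Lemma~\ref{gfkiogergergregrge}), and identification of the excision projections with the generalized morphisms $p_{i}$ (and its analogue for $I'$), computed by span composition. If anything, you are more explicit than the paper about the vanishing cross terms via the empty pullbacks, which is a welcome but not essentially different addition.
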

\begin{proof}
Let $j_{i}\colon X\to  I_{min,min}\otimes X$ be the inclusion given by $x\mapsto (i,x)$.
In $G\BC_{\tr}$ we have the relation  \eqref{fwefewlkml243rt4}
\[j_{i}+\tr_{X,I^{\prime}}=\tr_{X,I} \ .\]
This implies by Lemma \ref{gfkiogergergregrge} that
\[\Yo^{s}_{\tr}(j_{i})+ \Yo^{s}_{\tr}(\tr_{X,I^{\prime}})\simeq  \Yo^{s}_{\tr}(\tr_{X,I})\ .\]
Using the projection \eqref{rbmmnmmrjkkj} we now have
\[a\simeq   \Yo^{s}_{\tr}(p_{i})\circ\Yo^{s}_{\tr}(\tr_{X,I})\simeq \Yo^{s}_{\tr}(p_{i}\circ  \tr_{X,I})\simeq  \id_{\Yo^{s}_{\tr}(X)}\]
and
\[ b\simeq  \Yo^{s}_{\tr}(\tr_{X,I})- \Yo^{s}_{\tr}(j_{i})\circ a\simeq   \Yo^{s}_{\tr}(\tr_{X,I})- \Yo^{s}_{\tr}(j_{i})\simeq   \Yo^{s}_{\tr}(\tr_{X,I^{\prime}})\ .\qedhere\] 
  \end{proof}
 
If $\cY:=(Y_{i})_{i\in I}$ is an equivariant  big family on a $G$-bornological coarse space $X$, {then} we set
\[\Yo^{s}_{\tr}(\cY):=\colim_{i\in I} \Yo^{s}_{\tr}(Y_{i})\ .\]

The following properties of the functor $\Yo^{s}_{\tr}$ are  shown by the same arguments as given for \cite[Cor.~4.12,   4.14 and 4.15]{equicoarse}.

\begin{lem}
\begin{enumerate}
\item If $X$ is a $G$-bornological coarse space and $A$   is a nice  invariant subset of $X$ \cite[Def.~3.3]{equicoarse}, then 
\[\Yo^{s}_{\tr}(A)\to \Yo^{s}_{\tr}(\{A\})\] is an equivalence.
\item If $(Y,Z)$ is an equivariant coarsely excisive pair on a $G$-bornological coarse space $X$, then we have a push-out: \[\xymatrix{\Yo^{s}_{\tr}(Z\cap Y)\ar[r]\ar[d]&\Yo^{s}_{\tr}(Z)\ar[d]\\\Yo^{s}_{\tr}(Y)\ar[r]&\Yo^{s}_{\tr}(X)}\]
\item If $I_{p}X$ is a coarse cylinder {\cite[Sec.~4.3]{buen}} over a $G$-bornological coarse space $X$, then the projection $I_{p}X\to X$ induces an equivalence
\[ \Yo^{s}_{\tr}(I_{p}X)\to \Yo^{s}_{\tr}(X)\ .\]
\end{enumerate}
\end{lem}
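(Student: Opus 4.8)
The plan is to reuse, essentially verbatim, the proofs of the corresponding statements \cite[Cor.~4.12,~4.14~and~4.15]{equicoarse}. The first point I would stress is that none of the three assertions actually involves a transfer morphism: every map occurring is the $\iota$-image of a morphism of $G\BC$ --- an inclusion of an invariant subset, or the projection of a coarse cylinder. Secondly, the defining locality conditions of $G\Spc\cX_{\tr}$ (excision, coarse invariance, vanishing on flasques, $u$-continuity) are imposed on presheaves on $G\BC_{\tr}$ by exactly the same formulas as for presheaves on $G\BC$, and $\Yo^{s}_{\tr}=\Sigma^{\infty}_{+}\circ\cL_{\tr}\circ\yo_{\tr}$ is again a composite of the Yoneda embedding with left adjoints, hence sends the generating morphisms of the two localizations to equivalences and preserves all colimits formed in $G\Spc\cX_{\tr}$. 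Since the proofs of the three cited corollaries use nothing beyond these formal features, they transport without change; what remains is to recall their shape.

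For part (1): by definition $\{A\}$ is the equivariant big family $(U[A])_{U\in\cC_{X}^{G}}$ of thickenings, so $\Yo^{s}_{\tr}(\{A\})=\colim_{U\in\cC_{X}^{G}}\Yo^{s}_{\tr}(U[A])$, a colimit over the filtered poset of invariant entourages. By the definition of a nice subset \cite[Def.~3.3]{equicoarse} each inclusion $A\hookrightarrow U[A]$ is a coarse equivalence and is therefore sent by the coarsely invariant functor $\Yo^{s}_{\tr}$ to an equivalence; these equivalences form a cocone over the diagram all of whose legs are equivalences, so the canonical map $\Yo^{s}_{\tr}(A)\to\Yo^{s}_{\tr}(\{A\})$ is an equivalence --- only coarse invariance enters here.

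For part (2): given an equivariant coarsely excisive pair $(Y,Z)$ on $X$, I would check that $(Z,\{Y\})$ is an equivariant complementary pair, so that excision supplies --- $G\Sp\cX_{\tr}$ being stable --- the push-out square with corners $\Yo^{s}_{\tr}(Z\cap\{Y\})$, $\Yo^{s}_{\tr}(Z)$, $\Yo^{s}_{\tr}(\{Y\})$, $\Yo^{s}_{\tr}(X)$; the coarsely excisive condition then lets one replace the big families $\{Y\}$ and $Z\cap\{Y\}$ by $Y$ and $Y\cap Z$ by means of part (1), giving the stated square, exactly as in \cite[Cor.~4.14]{equicoarse}. For part (3) I would use that the projection $I_{p}X\to X$ has a section, decompose $I_{p}X$ through an equivariant complementary pair into coarse cylinders over subspaces together with a flasque piece, and run the resulting induction using excision, vanishing on flasques and coarse invariance, as in \cite[Cor.~4.15]{equicoarse}.

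The main obstacle here is bookkeeping rather than a genuine difficulty: as noted in the excerpt, the restriction $\PSh(G\BC_{\tr})\to\PSh(G\BC)$ does \emph{not} preserve representables, so one must be careful to read $\Yo^{s}_{\tr}$ of a big family strictly as the colimit above and never conflate it with something computed in $\PSh(G\BC)$. Since all subsets, big families and complementary pairs occurring live over a fixed $X$ and the relevant inclusions are $\iota$-images of $G\BC$-morphisms, and since $\cL_{\tr}$ and $\Sigma^{\infty}_{+}$ preserve colimits, this bookkeeping is harmless and the three proofs from \cite{equicoarse} apply unchanged.
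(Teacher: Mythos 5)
Your proposal matches the paper exactly: the paper gives no argument beyond the remark that these properties "are shown by the same arguments as given for [equicoarse, Cor.~4.12, 4.14 and 4.15]", which is precisely the strategy you adopt and then correctly flesh out. Your observation that the morphisms involved are all $\iota$-images of $G\BC$-morphisms and that the localization axioms are imposed by the same formulas is the right justification for why those arguments transfer unchanged.
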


\begin{rem}
Let $A,B$ be objects in a stable $\infty$-category. Then we have an action
\[\nat\times \Map(A,B)\to \Map(A,B)\ ,  \quad (n,f)\mapsto nf\ .\] It sends a morphism
$f\colon A\to B$ to the composition
\[A \xrightarrow{\diag}\bigoplus_{i=1}^{n} A \xrightarrow{\bigoplus f} \bigoplus_{i=1}^{n} B \xrightarrow{+} B\ .\]
Here the diagonal map uses the interpretation of the sum as a product, while the last map is induced by the projections to the summands and interprets the sum as a coproduct.
\end{rem}

Let $X$ be a $G$-bornological coarse space and $I$ be a  set.  We consider $I$ as a $G$-set with the trivial $G$-action.
If $I$ is finite, then $I_{min,min}\to *$ is a morphism of $G$-bornological coarse spaces.
Hence we get a morphism $\rho\colon I_{min,min}\otimes X\to X$.
\begin{lem}
If $I$ is finite, then
\[\Yo^{s}_{\tr}(\rho)\circ \Yo^{s}_{\tr}(\tr_{X,I})\simeq |I|\cdot \id_{\Yo^{s}_{\tr}(X)}\ ,\]
\end{lem}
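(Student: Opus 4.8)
The plan is to reduce the statement to the additivity of $\Yo^{s}_{\tr}$ together with the elementary identity $\rho\circ j_{i}=\id_{X}$, so no genuinely new input is needed beyond \cref{gfkiogergergregrge}.

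First I would record that, since $I$ is finite and carries the trivial $G$-action, $\tr_{X,I}=\sum_{i\in I}j_{i}$ holds as a literal identity in $G\BC_{\tr}$ (this is the discussion around \eqref{fwefewlkml243rt4}; alternatively it follows by induction on $|I|$ from \eqref{fwefewlkml243rt4}, the base case being $\tr_{X,\{i\}}=j_{i}$ for a one-element set). Applying $\Yo^{s}_{\tr}$ and invoking \cref{gfkiogergergregrge}, which says that $\Yo^{s}_{\tr}$ is additive and hence linear on mapping spaces, yields
\[\Yo^{s}_{\tr}(\tr_{X,I})\simeq \sum_{i\in I}\Yo^{s}_{\tr}(j_{i})\]
in $G\Sp\cX_{\tr}$.

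Next I would postcompose with $\Yo^{s}_{\tr}(\rho)$. Since $G\Sp\cX_{\tr}$ is stable, composition is bilinear for its additive structure, so postcomposition with $\Yo^{s}_{\tr}(\rho)$ distributes over the finite sum:
\[\Yo^{s}_{\tr}(\rho)\circ \Yo^{s}_{\tr}(\tr_{X,I})\simeq \sum_{i\in I}\Yo^{s}_{\tr}(\rho)\circ \Yo^{s}_{\tr}(j_{i})\simeq \sum_{i\in I}\Yo^{s}_{\tr}(\rho\circ j_{i})\ ,\]
where the last equivalence uses functoriality of $\Yo^{s}_{\tr}$ together with the observation that $\rho$ and each $j_{i}$ are honest morphisms of $G$-bornological coarse spaces, so their composite in $G\BC_{\tr}$ is just $\iota(\rho\circ j_{i})$.

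Finally I would observe that $\rho\circ j_{i}=\id_{X}$ in $G\BC$: by construction $j_{i}$ is the inclusion $x\mapsto(i,x)$ of the $i$-th copy and $\rho$ is the projection $(j,x)\mapsto x$ induced by $I_{min,min}\to *$. Hence each summand on the right equals $\id_{\Yo^{s}_{\tr}(X)}$, and a sum of $|I|$ copies of the identity is by definition $|I|\cdot\id_{\Yo^{s}_{\tr}(X)}$. I do not anticipate a real obstacle; the only points deserving a word of justification are that an additive functor is linear on mapping spaces and that composition in a stable $\infty$-category is bilinear, both of which are formal consequences of semi-additivity and are already used implicitly in the proof of \cref{regopergergregrege}.
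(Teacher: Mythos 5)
Your argument is correct, and it reaches the conclusion by a slightly different organization than the paper. The paper's proof splits $\Yo^{s}_{\tr}(I_{min,min}\otimes X)$ by excision into $\bigoplus_{i\in I}\Yo^{s}_{\tr}(X)$ and then invokes \cref{regopergergregrege} to identify $\Yo^{s}_{\tr}(\tr_{X,I})$ with the diagonal and \cref{gfkiogergergregrge} to identify $\Yo^{s}_{\tr}(\rho)$ with the fold map, so that the composite is $|I|\cdot\id$ by the definition of the $\nat$-action. You instead stay in $\Ho(G\BC_{\tr})$, expand $\tr_{X,I}=\sum_{i\in I}j_{i}$ (which is exactly the iteration of \eqref{fwefewlkml243rt4}, using the semi-additivity from \cref{hfweuihfweifefefeef}, and is asserted verbatim in the introduction), push the sum through the additive functor $\Yo^{s}_{\tr}$ via \cref{gfkiogergergregrge}, and use bilinearity of composition plus $\rho\circ j_{i}=\id_{X}$. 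This bypasses \cref{regopergergregrege} and the explicit excision splitting; the trade-off is that you lean directly on the relation \eqref{fwefewlkml243rt4} (which the paper states without proof, but which also underlies its own \cref{regopergergregrege}), and on the formal facts that additive functors induce homomorphisms on hom-monoids and that composition is bilinear in a stable $\infty$-category — both of which are indeed formal and are implicitly used by the paper as well. One cosmetic point worth a half-sentence in a write-up: in the inductive use of \eqref{fwefewlkml243rt4} the morphism $\tr_{X,I'}$ must be postcomposed with the inclusion $I'_{min,min}\otimes X\to I_{min,min}\otimes X$ to live in $\Hom(X,I_{min,min}\otimes X)$, exactly as the paper tacitly does.
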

\begin{proof}
We have a commuting diagram
\[\xymatrix@C=4em{
 \Yo^{s}_{\tr}(X)\ar[r]^-{ \Yo^{s}_{\tr}(\tr_{X,I})}\ar@{=}[d] & \Yo^{s}_{\tr}(I_{min,min}\otimes X) \ar[d]^{\cong}\ar[r]^-{\Yo^{s}_{\tr}(\rho)} & \Yo^{s}_{\tr}(X)\ar@{=}[d] \\
  \Yo^{s}_{\tr}(X)\ar[r]^-{\diag_{\Yo^{s}_{\tr}(X)}} & \bigoplus\limits_{i\in I} \Yo^{s}_{\tr}(X)\ar[r]^-{+} & \Yo^{s}_{\tr}(X)} \\
\]
where the middle vertical isomorphism  is induced by excision. Lemma \ref{regopergergregrege}    ensures that the first square commutes. 
The second square commutes in view of Lemma  \ref{gfkiogergergregrge}.
\end{proof}

\subsection{Bounded and free unions}
 
Let $X $ be a $G$-bornological coarse space and $I$ be a $G$-set. 
\begin{ddd}\label{ddd_bounded_unions}
The \emph{bounded union} $\coprod_{i\in I}^{\bd}X$ in $G\BC$ is defined
as follows:
\begin{enumerate}
\item The underlying $G$-set of $\coprod_{i\in I}^{\bd}X$ is the product $I\times X$ with the diagonal $G$-action.
\item The bornology of $\coprod_{i\in I}^{\bd}X$ is given by the subsets $B$    satisfying the following two conditions:\begin{enumerate}
\item  The image of $B$ under the projection $I\times X\to I$ is finite.
\item The image of $B$ under the projection $I\times X\to X$ is bounded.
\end{enumerate}
\item The coarse structure of $\coprod_{i\in I}^{\bd}X$ is generated by the entourages
$ \diag_{I}\times U$ for all entourages $U$ of $X$.\qedhere
\end{enumerate}
\end{ddd}

\begin{rem}\label{eoifui24ri24r432r}
We can consider the $G$-set $I$ as the $G$-bornological coarse space   $I_{min,min}$ with the minimal bornology and the discrete coarse structure. Then we have an isomorphism of $G$-bornological coarse spaces
\[\coprod_{i\in I}^{\bd}X\cong I_{min,min}\otimes X\ ,\]
where $\otimes$ is the symmetric monoidal structure \cite[Sec.~4.3]{equicoarse} on $G\BC$.
\end{rem}

We say that a $G$-set $I$ has finite orbits if for every $i$ in $I$ the orbit $Gi$ is finite.

Assume that $I$ is a $G$-set with finite orbits. Let $X$ be a $G$-bornological coarse space.
\begin{ddd}\label{gierjgoergjt9t3t}
We define the \emph{free union}
$\coprod_{i\in I}^{\free}X$ in $G\BC$ as follows:
\begin{enumerate}\item\label{rgoejkrgo34t034t34t34t} The underlying $G$-bornological space of $\coprod_{i\in I}^{\free}X$ coincides with the one of
$\coprod_{i\in I}^{\bd}X$.
  \item\label{ergijio34t34t34t}
The coarse structure of $\coprod^{\free}_{i\in I} X$ is generated by the entourages
$\bigsqcup_{i\in I} U_{i}$ for all families $(U_{i})_{i\in I}$ of coarse entourages of $X$. \qedhere
\end{enumerate}
\end{ddd}

\begin{rem}
The restriction on the $G$-action on $I$ is necessary in order to ensure that the coarse structure described in Definition~\ref{gierjgoergjt9t3t}.\ref{ergijio34t34t34t} is a $G$-coarse structure.

If $I$ is more general, we could modify Point~\ref{ergijio34t34t34t} of Definition~\ref{gierjgoergjt9t3t} and instead
take the induced $G$-coarse structure. But then we may lose the compatibility with the bornology described in Point~\ref{rgoejkrgo34t034t34t34t} of Definition~\ref{gierjgoergjt9t3t}.
\end{rem}

\begin{rem}\label{gieugreg43t}
If $I$ is a $G$-set with finite orbits and $X$ is a $G$-bornological coarse space, then
we have a canonical morphism
\[\coprod_{i\in I}^{\bd}X\to \coprod_{i\in I}^{\free} X\]
  induced by the identity of the underlying set. 

In particular, if we assume that $I$ has the trivial $G$-action and $X$ is a $G$-bornological coarse space, then we have morphisms
\[\coprod_{i\in I} X \to \coprod_{i\in I}^{\bd}X \to \coprod_{i\in I}^{\free}X \]
all induced by the identity map of the underlying set. 
\end{rem}

\subsection{Equivariant coarse homology theories with transfers}
\label{sec_homology_with_transfers}

\newcommand{\Homology}{\mathbf{CoarseHomology}}

We recall the definition of an equivariant coarse
homology theory \cite[Def.~3.10]{equicoarse}. Let $\bC$ be a {cocomplete} stable $\infty$-category and
 $E\colon G\BC\to \bC$ be a functor. 
\begin{ddd}\label{roigwregergerger222}
 $E$ is an \emph{equivariant $\bC$-valued coarse homology theory}  if it satifies:
 \begin{enumerate}
\item $E$ is excisive for equivariant complementary pairs.
\item $E$ is coarsely invariant.
\item $E$ vanishes on flasque $G$-bornological coarse spaces.
\item $E$ is {$u$-}continuous.\qedhere
\end{enumerate}
\end{ddd}
We refer to \cite{equicoarse} for details on the notions appearing in the above definition.

Recall the embedding $\iota\colon \Nerve(G\BC)\to G\BC_{\tr}$ given in 
\cref{thorh5t4554t}.

\begin{ddd}\label{roigwregergerger}
An \emph{equivariant $\bC$-valued coarse homology theory with transfers} is a functor
\[E\colon G\BC_{\tr}^{}\to \bC\] such that $E\circ \iota$ is an equivariant $\bC$-valued coarse homology theory.
\end{ddd}
 The conditions  listed in Definition \ref{roigwregergerger222} determine the  full sub-$\infty$-category  \[G\Homology^{\bC}_{\tr}\subseteq \Fun(G\BC^{}_{\tr},\bC)\ .\]
 of {$\bC$-valued} equivariant coarse homology theories with transfer.

By construction of $G\Sp\cX_{\tr}$ we have the following proposition:
\begin{prop}\label{rhiohff3f34f34f3f}
The pre-composition with $\Yo^{s}_{\tr}$ (see \eqref{greg43t3rerer4t34t}) induces an equivalence
\[\Fun^{{\colim}}(G\Sp\cX_{\tr}, \bC)\xrightarrow{\simeq} G\Homology^{\bC}_{\tr}\ .\]
of the $\infty$-category of equivariant $\bC$-valued coarse homology theories with the $\infty$-category $\Fun^{{\colim}}(G\Sp\cX_{\tr}, \bC)$ of colimit-preserving functors from $G\Sp\cX_{\tr}$ to $\bC$.
\end{prop}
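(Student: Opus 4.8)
The plan is to exploit the observation, already noted above, that the proposition holds by construction: $G\Sp\cX_{\tr}$ is obtained from the presheaf category $\PSh(G\BC_{\tr})$ by Bousfield localization followed by stabilization, and by \eqref{greg43t3rerer4t34t} and \eqref{verjl34g3g} the functor $\Yo^{s}_{\tr}$ is the composite of the Yoneda embedding $\yo_{\tr}$, the localization $\PSh(G\BC_{\tr})\to G\Spc\cX_{\tr}$ (which factors through $\Sh(G\BC_{\tr})$ as $\cL_{\tr}\circ L$), and the stabilization $\Sigma^{\infty}_{+}$. I would simply compose the universal property attached to each of these steps, exactly as in the proof of the non-transfer analogue \cite[Cor.~4.10]{equicoarse}.

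First I would treat the presheaf level: by \cite[Thm.~5.1.5.6]{htt}, precomposition with $\yo_{\tr}$ induces an equivalence $\Fun^{\colim}(\PSh(G\BC_{\tr}),\bC)\simeq\Fun(G\BC_{\tr},\bC)$, under which a functor $E\colon G\BC_{\tr}\to\bC$ corresponds to its colimit-preserving left Kan extension $E_{!}$. Next, invoking \cref{fijweofjojewfoefjeofewfewfewf}, I would pass to a $\kappa$-bounded subcategory of $G\BC_{\tr}$, so that $\Sh(G\BC_{\tr})$ and $G\Spc\cX_{\tr}$ are left Bousfield localizations of $\PSh(G\BC_{\tr})$ at small sets of morphisms — the excision morphisms of \cref{fweoifewfewfew}, together with the morphisms expressing coarse invariance, vanishing on flasques, and $u$-continuity. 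The universal property of Bousfield localizations then identifies $\Fun^{\colim}(G\Spc\cX_{\tr},\bC)$ with the full subcategory of $\Fun^{\colim}(\PSh(G\BC_{\tr}),\bC)$ of functors inverting all of these morphisms. Finally, since $\bC$ is stable and cocomplete, the adjunction $\Stab\colon\Prr^{L}\leftrightarrows\Prr^{L}_{stab}$ and the fact that $\Sigma^{\infty}_{+}$ is its unit on $G\Spc\cX_{\tr}$ give an equivalence $\Fun^{\colim}(G\Sp\cX_{\tr},\bC)\simeq\Fun^{\colim}(G\Spc\cX_{\tr},\bC)$ implemented by precomposition with $\Sigma^{\infty}_{+}$, again as in \cite[Cor.~4.10]{equicoarse}.

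Composing these three steps, precomposition with $\Yo^{s}_{\tr}$ identifies $\Fun^{\colim}(G\Sp\cX_{\tr},\bC)$ with the full subcategory of $\Fun(G\BC_{\tr},\bC)$ consisting of those $E$ whose left Kan extension $E_{!}$ inverts every localizing morphism. The final point is to recognise this subcategory as $G\Homology^{\bC}_{\tr}$: each localizing morphism is $\yo_{\tr}$ applied to a morphism of $G\BC$, or is assembled from representables $\yo_{\tr}(Y)$ of objects $Y$ of $G\BC$ (the complementary-pair colimits, the $u$-continuity colimit, and so on), so that inverting them constrains only the restriction $E\circ\iota\colon G\BC\to\bC$. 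By the same translation as in \cite[Sec.~4]{equicoarse} — there is nothing new here beyond what is proved for $G\BC$ — this is equivalent to $E\circ\iota$ being excisive, coarsely invariant, vanishing on flasques and $u$-continuous, i.e. to $E$ being an equivariant $\bC$-valued coarse homology theory with transfers, which is exactly the defining condition of $G\Homology^{\bC}_{\tr}$.

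I do not expect a genuine mathematical obstacle; the statement is true by design and every universal property invoked is standard. The one point that needs care — and it is the same one already present in the non-transfer case — is set-theoretic: one must first cut down to a subcategory of $G\BC_{\tr}$ whose coarse structures and indexing sets of big families are bounded by a fixed regular cardinal $\kappa$ (\cref{fijweofjojewfoefjeofewfewfewf}), so that $G\Spc\cX_{\tr}$ and $G\Sp\cX_{\tr}$ are presentable and the localizations are at \emph{small} sets of morphisms between $\kappa$-compact objects, and then one checks, as in \cite[Cor.~4.10]{equicoarse}, that the universal properties of these localizations and of the stabilization remain valid for the not-necessarily-presentable target $\bC$. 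Everything else is the bookkeeping of \cite[Sec.~4]{equicoarse}, transported along $\iota\colon\Nerve(G\BC)\to G\BC_{\tr}$.
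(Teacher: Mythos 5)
Your proposal is correct and follows essentially the same route as the paper, which simply remarks that the statement holds by construction of $G\Sp\cX_{\tr}$ and that the argument is completely analogous to the non-transfer case (\cite[Cor.~4.6]{buen}, resp.\ \cite[Cor.~4.10]{equicoarse}): compose the universal properties of the Yoneda embedding, of the Bousfield localizations enforcing the four axioms (with the cardinal bound of \cref{fijweofjojewfoefjeofewfewfewf} to keep everything small), and of stabilization against the stable cocomplete target $\bC$. Your additional observation that the localizing morphisms only constrain $E\circ\iota$, hence that the inverted functors are exactly the objects of $G\Homology^{\bC}_{\tr}$, is precisely the point the paper leaves implicit.
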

The argument is completely analogous to the one for \cite[Cor.~4.6]{buen}.

Let  $E\colon G\BC_{\tr} \to \bC$ be  an equivariant coarse homology theory with transfers.
\begin{kor}\label{ewfiowef234rr34t} The functor $E\colon G\BC^{}_{\tr} \to \bC$ is additive.
\end{kor}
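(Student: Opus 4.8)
The claim is that any equivariant coarse homology theory with transfers $E\colon G\BC_{\tr}\to\bC$ is an additive functor. The plan is to reduce additivity to two facts: first, that $G\BC_{\tr}$ is semi-additive (Lemma~\ref{hfweuihfweifefefeef}), so that it makes sense to ask whether $E$ preserves the finite (co)products and the zero object; and second, that $E$ actually preserves these, which will follow from the excision and vanishing axioms that $E\circ\iota$ satisfies together with the explicit description of finite coproducts in $\Ho(G\BC_{\tr})$ given in the proof of Lemma~\ref{hfweuihfweifefefeef}. Concretely, an additive functor between semi-additive $\infty$-categories is the same as a functor preserving finite coproducts (equivalently finite products, equivalently zero object and binary coproducts), so it suffices to check these.

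First I would note that the zero object of $G\BC_{\tr}$ is the empty space $\emptyset$, and $E(\emptyset)\simeq E(\iota(\emptyset))\simeq 0$ because $E\circ\iota$ is a coarse homology theory and hence satisfies excision, which forces $E(\emptyset)\simeq 0$. Next, for $G$-bornological coarse spaces $X$ and $Y$, recall from the proof of Lemma~\ref{hfweuihfweifefefeef} that the coproduct $X\sqcup Y$ taken in $G\BC$ also represents the coproduct (and the product) of $X$ and $Y$ in $\Ho(G\BC_{\tr})$, with structure maps $\iota(i)$ and $\iota(j)$ where $i\colon X\to X\sqcup Y$ and $j\colon Y\to X\sqcup Y$ are the canonical inclusions in $G\BC$. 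Since $E\circ\iota$ is a coarse homology theory, it is excisive for equivariant complementary pairs; applying this to the complementary pair $(X,(Y))$ on $X\sqcup Y$ (where the subsets $X$ and $Y$ are disjoint, so the intersection part is empty) shows that
\[
E(X)\oplus E(Y)\xrightarrow{E(\iota(i))+E(\iota(j))} E(X\sqcup Y)
\]
is an equivalence. Hence $E$ carries the coproduct diagram in $G\BC_{\tr}$ to a coproduct diagram in $\bC$.

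Putting these together, $E$ preserves the zero object and binary coproducts, hence all finite coproducts, hence — since both source and target are semi-additive, so finite coproducts agree with finite products and the comparison of (co)products with biproducts is an equivalence — $E$ is additive. I expect the main (and only mildly subtle) point to be the bookkeeping that identifies the coproduct in $G\BC_{\tr}$ with $X\sqcup Y$ and its structure maps with $\iota(i)$, $\iota(j)$, and then recognizing the relevant complementary pair so that excision for $E\circ\iota$ applies verbatim; all of this is already supplied by the proof of Lemma~\ref{hfweuihfweifefefeef} and the excision axiom, so no genuinely new argument is needed.
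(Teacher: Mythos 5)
Your proof is correct and is in substance the same argument as the paper's: the paper deduces the corollary from Lemma~\ref{gfkiogergergregrge}, whose proof is exactly your computation (vanishing on $\emptyset$ plus excision for the complementary pair $(X,(Y))$ on $X\sqcup Y$, combined with the identification of coproducts in $G\BC_{\tr}$ from Lemma~\ref{hfweuihfweifefefeef}). You merely inline that excision argument directly for $E$ instead of citing the additivity of $\Yo^{s}_{\tr}$ and transporting it along the factorization, which is a negligible difference.
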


\begin{proof}
This follows from Lemma \ref{gfkiogergergregrge}.
\end{proof}

Pull-back along the inclusion $\iota\colon \Nerve(G\BC)\to G\BC^{}_{\tr}$ sends  equivariant coarse homology theories with transfers to equivariant coarse homology theories in the sense considered in \cite{equicoarse}. Applied to $\Yo^{s}_{\tr}\circ \iota$ we get a colimit-preserving functor
\[\iota^{Mot}\colon G\Sp\cX \to G\Sp\cX_{\tr}\]
such that
\begin{equation*}
\Yo^{s}_{\tr}\circ \iota\simeq \iota^{Mot}\circ \Yo^{s}\ .
\end{equation*}

\begin{rem}\label{eifowffwefwefewf} 
For every $G$-set $I$ with finite $G$-orbits  and every $G$-bornological coarse space $X$ we have a version of the transfer
\begin{equation}\label{brtb4t45g4g45g4}
 {\tr_{X,I}^{\free}} \colon  X \xrightarrow{ \tr_{X,I} }   \coprod_{i\in I}^{\bd} X \to \coprod_{i\in I}^{\free} X
\end{equation}for the free union in $G\BC^{}_{\tr}$.
Furthermore, for every $G$-fixed point $j$ in $I$ we have the generalized morphism \begin{equation}\label{FEWFIOJIO23ROI}
 p^{\free}_{j}\colon  \coprod_{i\in I}^{\free} X \to X  \end{equation}represented by the span
\[\xymatrix{&X\ar@{->>}[dl]\ar[dr]^{\id_{X}}& \\ \coprod_{i\in I}^{\free} X &&X}\]
whose left leg is the inclusion of the $j$'th component.

If $E$ is now an equivariant coarse homology theory with transfers, then we have   induced morphisms
\[{E(\tr_{X,I}^{\free})} \colon  E(X)\to E  \big(\coprod_{i\in I}^{\free} X\big)\ , \quad E(p^{\free}_{j})\colon E \big( \coprod_{i\in I}^{\free} X\big)    \to E(X) \ .\]
Applying excision for the equivariant  coarsely excisive decomposition $( X_{j}, \coprod^{\free}_{i\in I\setminus\{j\}} X)$ of $\coprod_{i\in I}^{\free} X$  we get the right vertical arrow in the   diagram 
\begin{equation}\label{jrirhviuwevwevewv}
\xymatrix{
E(X)\ar[rrrr]^-{E(\tr^{\free}_{X,I}) }\ar@{=}[d]&&&& E  (\coprod_{i\in I}^{\free} X)\ar[d]^-{\simeq}\ar[rd]^-{E(p^{\free}_{j})}&\\E(X)\ar[rrrr]^-{\id_{E(X)}\oplus E(\tr^{\free}_{X,I\setminus\{j\}})}&&&& E(X)\oplus  E(\coprod^{\free}_{i\in I\setminus\{j\}} X)\ar[r]_-{\pr_{1}}&E(X)
}
\end{equation}
which commutes in view of \cref{regopergergregrege}.
\end{rem}

\begin{ex}\label{iojweoifewfewfwef}
Let $E$ be an equivariant $\bC$-valued coarse homology theory with transfers and $Q$ be any $G$-bornological coarse space. Then in view of the \cref{hio3t45ththr} and by \cite[Sec.~4.3]{equicoarse} the twist of $E$ by $Q$, which is defined as the composition
\[E(-\otimes Q)\colon G\BC^{}_{\tr}\xrightarrow{-\otimes Q} G\BC^{}_{\tr}\xrightarrow{E}\bC \ , \] is again an equivariant $\bC$-valued coarse homology theory with transfers.
For fixed $Q$, we thus get a colimit-preserving functor
\[ E(-\otimes Q) \colon G\Sp\cX_{\tr}\to \bC\ .\]

Using the bifunctor \eqref{clkjcoecwecwecwcec} we see that this construction is also functorial in $Q$ and satisfies the axioms of an equivariant coarse homology theory in this variable.  In order to see the last assertion, note that a functor $G\BC\to \Fun^{\colim}(G\Sp\cX_{\tr},\bC)$ is a coarse homology theory if and only if its evaluation  at $\Yo^{s}_{\tr}(X)$ for each object $X$ of $G\BC_{\tr}$ is a coarse homology theory. The objects of $G\BC_{\tr}$ are the objects of $G\BC$, and we already know that  twisting with a $G$-bornological coarse space preserves equivariant coarse homology theories by  \cite[Sec. 4.3]{equicoarse}.
Consequently, we get a bifunctor
\[ E(-\otimes-) \colon  G\Sp\cX_{\tr}\otimes G\Sp\cX\to \bC \]
which preserves colimits in each argument.
\end{ex}

We will show that if an equivariant coarse homology theory $E$ has transfers, then it has weak transfers \cite[Def.~2.4]{trans}.

We consider a family $(X_{i})_{i\in I}$ of $G$-bornological coarse spaces and a $G$-fixed point $j$ in $I$, and we set $I^{\prime}_{j}:=I\setminus  {\{j\}}$.
Then the pair of invariant subsets  $(X_{j}, \coprod^{\free}_{i\in I^{\prime}_{j}}X_{i})$ of  $\coprod^{\free}_{i\in I}X_{i}$ is an invariant coarsely excisive decomposition (see \cite[Def.~4.13]{equicoarse}).
If $E$ is an equivariant coarse homology theory, then
$E$ satisfies excision for invariant coarsely excisive decompositions   \cite[Cor.~4.14]{equicoarse}. Therefore    we can define a projection
\begin{equation}\label{brtboijogr4g45g5}
p^{ex}_{j}\colon E \big( \coprod^{\free}_{i\in I}X_{i} \big) \simeq E(X_{j})\oplus E \big( \coprod^{\free}_{i\in I^{\prime}_{j}}X_{i} \big) \to E(X_{j})\ ,
\end{equation} 
where the superscript $ex$ is a reminder for the fact that the morphism uses excision for~$E$.

Let $I$ be a set with the trivial $G$-action and let $E\colon G\BC\to \bC$ be an equivariant coarse homology theory.
Then we  define a functor
\[E^{I}\colon G\BC\to \bC\ , \quad X\mapsto E \big( \coprod^{\free}_{i\in I}X \big)\ .\]
For every   $j$  in $I$ the projection \eqref{brtboijogr4g45g5}    provides a  natural transformation of functors
\[p^{ex}_{j}\colon E^{I}\to E \ .\]  

Let $E$ be  an equivariant coarse homology theory.
\begin{ddd}\label{rgioggergergergerg}
$E$ has {\em weak transfers} for $I$ if there exists a natural transformation
\[\tr_{I}\colon E\to E^{I}\] such that
\begin{equation}\label{bvtrb4g4tb}
p^{ex}_{j}\circ \tr_{I}\simeq \id_{E}
\end{equation} for every $j$ in $I$.
\end{ddd}
  
\begin{lem}\label{eoifuwoif23r2r2r}
If $E$ admits transfers (see \cref{efuewhiffwefweffeff}), then $E$ has weak transfers.
\end{lem}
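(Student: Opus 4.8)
The plan is to obtain the weak transfer by transporting a canonical transfer morphism of $G\BC_{\tr}$ down to $\bC$ along the given lift $E_{\tr}$ of $E$. Fix once and for all an equivalence $E\simeq E_{\tr}\circ\iota$ as in \cref{efuewhiffwefweffeff}; composing on the right with the ordinary functor $\coprod^{\free}_{i\in I}(-)\colon G\BC\to G\BC$ then also identifies $E^{I}$ with $E_{\tr}\circ\iota\circ\coprod^{\free}_{i\in I}(-)$. Hence it suffices to promote the transfers $\tr^{\free}_{X,I}$ of \eqref{brtb4t45g4g45g4} to a natural transformation $\tau\colon\iota\Rightarrow\iota\circ\coprod^{\free}_{i\in I}(-)$ of functors $\Nerve(G\BC)\to G\BC_{\tr}$, and then to define $\tr_{I}$ as the natural transformation $E\Rightarrow E^{I}$ obtained by applying $E_{\tr}$ to $\tau$ and transporting along the chosen equivalences.

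To construct $\tau$ I would factor it as $\iota\Rightarrow\iota\circ\coprod^{\bd}_{i\in I}(-)\Rightarrow\iota\circ\coprod^{\free}_{i\in I}(-)$. The second arrow is $\iota$ applied to the natural transformation of \cref{gieugreg43t}, whose components are identities on underlying sets, so it is manifestly natural. For the first arrow, note that $\coprod^{\bd}_{i\in I}(-)$ is the functor $-\otimes I_{min,min}$, which by \cref{hio3t45ththr} extends to an endofunctor of $G\BC_{\tr}$ restricting along $\iota$ to $\iota\circ\coprod^{\bd}_{i\in I}(-)$; thus it is enough to give a natural transformation $\id_{G\BC_{\tr}}\Rightarrow(-\otimes I_{min,min})$ with components $\tr_{X,I}$ and restrict along $\iota$. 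On homotopy categories this reduces to checking, for a morphism $f\colon X\to Y$ of $G\BC$, that the composite spans computing $\tr_{Y,I}\circ f$ and $(f\otimes\id_{I_{min,min}})\circ\tr_{X,I}$ are isomorphic — a direct unwinding of the composition law of \cref{GROIERG}, using that the pullback of $I_{min,min}\otimes Y\to Y$ along $f$ is $I_{min,min}\otimes\hat X$ and that the two resulting bornologies on the common underlying $G$-set coincide. To upgrade this pointwise coherence into an actual functor $\Nerve(G\BC)\times\Delta^{1}\to G\BC_{\tr}$ I would invoke that $G\BC_{\tr}$ is $2$-coskeletal (\cref{lem:coskeletal}): such a functor is equivalent to compatible data in simplicial degrees $\le2$, where the only substantive condition is the admissibility of the two squares occurring in each $2$-simplex, and that follows immediately from the definitions.

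It remains to verify $p^{ex}_{j}\circ\tr_{I}\simeq\id_{E}$ for each $j\in I$. Since $E_{\tr}\circ\iota$ is a coarse homology theory, $E_{\tr}$ is a coarse homology theory with transfers, so diagram \eqref{jrirhviuwevwevewv} applies to $E_{\tr}$. Translating along $E\simeq E_{\tr}\circ\iota$: the components of $\tr_{I}$ are the maps $E_{\tr}(\tr^{\free}_{X,I})$, the commuting triangle in \eqref{jrirhviuwevwevewv} identifies $p^{ex}_{j}$ with $E_{\tr}(p^{\free}_{j})=\pr_{1}\circ(\text{excision equivalence})$, and the commuting square gives $(\text{excision equivalence})\circ E_{\tr}(\tr^{\free}_{-,I})\simeq\id_{E}\oplus E_{\tr}(\tr^{\free}_{-,I\setminus\{j\}})$; composing yields $p^{ex}_{j}\circ\tr_{I}\simeq\pr_{1}\circ(\id_{E}\oplus E_{\tr}(\tr^{\free}_{-,I\setminus\{j\}}))\simeq\id_{E}$. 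Since the transfers $\tr^{\free}_{-,I}$, the projections $p^{\free}_{j}$ and the excision equivalence are all natural in $X$, this is natural in $X$, so $p^{ex}_{j}\circ\tr_{I}\simeq\id_{E}$ as transformations of functors $G\BC\to\bC$. I expect the only genuinely non-formal point to be the construction of $\tau$: the span bookkeeping is elementary, but some care is needed to make the isomorphisms of spans cohere into a map of simplicial sets, which is precisely what $2$-coskeletality of $G\BC_{\tr}$ provides; everything downstream is formal manipulation of \eqref{jrirhviuwevwevewv}.
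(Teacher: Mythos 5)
Your proposal is correct and follows essentially the same route as the paper: define $\tr_I$ by applying $E_{\tr}$ to the natural transformation $\tr^{\free}_{-,I}\colon \id\Rightarrow\coprod^{\free}_{i\in I}(-)$ and deduce $p^{ex}_j\circ\tr_I\simeq\id_E$ from $p^{\free}_j\circ\tr^{\free}_{X,I}=\id_X$ together with the identification $E_{\tr}(p^{\free}_j)\simeq p^{ex}_j$ coming from diagram~\eqref{jrirhviuwevwevewv}. The only difference is that you spell out the coherence of $\tr^{\free}_{-,I}$ as a map of simplicial sets via $2$-coskeletality, a point the paper dispatches with the remark that $\tr_{X,I}$ is natural in $X$.
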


\begin{proof}
For every set $I$  and $G$-bornological coarse space $X$  we have  morphisms   $\tr_{X,I}^{\free}$ and $p_{j}^{\free}$, see \eqref{brtb4t45g4g45g4} and \eqref{FEWFIOJIO23ROI}, which satisfy the relation 
\begin{equation}\label{ewfewf32}
p_{j}^{\free}\circ \tr_{X,I}^{\free}=\id_{X}\ .
\end{equation} 
  If $E$ admits transfers, then by the commutativity of the right triangle in \eqref{jrirhviuwevwevewv} (where $E$ is replaced by the extension $E_{\tr}$ which exists by assumption) we have the equivalence
  \begin{equation}\label{vwojofijef23f}
E_{\tr}(p_{j}^{\free})\simeq p^{ex}_{j}
\end{equation} for every $j$ in $I$.  Here and below we implicitly identify the values of $E_{\tr}$ and $E$ on objects.

The morphism $\tr_{X,I}$ is  natural in $X$. We can therefore form the natural transformation
\[\tr^{\free}_{-,I}\colon \id_{G\BC^{}_{\tr}}\to \coprod^{\free}_{i\in I} -  \colon G\BC^{}_{\tr}\to G\BC^{}_{\tr}\]
of endofunctors of $G\BC^{}_{\tr}$.
We now define the natural transformation 
\[\tr_{I}:=E_{\tr}(\tr^{\free}_{-,I})\colon E\to E^{I}\ .\]
The relation \eqref{bvtrb4g4tb} is implied by \eqref{ewfewf32} and \eqref{vwojofijef23f}.
\end{proof}

Theorem \ref{riojgerogjo23rwefwefwef}  in combination with Lemma \ref{eoifuwoif23r2r2r} has the following corollary.
\begin{kor}\label{rgierog0uj9034tg3g}
\mbox{}
\begin{enumerate} \item Equivariant coarse ordinary homology $H\cX^{G}$ has weak transfers.
\item  Equivariant coarse algebraic $K$-homology $K\bA\cX^{G}$ with coefficients in  an additive category $\bA$ with a strict $G$-action  has weak transfers.
\end{enumerate}
\end{kor}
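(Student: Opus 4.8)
The plan is to combine the two results just obtained. By \cref{riojgerogjo23rwefwefwef}, each of the equivariant coarse homology theories $H\cX^{G}$ and $K\bA\cX^{G}$ admits transfers in the sense of \cref{efuewhiffwefweffeff}; that is, each extends along the canonical embedding $\iota\colon \Nerve(G\BC)\to G\BC^{}_{\tr}$ to a functor $E_{\tr}\colon G\BC^{}_{\tr}\to\bC$ with $E_{\tr}\circ\iota\simeq E$. On the other hand, \cref{eoifuwoif23r2r2r} shows that any equivariant coarse homology theory which admits transfers has weak transfers in the sense of \cref{rgioggergergergerg}.

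First I would apply \cref{riojgerogjo23rwefwefwef} with $E=H\cX^{G}$ to obtain an extension $(H\cX^{G})_{\tr}$, and then feed this extension into \cref{eoifuwoif23r2r2r}: for every set $I$ with the trivial $G$-action this yields the natural transformation $\tr_{I}:=(H\cX^{G})_{\tr}(\tr^{\free}_{-,I})\colon H\cX^{G}\to (H\cX^{G})^{I}$, and the relation $p^{ex}_{j}\circ\tr_{I}\simeq\id$ for every $j\in I$ follows from $p^{\free}_{j}\circ\tr^{\free}_{X,I}=\id_{X}$ together with the identification $(H\cX^{G})_{\tr}(p^{\free}_{j})\simeq p^{ex}_{j}$ provided by the commuting triangle in \eqref{jrirhviuwevwevewv}. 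I would then repeat these two steps verbatim with $E=K\bA\cX^{G}$, using the extension supplied again by \cref{riojgerogjo23rwefwefwef}.

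There is no real obstacle in this deduction; all of the content sits upstream, in the proof of \cref{riojgerogjo23rwefwefwef} (where the algebraic $K$-theory case is the genuinely involved one and relies on the preparations cited there) and in \cref{eoifuwoif23r2r2r}. The one point worth keeping track of is bookkeeping: \emph{weak transfers} in the sense of \cite[Def.~2.4]{trans} are formulated for sets $I$ carrying the trivial $G$-action, which is exactly the setting of \cref{rgioggergergergerg} and \cref{eoifuwoif23r2r2r}, so the input of \cref{riojgerogjo23rwefwefwef} and the desired conclusion match without any further argument.
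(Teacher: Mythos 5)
Your proposal is correct and is exactly the paper's argument: the corollary is stated as an immediate consequence of \cref{riojgerogjo23rwefwefwef} (existence of transfers for $H\cX^{G}$ and $K\bA\cX^{G}$) combined with \cref{eoifuwoif23r2r2r} (admitting transfers implies weak transfers). Your recap of the mechanism of \cref{eoifuwoif23r2r2r} via \eqref{jrirhviuwevwevewv} matches the paper's proof of that lemma, so nothing further is needed.
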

For $K\bA\cX^{G}$ an alternative and independent  argument is given in  \cite[Ex.~2.5]{trans}.

Let $E\colon G\BC_{\tr}\to \bC$ be an  equivariant  $\bC$-valued  coarse homology theory   with transfers.
\begin{ddd}\label{sdj2398sd}
$E$ is called \emph{strongly additive} if for every family $(X_{i})_{i\in I}$ of $G$-bornological coarse spaces  the morphism
\begin{equation}
\label{iuztrewq}
E\big( \coprod_{i\in I}^{\free}X_{i} \big) \to \prod_{j\in I} E(X_{j})\ .
\end{equation}
induced by  the family $(E(p^{\free}_{j}))_{j\in I}$, see \eqref{FEWFIOJIO23ROI}, is an equivalence.
 \end{ddd}

 \begin{rem}\label{fwefewifjioj32or}
An equivariant coarse homology theory with transfers $E$ is strongly additive if and only if the underlying equivariant coarse homology theory $E \circ \iota$ is strongly additive in the sense of \cite[Def.~3.12]{equicoarse}. This follows from the commutativity of the right triangle in \eqref{jrirhviuwevwevewv} which compares the projection $E(p_{j}^{\free})$ with the projection $p_{j}$ defined by excision  (the down-right composition in the triangle)  used in the reference.
 \end{rem}
 
 \begin{ex}
 Examples of strongly additive coarse homology theories with transfers are coarse algebraic $K$-homology and coarse ordinary homology, see \cref{sec:algK,sec:ordH}.
\end{ex}

\section{Examples}\label{ergpoerpgergerg}

In this section we show that equivariant coarse algebraic $K$-homology and equivariant coarse ordinary homology extend to equivariant coarse homology theories with transfers.

\subsection{Functors out of \texorpdfstring{$G\BC^{}_{\tr}$}{GBornCoarse with transfers}}\label{ergiuergergerge}
 
In order to construct coarse homology theories with transfers (see \cref{roigwregergerger}) we must construct functors out of the $\infty$-category $G\BC^{}_{\tr}$. Since this category is  given in   \cref{olergregregegerg}  explicitly as some  simplicial set, there are essentially two options. 
The simpler option is to start with the canonical functor
\[G\BC^{}_{\tr}\to \Ho(G\BC^{}_{\tr})
\]
and then to construct ordinary functors out of $\Ho(G\BC_{\tr})$. This option works in the case of the construction of equivariant ordinary coarse homology with transfers in \cref{sec:ordH}. The more complicated option is to describe directly a map of simplicial sets with domain 
 $G\BC^{}_{\tr}$. In the case of the construction of equivariant coarse algebraic  $K$-homology with coefficients in a $G$-equivariant additive category  in \cref{fwljfewkfjewfewfewfwf} the target of this map is the nerve of the strict $(2,1)$-category $\Add$ of additive categories.
 
The main goal of the present section is to   prepare the construction of coarse algebraic   $K$-homology with transfers by describing the data necessary to define a  functor  from 
$G\BC^{}_{\tr}$ to the nerve of some strict $(2,1)$-category $\bC$.

Applying the usual nerve functor $\Nerve\colon \Cat\to \sSet$ to the morphism categories  we get a category $\Nerve(\bC)$ which is enriched 
 in Kan complexes. We can now further apply the homotopy coherent nerve functor $\mathcal{N}$.
 In this way we get an $\infty$-category which, following \cite[Def.~A.12]{Gepner:2015aa}, will be denoted by $\Nerve_{2}(\bC)$. In the following we describe sufficient data  (justified by  \cref{ergioergergrege} below)  for a functor \begin{equation}\label{fuief3g43g3g3g}
\bV_{\tr}\colon G\BC^{}_{\tr}\to \Nerve_{2}(\bC)\ .
\end{equation}
 Suppose we are given the following data:
 \begin{enumerate}
 \item a functor $\bV\colon G\BC\to u(\bC)$, where $u(\bC)$ is the $1$-category obtained from $\bC$ by forgetting the rest of the $2$-category structure;
 \item for every bounded covering $w\colon W\to Z$ (see \cref{ergierog43t34t34t34t}) a $1$-morphism
\[w^{*}\colon \bV(Z)\to \bV(W)\ ;\]
 \item for every two composable bounded coverings $w\colon W\to Z$ and $v\colon V\to W$ a $2$-isomorphism  \[a_{v,w}\colon (w\circ v)^{*}\Rightarrow v^{*}\circ w^{*}\ ;\]
 \item\label{wefiowegergergergergerg} for every admissible square 
\[\xymatrix{&W\ar[dr]^{f}\ar@{->>}[dl]_{w}&\\V\ar[dr]_{g}&&U\ar@{->>}[dl]^{u}\\&Z&}\]
  of $G$-bornological coarse spaces (see \cref{gergergeg}) a $2$-morphism  \[b_{g,u}\colon f_{*}\circ w^{*}\Rightarrow u^{*}\circ g_{*}\ ,\]
 where we write $f_*$ and $g_*$ for $\bV(f)$ and $\bV(g)$ respectively.
\end{enumerate}
We assume that this data satisfies the following conditions:
 \begin{enumerate}
 \item\label{fuwif23fwfwef} 
 If the bounded covering $w\colon W\to Z$ is an isomorphism of the underlying $G$-coarse spaces, then we require that $w^{*}=(w^{-1})_{*}$.
 This is possible since  the inverse of a bornological bijection is proper and hence    $w^{-1}\colon Z\to W$ is a morphism of $G$-bornological coarse spaces.   \item\label{efiewoffewfwe2} If two composable bounded coverings $w\colon W\to Z$ and $v\colon V\to W$ are isomorphisms of the underlying $G$-coarse spaces, then $\alpha_{v,w}$ is the identity of $(v^{-1})_{*}\circ (w^{-1})_{*}=((w\circ v)^{-1})_{*}$. Note that this is possible  to require by Condition \ref{fuwif23fwfwef}.
 \item For every three composable bounded coverings $w\colon W\to Z$, $v\colon V\to W$, and  $u\colon U\to V$ the square \begin{equation}\label{gkengkergergreg}
 \xymatrix{(w\circ v\circ u)^*\ar@{=>}[r]^{a_{vu,w}}\ar@{=>}[d]_{a_{u,wv}}&(v\circ u)^*\circ w^*\ar@{=>}[d]^{a_{u,v}\circ w^*}\\
 u^*\circ(w\circ v)^*\ar@{=>}[r]^{u^*\circ a_{v,w}}&u^*\circ v^*\circ w^*}
\end{equation}
commutes.
\item\label{efiewoffewfwe21} In the case of an admissible square with morphisms $w,f,g,u$, if $u$ (and therefore also $w$) is an isomorphism of the underlying $G$-coarse spaces, then we require that $b_{g,u}$ is the identity of $f_{*}\circ (w^{-1})_{*} =(u^{-1})_{*}\circ g_{*}$.
\item\label{efiewoffewfwe211} In the case of an admissible square with morphisms $w,f,g,u$, if $f$ and $g$ are identities and therefore $w=u$, then we require that $b_{g,u}$ is the identity of {$w^*=u^*$.}
\item For every diagram
\[\xymatrix{&&&T\ar@{->>}[dl]^{t}\ar[dr]^{m}&&&\\&&U \ar[dr]^{h}&&S\ar[dr]^{n}\ar@{->>}[dl]^{s}&&\\& &&V \ar[dr]^{g}&&R\ar@{->>}[dl]^{r} &\\&&&&Z&&}\] 
consisting of two admissible squares we have the relation 
\begin{equation}\label{g34glkjkgl234g34}
b_{gh,r}=(b_{g,r}\circ h_*)(n_*\circ b_{h,s})\ .
\end{equation}
\item For every diagram  
\[\xymatrix{&&&T\ar@{->>}[dl]^{t}\ar[dr]^{m}&&&\\&&U\ar@{->>}[dl]^{u}\ar[dr]^{h}&&S \ar@{->>}[dl]^{s}&&\\&W \ar[dr]^{f}&&V\ar@{->>}[dl]^{v} && &\\&&Y&&&&}\] 
consisting of  two admissible squares we have the relation \begin{equation}\label{fce32r32r23r5}
(a_{s,v}\circ f_*)b_{f,vs}= (s^*\circ b_{f,v})(b_{h,s}\circ u^*)(m_*\circ a_{t,u})\ .
\end{equation}
  \end{enumerate}

 \begin{lem}\label{ergioergergrege}
 The data as described above determines a functor
 \[ \bV_{\tr}\colon G\BC_{\tr}\to\Nerve_{2}(\bC)\ ,\]
  such that the diagram
 	\begin{equation}\label{btoijoi4g45g45g}\xymatrix{ G\BC_{\tr}\ar[r]^-{\bV_{\tr}}&\Nerve_2(\bC)\\
 		G\BC\ar[u]^\iota\ar[r]^-{\bV}&\Nerve(u(\bC))\ar[u]}
		\end{equation}
 	commutes.
 \end{lem}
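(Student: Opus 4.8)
The plan is to construct $\bV_{\tr}$ simplex by simplex and to reduce the verification of functoriality to dimension three. Recall from \cref{rgeuouoifweffwefwe} that an $n$-simplex of $G\BC_{\tr}$ is a functor $X\colon\Tw[n]\to\wt{G\BC}$ whose left legs $X_{i,j}\to X_{i,j-1}$ are bounded coverings, whose right legs $X_{i,j}\to X_{i+1,j}$ are proper and bornological, and all of whose elementary $2\times2$ sub-squares are admissible; and recall that, by the rigidification adjunction, an $n$-simplex of $\Nerve_2(\bC)=\mathcal{N}(\Nerve(\bC))$ is a simplicial functor $\mathfrak{C}[\Delta^n]\to\Nerve(\bC)$ --- concretely, a choice of objects $\bV_{\tr}(X)_i$ for $0\le i\le n$, of a ``direct'' $1$-morphism $\bV_{\tr}(X)_i\to\bV_{\tr}(X)_j$ for each $i\le j$, and of $2$-isomorphisms filling the nerves $\Nerve(P_{i,j})$, where $P_{i,j}$ is the poset of subsets of $\{i,i+1,\dots,j\}$ containing $i$ and $j$. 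I would set $\bV_{\tr}(X)_i:=\bV(X_i)$; and, writing $w_{i,j}\colon X_{i,j}\to X_i$ and $f_{i,j}\colon X_{i,j}\to X_j$ for the composites of the left, resp.\ right, legs along the staircase in $\Tw[n]$ from $(i,j)$ down to $(i,i)$, resp.\ to $(j,j)$ --- so that $w_{i,j}$ is a bounded covering and $f_{i,j}$ a proper bornological morphism of $G\BC$ by the composition properties of bounded coverings and of proper bornological maps --- I would take $\bV(f_{i,j})\circ w_{i,j}^{*}$ as the direct $1$-morphism from $i$ to $j$.

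For the $2$-cells filling $\Nerve(P_{i,j})$ I would use three kinds of building block: the $2$-isomorphisms $a$, which reassociate the composite bounded coverings $w_{i,j}$; strict functoriality of $\bV$, which reassociates the composite morphisms $f_{i,j}$ on the nose; and the $2$-morphisms $b$, which turn each elementary admissible square of $X$ from the form $f_*\circ w^*$ into $u^*\circ g_*$. The poset $P_{i,j}$ has least element $\{i,j\}$, corresponding to the direct $1$-morphism, and top element $\{i,\dots,j\}$, corresponding to the fully decomposed $1$-morphism running through all the $\bV(X_k)$; a generating edge $S\to S\cup\{k\}$ of $\Nerve(P_{i,j})$ should be sent to the $2$-cell inserting $\bV(X_k)$ into the factorization, built by applying $a$ to split off the relevant surplus left leg, then the $b$ of the relevant admissible sub-square of $X$, then strict functoriality of $\bV$; the orientations are irrelevant since every $2$-cell of $\bC$ is invertible. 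By the normalization conditions \ref{fuwif23fwfwef}, \ref{efiewoffewfwe2}, \ref{efiewoffewfwe21} and \ref{efiewoffewfwe211} all of these building blocks degenerate to identities on degenerate simplices, so the assignment is compatible with degeneracies; compatibility with faces is immediate, faces of $X$ being restrictions along the maps $\Tw[m]\to\Tw[n]$.

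The remaining task --- and the hard part --- is to check that this data genuinely defines a map of simplicial sets $G\BC_{\tr}\to\Nerve_2(\bC)$. Here I would invoke that $G\BC_{\tr}$ is $2$-coskeletal (\cref{lem:coskeletal}) and that $\Nerve_2(\bC)$ is $3$-coskeletal, the mapping spaces of $\Nerve(\bC)$ being nerves of groupoids and hence $1$-truncated; consequently $\bV_{\tr}$ is determined by, and the coherences need only be checked on, simplices of dimension $\le3$. For $n\le2$ there is nothing to verify beyond the definition, so the whole content lies in the case $n=3$: there $\Nerve(P_{0,3})$ is the poset square, and one must check that the resulting square of $2$-isomorphisms commutes in the groupoid $\bC(\bV(X_0),\bV(X_3))$, compatibly with the faces $\Nerve(P_{0,1})\times\Nerve(P_{1,3})$, $\Nerve(P_{0,2})\times\Nerve(P_{2,3})$ and $\Nerve(P_{0,1})\times\Nerve(P_{1,2})\times\Nerve(P_{2,3})$. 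This reduces to three local identities, each matching a configuration occurring inside a $3$-simplex: the pentagon \eqref{gkengkergergreg} for a triple of composable bounded coverings, relation \eqref{g34glkjkgl234g34} for a vertical stack of two admissible squares, and relation \eqref{fce32r32r23r5} for an admissible square abutting a composite of bounded coverings. I expect the essential obstacle to be precisely this dimension-three bookkeeping: tracking which admissible square and which composite of bounded coverings each cube edge encodes and matching it to \eqref{gkengkergergreg}, \eqref{g34glkjkgl234g34}, \eqref{fce32r32r23r5}; once $n=3$ is settled, everything in dimension $\ge4$ is formal by coskeletality.

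Finally I would verify the commutativity of \eqref{btoijoi4g45g45g}. By \cref{voiehjoriverververv}, $\iota(f)$ for a morphism $f\colon X\to Y$ of $G\BC$ is the span whose left leg is the bounded covering $w\colon\hat X\to X$ that is an isomorphism of the underlying $G$-coarse spaces and whose right leg is $\hat f$; condition \ref{fuwif23fwfwef} gives $w^{*}=(w^{-1})_{*}=\bV(w^{-1})$, so $\bV_{\tr}(\iota(f))=\bV(\hat f)\circ\bV(w^{-1})=\bV(\hat f\circ w^{-1})=\bV(f)$ since $\hat f\circ w^{-1}=f$ in $G\BC$. On the higher simplices of $\Nerve(G\BC)$ the spans in the image of $\iota$ again have left legs that are isomorphisms of the underlying $G$-coarse spaces, and the admissible squares occurring (see the $3$-simplex displayed after \cref{thorh5t4554t}) are of the shapes covered by conditions \ref{efiewoffewfwe2}, \ref{efiewoffewfwe21} and \ref{efiewoffewfwe211}, so all coherence $2$-cells in $\bV_{\tr}\circ\iota$ are identities. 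Hence $\bV_{\tr}\circ\iota$ factors through the canonical inclusion $\Nerve(u(\bC))\hookrightarrow\Nerve_2(\bC)$ and coincides there with $\bV$, which is the assertion of \eqref{btoijoi4g45g45g}.
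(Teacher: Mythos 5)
Your plan is the paper's proof: both use that $\Nerve_{2}(\bC)$ is $3$-coskeletal to reduce everything to simplices of dimension $\le 3$, both define the image of a simplex by the objects $\bV(X_{i})$, the $1$-morphisms $f_{*}\circ w^{*}$ built from the composite legs, and $2$-cells assembled from the $a$'s, the $b$'s and strict functoriality of $\bV$, and both handle degeneracies and the square \eqref{btoijoi4g45g45g} via the normalization conditions \ref{fuwif23fwfwef}, \ref{efiewoffewfwe2}, \ref{efiewoffewfwe21}, \ref{efiewoffewfwe211}. Your treatment of the commutativity of \eqref{btoijoi4g45g45g} and of the face/degeneracy compatibilities is fine.

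The one genuine gap is exactly the point you flag as ``the hard part'' and then do not do: the verification that, for a $3$-simplex, the four $2$-morphisms $\alpha,\beta,\gamma,\delta$ you construct satisfy the single coherence relation $(\beta\circ f_{01})\delta=(f_{23}\circ\alpha)\gamma$ (the commutativity of the square $\Nerve(P_{0,3})$). Saying that this ``reduces to three local identities'' \eqref{gkengkergergreg}, \eqref{g34glkjkgl234g34}, \eqref{fce32r32r23r5} is not yet a proof: the reduction is the content of the lemma, and it is not a matter of matching each edge of the cube to one axiom. In the paper's computation the three relations are applied once each, but they must be interleaved with several applications of the middle-four interchange law of the $2$-category (the steps marked ``$!$'') to move whiskered $2$-cells past each other before any of the hypotheses becomes applicable; without exhibiting that rearrangement one has not shown that the particular composite $2$-cells $\gamma=(l_*\circ b_{gh,r}\circ(w\circ u)^{*})((l\circ n\circ m)_*\circ a_{t,wu})$ and $\delta=((l\circ n)_*\circ b_{f,vs}\circ w^{*})((l\circ n\circ m)_*\circ a_{ut,w})$ fit together. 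Since everything else in the argument is formal, this half-page diagram chase is the mathematical substance of the lemma and needs to be written out.
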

\begin{proof}
It is known that the nerve  $\Nerve_{2}(\bC)$ for a strict $(2,1)$-category is 
 $3$-coskeletal \cite[Prop.~A.16]{Gepner:2015aa}. Therefore it suffices to provide the map $\bV_{\tr}$ on simplices of dimensions $0,1,2$ and~$3$. We need an explicit description of the $3$-skeleton of the nerve $\Nerve_{2}(\bC)$ (compare  \cite[Rem.~A.18]{Gepner:2015aa}).

For every $n$ in $\nat$ we consider the simplicially enriched category $\mathcal{C}[n]$ with objects $\{0,\dots,n\}$ and whose morphism space $\Map_{\cC[n]}(i,j)$ is the nerve of the poset of subsets of $[i,j]$ containing $i$ and $j$. Then by definition 
\[\Nerve_{2}(\bC)[n]= {\Hom_{\sCat}}(\mathcal{C}[n],\Nerve(\bC))\ .\] 
The following describes the $n$-simplices of $\Nerve_2(\bC)$ for $n\le 3$.
 \begin{enumerate}
 \item $\Nerve_{2}(\bC)[0] =\Ob(\bC)$.
 \item We have $\Nerve_{2}(\bC)[1]=\Fun(\mathcal{C}[1],\bC)$. Note that $\Map_{\mathcal{C}[1]}(0,1)=\{*\}$. Therefore a one-simplex  in $ \Nerve_{2}(\bC)$ is a morphism
 $X\to Y$ in $\bC$ and its faces are $X$ and $Y$.
 \item 
 A two-simplex in $\Nerve_{2}(\bC)$ is given by a   diagram
 \[\xymatrix{&Y\ar[dr]&\\X\ar[rr]\ar[ur]&\ar@{=>}[u]&Z}\]
\item 
The mapping spaces $\Hom_{\mathcal{C}[3]}(0,1)$, $\Hom_{\mathcal{C}[3]}(1,2)$ and $\Hom_{\mathcal{C}[3]}(2,3)$
are points. The mapping spaces $\Hom_{\mathcal{C}[3]}(0,2)$ and $\Hom_{\mathcal{C}[3]}(1,3)$ are isomorphic to
$\Delta^{1}$ and we call the one-simplexes $\alpha$ and $\beta$. The mapping space $\Hom_{\mathcal{C}[3]}(0,3)$ is the square 
\[\xymatrix{
&\{0,1,3\}\ar@{=>}[dr]^-{\ \beta \circ \{0,1\}} &\\
\{0,3\}\ar@{=>}[rr] \ar@{=>}[rd]^-{\gamma}  \ar@{=>}[ru]_-{\delta} &&\{0,1,2,3\}\\
&\{0,2,3\}\ar@{=>}[ur]_-{\ \{2,3\}\circ \alpha}&
}\]
Hence, in order to provide a $3$-simplex in $ \Nerve_{2}(\bC)$ we must provide the following data:
\begin{enumerate}
\item four objects $X_{0},X_{1},X_{2},X_{3}$;
\item six $1$-morphisms $f_{ij}\colon X_{i}\to X_{j}$ for $i<j$;
\item four $2$-morphisms  
\[\alpha\colon f_{02} \Rightarrow  f_{12}\circ f_{01},\ \beta\colon   f_{13} \Rightarrow f_{23}\circ f_{12},\ \gamma\colon f_{03}\Rightarrow   f_{23}\circ f_{02},\ \delta\colon f_{03}\Rightarrow    f_{13}\circ f_{01},\]
\end{enumerate}
satisfying the relation
\begin{equation}\label{fewfe23rrr32r32r32}
(\beta\circ f_{01})  \delta=(f_{23}\circ \alpha) \gamma\ .
\end{equation}

  \end{enumerate}
We can now construct $\bV_{\tr}$ using the data described above.
 \begin{enumerate}
 \item On zero simplices of $G\BC_{\tr}^{}$ we define
 $\bV_{\tr}(X):=\bV(X)$.
 \item On $1$-simplices of $G\BC_{\tr}^{}$ the functor $ \bV_{\tr}$   sends the span $(W,w,f)$
 to the morphism
\[f_{*}\circ w^{*}\colon \bV(X)\to \bV(Y)\ .\]
 Note that if $(W,w,f)$ is in the image of $\iota$, then {\eqref{btoijoi4g45g45g} commutes on the level of 1-simplices by Condition \ref{fuwif23fwfwef}.}
 \item  The functor $\bV_{\tr}$ sends a $2$-simplex
 \begin{equation}\label{fvelkvervrev2222}
 \xymatrix{&&  U\ar[dl]^{u}\ar[dr]^{h}&&\\
 	&  W\ar[dr]^{f}\ar[dl]^{w}&&  V\ar[dl]^{v}\ar[dr]^{g}\\
 	X&&  Y&&  Z}
 \end{equation} to the diagram
\[\xymatrix{&& \bV(Y)\ar[ddrr]^{ g_{*}\circ v^{*}}&&\\&& &&\\\bV(X)\ar[rrrr]_{ (g\circ h)_{*} \circ (w\circ  u)^{*}}\ar[uurr]^{f_{*}\circ w^{*}}&&\ar@{=>}[uu]&&\bV(Z)}\]
filled by the $2$-morphism
\[(g\circ h)_{*} \circ (w\circ u)^{*}\xRightarrow{(g\circ h)_*\circ a_{u,w}}(g\circ h)_{*}\circ u^{*}\circ w^{*}=g_{*}\circ (h_{*}\circ u^{*})\circ w^{*}\xRightarrow{g_*\circ b_{f,v}\circ w^*} g_{*}\circ v^{*}\circ f_{*}\circ w^{*}\ .\]
If  the $2$-simplex is in the image of $\iota$, then {\eqref{btoijoi4g45g45g} commutes on the level of 2-simplices by Conditions \ref{fuwif23fwfwef}, \ref{efiewoffewfwe2} and \ref{efiewoffewfwe21}.}

\item The functor $\bV_{\tr}$ sends a $3$-simplex
\[\xymatrix{&&&T\ar@{->>}[dl]^{t}\ar[dr]^{m}&&&\\&&U\ar@{->>}[dl]^{u}\ar[dr]^{h}&&S\ar[dr]^{n}\ar@{->>}[dl]^{s}&&\\&W\ar@{->>}[dl]^{w}\ar[dr]^{f}&&V\ar@{->>}[dl]^{v}\ar[dr]^{g}&&R\ar@{->>}[dl]^{r}\ar[dr]^{l}&\\X&&Y&&Z&&Q}\] 
 to the $3$-simplex of $\Nerve_{2}(\bC)$ given by the following data:
\begin{enumerate}
\item The objects of the $3$-simplex are $\bV(X)$, $\bV(Y)$, $\bV(Z)$, and $\bV(Q)$.
\item The $1$-morphisms are
\begin{enumerate}
\item $f_{01}:= f_{*}\circ w^{*}$
\item $f_{12}:= g_{*}\circ v^{*}$
\item $f_{23}:= l_{*}\circ r^{*}$
\item $f_{02}:= (g\circ h)_{*} \circ (w\circ  u)^{*}$
\item $f_{13}:= (l\circ  n)_{*} \circ (v\circ  s)^{*}$
\item $f_{03}:= (l\circ n\circ m)_{*}\circ (w\circ u\circ t)^{*}$
\end{enumerate}
\item The $2$-morphisms are
\begin{enumerate}
\item $\alpha:=   (g_*\circ b_{f,v}\circ w^*)((g\circ h)_*\circ a_{u,w})$
\item $\beta:=  (l_*\circ b_{g,r}\circ v^*)((l\circ n)_*\circ a_{s,v})$
\item $\gamma:= (l_*\circ b_{gh,r}\circ (w\circ u)^*)((l\circ n\circ m)_*\circ a_{t,wu})$
\item $\delta:=  ((l\circ n)_*\circ b_{f,vs}\circ w^*) ((l\circ n\circ m)_*\circ a_{ut,w})$
\end{enumerate}
\end{enumerate}
We must check the relation \eqref{fewfe23rrr32r32r32}:
\begin{align*}
(\beta & \circ f_{01})\delta =\\
& = \ (l_*b_{g,r}v^*f_*w^*)(l_*n_*\boldsymbol{a_{s,v}f_*}w^*)(l_*n_*\boldsymbol{b_{f,vs}}w^*)(l_*n_*m_*a_{ut,w})\\
& \stackrel{\mathclap{\eqref{fce32r32r23r5}}}{=} \ (l_*b_{g,r}v^*f_*w^*)(l_*n_*s^*b_{f,v}w^*)(l_*n_*b_{h,s}u^*w^*)(l_*n_*m_*\boldsymbol{a_{t,u}w^*})(l_*n_*m_*\boldsymbol{a_{ut,w}})\\
& \stackrel{\mathclap{\eqref{gkengkergergreg}}}{=} \ (l_*\boldsymbol{b_{g,r}v^*f_*}w^*)(l_*\boldsymbol{n_*s^*b_{f,v}}w^*)(l_*n_*b_{h,s}u^*w^*)(l_*n_*m_*t^*a_{u,w})(l_*n_*m_*a_{t,wu})\\
& \stackrel{\mathclap{!}}{=} \ (l_*r^*g_*b_{f,v}w^*)(l_*\boldsymbol{b_{g,r}h_*}u^*w^*)(l_*\boldsymbol{n_*b_{h,s}}u^*w^*)(l_*n_*m_*t^*a_{u,w})(l_*n_*m_*a_{t,wu})\\
& \stackrel{\mathclap{\eqref{g34glkjkgl234g34}}}{=} \ (l_*r^*g_*b_{f,v}w^*)(l_*\boldsymbol{b_{gh,r}u^*w^*})(l_*\boldsymbol{n_*m_*t^*a_{u,w}})(l_*n_*m_*a_{t,wu})\\
& \stackrel{\mathclap{!}}{=} \ (l_*r^*g_*b_{f,v}w^*)(l_*r^*g_*h_*a_{u,w})(l_*b_{gh,r}(wu)^*)(l_*n_*m_*a_{t,wu})\\
& = \ (f_{23}\circ \alpha)\gamma
\end{align*}
For better legibility we omitted the composition sign $\circ$ and marked boldface the part to which the respective relation is applied. The equations marked by $!$ hold in every $(2,1)$-category.

One again checks that the diagram~\eqref{btoijoi4g45g45g} commutes on the level of $3$-simplices because of Conditions~\ref{fuwif23fwfwef}, \ref{efiewoffewfwe2} and~\ref{efiewoffewfwe21}.
\end{enumerate}It is immediate from the definitions that our construction is compatible with the face maps.
To verify the compatibility with the degeneracy maps we use the Conditions~\ref{fuwif23fwfwef},
\ref{efiewoffewfwe2}, \ref{efiewoffewfwe21} and~\ref{efiewoffewfwe211} applied to identity maps  in the appropriate places.
\end{proof}

\subsection{Coarse algebraic \texorpdfstring{$K$}{K}-homology}\label{fwljfewkfjewfewfewfwf}
\label{sec:algK}

Let $\bA$ be  an additive category  with a strict $G$-action.
In this section we construct the extension of equivariant coarse algebraic $K$-homology $K\bA\cX^{G}\colon G\BC\to \Sp$ to an equivariant coarse homology theory with transfers $K\bA\cX^{G}_{\tr}$.
For the construction of the  functor $K\bA\cX^{G}$  (which will be recalled in detail below) and the verification of the axioms of an equivariant coarse  homology theory we refer to \cite[Sec.~8]{equicoarse}.

We first explain how the algebraic $K$-theory functor for additive categories can be extended to a functor defined on the $\infty$-category $\Nerve_{2}(\Add)$, see the  {beginning of \cref{ergiuergergerge}} for the notation $\Nerve_{2}$.
We start with a non-connective  algebraic $K$-theory functor
\[ K \colon \Add \to \Sp\ , \quad \bA\mapsto K(\bA) \]
for additive categories, see \cite{MR2079996}. More precisely, we consider $K$ as a functor between $\infty$-categories
\[ K \colon \Nerve(\Add)\to \Sp\ . \]
Let $W$ be the class of equivalences of additive categories in $\Add$. Since $K$ sends equivalences between additive categories to equivalences of spectra it has an essentially unique factorization over the localization $\Nerve(\Add)\to \Nerve(\Add)[W^{-1}]$. 
Because the natural inclusion $\Nerve(\Add)\to \Nerve_{2}(\Add)$ sends equivalences  {between} additive categories to equivalences in the $\infty$-category $\Nerve_{2}(\Add)$ it induces a functor $\Nerve(\Add)[W^{-1}]\to \Nerve_{2}(\Add)$. The latter is an equivalence  of $\infty$-categories \cite[Sec. 3.1]{addcats}.

Hence we get a commuting diagram in $\Cat_{\infty}$
\begin{equation}\label{g3oijoir5gteberv}
\xymatrix{\Nerve(\Add)\ar[rr]^{K}\ar[d]&&\Sp\\ \Nerve(\Add)[W^{-1}]\ar@{..>}[urr]\ar[d]^{\simeq}&&\\\Nerve_{2}(\Add)\ar[uurr]_{\bK}&&}
\end{equation}
It provides  an essentially unique extension of $K$ to a functor 
\begin{equation}\label{dqwdiuzhi2343r}
\bK\colon \Nerve_{2}(\Add)\to \Sp\ .
\end{equation}
Let $X$ be a $G$-bornological coarse space.
The spectrum $K\bA\cX^{G}(X)$ is the non-connective algebraic $K$-theory spectrum of the additive category $\bV_{\bA}^{G}(X)$ of equivariant $X$-controlled objects of $A$ and equivariant morphisms with controlled propagation \cite[Sec.~8.2]{equicoarse}. 
The functor $K\bA\cX^{G}$ is  defined as   the composition 
\[K\bA\cX^{G}:=K\circ \bV_{\bA}^{G}\colon G\BC \to \Add \to \Sp\ .\] For the verification that $K\bA\cX^{G}$ satisfies the axioms of a strongly additive equivariant coarse homology theory we refer to \cite[Thm.~8.9 and Prop.~8.19]{equicoarse}.

In order to construct the extension $K\bA\cX^{G}_{\tr}$ we use the method described in Section \ref{ergiuergergerge} to  construct an extension
\[ \bV_{\bA,\tr}^{G}\colon G\BC^{}_{\tr}\to \Nerve_{2}(\Add) \]
of the functor $\bV_{\bA}^{G}$, and compose it then with the functor $\bK$ in  \eqref{dqwdiuzhi2343r}.

We start with recalling the details of the definition of the $\Add$-valued functor $\bV_{\bA}^{G}$ from  \cite[Sec.~8.2]{equicoarse}.
Let $\bA$ be an additive category with a strict $G$-action and $X$ be a $G$-bornological coarse space. We consider the bornology  $\cB$ of $X$ as a poset with a $G$-action and hence as a category with a $G$-action.

If $A\colon \cB\to \bA$ is a functor and $g$ is an element of $G$, then  $g A\colon \cB\to \bA$ denotes  the functor which sends a bounded set $B$ in $\cB$ to the object $gA(g^{-1}(B))$ of $\bA$.
If $\rho\colon A\to  A^{\prime}$ is a natural transformation between two such functors, then we let  $g\rho \colon gA\to gA^{\prime}$  denote the canonically induced natural transformation.
\begin{ddd}\label{def:Xcontrolledobject}
	An \emph{equivariant $X$-controlled $\bA$-object} is a pair $(A,\rho)$ consisting of a functor
	$A \colon \cB \to \bA$
	and a family $\rho=(\rho(g))_{g\in G}$ of natural isomorphisms 
	$\rho(g)\colon A \to g A$
	 	satisfying the following conditions:
	\begin{enumerate}
		\item\label{def:Xcontrolledobject:it1} $A(\emptyset) \cong 0$.
		\item\label{def:Xcontrolledobject:it3} For all $B, B'$ in $\cB$, the commutative square
		\[\xymatrix{
			A(B \cap B')\ar[r]\ar[d] & A(B)\ar[d] \\
			A(B')\ar[r] & A(B \cup B')
		}\]
		is a pushout square.
		\item\label{def:Xcontrolledobject:it4} For all $B$ in $\cB$, there exists some finite subset $F$ of $B$ such that the inclusion $F\to B$ induces an isomorphism $A(F) \xrightarrow{\cong} A(B)$.
		\item \label{def:Xcontrolledobject:it5}For all pairs of elements  $g,g'$ of $G$ we have the relation
		$\rho(gg')= g\rho(g')\circ \rho(g)$.
		\qedhere
	\end{enumerate}
\end{ddd}

If $U$ is an invariant coarse entourage of $X$, i.e.,  an element of $\cC^{G}$, then we get a $G$-equivariant functor
\[U[-]\colon \cB\to \cB\] which sends a bounded
 subset $B$ of $X$ to its  $U$-thickening   \[U[B]:=\{x\in X\:|\: (\exists b\in B\:|\: (x,b)\in U)\}\ .\]
Note that $U[B]$ is again bounded by the compatibility of the coarse structure $\cC$ and the bornology $\cB$. For $g$ in $G$ we have the equality  $U[gB]=gU[B]$ by the $G$-invariance of $U$. Furthermore note that for $B^{\prime}$ in $\cB$ with $B\subseteq B^{\prime}$ we have $U[B]\subseteq U[B^{\prime}]$.

Let $(A,\rho), (A',\rho')$ be equivariant $X$-controlled $\bA$-objects and $U$ be  an invariant coarse entourage of $X$.
\begin{ddd}
	An \emph{equivariant $U$-controlled morphism} $\phi  \colon (A,\rho) \to (A',\rho')$ is a natural transformation
	\[ \phi \colon A(-) \to A'(U[-])\ ,\] 
	such that $\rho'(g)\circ \phi =(g \phi )\circ \rho(g)$ for all elements $g$ of $G$.
\end{ddd}
We let $\Mor_U((A,\rho),(A',\rho'))$ be the abelian group of equivariant $U$-controlled morphisms.

If $U^{\prime}$ is in $\cC^{G}$ and such that $U\subseteq U^{\prime}$, then for every $B$ in $\cB$ we have $U[B]\subseteq U^{\prime}[B]$. These inclusions induce a transformation between functors
$A^{\prime}(U[-])\to A^{\prime}(U^{\prime}[-])$ and therefore a map 
\[\Mor_U((A,\rho),(A',\rho'))\to \Mor_{U^{\prime}}((A,\rho),(A',\rho'))\]
by post-composition. Using these maps in the interpretation of the colimit
 we define the abelian group of equivariant controlled morphisms from $A$ to $A'$ by
\[ \Hom_{\bV^{G}_\bA(X)}((A,\rho),(A',\rho')) := \colim_{U \in \cC^{G}} \Mor_U((A,\rho),(A',\rho'))\ .\]

We now consider a pair  of morphisms in \[ \Hom_{\bV^{G}_\bA(X)}((A,\rho),(A',\rho'))\:\:\mbox{and}\:\:  \Hom_{\bV^{G}_\bA(X)}((A^{\prime},\rho^{\prime}),(A^{\prime\prime},\rho^{\prime\prime}))\ ,\] respectively, 
 which are represented by 
  \[\phi \colon A(-)\to A^{\prime}(U[-])\:\:\mbox{and}\:\: \phi ^{\prime}\colon A^{\prime}(-)\to A^{\prime\prime}(U^{\prime}[-])\ .\] 
We define the composition of the two morphisms  to be represented by the morphism
\[U[-]^{*}\phi ^{\prime}\circ \phi \colon  A\to A^{\prime\prime}((U^{\prime}\circ U)[-])\ , \]
where
$U[-]^{*}\phi^{\prime}\colon A^{\prime}(U[-])\to A^{\prime\prime}((U^{\prime}\circ U)[-])$ is defined in the canonical manner.

We denote now the resulting additive category of equivariant $X$-controlled $\bA$-objects and equivariant controlled morphisms by $\bV_\bA^G(X)$.

Let $f \colon (X,\cB,\cC) \to (X',\cB',\cC')$ be a morphism of $G$-bornological coarse spaces,
and let $(A,\rho)$ be an equivariant $X$-controlled $\bA$-object.
Since $f$ is proper, it induces a functor $f^{-1}\colon \cB^{\prime}\to \cB$, and we can define a functor $f_*A \colon \cB' \to \bA$ by
\[ f_*A := A\circ f^{-1} \ .\]
Furthermore, we define
\[f_*\rho(g) :=\rho(g)\circ f^{-1} \ .\]
Let $U$ be in $\cC^{G}$ and let $\phi \colon (A,\rho) \to (A',\rho')$ be an equivariant $U$-controlled morphism. Then  $V:= (f \times f)(U)$ belongs to $\cC^{\prime G}$ and  $U[f^{-1}(B^{\prime})] \subseteq f^{-1}(V[B^{\prime}])$ for all bounded subsets $B^{\prime}$ of $X^{\prime}$. Therefore, we obtain an induced $V$-controlled morphism
\[f_*\phi = \{  f_*A(B^{\prime}) \xrightarrow{\phi_{f^{-1}(B^{\prime})}} A(U[f^{-1}(B^{\prime})])\to  f_*A(V[B^{\prime}]) \}_{B^{\prime} \in \cB'}\ .\]
One checks that this construction defines an additive functor
\[ f_* \colon \bV^G_\bA(X) \to \bV^G_\bA(X')\ .\]
This completes the construction of the functor
\[ \bV^G_\bA \colon G\BC \to \Add\ .\]

We now start the construction of the functor $\bV_{\bA,\tr}^{G}$.

Let $w\colon W\to Z$ be a bounded covering. Given a controlled object $(A,\rho)$ in $\bV_\bA^G(Z)$ we define $w^*(A,\rho)=(w^*A,w^*\rho)$ as follows. Let $\cB_{W}$ denote the category    of bounded subsets of $W$ and let \begin{equation}\label{rgergkln3g4kl3tg34g34g}
 \cB'_W\subseteq \cB_W
\end{equation}
be the full subcategory   consisting of coarsely connected bounded subsets. 
Let
\[\hat w\colon \cB_W'\to \cB_Z\]
be the functor sending $B$ in $\cB_W'$ to $w(B)$ in $\cB_Z$. We define $w^*A:=\Lan(A\hat w)$ to be a left Kan extension of $A \hat w$ along the inclusion $i\colon \cB_W'\to \cB_W$ as indicated in the following diagram: 
\[\xymatrix@R=3em{\cB_{W}^{\prime}\ar[rr]^{A \hat w}\ar[d]_-{i}&\ar@{}[dl]^(0.1){}="a"^(0.8){}="b"\ar@{=>}"a";"b"_(0.35){\tau_{A,w}}&\bA\\\cB_{W}\ar[urr]_{\ \Lan(A\hat w)}&&}\]
The definition of $w^{*}A$ involves a choice. It is fixed uniquely up to unique isomorphism if we take into account 
the natural transformation \[\tau_{A,w} \colon A\hat w \to \Lan(A\hat w)i\]  which  is actually  a natural isomorphism since $i$ is fully faithful.
 If    $w$ is an isomorphism of the underlying coarse spaces, then $w^{-1}\colon Z\to B$ is proper, and we can choose the object $w^{*}A:=w^{-1}_{*}A$ and let
 $\tau_{A,w}$ be the identity. This ensures  Conditions
  \ref{fuwif23fwfwef} and 
 \ref{efiewoffewfwe2} formulated in \cref{ergiuergergerge}.
We suppress $\tau_{A,w}$ from notation unless we need to mention it explicitly.

For every $g$ in $ G$ we further define $w^*\rho(g)\colon w^*A\to gw^*A$ as the composition
\[\Lan(A\hat w)\xrightarrow{\Lan(\rho(g)\hat w)}\Lan(gA\hat w)\stackrel{\iota}{\to} g\Lan(A\hat w)\ ,\]
where the morphisms are uniquely determined by the universal property of left Kan extensions and the relations 
\[\tau_{gA,w}(\rho(g)\hat w)=(\Lan(\rho(g)\hat w)\circ i)  \tau_{A,w}\ , \quad (\iota\circ i)\tau_{gA,w}=g\tau_{A,w}\ .\]
The morphism $\iota$  is an isomorphism since $(g\Lan(A\hat w),g\tau_{A,w})$
 has the property of a left Kan extension of
$gA\hat w$ along $i$.

Note that $\bA$ admits finite sums but is in general not cocomplete.\footnote{Such a condition would actually lead, by an Eilenberg swindle, to a very uninteresting $K$-theory.} Therefore we must check that the Kan extensions actually exist and land in the desired functor category.
\begin{lem}
	The  Kan extensions involved in the construction of $A\hat w$ exists and $(w^*A,w^*\rho)$ is an object of $\bV_\bA^G(W)$.
\end{lem}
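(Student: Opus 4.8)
The plan is to establish the two assertions in order: first that the pointwise formula for $w^{*}A$ yields a colimit that actually exists in $\bA$, and then that $(w^{*}A,w^{*}\rho)$ satisfies the four conditions of \cref{def:Xcontrolledobject}. Since $i\colon\cB_{W}'\hookrightarrow\cB_{W}$ is the fully faithful inclusion of a full subposet, the left Kan extension $w^{*}A=\Lan(A\hat w)$ is, if it exists, computed pointwise by $w^{*}A(B)=\colim A(w(B'))$, the colimit being taken over the poset of coarsely connected bounded subsets $B'\subseteq B$. The key point is that, up to its initial object $\emptyset$ on which $A\hat w$ takes the value $A(\emptyset)\cong 0$, this poset is the disjoint union, over the coarse components $j$ of $W$ meeting $B$, of the poset of coarsely connected subsets of $B\cap j$: a coarsely connected subset of $B$ lies in a single coarse component, and inclusions of such subsets stay within it. As $B\cap j$ is coarsely connected and bounded it is terminal in its piece, so $w^{*}A(B)\cong\bigoplus_{j}A(w(B\cap j))$. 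To see that this is a \emph{finite} biproduct --- hence that it exists in $\bA$ --- I would invoke Condition~\ref{ergierog43t34t34t34t}.\ref{gropergergreg}: choose a finite coarsely disjoint partition $(B_{a})_{a\in A}$ of $B$ with $w_{|[B_{a}]}\colon[B_{a}]\to[w(B_{a})]$ an isomorphism of coarse spaces. Every component of $W$ meeting $B$ lies in exactly one $[B_{a}]$, on which $w$ is injective and identifies coarse components with those of $[w(B_{a})]$, so $\bigoplus_{j\subseteq[B_{a}]}A(w(B\cap j))\cong\bigoplus_{j'}A(w(B_{a})\cap j')$, the sum over coarse components $j'$ of $Z$. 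By Condition~\ref{def:Xcontrolledobject}.\ref{def:Xcontrolledobject:it4} there is a finite $F\subseteq w(B_{a})$ with $A(F)\xrightarrow{\cong}A(w(B_{a}))$, so $F$ meets only finitely many components; applying Condition~\ref{def:Xcontrolledobject}.\ref{def:Xcontrolledobject:it3} repeatedly --- with Condition~\ref{def:Xcontrolledobject}.\ref{def:Xcontrolledobject:it1} --- to the disjoint decomposition of $w(B_{a})$ into its component-intersections then shows that all but these finitely many summands vanish and that their sum is $A(w(B_{a}))$. Summing over the finitely many $a$ yields $w^{*}A(B)\cong\bigoplus_{a\in A}A(w(B_{a}))$, so the Kan extension exists and is pointwise.

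Given this explicit description, the first three conditions of \cref{def:Xcontrolledobject} for $(w^{*}A,w^{*}\rho)$ reduce to the same conditions for $(A,\rho)$. The vanishing condition~\ref{def:Xcontrolledobject}.\ref{def:Xcontrolledobject:it1} is immediate, since $w^{*}A(\emptyset)=A(w(\emptyset))=A(\emptyset)\cong 0$. For the finiteness condition~\ref{def:Xcontrolledobject}.\ref{def:Xcontrolledobject:it4} I would choose for each $a$ a finite $F_{a}'\subseteq w(B_{a})$ with $A(F_{a}')\xrightarrow{\cong}A(w(B_{a}))$, pull it back through the injective map $w_{|[B_{a}]}$ to a finite $F_{a}\subseteq B_{a}$, and set $F:=\bigcup_{a}F_{a}$; unwinding the decomposition shows that the map $w^{*}A(F)\to w^{*}A(B)$ induced by $F\subseteq B$ is, summand by summand, the chosen isomorphism, hence an isomorphism. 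For the pushout condition~\ref{def:Xcontrolledobject}.\ref{def:Xcontrolledobject:it3}, the decomposition $w^{*}A(-)\cong\bigoplus_{j}A(w((-)\cap j))$ is natural in its argument, so the Mayer--Vietoris square for $B$ and $B'$ is the finite direct sum, over the coarse components $j$ of $W$, of the squares obtained by applying $A$ to $w(B\cap j)$ and $w(B'\cap j)$; here one uses that $w$ is injective on every coarse component meeting $B\cup B'$ --- once more by Condition~\ref{ergierog43t34t34t34t}.\ref{gropergergreg}, applied to $B\cup B'$ --- so that $w$ preserves the relevant finite unions and intersections, that these squares are pushouts by Condition~\ref{def:Xcontrolledobject}.\ref{def:Xcontrolledobject:it3} for $A$, and that a finite direct sum of pushout squares is a pushout.

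Finally, Condition~\ref{def:Xcontrolledobject}.\ref{def:Xcontrolledobject:it5} and the fact that each $w^{*}\rho(g)$ is a natural isomorphism are formal. Invertibility holds because $\rho(g)$ is invertible, $\Lan$ preserves isomorphisms, and the comparison map $\iota$ entering the construction of $w^{*}\rho(g)$ is an isomorphism by the universal property of left Kan extensions. For the cocycle identity, the two natural transformations $w^{*}A\to gg'\,w^{*}A$ in question coincide once restricted along $\tau_{A,w}\colon A\hat w\to (w^{*}A)\circ i$, hence coincide; checking this uses that $G$ preserves coarse connectedness, so that its action on $\cB_{W}$ restricts to $\cB_{W}'$ and $(gH)\circ i=g(H\circ i)$, together with the relations $\tau_{gA,w}\circ(\rho(g)\hat w)=(\Lan(\rho(g)\hat w)\circ i)\circ\tau_{A,w}$ and $(\iota\circ i)\circ\tau_{gA,w}=g\tau_{A,w}$ from the construction, functoriality of $\Lan$, and Condition~\ref{def:Xcontrolledobject}.\ref{def:Xcontrolledobject:it5} for $\rho$; it is a routine diagram chase. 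I expect the main obstacle to be the first step, the reduction of the defining colimit to a finite biproduct, since that is precisely where bounded sets meeting infinitely many coarse components must be handled and where Condition~\ref{ergierog43t34t34t34t}.\ref{gropergergreg} is indispensable; once the formula $w^{*}A(B)\cong\bigoplus_{a}A(w(B_{a}))$ is available, the remaining verifications are bookkeeping.
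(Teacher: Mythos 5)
Your proof is correct and follows essentially the same route as the paper: reduce to the pointwise colimit, decompose it via the coarse components of $W$ together with the finite coarsely disjoint partition supplied by Condition~\ref{ergierog43t34t34t34t}.\ref{gropergergreg}, and use Condition~\ref{def:Xcontrolledobject}.\ref{def:Xcontrolledobject:it4} (with \ref{def:Xcontrolledobject}.\ref{def:Xcontrolledobject:it1} and \ref{def:Xcontrolledobject}.\ref{def:Xcontrolledobject:it3}) to kill all but finitely many summands, arriving at the same formula $w^{*}A(B)\cong\bigoplus_{a}A(w(B_{a}))$. The only differences are that you perform the two decompositions in the opposite order and that you verify the axioms of \cref{def:Xcontrolledobject} in more detail than the paper, which dismisses that step as straightforward.
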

\begin{proof}
	By \cite[Cor.~X.3.4]{MR1712872}, the Kan extension exists if for every $B$ of $B_{W}$ the colimit
\[\colim_{(B' \subseteq B) \in \cB_W'/B} A(w(B'))\]
exists. Fix $B$ in $B_W$.
Since $w$ is a bounded covering (see \cref{ergierog43t34t34t34t}),  there exists a finite, coarsely disjoint partition $(B_{j})_{j\in J}$  of $B$ such that $w_{|[B_{j}]} \colon [B_j]\to [w(B_j)]$ is an isomorphism of  coarse spaces for every $j$ in $J$.
Since every element of $\cB_W'$ is coarsely connected, we have a decomposition of categories \[\cB_{W}^{\prime}/B\simeq \bigsqcup_{j\in J}\cB_{W}^{\prime} /B_{j}\ .\] 
For every $j$ in $J$ the inclusion of the discrete subcategory $\{( B_{j} \cap W_0 \subseteq B_{j}) \}_{W_0 \in \pi_0(W)}$ into the comma category $\cB_W'/B_{j}$ is cofinal. Hence we have to show that the sum
\begin{equation}
\label{eq:kansum}
\bigoplus_{j\in J}\bigoplus_{W_0 \in \pi_0(W)} A(w(W_0 \cap B_{j}))
\end{equation}
exists.
Since $w(B_{j})$ is a bounded subset of $Z$  by 
Property \ref{def:Xcontrolledobject}.\ref{def:Xcontrolledobject:it4} of $A$
it admits a finite subset $F_{j}$ such that
$A(F_{j})\xrightarrow{\cong}A(w( B_{j}))$.
We can choose a finite subset $P_{j}$ of $\pi_{0}(W)$ such that $F_{j}\cap w(B_{j}\cap W_{0})=\emptyset $ for all $W_{0}$ in $\pi_{0}(W)\setminus P_{j}$. 
	In \eqref{eq:kansum} we can therefore restrict the sum to the finite set $P_{j}$.
Since $\bA$ admits finite sums, this completes the proof of the existence of the Kan extension.

We use Properties \ref{def:Xcontrolledobject}.\ref{def:Xcontrolledobject:it3} and \ref{def:Xcontrolledobject}.\ref{def:Xcontrolledobject:it4} for $A$ in order to calculate the sum in \eqref{eq:kansum}, and hence the value of the Kan extension at $B$, explicitly. We obtain an isomorphism\begin{equation}\label{rijoijd32fd23f}
\Lan(A\hat w)(B) \cong  \bigoplus_{j\in J} A(w(B_{j})) \ .
\end{equation}
It is   now  straightforward  to check that $w^*A$ satisfies the conditions   \ref{def:Xcontrolledobject}.\ref{def:Xcontrolledobject:it1}, \ref{def:Xcontrolledobject}.\ref{def:Xcontrolledobject:it3} and  \ref{def:Xcontrolledobject}.\ref{def:Xcontrolledobject:it4} for a $W$-controlled $\bA$-object.  The relation \ref{def:Xcontrolledobject}.\ref{def:Xcontrolledobject:it5} can be checked by a similar reasoning as in the construction of $w^{*}\rho(g)$ using the universal property of left Kan extensions.
\end{proof}

The following observation is stated here for later use.
Let $W$ be a $G$-bornological coarse space and 
$i\colon \cB_W'\to \cB_W$ be the inclusion as in \eqref{rgergkln3g4kl3tg34g34g}.  Let $(A,\rho)$ be an object of $\bV_\bA^G(W)$. 
\begin{lem}
	\label{lem:lan}
	Then $A$ is canonically isomorphic to $\Lan(Ai)$, the left Kan extension of $A\circ i$ along $i$.
\end{lem}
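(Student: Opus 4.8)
The plan is to show that the canonical comparison morphism $c_A\colon\Lan(Ai)\Rightarrow A$ (the counit of left Kan extension along $i$) exists and is an isomorphism. Since $i\colon\cB_W'\to\cB_W$ is fully faithful, $c_A$ is automatically invertible after restriction along $i$, i.e.\ on coarsely connected bounded subsets; so the real content is to compute $\Lan(Ai)(B)$ for an arbitrary bounded $B\subseteq W$ and to check that it recovers $A(B)$. Note that this is the special case $w=\id_W$ of the construction of $w^{\ast}A$ carried out just above: $\id_W$ is a bounded covering by \cref{rgioergergregegreg} (for any bounded set the trivial one-element partition satisfies Condition~\ref{ergierog43t34t34t34t}.\ref{gropergergreg}), and $\widehat{\id_W}=i$, so $\id_W^{\ast}A=\Lan(Ai)$; thus one could simply invoke that construction, but it is just as quick to argue directly.

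Concretely, I would use the pointwise formula $\Lan(Ai)(B)\cong\colim_{(B'\subseteq B)\in\cB_W'/B}A(B')$ (once the colimit is known to exist). The poset $\cB_W'/B$ decomposes as a disjoint union of its connected components, one for each coarse component $W_0$ of $W$ meeting $B$: a coarsely connected $B'\subseteq B$ lies in a unique coarse component $W_0$, and for fixed $W_0$ the coarsely connected bounded subsets of $B\cap W_0$ form such a component, with maximal element $B\cap W_0$ (which is coarsely connected, being contained in one coarse component, and bounded, being contained in $B$). Hence, if it exists, the colimit equals $\bigoplus_{W_0}A(B\cap W_0)$, with structure maps the images under $A$ of the inclusions $B\cap W_0\hookrightarrow B$.

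The only point requiring work is that this coproduct exists in $\bA$, which has only finite coproducts; equivalently, that all but finitely many $A(B\cap W_0)$ vanish. Here I would use Property~\ref{def:Xcontrolledobject}.\ref{def:Xcontrolledobject:it4}: choose a finite $F\subseteq B$ with $A(F)\xrightarrow{\cong}A(B)$, let $P\subseteq\pi_0(W)$ be the finite set of coarse components meeting $F$, and write $B=B_P\sqcup B_P'$ with $B_P:=\bigcup_{W_0\in P}(B\cap W_0)$, so $F\subseteq B_P$. By Properties~\ref{def:Xcontrolledobject}.\ref{def:Xcontrolledobject:it1} and~\ref{def:Xcontrolledobject}.\ref{def:Xcontrolledobject:it3} applied to this disjoint decomposition, $A(B)\cong A(B_P)\oplus A(B_P')$, and since the isomorphism $A(F)\xrightarrow{\cong}A(B)$ factors through the summand $A(B_P)$ it forces $A(B_P')\cong 0$; applying Property~\ref{def:Xcontrolledobject}.\ref{def:Xcontrolledobject:it3} once more to $B_P'=(B\cap W_0)\sqcup(\text{rest})$ then gives $A(B\cap W_0)\cong 0$ for every $W_0\notin P$. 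Iterating Property~\ref{def:Xcontrolledobject}.\ref{def:Xcontrolledobject:it3} on the finite disjoint decomposition $B_P=\bigsqcup_{W_0\in P}(B\cap W_0)$ yields $A(B)\cong A(B_P)\cong\bigoplus_{W_0\in P}A(B\cap W_0)=\bigoplus_{W_0}A(B\cap W_0)$, with the structure maps again the images of the inclusions. Matching this with the previous paragraph shows that $\Lan(Ai)(B)$ exists and that $c_A$ is an isomorphism at $B$. Naturality in $B$ (and compatibility with the isomorphisms $\rho$) is then automatic, since $c_A$ is a morphism of functors to begin with and we have only verified that it is objectwise invertible.

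The main obstacle is precisely the finiteness step of the third paragraph — that only finitely many $A(B\cap W_0)$ are nonzero — which is what keeps the pointwise colimit inside $\bA$. This is the same subtlety (there organised via the partition supplied by the bounded covering condition) that one meets in the existence proof for $w^{\ast}A$ above, and it is resolved by the interplay of the pushout axiom~\ref{def:Xcontrolledobject}.\ref{def:Xcontrolledobject:it3} with the finite-support axiom~\ref{def:Xcontrolledobject}.\ref{def:Xcontrolledobject:it4} for controlled objects.
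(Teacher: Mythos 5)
Your proposal is correct and follows essentially the same route as the paper, whose proof of this lemma is simply a pointer to the computation leading to \eqref{rijoijd32fd23f}: the pointwise formula for the left Kan extension, the decomposition of $\cB_W'/B$ by coarse components with cofinal maximal elements $B\cap W_0$, and the finiteness of the nonzero summands extracted from Properties \ref{def:Xcontrolledobject}.\ref{def:Xcontrolledobject:it3} and \ref{def:Xcontrolledobject}.\ref{def:Xcontrolledobject:it4}. Your observation that this is the special case $w=\id_W$ of the construction of $w^{*}A$, together with the spelled-out vanishing argument for $A(B\cap W_0)$ outside a finite set, is exactly the intended content.
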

\begin{proof} The argument is similar to the argument leading to  \eqref{rijoijd32fd23f} in the proof above.
\end{proof}

\begin{lem}
	\label{lem:restriction}
	If $w$ is an isomorphism on the underlying coarse spaces, then $\cB_W$ is a subset of $\cB_Z$ and in this case $w^*A$ is  {isomorphic} to the restriction of $A$ to $\cB_W$.
\end{lem}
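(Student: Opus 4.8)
The plan is essentially to unwind the construction of $w^{*}$ in the special case at hand and to recognise the result as a restriction, using \cref{lem:lan} to make the identification with a left Kan extension.

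\textbf{Step 1: consequences of $w$ being an isomorphism of coarse spaces.} An isomorphism in $G\Coarse$ is in particular a $G$-equivariant bijection of underlying $G$-sets which is controlled in both directions. Together with the fact that a bounded covering is a bornological map (Condition~\ref{ergierog43t34t34t34t}.\ref{gropergergreg111}), this shows that $w^{-1}\colon Z\to W$ is proper and controlled, hence a morphism in $G\BC$, and that $w$ carries $\cB_{W}$ bijectively onto a sub-poset of $\cB_{Z}$ (namely $B\mapsto w(B)$ is injective and lands in $\cB_{Z}$, and respects inclusions). This is the identification under which the statement ``$\cB_{W}$ is a subset of $\cB_{Z}$'' is to be read, and ``restriction of $A$ to $\cB_{W}$'' means the composite of the inclusion $\cB_{W}\hookrightarrow\cB_{Z}$ with $A\colon\cB_{Z}\to\bA$ (with the analogous restriction of $\rho$).

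\textbf{Step 2: identify $w^{-1}_{*}A$ with this restriction.} As recalled in the construction of $\bV^{G}_{\bA,\tr}$, in the isomorphism case the chosen value of $w^{*}A$ is $w^{-1}_{*}A$, with $\tau_{A,w}=\id$. By the definition of $f_{*}$ we have $(w^{-1}_{*}A)(B)=A\bigl((w^{-1})^{-1}(B)\bigr)$ for $B$ in $\cB_{W}$, and since $w$ is a bijection one has $(w^{-1})^{-1}(B)=w(B)$. Hence $w^{-1}_{*}A$ sends $B$ to $A(w(B))$, i.e.\ it is exactly the restriction of $A$ to $\cB_{W}$ in the sense of Step 1; the same computation identifies $w^{-1}_{*}\rho(g)$ with the restriction of $\rho(g)$. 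In particular this restriction is automatically an object of $\bV^{G}_{\bA}(W)$, being equal to $w^{*}A$.

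\textbf{Step 3: check this is a legitimate choice of $\Lan(A\hat w)$.} It remains to see that $w^{-1}_{*}A$, together with $\tau_{A,w}=\id$, models the left Kan extension $w^{*}A=\Lan(A\hat w)$ along $i\colon\cB_{W}'\to\cB_{W}$, so that the isomorphism asserted by the lemma is the canonical comparison isomorphism. For $B$ in $\cB_{W}'$ we have $(w^{-1}_{*}A)(i(B))=A(w(B))=(A\hat w)(B)$, so $(w^{-1}_{*}A)\circ i=A\hat w$ on the nose. Now \cref{lem:lan}, applied to the object $w^{-1}_{*}A$ of $\bV^{G}_{\bA}(W)$, states that the canonical map $\Lan\bigl((w^{-1}_{*}A)\circ i\bigr)\to w^{-1}_{*}A$ is an isomorphism; since $(w^{-1}_{*}A)\circ i=A\hat w$, this says precisely that $\Lan(A\hat w)\to w^{-1}_{*}A$ is an isomorphism. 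Thus $w^{*}A$ is canonically isomorphic to the restriction of $A$ to $\cB_{W}$, and the isomorphism is compatible with the $\rho$-data by Step 2.

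The argument is bookkeeping once \cref{lem:lan} is available; the only point needing a little care is keeping straight the identification of $W$ with $Z$ along the bijection $w$ and verifying that under this identification the preimage functor $(w^{-1})^{-1}$ agrees with direct image along $w$. I do not expect a genuine obstacle.
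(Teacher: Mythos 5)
Your argument is correct and in substance coincides with the paper's: the first claim is exactly the bornologicality condition in \cref{ergierog43t34t34t34t}, and the second reduces to the fact that left Kan extension along $i$ recovers an object of $\bV^{G}_{\bA}(W)$ from its restriction to $\cB_W'$, which you obtain by applying \cref{lem:lan} to $w^{-1}_{*}A$, whereas the paper cites the pointwise formula for Kan extensions together with Properties \ref{def:Xcontrolledobject}.\ref{def:Xcontrolledobject:it3} and \ref{def:Xcontrolledobject}.\ref{def:Xcontrolledobject:it4} of $A$ — which is precisely the argument proving \cref{lem:lan}. As a minor bonus, your Step 3 makes explicit why the constructional choice $w^{*}A:=w^{-1}_{*}A$ with $\tau_{A,w}=\id$ in the isomorphism case is a legitimate model of the left Kan extension, a point the construction leaves implicit.
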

\begin{proof}
	The first statement follows by \cref{ergierog43t34t34t34t} of a bounded covering. The second one from the pointwise formula 
	for Kan extensions and Properties   \ref{def:Xcontrolledobject}.\ref{def:Xcontrolledobject:it3} \&  \ref{def:Xcontrolledobject}.\ref{def:Xcontrolledobject:it4} of $A$.
\end{proof}

This finishes the construction of $w^{*}$ on objects.
We now define $w^*$ on morphism as follows. Let $U$ be an invariant entourage of $Z$ and let a natural transformation $\phi\colon A\to A'\circ U[-]$ be given. We set  $U_W:= (w\times w)^{-1}(U)\cap U(\pi_0(W))$, where $U(\pi_{0}(W))$ is defined as in \eqref{geh54lolj3p45g354}.  Then we consider the commutative diagram
\[\xymatrix{
	\cB_W\ar[rr]^-{U_W[-]} & & \cB_W\\
	\cB_W'\ar[rr]^-{U_W[-]}\ar[u]_{\subseteq}^i\ar[d]_{\hat w} & & \cB_W'\ar[u]_\subseteq^i\ar[d]_{\hat w}\\
	\cB_Z\ar[rr]^-{U[-]} & & \cB_Z
}\]
We consider the composition
\begin{align*}&&
 \Nat(A\hat w, A'U[-]\hat w)= \Nat(A\hat w, A'\hat wU_W[-])  \xrightarrow[(\tau_{A',w})_*]{\cong}  \Nat(A\hat w, \Lan(A'\hat w)iU_W[-])\\
&&=\Nat(A\hat w, \Lan(A'\hat w)U_W[-]i) 
\xrightarrow[(\tau_{A,w}^*)^{-1}]{\cong}  \Nat(\Lan(A\hat w), \Lan(A'\hat w)U_W[-])\ ,
\end{align*} and we define the morphism $ w^*\phi \colon w^*A \to (w^{*}A')U_{W}[-]$
to be the image of 
$\phi\circ \hat w $ under this map.
 {In other words, the morphism $w^*\phi$ is uniquely determined by the equation}
\begin{equation}\label{eq:transfer-on-morphisms} {(w^*\phi \circ i)\tau_{A,w} = \tau_{A',w} \circ (\phi\hat w)}\ .\end{equation}
Using this equation one checks easily that the construction of $w^{*}$ is compatible with the composition.

Given two bounded coverings $V\xrightarrow{v}W\xrightarrow{w}Z$, we now have to define a natural isomorphism \[a_{v,w}\colon  {(wv)^*A \to v^*w^*A}\ .\] Let $j\colon \cB_V'\to \cB_V$ be the inclusion analogous to the one in \eqref{rgergkln3g4kl3tg34g34g}. We observe that $\hat v$ has a canonical factorization $\hat v \colon \cB_V' \xrightarrow{\hat v'} \cB_W' \xrightarrow{i} \cB_W$ such that $\widehat{wv} = \hat w\hat v'$.
Since we have a natural isomorphism
\begin{equation}\label{qwdj1ke}
  {A\widehat{wv} = A\hat w\hat v' \xrightarrow[\tau_{A,w} \circ \hat v']{\cong} \Lan(A\hat w)i\hat v' = \Lan(A\hat w)\hat v \xrightarrow[\tau_{\Lan(A\hat w),v}]{\cong} \Lan(\Lan(A\hat w)\hat v)j} \ ,
\end{equation}
the functor
$\Lan(\Lan(A\hat w)\hat v)$ is a left Kan extension of $A\widehat{wv}$ along $j$ by \cref{lem:lan}. We define the natural isomorphism $a_{v,w}$ by
\[ {a_{v,w} \colon}(wv)^*A=\Lan(A\widehat{wv})\xrightarrow{\eqref{qwdj1ke}} \Lan(\Lan(A\hat w)\hat v)=v^*w^*A\ .\]
 In particular, $a_{v,w}$ is uniquely determined by the equality
\begin{equation}\label{eq:transfer-associator}
 {(a_{v,w} \circ j)\tau_{A,wv} = \tau_{\Lan(A\hat w),v}(\tau_{A,w} \circ \hat v')}\ .
\end{equation}
Since $(wv)^*\rho(g)$ and $v^*w^*\rho(g)$ are the natural equivalences of the left Kan extensions induced by $\rho(g)$, they agree under the above natural isomorphism.

Given three bounded coverings $U\xrightarrow{u}V\xrightarrow{v}W\xrightarrow{w}Z$, we have a commutative diagram
\[{\xymatrix{
 \cB_U'\ar[d]_{k}\ar[r]^{\hat u'}\ar[rd]^{\hat u}\ar@(ur,ul)[rr]^{\widehat{uv}'} & \cB_V'\ar[d]_{j}\ar[r]^{\hat v'}\ar[rd]^{\hat v} & \cB_W'\ar[d]_{i}\ar[rd]^{\hat w} & \\
 \cB_U & \cB_V & \cB_W & \cB_Z
}}\]
{We conclude that}
\begin{align*}
  (((a_{u,v} \circ w^*) a_{vu,w}) \circ k)\tau_{A,wvu}
  \ & = \ (a_{u,v} \circ w^* \circ k)\mathbf{(a_{vu,w} \circ k)\tau_{A,wvu}} \\
  & \stackrel{\mathclap{\eqref{eq:transfer-associator}}}{=} \ \mathbf{(a_{u,v} \circ w^* \circ k)\tau_{\Lan(A\hat w),vu}}(\tau_{A,w} \circ \widehat{vu}') \\
  & \stackrel{\mathclap{\eqref{eq:transfer-associator}}}{=} \ \tau_{\Lan(\Lan(A\hat w)\hat v),u} \mathbf{(\tau_{\Lan(A\hat w),v} \circ \hat u') (\tau_{A,w} \circ \widehat{vu}')} \\
  & \stackrel{\mathclap{\eqref{eq:transfer-associator}}}{=} \ \tau_{\Lan(\Lan(A\hat w)\hat v),u} (((a_{v,w} \circ j)\tau_{A,wv}) \circ \hat u') \\
  & = \ \mathbf{\tau_{\Lan(\Lan(A\hat w)\hat v),u}(a_{v,w} \circ \hat u)}(\tau_{A,wv} \circ \hat u') \\
  & \stackrel{\mathclap{\eqref{eq:transfer-on-morphisms}}}{=} \ (u^*a_{v,w} \circ k)\mathbf{\tau_{\Lan(A\widehat{wv}),u} (\tau_{A,wv} \circ \hat u')} \\
  & \stackrel{\mathclap{\eqref{eq:transfer-associator}}}{=} \ (u^*a_{v,w} \circ k)(a_{u,wv} \circ k)\tau_{A,wvu} \\
  & = \ (((u^*a_{v,w})a_{u,wv}) \circ k)\tau_{A,wvu}
\end{align*}
which proves that relation \eqref{gkengkergergreg} holds.
 
Given an admissible square \[\xymatrix{&W\ar[dr]^{f}\ar@{->>}[dl]^{w}&\\V\ar[dr]^{g}&&U\ar@{->>}[dl]^{u}\\&Z&}\]
we consider the diagram
\[\xymatrix{
	\cB_U\ar[r]^{f^{-1}}&\cB_W\\
	\cB_U'\ar[r]^{f^{-1}}\ar[u]^i_{\subseteq}\ar[d]_{\hat u}&\cB_W'\ar[u]_{\subseteq}\ar[d]_{\hat w}\\
	\cB_Z\ar[r]^{g^{-1}}&\cB_V
}\]
which is commutative since $w$ is bornological and admissible squares are pull-backs of the underlying coarse spaces.
 We define \[b'_{g,u} \colon \Lan((g_*A)\hat u) \to f_*\Lan(A\hat w)\] to be the natural isomorphism induced by the natural isomorphism
\[  { (g_*A)\hat u = Ag^{-1}\hat u = A\hat w f^{-1} \xrightarrow[\tau_{A,w} \circ f^{-1}]{\cong} \Lan(A\hat w)if^{-1} = f_*\Lan(A\hat w)i}\ . \]
 In particular, $b'_{g,u}$ is uniquely determined by the equation
\begin{equation}\label{eq:transfer-co-contra}
 { (b'_{g,u} \circ i)\tau_{g_*A,u} = \tau_{A,w} \circ f^{-1}}\ .
\end{equation}
We finally define $b_{g,u}$ as the inverse of $b'_{g,u}$.
As above this morphism is compatible with~$\rho$.

We check the relations \eqref{g34glkjkgl234g34} and \eqref{fce32r32r23r5}.
 Suppose that we have three admissible squares
\[ \xymatrix{&&&T\ar@{->>}[dl]^{t}\ar[dr]^{m}&&&\\&&U\ar@{->>}[dl]^{u}\ar[dr]^{h}&&S\ar[dr]^{n}\ar@{->>}[dl]^{s}&&\\&W \ar[dr]^{f}&&V\ar@{->>}[dl]^{v}\ar[dr]^{g}&&R\ar@{->>}[dl]^{r} &\\&&Y&&Z&&} \] 
 We denote the inclusion $\cB_R' \to \cB_R$ (the analogue of \eqref{rgergkln3g4kl3tg34g34g}) by $i_R$. By repeated application of \eqref{eq:transfer-co-contra}, we then have
\begin{align*}
 (b'_{gh,r} \circ i_R)\tau_{g_*h_*A,t}
   &= \tau_{A,t} \circ (nm)^{-1} \\
 &=   (\tau_{A,t} \circ m^{-1})\circ n^{-1} \\
 &=   ((b'_{h,s} \circ i_R)\tau_{h_*A,s})\circ n^{-1}  \\
 &=  ((b'_{h,s} \circ i_R)\circ n^{-1})(\tau_{h_*A,s} \circ n^{-1} ) \\
 &= ((n_* \circ b'_{h,s}) \circ i_R)((b'_{g,r} \circ h_{*} \circ i_R)\tau_{g_*h_*A,t} \\
 &= (((n_* \circ b'_{h,s})(b'_{g,r} \circ h_{*})) \circ i_R)\tau_{g_*h_*A,t}\ .
\end{align*}
 This proves that relation \eqref{g34glkjkgl234g34} holds.

Finally, we compute
\begin{align*}
 ((b'_{h,s} \circ u^*)&(s^* \circ b_{f,v})\boldsymbol{(a_{s,v} \circ f_*)) \circ i_S)}\boldsymbol{\tau_{f_*A,vs}} \\
 &\stackrel{\eqref{eq:transfer-associator}}{=} ((b'_{h,s} \circ u^*) \circ i_S)\boldsymbol{((s^* \circ b'_{f,v}) \circ i_S)}\boldsymbol{\tau_{\Lan((f_*A)\hat v),s}}(\tau_{f_*A,v} \circ \hat s') \\
 &\stackrel{\eqref{eq:transfer-on-morphisms}}{=} ((b'_{h,s} \circ u^*) \circ i_S)\tau_{h_*\Lan(A\hat u),s}\boldsymbol{(b'_{f,v} \circ \hat s)}\boldsymbol{(\tau_{f_*A,v} \circ \hat s')} \\
 &\stackrel{\eqref{eq:transfer-co-contra}}{=} \boldsymbol{((b'_{h,s} \circ u^*) \circ i_S)}\boldsymbol{\tau_{h_*\Lan(A\hat u),s}}(\tau_{A,u} \circ h^{-1}\hat s') \\
 &\stackrel{\eqref{eq:transfer-co-contra}}{=} (\tau_{\Lan(A\hat u),t} \circ m^{-1})(\tau_{A,u} \circ \hat t'm^{-1}) \\
 &\stackrel{\hphantom{\eqref{eq:transfer-co-contra}}}{=} (\tau_{\Lan(A\hat u),t}(\tau_{A,u} \circ \hat t')) \circ m^{-1} \\
 &\stackrel{\eqref{eq:transfer-associator}}{=} ((a_{t,u} \circ i_T)\tau_{A,ut}) \circ m^{-1} \\
 &\stackrel{\hphantom{\eqref{eq:transfer-co-contra}}}{=} (m_* \circ a_{t,u} \circ i_S)(\tau_{A,ut} \circ m^{-1}) \\
 &\stackrel{\eqref{eq:transfer-co-contra}}{=} (m_* \circ a_{t,u} \circ i_S)(b'_{f,vs} \circ i_S)\tau_{f_*A,vs} \\
 &\stackrel{\hphantom{\eqref{eq:transfer-co-contra}}}{=} (((m_* \circ a_{t,u})b'_{f,vs}) \circ i_S)\tau_{f_*A,vs}
\end{align*}
 This implies immediately that relation \eqref{fce32r32r23r5} holds as well.

By \cref{ergioergergrege} the above data induces a functor
\[ \bV_{\bA,\tr}^{G}\colon G\BC_{\tr}^{}\to \Nerve_{2}(\Add)\ .\]

\begin{ddd}
	\label{def:algK}
	We  define the equivariant algebraic $K$-homology  with transfers
	\[K\cA\cX^{G}_{\tr}\colon G\BC_{\tr}\to \Sp\]
	as the composition
	\[K\cA\cX^{G}_{\tr}:=\bK\circ \bV_{\tr}\ .\qedhere\]
\end{ddd}

\begin{prop}
	The functor $K\cA\cX^{G}$ is equivalent to $K\cA\cX^{G}_{\tr}\circ \iota$.
\end{prop}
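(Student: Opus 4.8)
The plan is to show that precomposing the newly constructed functor $\bV_{\bA,\tr}^{G}\colon G\BC_{\tr}\to\Nerve_{2}(\Add)$ with the canonical embedding $\iota\colon\Nerve(G\BC)\to G\BC_{\tr}$ of \cref{thorh5t4554t} recovers (the $2$-categorical refinement of) the original functor $\bV_{\bA}^{G}\colon G\BC\to\Add$, and then to apply $\bK$ to both sides. Concretely, $\bV_{\bA,\tr}^{G}$ was produced via \cref{ergioergergrege} from a package of data $(\bV^G_\bA, w^{*}, a_{v,w}, b_{g,u})$, and that lemma asserts precisely that the diagram \eqref{btoijoi4g45g45g} commutes, i.e.\ $\bV_{\bA,\tr}^{G}\circ\iota\simeq\bV_{\bA}^{G}$ as functors $\Nerve(G\BC)\to\Nerve_{2}(\Add)$. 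Composing with $\bK\colon\Nerve_{2}(\Add)\to\Sp$ of \eqref{dqwdiuzhi2343r} then yields
\[
K\bA\cX^{G}_{\tr}\circ\iota = \bK\circ\bV_{\bA,\tr}^{G}\circ\iota \simeq \bK\circ\bV_{\bA}^{G} = K\bA\cX^{G}\ ,
\]
where the last equality is the definition of $K\bA\cX^{G}$ recalled earlier in \cref{sec:algK} via the factorization \eqref{g3oijoir5gteberv} of $K$ through $\bK$.

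The first step is therefore to invoke the commutativity half of \cref{ergioergergrege}: the conclusion of that lemma is exactly a commuting square \eqref{btoijoi4g45g45g}, so the only thing one must check here is that the data used to build $\bV_{\bA,\tr}^{G}$ genuinely satisfies the hypotheses of \cref{ergioergergrege} in the \emph{isomorphism} cases—Conditions \ref{fuwif23fwfwef}, \ref{efiewoffewfwe2}, \ref{efiewoffewfwe21} and \ref{efiewoffewfwe211}. But these were arranged by construction in \cref{sec:algK}: for a bounded covering $w$ which is an isomorphism of underlying coarse spaces one set $w^{*}A:=w^{-1}_{*}A$ with $\tau_{A,w}=\id$ (using \cref{lem:restriction}), and the associators $a_{v,w}$ and the base-change isomorphisms $b_{g,u}$ were all taken to be identities in the corresponding degenerate situations, precisely to secure these conditions. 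Hence the hypotheses of \cref{ergioergergrege} hold, and \eqref{btoijoi4g45g45g} commutes.

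The second step is the passage to $K$-theory. One composes the commuting triangle/square \eqref{btoijoi4g45g45g} with $\bK$ and uses that, by construction, $\bK$ restricted along $\Nerve(\Add)\hookrightarrow\Nerve(\Add)[W^{-1}]\xrightarrow{\simeq}\Nerve_{2}(\Add)$ is the original algebraic $K$-theory functor $K$ of \cite{MR2079996}; this is the content of the diagram \eqref{g3oijoir5gteberv}. Since $K\bA\cX^{G}=K\circ\bV^{G}_{\bA}$ by \cite[Sec.~8.2]{equicoarse} and $K\bA\cX^{G}_{\tr}=\bK\circ\bV_{\bA,\tr}^{G}$ by \cref{def:algK}, the claimed equivalence $K\bA\cX^{G}\simeq K\bA\cX^{G}_{\tr}\circ\iota$ follows by pasting.

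There is essentially no ``main obstacle'' left at this point: all the genuinely hard work—verifying that the associativity and base-change coherences \eqref{gkengkergergreg}, \eqref{g34glkjkgl234g34}, \eqref{fce32r32r23r5} hold for the Kan-extension data, and hence that $\bV_{\bA,\tr}^{G}$ exists at all—was carried out in \cref{sec:algK} just above. The one point that deserves a sentence of care is the compatibility with degeneracies, i.e.\ that $\iota$ really lands in the part of $G\BC_{\tr}$ where $\bV_{\bA,\tr}^{G}$ agrees with $\bV^{G}_{\bA}$: the embedding $\iota$ sends a morphism $f$ to the span $(\hat X,\id,\hat f)$ whose left leg is an isomorphism of underlying coarse spaces, so on $\iota$ every $w^{*}$ occurring is of the form $(w^{-1})_{*}$ with trivial $\tau$, and the $2$-cells $a$, $b$ filling the higher simplices are identities; thus $\bV_{\bA,\tr}^{G}\circ\iota$ collapses to $\bV^{G}_{\bA}$ on the nose, not merely up to a nontrivial coherence. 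This is exactly what Conditions \ref{fuwif23fwfwef}--\ref{efiewoffewfwe211} encode, and it is why the construction in \cref{sec:algK} was set up with those normalizations.
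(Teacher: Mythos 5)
Your argument is correct and is essentially the paper's own proof: the left square in the relevant diagram commutes by \cref{ergioergergrege} (whose hypotheses were arranged by the normalizations $w^{*}=(w^{-1})_{*}$, $\tau_{A,w}=\id$ in \cref{sec:algK}), and the right triangle commutes by the factorization \eqref{g3oijoir5gteberv} identifying $\bK$ restricted along $\Nerve(\Add)\to\Nerve_{2}(\Add)$ with $K$. Pasting these, exactly as you do, is the paper's argument.
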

\begin{proof}
	This follows from the definition since the diagram
	\[\xymatrix{ G\BC_{\tr}\ar[r]^-{\bV_{\tr}}&\Nerve_2(\Add)\ar[r]^-\bK&\Sp\\
		\Nerve(G\BC)\ar[u]^\iota\ar[r]^-{\bV}&\Nerve(\Add)\ar[u]\ar[ur]_-K&	
}\]
commutes by \cref{ergioergergrege} and \eqref{g3oijoir5gteberv}.
\end{proof}

\subsection{Coarse ordinary homology}
\label{sec:ordH}

We first recall the construction of equivariant coarse ordinary homology \[H\cX^{G}\colon G\BC\to \Sp\] from \cite[Sec.~7]{equicoarse}.  
One starts with a functor
\[C\cX^{G}\colon G\BC\to \Ch\] which associates to a $G$-bornological coarse space $X$ the chain complex of $G$-invariant,  locally finite and controlled chains (the definitions will be recalled below).   We then use the Eilenberg--MacLane functor
\[\cE\cM\colon \Ch\to \Sp\] in order to define the equivariant coarse ordinary homology functor \[H\cX^{G}:=\cE\cM\circ C\cX^{G}\colon G\BC\to \Sp\ .\]

In order to define  equivariant coarse ordinary homology with transfers 
\[H\cX^{G}_{\tr}\colon G\BC^{}_{\tr}\to \Sp\] we will define a functor
\[C\cX^{G}_{\tr}\colon \Ho(G\BC_{\tr})\to  \Ch\] such that $C\cX^{G}_{\tr}\circ \iota=C\cX^{G}$.
It  then induces the desired extension $H\cX^{G}_{\tr}$ of $H\cX^{G}$ as the composition
 \begin{equation}\label{ververoij3o4gf34fg34fg}\hspace{-0.3cm}
G\BC_{\tr}^{}\to  \Ho(G\BC_{\tr}^{})
\xrightarrow{ C\cX^{G}_{\tr} }\Ch\xrightarrow{\cE\cM} \Sp\ ,
\end{equation} 
 where we omitted the nerve functor to consider ordinary categories as $\infty$-categories.

The construction of $H\cX^{G}_{\tr}$ turns out to be considerably less involved than in the construction of $K$-homology  $K\bA\cX^{G}_{\tr}$ given in Section \ref{fwljfewkfjewfewfewfwf} since  we can
stick to one-categorical considerations. We now explain the details.

Recall that the objects of $G\BC_{\tr}$ are $G$-bornological coarse spaces. Hence on objects we can define
 \[C\cX^{G}_{\tr}(X):=C\cX^{G}(X)\ .\]
To define $C\cX^{G}_{\tr}$  as a functor we must extend the functor
 $C\cX^{G}$ to generalized morphisms, see \cref{GROIERG}.

 We now recall the definition of  $C\cX^{G}(X)$. For an $n$ in $\nat$ the group 
 $C\cX_{n}^{G}(X)$  consists of   functions $c\colon X^{n+1}\to \Z$ which are $G$-invariant, and whose support is controlled and   locally finite.
 Here the group $G$ acts diagonally on the $(n+1)$-fold product $X^{n+1}$ of $X$ with itself. We say that a subset $S$ of $X^{n+1}$ is   controlled  if there exists an entourage $U$ of $X$ such that
 $(x_{0},\dots,x_{n})\in S $ implies that $(x_{i},x_{j})\in U$ for all $i,j$ in $\{0,\dots,n\}$. Finally, a subset $S$ of $X^{n+1}$ is 
 locally finite if for every bounded set $B$ of $X$ the set $\{s\in S\:|\: \mbox{$s$ meets $B$}\}$ is finite, where we say that
 $s=(x_{0},\dots,x_{n})$ meets $B$ if there exists $i$ in $\{0,\dots,n\}$ such that $x_{i}\in B$. 
 The differential \[\partial\colon C\cX_{n}^{G}(X)\to C\cX_{n-1}^{G}(X)\] is defined by $\partial:=\sum_{i=0}^{n} {(-1)^i}\partial_{i}$,
 where
 $\partial_{i}$ is the linear extension of the map $X^{n+1}\to X^{n}$ which omits the $i$'th entry.
 
 We consider now a generalized morphism
 $[W,w,f]$ from $X$ to $Y$, see the \cref{GROIERG}. 
 We consider the entourage $U(\pi_0(W))$ defined as in \eqref{geh54lolj3p45g354} for the partition $\pi_0(W)$ of $W$ into coarse components. We let $\chi^{n}_{\pi_0(W)}$ in $(\Z^{W^{n+1}})^{G}$ denote the $G$-invariant characteristic function of the set
 \[\{(w_{0},\dots,w_{n})\mid(\forall i,j\in \{0,\dots,n\}\mid (w_{i},w_{j})\in U(\pi_0(W))) \}\ ,\]
 i.e., the maximal $U(\pi_0(W))$-controlled subset of $W^{n+1}$.
 The map $w\colon W\to X$ induces a $G$-equivariant map $\hat w\colon W^{n+1}\to X^{n+1}$ which we can use to 
pull-back a $G$-invariant function $c$ on $X^{n+1}$ to a $G$-invariant function $\hat w^{*}c$ on $W^{n+1}$.
 Then we can define a map
 \[w^{*}\colon C\cX^{G}(X)\to (\Z^{W^{n+1}})^{G}\ , \quad w^{*}c:=\chi^{n}_{\pi_0(W)} \cdot \hat w^{*}c  \ .\]   
 We now show that $w^{*}c$ actually belongs to $C\cX_{n}^{G}(W)$.
  Let $B$ be a bounded subset of $W$. By \cref{ergierog43t34t34t34t} of a bounded covering there exists a finite partition $(B_{\alpha})_{\alpha\in I}$ of $B$ such that
  $w_{|[B_{\alpha}]}\colon [B_{\alpha}]\to [w(B_{\alpha})]$ is an isomorphism of coarse spaces. 
  Moreover, since $w$ is bornological and hence $w(B_{\alpha})$ is bounded  for every $\alpha$ in $I$, only
    finitely many points of the support of $w^{*}c$  meet $B_{\alpha}$.
  Since $I$ is finite only finitely many points of the support of $w^{*}c$ meet $B$.
   
  There exists an entourage $U$   of $X$ such that $c$ is $U$-controlled. Then it is straightforward to see that $w^{*}c$ is $w^{-1}U\cap U(\pi_0(W))$ controlled. Since $w^{-1}U\cap U(\pi_0(W))$ is an entourage of $W$ by the definition of a bounded coarse covering (\cref{wefiowefewfetrz}) we see that $w^{*}c$ is controlled.
  
 We have therefore defined a homomorphism
 \[w^{*}\colon C\cX^{G}_{n}(X)\to C\cX^{G}_{n}(W)\ .\]
 We now consider the compatibility of $w^{*}$ with the differential. 
 For notational simplicity we consider the case of $\partial_{n}$. We have 
 \[(\partial_{n}w^{*}c)(w_{0},\dots,w_{n-1})=\sum_{w_{n}\in W} \chi^{n}_{\pi_0(W)}(w_{0},w_{1},\dots,w_{n}) c(w(w_{0}),w(w_{1}),\dots,w(w_{n}))\ .\]  
 We fix $w_{0}$ in $W$ and 
let $W_{0}$ be the   coarse component of $w_{0}$. 
Because of the $\chi^{n}_{\pi_0(W)}$-factor a summand on the right-hand side is non-trivial only if the points
$w_{1},\dots,w_{n}$ all belong to   $W_{0}$.
Since $w$ is a bounded covering the restriction of $w$ to $W_{0}$ is a bijection
$w_{|W_{0}}\colon W_{0}\to w(W_{0})$ between  coarse components.
Since $c$ is controlled we see that    $c(w(w_{0}),\dots,w(w_{n-1}),x_{n})=0$  if  $x_{n}\not\in w(W_{0})$.
We therefore get the equality
\begin{eqnarray*}
(\partial_{n}w^{*}c)(w_{0},\dots,w_{n-1})&=&  \sum_{w_{n}\in W} \chi^{n}_{\pi_0(W)}(w_{0},w_{1},\dots,w_{n}) c(w(w_{0}),w(w_{1}),\dots,w(w_{n}))\\  &=&\sum_{x_{n}\in X} \chi^{n-1}_{\pi_0(W)}(w_{0},\dots,w_{n-1}) c(w(w_{0}),\dots,w(w_{n-1}),x_{n})\\&=& (w^{*}\partial_{n}c)(w_{0},\dots,w_{n})
 \ .\end{eqnarray*}
We thus have seen that $w^{*}$ induces a morphism of complexes.
We can now define
\[[W,w,f]_{*}\colon C\cX^{G}(X)\to C\cX^{G}(X)\ , \quad [W,w,f]_{*} :=f_{*}\circ w^{*}\ .\]
We must verify that $[W,w,f]_{*}$ is well-defined independently of the choice of the representative $(W,w,f)$ of the generalized morphism, and that this definition is compatible with the composition.  

Assume now that $\phi\colon W\to W^{\prime}$ induces an isomorphism between the spans $(W,w,f)$ and $(W^{\prime},w^{\prime},f^{\prime})$ from $X$ to $Y$. Then the commutative diagram  \eqref{revpokrvporekververv} induces a commutative diagram of chain complexes
\[\xymatrix{C\cX^{G}(X)\ar@{=}[d]\ar[r]^{w^{*}}&C\cX^{G}(W)\ar@/^0.3cm/[d]^{\phi_{*}}_(0.4){\cong}\ar[r]^{f_{*}}&C\cX^{G}(Y)\ar@{=}[d]\\
C\cX^{G}(X)\ar[r]^{w^{\prime,*}}&C\cX^{G}(W^{\prime})\ar@/^0.3cm/[u]_(0.4){\cong}^{\phi^{*}} \ar[r]^{f^{\prime}_{*}}&C\cX^{G}(Y)}
\]
where we use that $\phi_{*}$ is inverse to $\phi^{*}$. We conclude that $f_{*}w^{*}=f^{\prime}_{*}w^{\prime,*}$ and therefore that  $[W,w,f]$ is well-defined.

Let now $[V,v,g]$ be a generalized morphism from $Y$ to $Z$. Then we consider a representative $[U,(wu),(gh)]$ of the composition fitting into the diagram of $G$-coarse spaces
  \begin{equation}\label{fvelkvervrev1}
\xymatrix{&& U\ar[dl]_{u}\ar[dr]^{h}&&\\
& W\ar[dr]^{f}\ar[dl]_{w}&& V\ar[dl]_{v}\ar[dr]^{g}\\
X&&Y&&Z}
\end{equation}
where the square is admissible.
We get a diagram
\begin{equation}\label{fvelkvervrev11}
\xymatrix{&&C\cX^{G}(U) \ar[dr]^{ {h_*}}&&\\
&C\cX^{G}(W)\ar[ur]^{u^{*}}\ar[dr]^{ {f_*}} &&C\cX^{G}(V) \ar[dr]^{{g_*}}\\
C\cX^{G}(X)\ar[ur]^{w^{*}}&&C\cX^{G}(Y)\ar[ur]^{v^{*}}&&C\cX^{G}(Z)}
\end{equation}
of chain complexes.
The relation
\[[V,v,g]_{*}\circ [W,w,f]_{*}=[U,(wu),(gh)]_{*}\]
is now implied by the following two relations
\[u^{*}w^{*}=(wu)^{*}\ , \quad  {h_{*}}u^{*} = v^{*}f_{*}\]
which we will verify in the following to paragraphs.

Since $u$ is a morphism in $\wt{G\BC}$ we have the equality
\[u^{-1}U(\pi_0(W))\cap U(\pi_0(U))=U(\pi_0(U))\ .\] This implies  the relation $\chi^{n}_{\pi_0(U)}(\hat u^{*}\chi^{n}_{\pi_0(W)}) =\chi^{n}_{\pi_0(U)}$.
Therefore  for $c$ in $C\cX^{G}_{n}(X)$ we get the chain of equalities
\begin{align*}
\mathclap{
u^{*}w^{*}c=  \chi^{n}_{\pi_0(U)} (\hat u^{*} (\chi_{\pi_0(W)}^{n} \hat w^{*}c))=   \chi^{n}_{\pi_0(U)}(\hat u^{*} \chi_{\pi_0(W)}^{n})( \hat u^{*}\hat w^{*}c)=\chi^n_{\pi_0(U)}(\hat u^*\hat w^*c)= \chi^{n}_{\pi_0(U)} \widehat{(wu)}^{*}c=(wu)^{*} c\ .
}
\end{align*}
 
Let now $(v_{0},\dots,v_{n})$ be a point in $V^{n+1}$ and $c$ be in $C\cX_{n}^{G}(W)$.
Then we have the following chain of equalities:
\begin{eqnarray*}
(h_{*}u^{*}c)(v_{0},\dots,v_{n})&=&\sum_{(u_{0},\dots,u_{n})\in h^{-1}(v_{0},\dots,v_{n})} (u^{*}c)(u_{0},\dots,u_{n})\\
&=&\sum_{(u_{0},\dots,u_{n})\in h^{-1}(v_{0},\dots,v_{n})} \chi^n_{\pi_0(U)} (u_{0},\dots,u_{n})  c(u(u_{0}),\dots,u(u_{n}))\\
&\stackrel{!}{=}&\sum_{(u_{0},\dots,u_{n})\in h^{-1}(v_{0},\dots,v_{n})} \chi^n_{\pi_0(V)} (v_{0},\dots,v_{n})  c(u(u_{0}),\dots,u(u_{n}))\\
&\stackrel{!!}{=}&\sum_{(w_{0},\dots,w_{n})\in f^{-1}(v(v_{0}),\dots,v(v_{n}))} \chi^n_{\pi_0(V)} (v_{0},\dots,v_{n})  c(w_{0},\dots,w_{n})\\
&=&(v^{*}f_{*}c) (v_{0},\dots,v_{n})\ ,
\end{eqnarray*}
where for the equality marked ! we use the fact that (since $c$ is controlled and the square is admissible) if $(u_{0},\dots,u_{n})$  in $U^{n+1}$ is such that 
$c(u(u_{0}),\dots,u(u_{n}))\not=0$, then the conditions 
$\chi^{n}_{\pi_0(U)}(u_{0},\dots,u_{n})=1$ and $\chi^{n}_{\pi_0(V)}(h(u_{0}),\dots h(u_{n}))=1$ are equivalent.

For the equality marked !! we use that an admissible square is a pullback square and hence $u$ induces a bijection
\[\{(u_0,\ldots,u_n)\mid h(u_i)=v_i\}\to \{(w_0,\ldots, w_n)\mid f(w_i)=v(v_i)\}\ .\]
\begin{ddd}
We define
\[H\cX^{G}_{\tr} \colon G\BC_{\tr}\to \Sp\ .\]
as the composition \eqref{ververoij3o4gf34fg34fg}.
\end{ddd}

\begin{lem} $H\cX^{G}_{\tr}$ is an equivariant strongly additive coarse homology theory with transfers.
\end{lem}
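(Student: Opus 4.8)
The plan is to reduce the lemma to facts that have already been established, partly in this section and partly in \cite{equicoarse}. First I would record that $H\cX^{G}_{\tr}$ really is a functor $G\BC_{\tr}\to\Sp$: by \eqref{ververoij3o4gf34fg34fg} it is the composition of the canonical functor $G\BC_{\tr}\to\Ho(G\BC_{\tr})$, the functor $C\cX^{G}_{\tr}\colon\Ho(G\BC_{\tr})\to\Ch$ (which is well defined by the verifications of representative-independence and of compatibility with composition carried out above), and the Eilenberg--MacLane functor $\cE\cM\colon\Ch\to\Sp$.

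To see that $H\cX^{G}_{\tr}$ is an equivariant coarse homology theory with transfers in the sense of \cref{roigwregergerger}, by definition it suffices to check that $H\cX^{G}_{\tr}\circ\iota$ is an equivariant coarse homology theory. But $C\cX^{G}_{\tr}\circ\iota=C\cX^{G}$ by construction, so
\[H\cX^{G}_{\tr}\circ\iota\simeq\cE\cM\circ C\cX^{G}_{\tr}\circ\iota=\cE\cM\circ C\cX^{G}=H\cX^{G}\ ,\]
and $H\cX^{G}$ is an equivariant coarse homology theory by \cite[Sec.~7]{equicoarse}. For strong additivity I would invoke \cref{fwefewifjioj32or}: an equivariant coarse homology theory with transfers is strongly additive precisely when the underlying equivariant coarse homology theory, here $H\cX^{G}_{\tr}\circ\iota\simeq H\cX^{G}$, is strongly additive in the sense of \cite[Def.~3.12]{equicoarse}; and the strong additivity of equivariant coarse ordinary homology is proved in \cite[Sec.~7]{equicoarse}. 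Concretely, for a family $(X_{i})_{i\in I}$ the coarse structure of the free union forces every controlled simplex to lie in a single component, and local finiteness on $\coprod^{\free}_{i\in I}X_{i}$ is equivalent to simultaneous local finiteness on all the $X_{i}$, so that $C\cX^{G}_{n}(\coprod^{\free}_{i\in I}X_{i})\cong\prod_{i\in I}C\cX^{G}_{n}(X_{i})$, and $\cE\cM$ preserves products.

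I do not expect a serious obstacle here: the substantial work --- producing $C\cX^{G}_{\tr}$ on generalized morphisms and verifying well-definedness and functoriality --- has already been completed above, and the present lemma only has to assemble these pieces with the corresponding properties of $H\cX^{G}$ recorded in \cite{equicoarse}. The only point worth a sentence of care is that passing to the homotopy category and applying $\cE\cM$ preserves each of excision, coarse invariance, vanishing on flasques, $u$-continuity and strong additivity; this holds because $\cE\cM$ is exact and preserves products and coproducts, and because $\Ho(G\BC_{\tr})$ has the same objects and the same relevant diagrams (complementary pairs, flasque spaces, cofinal families of entourages, free unions) as $G\BC$ does via $\iota$.
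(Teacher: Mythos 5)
Your proposal is correct and follows essentially the same route as the paper: the paper's proof simply observes that $H\cX^{G}_{\tr}\circ\iota\simeq H\cX^{G}$ holds by construction and cites \cite[Thm.~7.3 and Lem.~7.11]{equicoarse} for $H\cX^{G}$ being a strongly additive equivariant coarse homology theory, which together with \cref{fwefewifjioj32or} (as you use) gives the statement. Your additional remarks on $\cE\cM$ and the product decomposition over free unions are consistent with, but not needed beyond, that citation.
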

\begin{proof}
By construction
$H\cX^{G}_{\tr}\circ \iota\simeq H\cX^{G}$ is a strongly additive equivariant coarse homology theory by \cite[Thm.~7.3 and Lem.~7.11]{equicoarse}.
\end{proof}

\section{Application: Mackey functors}
\label{sec_mackey}
In this final section we assume that $G$ is a finite group. In \cref{gegwfefwef} we show that any $G$-equivariant $\bC$-valued coarse homology theory with transfers gives rise to a $\bC$-valued Mackey functor. In the special case when $\bC$ is the category of spectra we obtain a spectral Mackey functor which is equivalent to the datum of a genuine $G$-equivariant spectrum, see   \cref{efuifeewfew}.  
Our main result  is   \cref{vwlkne2ofwevwevevw} which expresses the delooping along a representation sphere of  a Mackey functor obtained from an equivariant coarse homology theory with transfers    in terms of coarse geometry. 

Our main application of   transfers for equivariant coarse homology theories  is the descent argument leading to
injectivity results for assembly maps. We refer to  \cite{desc} for more details. 
In \cref{groiowwefwfewfwewf} we explain the main principle of the descent argument in the case of finite groups.
On the one hand we can avoid all the difficulties connected with infinite groups, but on the other hand, even for finite groups,  we obtain interesting consequences.

\subsection{Mackey functors from equivariant coarse homology theories with transfers}\label{gegwfefwef}
We let $G\Fin$ denote the category of finite $G$-sets and equivariant maps.
This category admits fibre products and we can form the bicategory 
$\Span(G\Fin)$ of spans in $G\Fin$. Its homotopy category is called the effective Burnside category of $G$. 
The $\infty$-categorical version of  the effective Burnside category   is the subcategory $A^{\eff}(G)$
of $\Fun(\Tw,G\Fin)$   (compare \cref{rioug9erger34t})  defined as follows.
\begin{ddd}\label{verpovevervev}
For every $n$ in $\nat$ the set
 of $n$-simplices of the $\infty$-category  $A^{\eff}(G)$ is the set of functors  $X$ in $\Fun(\Tw[n],G\Fin)$   such that  the squares 
\[\xymatrix{X_{i,j}\ar[r]\ar[d]&X_{i^{\prime},j}\ar[d]\\X_{i,j^{\prime}}\ar[r]&X_{i^{\prime},j^{\prime}}}\]
for all $0\le i\le i^{\prime}\le j^{\prime}\le j\le n$  are pull-backs.
\end{ddd}

Let $\bC$ be some $\infty$-category.

\begin{ddd}\label{geriogjergergrege}
We define the $\infty$-category $\Mack_\bC(G)$ of \emph{$\bC$-valued Mackey functors} to be the full subcategory of $\Fun(A^{\eff}(G)^{op},\bC)$ 
	of the coproduct preserving  (or equivalently, additive)  functors.
\end{ddd}

\begin{rem}\label{efuifeewfew}
The stable $\infty$-category $\Mack_\Sp(G)$ is called the  $\infty$-category of spectral Mackey functors, and it models the genuine stable homotopy category associated to the group~$G$ \cite{Guillou:2011aa,Barwick:2014aa}. 
Typical constructions in genuine equivariant stable homotopy theory are fixed points with respect to subgroups of $G$, deloopings along representation spheres, and 
geometric fixed points. In the present section we explain how these operations can be expressed in terms of coarse geometry provided the spectral Mackey functor is derived from an equivariant coarse homology theory with transfers, see \cref{rgiurg432t34t34t}.
\end{rem}

A $G$-set $S$ naturally gives rise to a $G$-bornological coarse space $S_{min,min}$ obtained by equipping $S$ with the minimal coarse and bornological structures. If $S\to T$ is a map between finite $G$-sets, then
$S_{min,min}\to T_{min,min}$ is controlled and proper.
We therefore have a functor  
  \begin{equation}\label{uiehweuihfiewfewfwef}
M\colon G\Fin\to G\BC\ , \quad  S\mapsto S_{min,min}\ .
\end{equation}
 
\begin{lem}\label{lem:prop}\mbox{}
	\begin{enumerate}
		\item \label{foiehfioefewfwefewf}The functor $M$ preserves finite coproducts.
		\item\label{fojwofewfewfewf} The functor $M$ intertwines the cartesian product on $G\Fin$
		 with the symmetric monoidal structure $\otimes$ on $G\BC$.		\item \label{fewuifhwiefewfewfw} The functor $M$ sends every morphism to a bounded covering.
		 \item  \label{efui2rf23r23r23r}The functor $M$ sends pull-back squares to admissible squares.
	\end{enumerate}
\end{lem}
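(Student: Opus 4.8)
The statement is a list of four elementary compatibility properties of the functor $M\colon G\Fin\to G\BC$, $S\mapsto S_{min,min}$. The plan is to verify each item by unwinding the definitions of the structures involved; all four are essentially formal once one recalls how the minimal coarse and bornological structures interact with products, coproducts and equivariant maps. I expect no real obstacle: the content is bookkeeping, and the only mildly delicate point is \ref{fewuifhwiefewfewfw}, where one must check Condition~\ref{ergierog43t34t34t34t}.\ref{gropergergreg} about bounded subsets.

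First I would treat \ref{foiehfioefewfwefewf} and \ref{fojwofewfewfewf} together, since both are instances of the fact that $S_{min,min}$ is characterized by a universal property (it is the initial object of $G\BC$ with underlying $G$-set $S$ in an appropriate sense, or concretely: the minimal coarse structure consists of subsets of the diagonal and the minimal bornology of finite subsets). For \ref{foiehfioefewfwefewf} one checks directly that $(S\sqcup T)_{min,min}$ has underlying set $S\sqcup T$, that a subset of $(S\sqcup T)\times(S\sqcup T)$ lies in the minimal coarse structure iff it is contained in the diagonal iff it decomposes into a piece in $S\times S$ and a piece in $T\times T$ each contained in the respective diagonal, and similarly for bornologies; this is precisely the coproduct $S_{min,min}\sqcup T_{min,min}$ in $G\BC$. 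For \ref{fojwofewfewfewf} one recalls from \cite[Sec.~4.3]{equicoarse} that the coarse structure of $X\otimes Y$ is generated by $U\times V$ for entourages $U,V$, and the bornology by products of bounded sets; applying this to $X=S_{min,min}$, $Y=T_{min,min}$ and noting that products of diagonal entourages are diagonal and products of finite sets are finite gives $(S\times T)_{min,min}\cong S_{min,min}\otimes T_{min,min}$.

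For \ref{fewuifhwiefewfewfw}, let $f\colon S\to T$ be a morphism of finite $G$-sets; I would verify the three conditions of \cref{ergierog43t34t34t34t} for $M(f)\colon S_{min,min}\to T_{min,min}$. Since the coarse structure on $S_{min,min}$ is minimal, the coarse components of $S_{min,min}$ are the singletons, so $R_{S_{min,min}}$ is the trivial equivalence relation, $\pi_{0}(S_{min,min})=S$, and every subset is coarsely closed. Condition~\ref{ergierog43t34t34t34t}.\ref{frweoirhgirogregewefw} (bounded coarse covering) then reduces to checking $(w^{-1}\cC_{T})(\cW)=\cC_{S}$ where $\cW$ is the partition into singletons: both sides are the minimal coarse structure, since $U(\cW)=\diag_S$ and intersecting anything with $\diag_S$ lands in the minimal structure, while the second defining condition (isomorphism on each component) is trivial because components are points. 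Condition~\ref{ergierog43t34t34t34t}.\ref{gropergergreg111} (bornological) holds because $f$ maps finite sets to finite sets. For Condition~\ref{ergierog43t34t34t34t}.\ref{gropergergreg}: given a bounded, i.e.\ finite, subset $B\subseteq S$, take the partition of $B$ into singletons; it is finite and coarsely disjoint (distinct points of $S_{min,min}$ are coarsely disjoint), and $w$ restricted to a singleton coarse closure $\{s\}$ is obviously an isomorphism onto $\{f(s)\}$. Hence $M(f)$ is a bounded covering. (Alternatively one could cite \cref{igjeroigegergregregreg} applied with $I=S$ fibered over $I'=T$ and $X=*$, but the direct check is shorter.)

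Finally, for \ref{efui2rf23r23r23r}, consider a pull-back square of finite $G$-sets with corner maps $w,f,g,u$ into $S\times_Z Y$; I would verify the four conditions of \cref{gergergeg} for the image square in $\widetilde{G\BC}$. By \ref{fojwofewfewfewf} and the observation that $M$ preserves the underlying $G$-sets, $M$ of a pull-back of $G$-sets is again a pull-back in $G\Coarse$ (the underlying $G$-set of $M(S\times_Z Y)$ is $S\times_Z Y$, and its coarse/bornological structures are the minimal ones, which agree with the structures inherited as a pull-back in $\widetilde{G\BC}$ because $\widetilde{G\BC}\to G\Coarse$ is an equivalence and pull-backs of minimal-structure spaces carry minimal structures); this gives Condition~\ref{gergergeg}.\ref{gioergergreg1}. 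Maps between finite $G$-sets equipped with minimal structures are automatically proper and bornological, so Conditions for $g$ and $f$ hold, and Condition~\ref{gergergeg}.\ref{fewuifhwiefewfewfw}\ (that $u$ is a bounded covering) is exactly \ref{fewuifhwiefewfewfw} just proved. Hence the image square is admissible. This completes all four items.
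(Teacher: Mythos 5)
Your proof is correct and takes essentially the same route as the paper's, which likewise just observes that finite coproducts, tensor products and pullbacks of $G$-sets with minimal structures again carry the minimal structures (finiteness being needed only for the coproduct bornology). For item (3) the paper simply cites \cref{igjeroigegergregregreg} (the case $X=*$), which your direct verification of the conditions of \cref{ergierog43t34t34t34t} reproves — as you yourself note.
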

\begin{proof} A finite coproduct  in $G\BC$ of $G$-sets with the minimal structures is the coproduct of the underlying  $G$-sets 
equipped with the minimal structures. This implies the 
 Assertion \ref{foiehfioefewfwefewf}. The finiteness assumption is necessary because an infinite coproduct in $G\BC$ of non-empty $G$-sets with the minimal structures would not have the minimal bornology anymore.
 
To see Assertion \ref{fojwofewfewfewf} note  that the
   $\otimes$-product of two finite $G$-sets with minimal structures in $G\BC$ is the product of the underlying sets with
 the minimal structures.   
 
 It has been observed in \cref{igjeroigegergregregreg} that a map between $G$-sets with minimal structures
 is a bounded covering. This implies 
 Assertion \ref{fewuifhwiefewfewfw}.
 
To see Assertion \ref{efui2rf23r23r23r} note that a cartesian square of finite $G$-sets becomes an admissible square (\cref{gergergeg}) if one equips the $G$-sets in the square with the minimal structures.
 \end{proof}

The following corollary is an immediate consequence of  \cref{lem:prop}.\ref{efui2rf23r23r23r}  and the fact (observed in the proof of  \cref{hfweuihfweifefefeef}) that the inclusion of $G\BC$  into $  G\BC_{tr}^{}$ preserves finite coproducts.
\begin{kor}\label{kor_M_coprod_preserving}
The functor $M$ naturally induces a coproduct preserving  functor \begin{equation}\label{dweoidowedwedwed}
M^{}\colon A^{\eff}(G)\to G\BC_{\tr}\ .
\end{equation}
\end{kor}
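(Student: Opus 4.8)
The plan is to obtain the functor \eqref{dweoidowedwedwed} simply by post-composition with the functor $M$ of \eqref{uiehweuihfiewfewfwef}, regarded as a functor $G\Fin\to\wt{G\BC}$ via the inclusion $G\BC\subseteq\wt{G\BC}$. Post-composition with $M$ induces a map of simplicial sets
\[
M_{*}\colon \Hom_{\Cat}(\Tw,G\Fin)\longrightarrow\Hom_{\Cat}(\Tw,\wt{G\BC})
\]
(with $\Hom_{\Cat}(\Tw,-)$ as in \cref{rioug9erger34t}), and the content of the corollary is that $M_{*}$ restricts to a map $A^{\eff}(G)\to G\BC_{\tr}$. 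Such a map of simplicial sets is automatically a functor of $\infty$-categories, since both the source and the target are quasi-categories ($A^{\eff}(G)$ by \cite{Barwick:2014aa}, $G\BC_{\tr}$ by \cref{eriogergergerg}); and it visibly restricts along the canonical embeddings $\Nerve(G\Fin)\hookrightarrow A^{\eff}(G)$ and $\iota\colon\Nerve(G\BC)\hookrightarrow G\BC_{\tr}$ (\cref{thorh5t4554t}) to the original functor $M$, which is the asserted naturality.

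To see that $M_{*}$ lands in $G\BC_{\tr}$ I would take an $n$-simplex $X$ of $A^{\eff}(G)$, that is, a functor $X\colon\Tw[n]\to G\Fin$ all of whose squares are pull-backs (\cref{verpovevervev}), and check the three conditions of \cref{rgeuouoifweffwefwe} for $M\circ X$. The maps $M(X_{i,j})\to M(X_{i,j-1})$ are bounded coverings by \cref{lem:prop}.\ref{fewuifhwiefewfewfw}; the maps $M(X_{i,j})\to M(X_{i+1,j})$ are proper and bornological because a morphism of finite $G$-sets equipped with the minimal structures is controlled and proper (as recalled before \eqref{uiehweuihfiewfewfwef}) and bornological (images of finite sets are finite); and every square that the third condition of \cref{rgeuouoifweffwefwe} requires to be admissible is the $M$-image of a square that \cref{verpovevervev} requires to be a pull-back, hence admissible by \cref{lem:prop}.\ref{efui2rf23r23r23r}.

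For the coproduct-preservation statement I would argue as follows. Both $A^{\eff}(G)$ and $G\BC_{\tr}$ are semi-additive (the latter by \cref{hfweuihfweifefefeef}), and for finite $G$-sets $S,T$ the coproduct in $A^{\eff}(G)$ is the disjoint union $S\sqcup T$, with coprojections the evident spans $(S,\id_{S},\incl)$ and $(T,\id_{T},\incl)$. Applying $M$ and using \cref{lem:prop}.\ref{foiehfioefewfwefewf}, the image cocone has apex $M(S)\sqcup M(T)$ computed in $G\BC$, with coprojections $\iota$ of the inclusions. Since, as observed in the proof of \cref{hfweuihfweifefefeef}, the embedding $G\BC\hookrightarrow G\BC_{\tr}$ preserves finite coproducts, this is a coproduct cocone in $G\BC_{\tr}$; hence $M$ preserves finite coproducts, equivalently is additive.

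The argument is essentially bookkeeping once \cref{lem:prop} is in hand, so I do not anticipate a genuine obstacle. The one point deserving care is the precise matching between the admissibility condition in \cref{rgeuouoifweffwefwe} and the pull-back condition in \cref{verpovevervev} — concretely, that the square of \cref{rgeuouoifweffwefwe} attached to an index $(i,j)$ with $1\le i\le j\le n-1$ is the one of \cref{verpovevervev} for the chain $i-1\le i\le j\le j+1$ — together with the (harmless, since everything in sight is finite) observation, built into \cref{lem:prop}.\ref{efui2rf23r23r23r}, that the pull-back of minimal bornological coarse structures along maps of finite $G$-sets is again minimal.
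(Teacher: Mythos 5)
Your proposal is correct and follows essentially the same route as the paper: the paper deduces the corollary directly from \cref{lem:prop}.\ref{efui2rf23r23r23r} (pull-back squares go to admissible squares, which is exactly what the simplex-level conditions of \cref{rgeuouoifweffwefwe} require) together with the fact, noted in the proof of \cref{hfweuihfweifefefeef}, that $\iota\colon G\BC\to G\BC_{\tr}$ preserves finite coproducts. You merely spell out the post-composition bookkeeping that the paper leaves implicit.
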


For a fixed $S$ in $G\Fin$ we have a functor \begin{equation}\label{4tgergergergregerg} P_{S}:=S\otimes (-)\colon A^{\eff}(G)\to A^{\eff}(G)\end{equation}  
given by  the cartesian product of objects, spans, etc., with $S$.  Recall that $G\Orb$ denotes the full subcategory of $G\Fin$
of transitive $G$-sets.
 
The effective Burnside category of $G$  has a canonical duality
\begin{equation}\label{vwhwiuevhuiwewec}
D\colon A^{\eff}(G)^{op}\to A^{\eff}(G)
\end{equation}
described in \cite[Sec.~2.18]{Barwick:2015ab}. 
It is the identity on objects. For the moment we only need to understand the functorial equivalence of mapping spaces
\begin{equation}\label{regergerg}
\Map_{A^{\eff}(G)}(S\otimes R,T)\simeq \Map_{A^{\eff}(G)}(R,D(S)\otimes T)
\end{equation}
for all $R,T$ in $A^{\eff}(G)$ and $S$ in $G\Orb$. Note that the right-hand side is defined  by considering $D(S)$ as an object of $G\Fin$.
The equivalence in \eqref{regergerg} is induced by the evaluation and coevaluation spans  
\[S\times S\xleftarrow{\diag(S)}S \to *\ , \quad *\leftarrow S\xrightarrow{\diag(S)} S\times S\ .\]
For details we refer to  \cite{Barwick:2015ab}.

Let now $\bC$ be stable and $E\colon G\BC_{\tr}\to \bC$ be an equivariant coarse homology theory with transfers.

\begin{ddd}\label{rgiurg432t34t34t}
We define the functor  \[EM:=E\circ M \circ D\colon A^{\eff}(G)^{op}\to  \bC \ \qedhere\]
\end{ddd}
	
The following corollary is an immediate consequence of  \cref{ewfiowef234rr34t}   {and \cref{kor_M_coprod_preserving}.}
\begin{kor}\label{kor_mackey}
The functor $EM$ preserves coproducts, i.e., it belongs to the subcategory $\Mack_\bC(G)$ of  $\Fun(A^{\eff}(G)^{op},\bC)$.
\end{kor}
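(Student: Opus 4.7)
The plan is to verify that each of the three functors composing $EM = E \circ M \circ D$ preserves finite coproducts, and then chain these observations together. The effective Burnside $\infty$-category $A^{\eff}(G)$ is semi-additive: finite products and finite coproducts of objects agree, both being given by disjoint unions of finite $G$-sets. Consequently, coproducts in $A^{\eff}(G)^{op}$ correspond to products in $A^{\eff}(G)$, which in turn coincide with coproducts in $A^{\eff}(G)$. The duality $D$ from \eqref{vwhwiuevhuiwewec} is an equivalence of $\infty$-categories which is the identity on objects, and hence sends coproducts in $A^{\eff}(G)^{op}$ to coproducts in $A^{\eff}(G)$.

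Next, by \cref{kor_M_coprod_preserving} the functor $M \colon A^{\eff}(G) \to G\BC_{\tr}$ preserves finite coproducts; this is ultimately inherited from \cref{lem:prop}.\ref{foiehfioefewfwefewf} together with the fact that $\iota \colon G\BC \to G\BC_{\tr}$ preserves finite coproducts. Finally, \cref{ewfiowef234rr34t} asserts that the equivariant coarse homology theory with transfers $E$ is additive, so it preserves finite coproducts.

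Composing these three coproduct-preserving functors yields that $EM$ preserves finite coproducts, so by \cref{geriogjergergrege} the functor $EM$ defines a $\bC$-valued Mackey functor. There is no serious obstacle; the only mildly delicate point is the observation that semi-additivity of $A^{\eff}(G)$ ensures the notion of finite coproduct is the same in $A^{\eff}(G)$ and in its opposite, so that the coproduct-preservation property is unambiguous and is compatible with the duality $D$.
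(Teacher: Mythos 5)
Your proof is correct and follows the paper's own route: the paper deduces the corollary immediately from \cref{ewfiowef234rr34t} (additivity of $E$) and \cref{kor_M_coprod_preserving} (coproduct preservation of $M$), exactly the ingredients you use. Your additional remark that semi-additivity of $A^{\eff}(G)$ and the fact that $D$ is the identity on objects make the passage through $A^{\eff}(G)^{op}$ harmless is a sensible filling-in of a step the paper leaves implicit.
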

The functor $EM$ is the Mackey functor associated to the $\bC$-valued  equivariant coarse homology theory with transfers $E$.
 
For a subgroup $H$ of $G$ we
define the functor
\begin{equation}\label{vrtvlk4nervev}
(-)^H\colon \Mack_\bC(G)\xrightarrow{\ev_{G/H}} \bC
\end{equation}
of evaluation at the $G$-set $G/H$.

\begin{rem}
The above notation is motivated by the notation for the $H$-fixed points  $F^{H}$ of a genuine $G$-equivariant spectrum $F$. Indeed, under the correspondence of spectral Mackey functors with genuine $G$-equivariant spectra (see \cref{efuifeewfew}) the operation \eqref{vrtvlk4nervev} corresponds to the operation of taking (categorical) $H$-fixed points. 
\end{rem}

 Let $E\colon G\BC_{\tr}\to \bC$ be an equivariant coarse homology theory with transfers. The following corollary is an immediate consequence of the definitions. 
 \begin{kor}\label{berbpojkpo34g3gereb} We have an equivalence
\[EM^{H}\simeq E((G/H)_{min,min})\ .\]
\end{kor}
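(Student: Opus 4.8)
The plan is to unwind the definitions involved; the statement is a purely formal consequence and requires no geometric input, which is why it is asserted as an immediate corollary. First, recall from \eqref{vrtvlk4nervev} that $(-)^H$ is the functor of evaluation at the $G$-set $G/H$, so that $EM^H$ is the value of the Mackey functor $EM \colon A^{\eff}(G)^{op} \to \bC$ on the object $G/H$. Next, by \cref{rgiurg432t34t34t} we have $EM = E \circ M \circ D$, and therefore $EM^H \simeq E\bigl(M(D(G/H))\bigr)$. The key point is that the duality $D$ of \eqref{vwhwiuevhuiwewec} is the identity on objects (recalled from \cite[Sec.~2.18]{Barwick:2015ab}), so $D(G/H) = G/H$ inside $A^{\eff}(G)$; in particular no subtlety arises from passing between $A^{\eff}(G)$ and its opposite, since we only need to track the underlying object. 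Finally, by \eqref{uiehweuihfiewfewfwef} together with \cref{kor_M_coprod_preserving}, the functor $M$ sends a finite $G$-set $S$ to the $G$-bornological coarse space $S_{min,min}$, hence $M(G/H) = (G/H)_{min,min}$. Chaining these identifications yields the claimed equivalence $EM^H \simeq E\bigl((G/H)_{min,min}\bigr)$.

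The only step that deserves a moment's attention is the invocation that $D$ fixes objects; once this is granted, everything else is bookkeeping of definitions, and there is no genuine obstacle. If a reader wanted a slightly more robust formulation one could instead note that $\ev_{G/H} \colon \Mack_\bC(G) \to \bC$ is corepresented by $G/H$ and that $M \circ D$ carries the Yoneda object at $G/H$ to $(G/H)_{min,min}$, but this is overkill: the half-sentence proof above made fully explicit is exactly what is intended.
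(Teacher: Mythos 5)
Your proposal is correct and matches the paper, which offers no argument beyond declaring the corollary an immediate consequence of the definitions: one evaluates $EM=E\circ M\circ D$ at $G/H$, uses that $D$ is the identity on objects, and that $M$ sends $G/H$ to $(G/H)_{min,min}$. Nothing further is needed.
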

	
\begin{rem}
The cartesian product of $G\Fin$ induces a symmetric monoidal structure $\otimes$ on $A^{\eff}(G)$.
The following constructions  could be written more naturally using this  symmetric monoidal structure.
Since in the present section we do not want to discuss this symmetric monoidal structure in detail we proceed in a more direct way.\end{rem}

 The functor $P_{S}$ defined in \eqref{4tgergergergregerg} preserves coproducts. Consequently, precomposition by  the functor $P_{S}$  preserves Mackey functors.
	Motivated by \cite[Cor.~4.5.1]{Barwick:2015ab}
we define the power functor  by the prescription
\begin{equation}\label{veroijeirojverv}
	G\Orb^{op}\times \Mack_\bC(G)\to \Mack_\bC(G)\ , \quad (S,F)\mapsto F^{S}:=P_{S}^{*}F\ .
	\end{equation}
	Here we implicitly use the functorial dependence   \[G\Orb\ni S\mapsto P_{S}\in \Fun(A^{\eff}(G),A^{\eff}(G))\ .\]
	Since Mackey functors are contravariant functors on $A^{\eff}(G)$ we eventually get the contravariant dependence on $S$ in \cref{veroijeirojverv}.

We now assume that $\bC$ is both presentable and stable.  By stability finite coproducts and products in $\bC$ coincide.
Using this fact we can describe the category $\Mack_{\bC}(G)$  as the full subcategory of sheaves in the $\infty$-category of $\bC$-valued presheaves $\Fun(A^{\eff}(G)^{op},\bC)$  on $A^{\eff}(G)$ with respect to the Grothendieck topology  given by finite disjoint decompositions into $G$-invariant subsets. It follows then that 
$\Mack_{\bC}(G)$ is presentable as well. 

Since limits in $\Fun(A^\eff(G)^{op},\bC)$ are defined objectwise, the functor of precomposition with $P_{S}$ preserves small limits.   
Since also the inclusion $\Mack_{\bC}(G)\to \Fun(A^\eff(G)^{op},\bC)$ detects and preserves limits, it follows that $(-)^{S}\colon \Mack_{\bC}(G)\to \Mack_{\bC}(G)$ preserves small limits. 
We therefore 
have an adjunction   
\[S\otimes (-)\colon  {\Mack_\bC(G)} \leftrightarrows {\Mack_\bC(G)}: (-)^{S}\]
which determines the tensor structure
\begin{equation}\label{veroijeirojverv111}
G\Orb\times \Mack_\bC(G)\to \Mack_\bC(G)\ , \quad (S,F)\mapsto S\otimes F\ .
\end{equation}  	 
Recall that by Elmendorf's theorem the $\infty$-category 
$\PSh( G\Orb)$
models the homotopy theory of $G$-spaces. 
We can left-Kan extend  the tensor structure~\eqref{veroijeirojverv111} (along the Yoneda embedding $G\Orb\to \PSh( G\Orb)$) to a functor 
\[\PSh( G\Orb) \times \Mack_\bC(G)\to \Mack_\bC(G)\ ,\quad  (X,F)\mapsto X\otimes F \]
preserving colimits in the first variable.
Similarly,  we can also right-Kan extend the power structure \eqref{veroijeirojverv}
  to a functor
\[\PSh( G\Orb)^{op} \times \Mack_\bC(G)\to \Mack_\bC(G)\ , \quad ( X,F)\mapsto F^{X}\]
preserving limits in the first variable.

	Let $E\colon G\BC^{}_{\tr}\to \bC$ be an equivariant coarse homology theory with transfers and recall \cref{rgiurg432t34t34t} of the $\bC$-valued Mackey functor $EM$ associated to $E$. Using \eqref{veroijeirojverv} we define the functor   
	\[ G\Orb^{op}\to \Mack_\bC(G) \ , \quad S\mapsto EM^{S}\ .\]
	For every equivariant coarse homology theory with transfers $E\colon G\BC^{}_{\tr}\to \bC$ and every $G$-bornological coarse space $V$
	we can form a new equivariant coarse homology theory with transfers $E_{V}\colon G\BC^{}_{\tr}\to \bC$ called the twist of $E$ by $V$,  see \cref{iojweoifewfewfwef}. 
	Recall the definition of  the functor $M^{} $ in \cref{dweoidowedwedwed}.  We can define  the functor
	\[G\Orb^{op}\to \Mack_\bC(G)\ , \quad S\mapsto E_{M^{}(D(S))}M\ .\]
To see the functorial dependence on $S$ note that by \cref{igjeroigegergregregreg}  for every map $S\to T$ in $G\Orb$ and $G$-bornological coarse space $V$ the induced map $S_{min,min}\otimes V\to T_{min,min}\otimes V$ is a bounded covering and therefore can serve as a left leg of a span  from $ T_{min,min}\otimes V$ to $S_{min,min}\otimes V$  whose right leg is the identity.
	
	\begin{prop}\label{rgiu9owewef}
		We have an equivalence  of functors $ EM^{(-)}\simeq E_{M^{}(D(-))}M$.
	\end{prop}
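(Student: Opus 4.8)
The plan is to compare the two functors $G\Orb^{op}\to \Mack_\bC(G)$ by unwinding both definitions and identifying them objectwise, then promoting the objectwise comparison to an equivalence of functors. Recall that, as a Mackey functor, a value is a functor $A^{\eff}(G)^{op}\to\bC$; so fix $S$ in $G\Orb$ and evaluate both sides at an arbitrary object $R$ of $A^{\eff}(G)$. On the left, $EM^{S}(R)=(P_{S}^{*}EM)(R)=EM(S\otimes R)=E(M(D(S\otimes R)))$ by \cref{rgiurg432t34t34t} and \eqref{4tgergergergregerg}. On the right, $E_{M(D(S))}M(R)=E_{M(D(S))}(M(D(R)))=E(M(D(R))\otimes M(D(S)))$, using the definition of the twist (see \cref{iojweoifewfewfwef}) and \cref{rgiurg432t34t34t}. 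So the whole statement reduces to producing a natural equivalence
\[
M(D(S\otimes R))\simeq M(D(R))\otimes M(D(S))
\]
in $G\BC_{\tr}$, naturally in $R\in A^{\eff}(G)$ and $S\in G\Orb$.

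The key input is \cref{lem:prop}: the functor $M\colon G\Fin\to G\BC$ intertwines the cartesian product on $G\Fin$ with $\otimes$ on $G\BC$ (part \ref{fojwofewfewfewf}), sends every morphism to a bounded covering (part \ref{fewuifhwiefewfewfw}), and sends pull-back squares to admissible squares (part \ref{efui2rf23r23r23r}), so by \cref{kor_M_coprod_preserving} it refines to $M\colon A^{\eff}(G)\to G\BC_{\tr}$; moreover this refinement is still symmetric monoidal, since the refinement of a pull-back-preserving functor to the span categories respects the product bifunctors (the product in both $A^{\eff}(G)$ and $G\BC_{\tr}$ is computed levelwise from products, resp.\ $\otimes$-products, of the nodes of a span, and $M$ matches these). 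On the $A^{\eff}(G)$-side the duality $D$ is the identity on objects and interchanges the two legs of a span; since $\otimes$ on $A^{\eff}(G)$ is symmetric monoidal and compatible with $D$ in the sense that $D(S\otimes R)\simeq D(R)\otimes D(S)$ (the tensor is symmetric, so this is just commutativity together with $D$ being a duality for the monoidal structure, as recorded in \cite[Sec.~2.18]{Barwick:2015ab} and used in \eqref{regergerg}), the displayed equivalence follows by applying the symmetric monoidal functor $M$. Naturality in $R$ is automatic; naturality in $S$ along $G\Orb$ uses that $M$ of a map $S\to T$ in $G\Orb$ is a bounded covering (\cref{lem:prop}.\ref{fewuifhwiefewfewfw}), which is exactly the left leg used to define the functoriality of $S\mapsto E_{M(D(S))}M$ in the paragraph preceding the proposition, and matches the functoriality of $S\mapsto P_{S}$ on the left side via the span description.

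To turn this into a genuine equivalence of functors rather than a family of objectwise equivalences, I would package the comparison as follows. Consider the external tensor bifunctor $\otimes\colon A^{\eff}(G)\times A^{\eff}(G)\to A^{\eff}(G)$ and its restriction along $G\Orb^{op}\xrightarrow{D} G\Orb\hookrightarrow A^{\eff}(G)$ in the first variable; precomposition with this bifunctor gives, for each $S$, the functor $P_{D(S)}^{*}=(\,-\,)^{D(S)}$ on Mackey functors, and the whole assignment $S\mapsto (\,-\,)^{D(S)}$ is functorial in $S$ by the displayed functorial dependence $S\mapsto P_{S}$. Both sides of the proposition are then obtained by applying $EM$ after such a reindexing: the left side is literally $S\mapsto (EM)^{S}$ via \eqref{veroijeirojverv}, and the right side is $S\mapsto EM$ precomposed with the self-map of $A^{\eff}(G)^{op}$ induced by $(-)\otimes D(S)$, which on objects is $R\mapsto R\otimes D(S)$; identifying this with $E_{M(D(S))}M$ uses the monoidal compatibility of $M$ to rewrite $E(M(R\otimes D(S)))=E(M(R)\otimes M(D(S)))=E_{M(D(S))}(M(R))$. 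A natural equivalence between the two reindexing functors on $A^{\eff}(G)$ — namely between $(-)\otimes D(S)$ precomposed appropriately and $P_{S}$ — is exactly the statement that $D$ is a monoidal duality, i.e.\ \eqref{regergerg} promoted to a functor-level identity, which is available from \cite{Barwick:2015ab}; applying $EM$ transports it to the desired equivalence $EM^{(-)}\simeq E_{M(D(-))}M$.

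\textbf{Main obstacle.} The genuinely delicate point is not the objectwise identification, which is straightforward from \cref{lem:prop}, but verifying that the refinement $M\colon A^{\eff}(G)\to G\BC_{\tr}$ is \emph{symmetric monoidal as a functor of $\infty$-categories} and that $D$ is compatible with the monoidal structure at the $\infty$-categorical level, so that the equivalence $M(D(S\otimes R))\simeq M(D(R))\otimes M(D(S))$ is coherent in $S$ and $R$ simultaneously. Since the paper has deliberately avoided setting up the symmetric monoidal structure on $A^{\eff}(G)$ in detail (see the remark just before \eqref{veroijeirojverv}), the cleanest route is probably to avoid invoking full monoidal coherence and instead argue directly with spans: write down the span in $G\BC_{\tr}$ representing the comparison map at the level of the bifunctor, check it is an isomorphism of spans using that $M$ sends pull-backs to admissible squares and morphisms to bounded coverings, and observe that the required compatibilities with composition and with $D$ are all \emph{strict} on the nose at the level of $\widetilde{G\BC}$, hence hold automatically after passing to $G\BC_{\tr}$. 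This keeps the proof within the one-categorical span bookkeeping already used throughout \cref{olergregregegerg}, at the cost of a short but careful diagram chase.
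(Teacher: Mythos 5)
Your argument is correct and is essentially the paper's own proof: the paper establishes the statement by exactly your chain of natural equivalences, $EM^{S}\simeq E\circ M\circ D\circ (S\otimes(-))\simeq E\circ M\circ (D(S)\otimes D(-))\simeq E\circ\bigl(M(D(S))\otimes M(D(-))\bigr)\simeq E_{M(D(S))}M$, citing \eqref{veroijeirojverv} for the power-functor step and \cref{lem:prop}.\ref{fojwofewfewfewf} for the monoidality of $M$, with naturality in $S$ simply asserted. The $\infty$-categorical coherence concerns in your ``main obstacle'' paragraph are not engaged with in the paper at all (the compatibility of $D$ with $\otimes$ and the coherence of the chain are taken for granted), so that discussion is an added refinement rather than a different route.
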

	\begin{proof}
		 The equivalence is implemented by the following chain of   equivalences which are natural in $S$: 
		\begin{eqnarray*} EM^{S}(-)&\simeq& (E\circ M^{}\circ D)^{S}(-)\\&\stackrel{\eqref{veroijeirojverv}}{\simeq}&  E\circ M^{}\circ  D\circ (S\otimes (-))\\
			&\stackrel{}{\simeq} &  E\circ M^{}\circ  (D(S) \otimes   D(-))\\&\stackrel{!}{\simeq}&E\circ (M^{} (D(S))\otimes   M^{}(D(-)))\\&\simeq& E_{M^{}(D(S))}M(-) \ ,\end{eqnarray*}
			where for the marked equivalence we use Lemma \ref{lem:prop}.\ref{fojwofewfewfewf}.
	\end{proof}

	The equivalence \eqref{regergerg} implies the natural equivalence of $\Spc$-valued Mackey functors
	\begin{equation}\label{verkvoervevwev}
y_{A^{\eff}(G)}(D(S)\otimes T)\simeq y_{A^{\eff}(G)}(  T)^{S}
\end{equation}
for every $T$ in $A^{\eff}(G)$ and $S$ in $G\Orb$. 
Using now that the functors $ D(S)\otimes -$ and $(-)^{S}$ on $\Mack_{\bC}(G)$ preserve colimits
	and that we can  write any  $\bC$-valued Mackey functor as a colimit of a diagram  of functors of the form
	$y_{A^{\eff}(G)}(T)\otimes C$ for~$T$ in $A^{\eff}(G)$ and $C$ in~$\bC$ (here $\otimes$ is the tensor structure of $\bC$ over $\Spc$)
	the equivalence \eqref{verkvoervevwev} extends to  the   Wirthmueller equivalence of functors
	\[D(-)\otimes F\simeq  F^{(-)}\colon G\Orb^{op}\to \Mack_\bC(G)\] for every $\bC$-valued Mackey functor $F$.
	If we combine the Wirthmueller equivalence
	with the equivalence shown in Proposition \ref{rgiu9owewef}, we get the following consequence.
	
	Let $\bC$ be a presentable stable $\infty$-category and
	 $E\colon G\BC^{}_{\tr}\to \bC$ be an equivariant coarse homology theory with transfers.
	\begin{kor}\label{weiufz932r32rr32r}
	 We have  a natural equivalence  of functors 
		\[(S\mapsto S\otimes EM\simeq   E_{S_{min,min}}M)\colon G\Orb\to  \Mack_\bC(G)\ .\]
	\end{kor}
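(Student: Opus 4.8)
The plan is to read off the statement by concatenating the two equivalences that have just been established and then feeding the duality $D$ into them. Concretely, the Wirthmüller equivalence of functors $D(-)\otimes F\simeq F^{(-)}\colon G\Orb^{op}\to\Mack_\bC(G)$, applied to $F=EM$, together with the equivalence $EM^{(-)}\simeq E_{M^{}(D(-))}M$ of \cref{rgiu9owewef}, yields a natural equivalence
\[D(S)\otimes EM\ \simeq\ EM^{S}\ \simeq\ E_{M^{}(D(S))}M\]
of functors in $S$. I would then substitute $D(S)$ for $S$: since the duality $D$ of \eqref{vwhwiuevhuiwewec} is an equivalence which is the identity on objects and satisfies $D\circ D\simeq\id$, this produces a natural equivalence $S\otimes EM\simeq E_{M^{}(D(D(S)))}M$, now covariant in $S$ because precomposition with $D$ interchanges $A^{\eff}(G)$ and $A^{\eff}(G)^{op}$. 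Finally $M^{}(D(D(S)))\simeq M^{}(S)=S_{min,min}$ by the definition \eqref{uiehweuihfiewfewfwef} of $M$ together with \cref{kor_M_coprod_preserving}, and the tensor action $S\otimes(-)$ on $\Mack_\bC(G)$ is the one induced by the $A^{\eff}(G)$-action, so the equivalence reads $S\otimes EM\simeq E_{S_{min,min}}M$, as asserted.

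Before carrying this out I would pin down the ambient category in which the substitution takes place. Both the Wirthmüller equivalence and \cref{rgiu9owewef} are formal consequences of the symmetric monoidal structure on $A^{\eff}(G)$, of the monoidal behaviour of $M$ and $D$ recorded in \cref{lem:prop,kor_M_coprod_preserving}, and of the adjunction between $S\otimes(-)$ and $(-)^{S}$ on $\Mack_\bC(G)$; in particular they promote to equivalences of functors on all of $A^{\eff}(G)^{op}$, on which $D$ acts as a genuine self-equivalence. Working at that level the substitution by $D$ is legitimate, and restricting the resulting equivalence on $A^{\eff}(G)$ back to the subcategory $G\Orb$ gives exactly the functors $S\mapsto S\otimes EM$ and $S\mapsto E_{S_{min,min}}M$, the latter with the functoriality described immediately before the corollary.

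I expect the only genuine difficulty to be this variance bookkeeping: verifying that precomposition with the self-equivalence $D$ really converts the contravariant functor $S\mapsto D(S)\otimes EM$ on $G\Orb^{op}$ into the covariant functor $S\mapsto S\otimes EM$ on $G\Orb$, and that it does so compatibly with the identification $M^{}(D(S))\simeq S_{min,min}$ and with the tensor and power structures. Once the equivalences of \cref{rgiu9owewef} and of Wirthmüller are available at the level of $A^{\eff}(G)$, the remaining steps are purely formal and require no computation.
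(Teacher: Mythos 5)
Your proposal follows the paper's own argument: \cref{weiufz932r32rr32r} is obtained there precisely by combining the Wirthmüller equivalence with \cref{rgiu9owewef}, the passage from the contravariant statement on $G\Orb^{op}$ to the covariant one on $G\Orb$ being left implicit (objectwise it is immediate, since $D$ is the identity on objects and $M(D(S))=S_{min,min}$). Your additional step of promoting the tensor/power structures and the two equivalences to all of $A^{\eff}(G)$ in order to ``substitute $D$'' is exactly the variance bookkeeping the paper glosses over (it deliberately avoids discussing the symmetric monoidal structure on $A^{\eff}(G)$), so it fills in an omitted detail rather than changing the route.
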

	
	Let $X$ be a pointed $G$-space, i.e., an object of $ \Fun(G\Orb^{op},\Spc_{*})$,  and $F$ be a $\bC$-valued Mackey functor for a presentable and stable  $\infty$-category $\bC$.
	\begin{ddd}
		We define the $\bC$-valued Mackey functor  \[ X\wedge F:=\Cofib(*\otimes F\to X\otimes F)\ .\qedhere \]
		\end{ddd}
		 We have a canonical equivalence \begin{equation}\label{wfeflkmk2l3r23r32r}
X\wedge F\simeq   \Fib(X\otimes F\to *\otimes F)\ .
\end{equation}

		A $G$-topological space $A $ gives rise 
		to an object (also denoted by $A$)
		 of $\PSh(G\Orb)$
		 which sends the $G$-orbit $S$ to the space represented by the topological mapping space $\Map_{G\Top}(S_{disc},A )$. 
		 We use a similar notation convention for pointed $G$-topological spaces which yield
		objects of $ \Fun(G\Orb^{op},\Spc_{*})$.

		Let $V$ be a finite-dimensional Euclidean vector space  with an orthogonal representation of~$G$. We consider $V$ as an object of $G\BC_{\tr}$ with the structures induced by the metric. Let $S^{1}(V) $ be the 
		$G$-topological space given by the unit sphere in $V$. Furthermore, let $S(V) $ be the pointed $G$-topological space given by  the one-point compactification of $V$ by the point $\infty$. We will write $S(V)_{\infty}$ for the corresponding based space.

Let $\bC$ be a presentable stable $\infty$-category and $E\colon G\BC^{}_{\tr}\to \bC$ be an equivariant coarse homology theory with transfers,
	let $EM$ be  the $\bC$-valued Mackey functor  associated to $E$ (see \cref{rgiurg432t34t34t}), and let $V$ be a finite-dimensional Euclidean vector space  with an   orthogonal representation of $G$.	 	\begin{prop}
		\label{vwlkne2ofwevwevevw} 
		We have a canonical equivalence of $\bC$-valued Mackey functors
		\[S(V)_{\infty}\wedge EM\simeq E_{V}M\ .\]  	
	\end{prop}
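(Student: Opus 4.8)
The plan is to identify the two Mackey functors by exhibiting a cofiber sequence relating $E_V M$ to the twists $E_{S^1(V)_{min,min}}M$ and $E_{*}M$, and then to recognize that cofiber sequence as the one defining $S(V)_\infty \wedge EM$. First I would recall from \cref{weiufz932r32rr32r} that for a $G$-orbit $S$ we have a natural equivalence $S_{min,min}\otimes EM \simeq E_{S_{min,min}}M$, and that by left Kan extension along $G\Orb \to \PSh(G\Orb)$ this upgrades to a natural equivalence $A \otimes EM \simeq E_{A_{min,min}}M$ for a $G$-space $A$ presented as a colimit of orbits — here $A_{min,min}$ should be read as the appropriate colimit in $G\BC_{tr}$, using that $E$ (being a coarse homology theory with transfers) preserves the relevant colimits because $\Yo^s_{tr}$ does. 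The unit sphere $S^1(V)$ is a finite $G$-CW complex, so this applies to $A = S^1(V)$.

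Next I would set up the geometric input. The one-point compactification $S(V)$ sits in a cofiber sequence of pointed $G$-spaces $S^1(V)_+ \to S^0 \to S(V)_\infty$ (collapsing $S^1(V)$ to the non-basepoint, using the standard identification of $S(V)$ with the unreduced suspension of $S^1(V)$, or equivalently the homotopy cofiber of $S^1(V)\to \ast$). Smashing with $EM$ and using the definition $X \wedge F = \Cofib(\ast \otimes F \to X \otimes F)$ together with the equivalence \eqref{wfeflkmk2l3r23r32r}, we get
\[
S(V)_\infty \wedge EM \simeq \Cofib\bigl(S^1(V)\otimes EM \to \ast \otimes EM\bigr)\ .
\]
By the Kan-extended form of \cref{weiufz932r32rr32r} the left-hand map is identified with $E_{S^1(V)_{min,min}}M \to E_{\ast}M$, which is the map induced by the projection $S^1(V)_{min,min}\otimes(-)\to (-)$ applied inside $E$.

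The heart of the argument is then a coarse-geometric statement, analogous to the standard fact in the non-equivariant coarse setting: for $V$ a finite-dimensional Euclidean $G$-representation there is a canonical cofiber sequence in $G\Sp\cX$ (hence after twisting, in $\bC$ for any coarse homology theory)
\[
\Yo^s(S^1(V)_{min,min}\otimes(-)) \to \Yo^s((-)) \to \Yo^s(V\otimes(-))\ ,
\]
coming from the coarsely excisive decomposition of $V$ into a neighborhood of the origin and a neighborhood of infinity together with the cone/suspension comparison (this is the coarse incarnation of the homeomorphism $S(V)/S^1(V)\simeq V$; compare the treatment of cones and the last lemma of \cref{sec_coarse_spectra_tr}). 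Twisting this sequence by an arbitrary $D(S)$ and applying $E$, then passing to the associated Mackey functors via $-\circ M\circ D$ as in \cref{rgiurg432t34t34t}, and invoking \cref{rgiu9owewef} to identify $E_{M(D(S))}M$ with $EM^{S}$-type expressions, yields precisely the cofiber sequence $E_{S^1(V)_{min,min}}M \to E_\ast M \to E_V M$. Comparing with the cofiber sequence above for $S(V)_\infty \wedge EM$ and checking that the left-hand maps agree (both are induced by the projection forgetting the $S^1(V)$-coordinate) gives the desired equivalence; naturality in all constructions is inherited from the naturality of \cref{weiufz932r32rr32r} and \cref{rgiu9owewef}.

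The main obstacle I expect is the coarse-geometric cofiber sequence identifying $\Yo^s(V\otimes(-))$ with the cofiber of $\Yo^s(S^1(V)_{min,min}\otimes(-))\to \Yo^s((-))$, together with its compatibility with twisting and with the Mackey-functor formalism: one must carefully choose the complementary pair on $V$ (a bounded ball versus the complement of a smaller ball) in an equivariant way, verify that the ball is flasque or otherwise contractible in coarse motives, and identify the "boundary" piece with $S^1(V)_{min,min}\otimes(-)$ up to the equivalences allowed by coarse invariance and $u$-continuity — and all of this while keeping track of the $G$-action, which is exactly what the transfer structure is there to handle. Everything else (smashing cofiber sequences, Kan extension, naturality) is formal.
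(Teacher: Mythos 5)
There is a genuine gap at the heart of your argument, located exactly where you predicted the main obstacle would be. \cref{weiufz932r32rr32r} gives $S\otimes EM\simeq E_{S_{min,min}}M$ only for orbits $S$; the left Kan extension of this along $G\Orb\to \PSh(G\Orb)$, evaluated at the $G$-space $S^{1}(V)$, is a colimit over the orbit-cells of $S^{1}(V)$, and for $\dim V\geq 2$ this is \emph{not} the twist $E_{S^{1}(V)_{min,min}}M$ by the literal bornological coarse space $S^{1}(V)_{min,min}$ (an infinite discrete space with minimal structures; for a strongly additive theory its value involves an infinite product, not a cellular colimit). Relatedly, your proposed motivic cofibre sequence $\Yo^{s}(S^{1}(V)_{min,min}\otimes -)\to \Yo^{s}(-)\to \Yo^{s}(V\otimes -)$ does not hold: its first map does not even exist in that direction (the projection $S^{1}(V)_{min,min}\otimes X\to X$ is a bounded covering, not proper, so it only yields a transfer the other way), and the ``boundary piece'' of a coarsely excisive decomposition of $V$ is not the discrete set $S^{1}(V)_{min,min}$ but a cone on the sphere, whose motive records the large-scale topology of $S^{1}(V)$ rather than its underlying set. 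A quick sanity check: your sequence happens to give the right answer for $\dim V\leq 1$, where the sphere is finite, which is precisely the regime in which the two interpretations of ``$S^{1}(V)_{min,min}$'' coincide.

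Your overall architecture --- smash the cofibre sequence $S^{1}(V)_{+}\to S^{0}\to S(V)_{\infty}$ with $EM$ and match it against a coarse-geometric cofibre sequence for $V$ --- is the same as the paper's; what is missing is the correct coarse incarnation of ``$S^{1}(V)$ at infinity''. The paper replaces it by the cone: it shows $\Yo^{s}(\cO(S(V)))\simeq \Yo^{s}(V\oplus\mathbb{R})$ by pulling back the structures of $V\oplus\mathbb{R}$ along $(t,\xi)\mapsto t\xi$, uses the cone sequence to obtain the fibre sequence \eqref{evwoihoigvrvw} with third term $\cO^{\infty}_{\hlg}(S(V))$ (the comparison of $\cO^{\infty}$ with its left-Kan-extended version $\cO^{\infty}_{\hlg}$ on finite $G$-CW complexes is a nontrivial input, not a formality), identifies $X\otimes EM\simeq \Sigma^{-1}E_{\cO^{\infty}_{\hlg}(X)}M$ by Kan extension from \eqref{ewfiohio234f23f23f32f32f}, and only then removes the extra $\mathbb{R}$-factor via the special case $E_{\mathbb{R}}M\simeq \Sigma EM$. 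In particular the coarse decomposition naturally lands on $V\oplus\mathbb{R}$ rather than $V$, so a final delooping step is unavoidable; your plan, as written, has no mechanism producing or cancelling this shift.
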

	
	\begin{proof}
		  The cone $\cO(A)$  {(\cite[Sec.~9.4]{equicoarse})} of a compact metrizable $G$-space $A $ is a well-defined object of $G\BC$, and its   
		  underlying $G$-set is  the product of $G$-sets $[0,\infty)\times A  $. Its bornology is generated by the subsets
		  $[0,n]\times A$ for all $n$ in $\nat$. Finally, its coarse structure is the hybrid coarse structure associated to
		  the uniform structure for  some  choice of a metric $d$ on $A$ and the maximal coarse structure, and the exhaustion $([0,n]\times A)_{n\in \nat}$. The notation  $\cO(A)$ abbreviates the longer symbol  $\cO(A_{d,max,max})$ used e.g.~in \cite[Sec.~4]{desc}.  {The cone at infinity $\cO^{\infty}(A)$ is then defined as $\Yo^{s}(\cO(A),([0,n]\times A)_{n\in \nat})$, see \cite[Sec.~9.5]{equicoarse}.}
		  
		Let $S$ be  in $G\Orb$. We have an equivalence \cite[Prop. 9.35]{equicoarse} 
		\[\cO^\infty(S_{disc,max,max})\simeq\cO^{\infty}(S_{disc,min,max})\simeq \Sigma \Yo^{s}(S_{min,max})\ .\]
		Since $S$ is a finite set, the bornological coarse spaces $S_{min,max}$ and $S_{min,min}$ coincide.
		By \cref{weiufz932r32rr32r} we therefore have the equivalence 		\begin{equation}\label{ewfiohio234f23f23f32f32f}
		S\otimes EM\simeq E_{S_{min,min}}M\simeq \Sigma^{-1}E_{\cO^{\infty}(S_{disc,max,max})}M  
		\end{equation} 
		of $\bC$-valued Mackey functors which is natural in $S$. Note that the twist with an object of $G\Sp\cX$ is well-defined by \cref{iojweoifewfewfwef}. Let 
		\[\cO^{\infty}_{\hlg}\colon \PSh(G\Orb)\to G\Sp\cX\]
		be the left-Kan extension of the functor
		\[G\Orb\to G\Sp\cX\ , \quad S\mapsto \cO^{\infty}(S_{disc,max,max})\ .\]
		Note that this is consistent with the definition of $\cO^\infty_\hlg$ from \cite[Def.~8.16]{desc}, see also  \cite[Rem.~8.17]{desc} which explains the difference with the definitions given in \cite{equicoarse}. 

		By left Kan-extension along the Yoneda embedding $G\Orb\to  \PSh(G\Orb)$ both sides of  \eqref{ewfiohio234f23f23f32f32f} can be extended to
		colimit-preserving functors $ \PSh(G\Orb)\to \Mack_{\bC}(G)$. 
		Using also that $E$ preserves colimits, we get
	the equivalence of $\bC$-valued Mackey functors
		\begin{equation}\label{wefwheiofewfewfwf}
		X\otimes EM\simeq \Sigma^{-1} E_{\cO_{\hlg}^{\infty}(X)}M\ ,
		\end{equation}
		which is natural for $X$ in $ \PSh(G\Orb)$.

In the following we want to rewrite the cone sequence  {(\cite[Cor.~9.30]{equicoarse})}
  \begin{equation}\label{fk3jf34f3f}
 \Yo^{s}(S(V)_{max,max}) \to \Yo^{s}(\cO(S(V) ))\to \cO^{\infty}(S(V) ) \to \Sigma \Yo^{s}(S(V)_{max,max}) 
\end{equation}  
in $G\Sp\cX$ in simpler terms.

		 {Pulling back the $G$-bornological coarse structure of $V\oplus\R$ along the map   \begin{equation}\label{ejhewbcwewcwec}
[0,\infty)\times S^1(V\oplus \mathbb{R})\to V\oplus\mathbb{R}\ , \quad  (t,\xi)\mapsto t\xi
\end{equation} 
of $G$-sets} induces a $G$-bornological coarse structure on $ [0,\infty)\times S^1(V\oplus \mathbb{R})$ which we call the Euclidean cone structure. We let $\cO_{eu}(S^1(V\oplus \mathbb{R}))$ denote the $G$-set $ [0,\infty)\times S^1(V\oplus \mathbb{R})$ equipped with this structure.
		The identity of the underlying sets  induces a   morphism
		\[\cO_{eu}(S^1(V\oplus \mathbb{R}))\to  \cO(S^1(V\oplus \mathbb{R}))\]
		 of  $G$-bornological coarse spaces.
	By arguments which are analogous to the ones given in \cite[Sec.~8]{ass}
 one can show that this morphism induces an equivalence \begin{equation}\label{eqwkhkwefewfef}
\Yo^{s} 	(\cO_{eu}(S^1(V\oplus \mathbb{R})))	\stackrel{\simeq}{\to} \Yo^{s}( \cO(S^1(V\oplus \mathbb{R})))
\end{equation}
   in $G\Sp\cX$.	 
	  The map \eqref{ejhewbcwewcwec} has a right-inverse
	  \[V\oplus \R\to  [0,\infty)\times S^1(V\oplus \mathbb{R})\] which sends the origin
	 of $V\oplus \R$ to the  point $(0,(0,1))$.
	 One easily checks that these   maps   implement  an  equivalence of $G$-bornological coarse spaces between
$\cO_{eu}(S^{1}(V\oplus \R))$ and $V\oplus \R$. In particular we 
 get  the third equivalence in the chain
 \[\Yo^{s}(\cO(S(V)))\simeq \Yo^{s}(\cO(S^{1}(V\oplus \R)))\stackrel{\eqref{eqwkhkwefewfef}}{\simeq} \Yo^{s}(\cO_{eu}(S^{1}(V\oplus \R)))
 \simeq \Yo^{s}(V\oplus \R)\ .\]
 For the first equivalence we use the usual equivariant homeomorphism 
 $S(V) \cong S^{1}(V\oplus \R)$.

 Since $S(V)$ has a $G$-fixed point, the projection map 
 $ S(V)_{max,max}\to  *$ is an equivalence of 
 $G$-bornological coarse spaces. Therefore we have an equivalence
 \[\Yo^{s} (S(V)_{max,max})\stackrel{\simeq}{\to} \Yo^{s} (*)\ .\]
 We finally note that $S(V)$ is homotopy equivalent to a finite $G$-CW complex. 
 Since the functors $\cO^{\infty} $ and $\cO^{\infty}_{\hlg}$ behave as  $G\Sp\cX$-valued homology theories on the category of finite $G$-CW-complexes (see 
 \cite[Lem.~8.23]{desc}) and coincide on $G$-orbits, we have an equivalence
  \[\cO^{\infty} (S(V))\simeq \cO^{\infty}_{\hlg}(S(V) )\ .\] 
  The cone sequence \eqref{fk3jf34f3f}
   is therefore  equivalent to the fibre sequence
		  \begin{equation}\label{evwoihoigvrvw}
		\Yo^{s}(*) \to \Yo^{s}(V\oplus \mathbb{R})\to \cO_{\hlg}^{\infty}(S(V) ) \to \Sigma \Yo^{s}(*)
		\end{equation}
		in $G\Sp\cX$.
		Using the functoriality  of $Q\mapsto E_{Q}$ (\cref{iojweoifewfewfwef}) and  the obvious equivalence $E_{\Yo^{s}(*)}\simeq E$
		we therefore get the fibre sequence  of $\bC$-valued Mackey functors
	\[ EM\to E_{V\oplus\mathbb{R}}M\to E_{\cO_{\hlg}^{\infty}(S(V) )}M\to \Sigma EM\ . 
\]
		We now  apply  the equivalence \eqref{wefwheiofewfewfwf} to the third term and obtain the  sequence \begin{equation}\label{g4goijoggog}
EM\to E_{V\oplus\mathbb{R}}M\to \Sigma (S(V) \otimes EM)\to \Sigma EM\ .\end{equation} 
		We have a commuting diagram {
		\[\xymatrix{\Sigma (S(V) \otimes EM)\ar[d]\ar[r]_-\simeq^-{\eqref{wefwheiofewfewfwf}}&   E_{\cO_{\hlg}^{\infty}(S(V) )}M\ar[d]\ar[r]&\Sigma EM\ar@{=}[d]\\
			\Sigma(* \otimes EM)\ar[r]_-\simeq^-{\eqref{wefwheiofewfewfwf}}&E_{\cO_{\hlg}^{\infty}(* )}M\ar[r]_-\simeq&\Sigma EM }\]
		where the unnamed vertical maps are induced by the projection $S(V) \to * $. 
		It follows that the last map in \eqref{g4goijoggog} is induced by the projection $S(V) \to * $.}

		 In view of \eqref{wfeflkmk2l3r23r32r}	 the fibre sequence  \eqref{g4goijoggog} gives an equivalence
		\begin{equation}
		\label{eq:sdgglfsdljgf}
		E_{V\oplus \mathbb{R}}M\simeq \Sigma(S(V)_{\infty}\wedge EM)\ .
		\end{equation}
		  Applied to the special case where 
		  $V$ is the zero-dimensional representation we get the equivalence
		\begin{equation}\label{fwefioj23of223ff}
E_{\mathbb{R}}M\simeq \Sigma EM\ .
\end{equation}
		In the category $G\BC$ we have an equivalence
		$V\oplus \R\cong V\otimes \R$. 
		This implies the second equivalence in
		\[\Sigma (S(V)_{\infty}\wedge EM) \stackrel{\eqref{eq:sdgglfsdljgf}}{\simeq}E_{\R\oplus V}M\simeq  (E_{V})_{\R}M\simeq \Sigma E_{V}M\ ,\]
		where in the last equivalence  we apply \eqref{fwefioj23of223ff} to the twisted homology theory $E_{V}$ in place of $E$.
		Since $\Sigma$ is an equivalence 
		 the proposition follows.
	\end{proof}
	
	Finally we discuss the geometric fixed point functor for a subgroup $H$ of $G$.
	 	In genuine equivariant stable homotopy theory  the geometric fixed points  of  a genuine $G$-equivariant spectrum $F$ can be calculated by the formula
	\[\Phi^{H}(F)\simeq\colim_{V^{H}=\{0\}} [S(V)\wedge F]^{V}\ ,\]
	where the colimit runs over all finite-dimensional representations of $G$ with no non-trivial $H$-fixed vectors (\cite[Def.~II.2.10]{Nikolaus:2017aa}).
In the following we  {use} this formula as a definition:
\begin{ddd}\label{f2eoij2of23f23f}
We define the \emph{geometric fixed point functor}
\[\Phi^{H}\colon \Mack_{\bC}(G)\to \bC\]  by 
\[\Phi^{H}(F):=\colim_{V^{H}=\{0\}} [S(V)_{\infty}\wedge F]^{H}\ .\qedhere\]
\end{ddd}

	Let $E\colon G\BC^{Q}_{tr}\to \bC$ be an equivariant coarse homology theory with transfers,
	$EM$ be  the $\bC$-valued Mackey functor  associated to $E$, and $H$ be a subgroup of $G$.	 	
	\begin{prop}
		In $\bC$ we have an equivalence 
		\[\Phi^{H}(EM)\simeq \colim_{V^{H}=\{0\}} E((G/H)_{min,min}\otimes V)\ ,\] where the colimit runs over orthogonal representations $V$ of $G$ with no non-trivial  $H$-fixed vectors.
	\end{prop}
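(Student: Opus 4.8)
The plan is to unwind the definition of $\Phi^{H}$ and reduce everything to Proposition~\ref{vwlkne2ofwevwevevw} together with \cref{berbpojkpo34g3gereb}. By \cref{f2eoij2of23f23f} we have
\[\Phi^{H}(EM)\simeq \colim_{V^{H}=\{0\}}[S(V)_{\infty}\wedge EM]^{H}\ ,\]
where $(-)^{H}$ is the evaluation at $G/H$ from \eqref{vrtvlk4nervev} and the colimit runs over the indexing category whose objects are the finite-dimensional orthogonal $G$-representations $V$ with $V^{H}=\{0\}$.

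First I would apply Proposition~\ref{vwlkne2ofwevwevevw} to rewrite $S(V)_{\infty}\wedge EM\simeq E_{V}M$ for each such $V$. The essential point is that this is an equivalence of \emph{functors} of $V$: the assignment $V\mapsto S(V)_{\infty}\wedge EM$ is visibly functorial, and $V\mapsto E_{V}M$ is functorial because a $G$-equivariant linear isometry $V\to V'$ induces a morphism of $G$-bornological coarse spaces and hence, by \cref{iojweoifewfewfwef}, a natural transformation of the associated twisted homology theories with transfers. One then has to check that the chain of equivalences produced in the proof of Proposition~\ref{vwlkne2ofwevwevevw} — assembled from the cone sequence, the Euclidean-cone identifications, and \cref{weiufz932r32rr32r} — is compatible with these functorial structures. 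Granting this, the equivalence passes to the colimit and yields
\[\Phi^{H}(EM)\simeq \colim_{V^{H}=\{0\}}[E_{V}M]^{H}\ .\]

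Finally, since $E_{V}$ is again an equivariant coarse homology theory with transfers by \cref{iojweoifewfewfwef}, \cref{berbpojkpo34g3gereb} applied to $E_{V}$ identifies $[E_{V}M]^{H}\simeq E_{V}((G/H)_{min,min})$, which by the definition of the twist equals $E((G/H)_{min,min}\otimes V)$; this chain of equivalences is again natural in $V$. Taking the colimit then gives the claimed equivalence. The main obstacle is thus purely the naturality bookkeeping: one must verify that the "canonical" equivalence of Proposition~\ref{vwlkne2ofwevwevevw} and the identification of \cref{berbpojkpo34g3gereb} assemble into an equivalence of functors on the indexing category of representations, so that the colimits on the two sides are genuinely compared; once this is in place the statement follows immediately.
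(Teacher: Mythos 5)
Your proposal is correct and follows essentially the same route as the paper: the paper's proof is exactly the two-step rewriting of the formula in \cref{f2eoij2of23f23f} using \cref{vwlkne2ofwevwevevw} and \cref{berbpojkpo34g3gereb} (applied to the twisted theory $E_{V}$), with the naturality in $V$ left implicit. Your additional remarks on the functoriality bookkeeping only make explicit what the paper takes for granted.
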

	\begin{proof} We   use \cref{berbpojkpo34g3gereb} and \cref{vwlkne2ofwevwevevw} in order to rewrite
		the formula from \cref{f2eoij2of23f23f} in the desired form.
	\end{proof}

\subsection{A descent principle}\label{groiowwefwfewfwewf}

In this section we explain how transfers can be applied to show injectivity of the assembly map in the case of finite groups. In view of \cref{hioehwvjoewvwevewvew} the main  result  itself  (see the \cref{wefiowefwfweewf1})  is not really new. Our main point is to give a selfcontained proof
using the descent principle, which avoids both the usage of the connection between spectral Mackey functors and genuine equivariant spectra and of results from genuine equivariant stable homotopy theory. Since we consider finite groups,
we can drop all arguments involving coarse geometry.   

Let $G$ be a finite group and let $\cF$ be a set of subgroups of $G$.   
\begin{ddd}
The set $\cF$ is called a family of subgroups if it is non-empty and closed under conjugation in $G$ and taking subgroups.
\end{ddd}

Let $\cF$  be a family of subgroups. Then we consider the full subcategory  $G_{\cF}\Orb$    of $G\Orb$ of transitive $G$-sets with stabilizers in $\cF$. For every  cocomplete  target category $\bC$ the inclusion $G_{\cF}\Orb\to G\Orb$ induces an adjunction 
\begin{equation}\label{eqcoiewcwc}
\Ind_{\cF}\colon \Fun( G_{\cF}\Orb,\bC)\leftrightarrows  \Fun(G\Orb,\bC):\Res_{\cF}\ .
\end{equation} 
We have a similar adjunction for contravariant functors.

We consider a functor $E\colon  G\Orb\to \bC$ with a cocomplete target $\bC$ and let $\cF$ be a family of subgroups of $G$. Note that $pt$ denotes the final object of $G\Orb$ given by the one-point $G$-set.
\begin{ddd}
The morphism
\begin{equation}\label{ewoij3wvwev}
\alpha_{\cF}\colon (\Ind_{\cF}\circ \Res_{\cF}(E))(pt)\to E(pt)
\end{equation}  given by the counit of the adjunction 
\eqref{eqcoiewcwc} is called the \emph{assembly map}.
\end{ddd}

Let $*_{\cF}$ denote the final object of $\PSh(G_{\cF}\Orb)$.
\begin{ddd}
The object $E_{\cF}G:=\Ind_{\cF}(*_{\cF})$ of $\PSh(G\Orb)$ is called the \emph{classifying space} of the family $*_{\cF}$.
\end{ddd}

One can check that
\begin{equation}\label{ewvfwjfwkefwf}
E_{\cF}G(S)\simeq \begin{cases} * & \text{if } S\in G_{\cF}\Orb\\\emptyset & \text{else} \end{cases}
\end{equation} 

The main result of this section is the next theorem.
Let $E\colon G\Orb\to \bC$ be a functor.
\begin{theorem}\label{ihwiuhvewvwevewv}
Assume: 
\begin{enumerate}
\item $\bC$ is stable, complete and cocomplete;
\item \label{wfiowfwefewfew} $E$ extends to a Mackey functor;
\item $E_{\cF}G$ is a compact object.
\end{enumerate}
Then the assembly map \eqref{ewoij3wvwev} is split injective.
\end{theorem}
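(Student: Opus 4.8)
The plan is to recognise the assembly map, up to evaluation at $G/G$, as a morphism of $\bC$-valued Mackey functors induced by the projection $E_{\cF}G\to *$, and then to split that morphism using the fact that, under hypothesis~(3), the family idempotent is dualizable. Let $\widehat E\in\Mack_{\bC}(G)$ be a Mackey functor restricting to $E$ on $G\Orb$. Using the tensoring $\PSh(G\Orb)\times\Mack_{\bC}(G)\to\Mack_{\bC}(G)$ and the Wirthmueller equivalence $D(S)\otimes F\simeq F^{S}$ of the previous subsection — together with the fact that $D$ is the identity on objects, so that evaluation at $G/G$ gives $(S\otimes F)^{G}\simeq F(S)$ for $S\in G\Orb$ — one obtains, by left Kan extension along $y\colon G\Orb\to\PSh(G\Orb)$, a natural equivalence $(X\otimes\widehat E)^{G}\simeq\colim_{(S\to X)}E(S)$ for every $X\in\PSh(G\Orb)$, the colimit taken over orbits mapping to $X$. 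Specialising to $X=E_{\cF}G=\Ind_{\cF}(*_{\cF})\simeq\colim_{S\in G_{\cF}\Orb}y(S)$ identifies $(E_{\cF}G\otimes\widehat E)^{G}$ with $\colim_{S\in G_{\cF}\Orb}E(S)=(\Ind_{\cF}\Res_{\cF}E)(pt)$ in such a way that the map induced by $E_{\cF}G\to *$ becomes exactly $\alpha_{\cF}$. Since the evaluation functor $(-)^{G}$ is exact, it therefore suffices to show that the morphism $E_{\cF}G\otimes\widehat E\to *\otimes\widehat E\simeq\widehat E$ is split injective already in $\Mack_{\bC}(G)$.

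For this, consider the isotropy-separation cofibre sequence $E_{\cF}G_{+}\to S^{0}\to\widetilde{E_{\cF}G}$ of pointed objects of $\PSh(G\Orb)$. Since $E_{\cF}G$ is a classifying space we have $E_{\cF}G\times E_{\cF}G\simeq E_{\cF}G$, so $\widetilde{E_{\cF}G}$ is an idempotent algebra under smash product and $E_{\cF}G_{+}\wedge E_{\cF}G_{+}\simeq E_{\cF}G_{+}$. Smashing this cofibre sequence with $\widehat E$ (using the $X\wedge F$ construction of the previous subsection) yields a cofibre sequence of $\bC$-valued Mackey functors $E_{\cF}G\otimes\widehat E\xrightarrow{\alpha}\widehat E\to\widetilde{E_{\cF}G}\wedge\widehat E$, so $\alpha$ is split injective if and only if this cofibre sequence splits. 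Now hypothesis~(3) enters: $E_{\cF}G$ is compact in $\PSh(G\Orb)$, hence the corresponding objects $E_{\cF}G_{+}\wedge\mathbb{S}_{G}$ and $\widetilde{E_{\cF}G}\wedge\mathbb{S}_{G}$ of $\Mack_{\Sp}(G)$ are compact, and compact objects of $\Mack_{\Sp}(G)$ are dualizable, the representables $y_{A^{\eff}(G)}(G/H)$ being self-dual via $D$. Thus $\widetilde{E_{\cF}G}\wedge\mathbb{S}_{G}$ is a \emph{dualizable} idempotent algebra in $\Mack_{\Sp}(G)$.

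The remaining step is the elementary fact that a dualizable idempotent algebra $L$ in a symmetric monoidal stable $\infty$-category splits off the unit, $\mathbb{1}\simeq\Fib(\mathbb{1}\to L)\oplus L$, compatibly with the cofibre sequence $\Fib(\mathbb{1}\to L)\to\mathbb{1}\to L$: writing $A:=\Fib(\mathbb{1}\to L)$ one has $A\otimes L\simeq 0$, hence (as $A$ is dualizable) $A^{\vee}\otimes L\simeq 0$ and $A^{\vee}\otimes A\xrightarrow{\ \sim\ }A^{\vee}$, from which transporting the evaluation and dualising produces a section $\mathbb{1}\to A$ of the augmentation. Applying this to $L=\widetilde{E_{\cF}G}\wedge\mathbb{S}_{G}$, smashing the resulting decomposition of $\mathbb{S}_{G}$ with $\widehat E$ — legitimate because $\widehat E$, being a $\bC$-valued Mackey functor with $\bC$ stable, is a module over $\mathbb{S}_{G}$ — and applying $(-)^{G}$, we get $E(pt)\simeq(\Ind_{\cF}\Res_{\cF}E)(pt)\oplus(\widetilde{E_{\cF}G}\wedge\widehat E)^{G}$ with the inclusion of the first summand equal to $\alpha_{\cF}$. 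Hence $\alpha_{\cF}$ is split injective.

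I expect the main obstacle to be the bookkeeping that makes this honest: first, checking in the opening paragraph that the identification of $(E_{\cF}G\otimes\widehat E)^{G}$ with $(\Ind_{\cF}\Res_{\cF}E)(pt)$ really carries the map induced by $E_{\cF}G\to *$ to $\alpha_{\cF}$ (and not merely to an abstractly isomorphic map); and second, verifying that the direct-sum decomposition coming from the dualizable idempotent is compatible with the cofibre sequence, so that the summand inclusion is \emph{literally} $\alpha_{\cF}$ — the splitting-off of a dualizable idempotent being the one point that may need a careful argument or an external reference. It is worth noting that hypothesis~(2) is used only to turn $\widehat E$ into a module over the (genuinely equivariant) unit $\mathbb{S}_{G}$, so that the family idempotent can act on it, and that the whole argument uses $\Mack_{\Sp}(G)$ purely as an abstract symmetric monoidal stable $\infty$-category, never invoking its identification with genuine $G$-spectra.
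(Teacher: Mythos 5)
Your argument is correct in outline, but it is a genuinely different proof from the one in the paper --- indeed it is essentially the argument that the paper's \cref{hioehwvjoewvwevewvew} sketches and that the authors explicitly set out to avoid. The paper's proof is a descent argument: from the Mackey functor $M$ it forms the bifunctor $F(T,S)=M(T\times S)$ on $G\Fin^{op}\times G\Fin$, Kan-extends it in the two possible orders to $\wt F$ and $\wt F'$, identifies the assembly map with $\wt F(*,A)\to\wt F(*,*)$ for $A=r_{!}E_{\cF}G$ (\cref{lem:assemblymap}), shows that $p_{A}$ and $p'_{A}$ are equivalences using only additivity of $M$ and the transfers encoded in $A^{\eff}(G)$ (\cref{cor:assembly}, \cref{cor:coassembly}), and uses compactness of $E_{\cF}G$ solely to make the comparison map $c\colon\wt F\to\wt F'$ an equivalence, via the commutation of finite colimits with limits in a stable category (\cref{lem:c-map}). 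No monoidal structure on Mackey functors enters. Your route instead needs the symmetric monoidal structure on $\Mack_{\Sp}(G)$ with self-dual representables (so that compact implies dualizable --- this self-duality is precisely the Wirthmueller/Spanier--Whitehead duality content of the identification with genuine $G$-spectra, so your closing claim of never invoking it is only literally true), plus a $\Mack_{\Sp}(G)$-module structure on $\Mack_{\bC}(G)$ compatible with the tensoring over $\PSh(G\Orb)$. These are available from \cite{Barwick:2014aa} and its sequels, but they are substantial external inputs; moreover the theorem only assumes $\bC$ complete, cocomplete and stable, not presentable, so the module structure you invoke is not free and needs an argument (e.g.\ via Day convolution). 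What your approach buys is a clean structural statement --- the family idempotent is dualizable, hence splits the unit --- reusable beyond this theorem; what the paper's approach buys is self-containedness and uniform applicability to arbitrary complete and cocomplete stable $\bC$.

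One step of your write-up is stated incorrectly, though the intended argument is sound. A \emph{section} $s\colon\beins\to A$ of the augmentation $\epsilon\colon A=\Fib(\beins\to L)\to\beins$, i.e.\ a map with $\epsilon\circ s\simeq\id$, cannot exist unless $L\simeq 0$: composing with $\beins\to L$ would make the unit of the algebra $L$ null. What you need, and what your dualizability computation actually produces, is a \emph{retraction} $r\colon\beins\to A$ with $r\circ\epsilon\simeq\id_{A}$, equivalently the vanishing of the boundary map $L\to\Sigma A$. The clean derivation is: $\Map(Y\otimes L,A)\simeq\Map(Y\otimes L\otimes A^{\vee},\beins)$ since $A$ is dualizable, and $A^{\vee}\otimes L\simeq 0$ because any map into it corresponds to a map from an $L$-acyclic object to the $L$-local object $L$; hence the mapping spectrum from $L$ to $A$ vanishes and the fibre sequence $A\to\beins\to L$ splits. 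With this correction, and with the module-structure and compact-implies-dualizable inputs supplied by reference, your proof goes through.
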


Assumption \ref{wfiowfwefewfew} will be explained in \cref{efioewfwfwefewff} below.

\begin{rem}\label{regpoeregregrgr}
  Let $\bS$ be an $\infty$-category and $\bC$ a cocomplete $\infty$-category.
 Then pull-back along the Yoneda embedding $\yo \colon \bS\to \PSh(\bS)$ induces   an equivalence of $\infty$-categories 
 \[ \Fun^{\colim}(\PSh(\bS),\bC) \xrightarrow{\simeq}  \Fun(\bS,\bC) \ ,\]
 where the superscript indicates the full subcategory of colimit preserving functors.
  For a functor $F \colon \bS\to \bC$ we let  $ \wt F\colon \PSh(\bS)\to \bC$   denote the  
  essentially uniquely determined colimit-preserving functor corresponding to $F$ under this equivalence.
   Note that $\wt F$ (together with the identification   of its restriction with $F$) is a left Kan extension of $F$ along the Yoneda embedding.  
  
 Similarly, for a complete target $\bC$ we have an equivalence 
   \[ \Fun^{\lim}(\PSh(\bS)^{op},\bC) \xrightarrow{\simeq}  \Fun(\bS^{op},\bC) \ .\] Again,
 for a functor $F\colon \bS^{op}\to \bC$ we let $\widetilde F\colon \PSh(\bS)^{op}\to \bC$ denote the essentially uniquely determined limit-preserving functor corresponding to $F$ under the above equivalence. Note that~$\wt F$ (together with the identification  of its restriction  with $F$) is a right Kan extension of the functor~$F$ along the Yoneda embedding.
 If we consider $\wt F$ as a contravariant functor from $\PSh(\bS)$ to~$\bC$, then it sends colimits to limits.
 
 Let $\bS$ and $\bT$ be $\infty$-categories
 and assume now that we   have a bifunctor
 \[F\colon \bS^{op}\times \bT\to \bC\] with a complete and cocomplete target $\bC$.
 Then we can define  a functor
 \[\widetilde F\colon \PSh(\bS)^{op}\times \PSh(\bT)\to \bC\]
 by first right Kan extending $F$ in the first variable, and then left Kan extending the result in the second variable. We consider $F$ and $\wt F$ as contravariant functors in the first variable. The functor $\wt F$
 is essentially uniquely determined by  the property that it restricts to $F$ along the  product of Yoneda embeddings
 $\bS\times \bT\to \PSh(\bS)\times \PSh(\bT)$ and satisfies 
 \begin{equation} \label{ewfwefevvwvw1}\wt F(\colim_{I}X,\colim_{J} {\yo(Y)})\simeq \colim_{J} \lim_{I} {\wt F}(X,{\yo(Y)}) \end{equation}
 for all diagrams $X\colon  I\to {\PSh}(\bS) $ and
 {$Y\colon J\to \bT$.}

 Similary, switching the order of the left and right Kan extensions, we obtain a functor (contravariant in its first variable)
  \[ \widetilde F^{\prime}\colon \PSh(\bS)\times \PSh(\bT)\to \bC\ .\]
  Again, the functor $\wt F^{\prime}$
 is essentially uniquely determined by  the property that it restricts to $F$ along the  product of Yoneda embeddings
   and satisfies 
 \begin{equation} \label{ewfwefevvwvw}\wt F^{\prime}(\colim_{I} {\yo(X)},\colim_{J} Y)\simeq  \lim_{I} \colim_{J}{\wt F^{\prime}}({\yo(X)},Y) \end{equation}
 for all diagrams {$X\colon  I\to  \bS $} and
 $Y\colon J\to\PSh(\bT)$.

Finally note that the natural comparison morphism \[\colim_{J}\lim_{I}\to \lim_{I}\colim_{J}\] provides a comparison morphism
\[
c\colon \wt F\to \wt F^{\prime}\ .\qedhere
\]
\end{rem}

Let $E\colon G\Orb\to \bC$ be a functor. In the following statement $*$ denotes the final object of $\PSh(G\Orb)$. Recall the notation introduced in \cref{regpoeregregrgr}.
\begin{lem}\label{cweclkwevcwecw}
The assembly map \eqref{ewoij3wvwev}  is equivalent to the morphism
\[\wt E(E_{\cF}G)\to \wt E(*)\]
induced by the  morphism $E_{\cF}G\to *$.
\end{lem}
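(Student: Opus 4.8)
The plan is to unwind the definitions of $\Ind_{\cF}$, $\Res_{\cF}$, and the classifying space $E_{\cF}G$, and to reduce the assembly map to the map $\wt E(E_{\cF}G)\to \wt E(\pt)$ by a sequence of adjunction identities. First I would recall from \cref{regpoeregregrgr} that since $E\colon G\Orb\to\bC$ has a complete target, it extends essentially uniquely to a limit-preserving functor $\wt E\colon \PSh(G\Orb)^{op}\to\bC$ (a right Kan extension along the Yoneda embedding $\yo\colon G\Orb\to\PSh(G\Orb)$), which as a contravariant functor sends colimits to limits. In particular $\wt E(\yo(S))\simeq E(S)$ for every $S$ in $G\Orb$, and $\wt E(\pt)\simeq E(\pt)$ since $\pt$ is the final object (hence $\yo(\pt)=\pt$; note the final object of $G\Orb$ is $\pt$ and the Yoneda embedding preserves it). The morphism $E_{\cF}G\to \pt$ in $\PSh(G\Orb)$ is the unique map to the final object, so the target statement makes sense.

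Next I would identify $\wt E(E_{\cF}G)$ with $(\Ind_{\cF}\circ\Res_{\cF}E)(\pt)$. By definition $E_{\cF}G=\Ind_{\cF}(*_{\cF})$, where $*_{\cF}$ is the final object of $\PSh(G_{\cF}\Orb)$, and $\Ind_{\cF}$ is the left adjoint to restriction along $G_{\cF}\Orb\hookrightarrow G\Orb$, i.e.\ left Kan extension. Writing $*_{\cF}=\colim_{T\in G_{\cF}\Orb}\yo(T)$ as a colimit of representables (every presheaf, in particular the terminal one, is a colimit of representables over its category of elements), we get $E_{\cF}G\simeq \colim_{T}\yo(T)$ in $\PSh(G\Orb)$ where the colimit is over the appropriate indexing category built from $G_{\cF}\Orb$. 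Since $\wt E$ turns colimits into limits, $\wt E(E_{\cF}G)\simeq \lim_{T}\wt E(\yo(T))\simeq \lim_{T} E(T)$. On the other hand, the formula for the value at $\pt$ of a right Kan extension (here $\Ind_{\cF}$ for contravariant functors, i.e.\ the right adjoint to restriction in the contravariant setting — note the excerpt sets up $\Ind_{\cF}\dashv\Res_{\cF}$ as the \emph{left} adjoint, so for the assembly map one uses the pointwise colimit formula) gives $(\Ind_{\cF}\Res_{\cF}E)(\pt)\simeq \colim_{(T\to \pt)\in (G_{\cF}\Orb\downarrow \pt)} E(T)\simeq \colim_{T\in G_{\cF}\Orb}E(T)$, and by \cref{thm_intro_injective}'s formulation the assembly map is exactly this colimit mapping to $E(\pt)$. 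The key bookkeeping step is matching the indexing category appearing in the colimit presentation of $*_{\cF}$ with the comma category $(G_{\cF}\Orb\downarrow\pt)\simeq G_{\cF}\Orb$, and checking that under the identification $\wt E(E_{\cF}G)\simeq \colim_{G_{\cF}\Orb}E(T)$ the map induced by $E_{\cF}G\to\pt$ is the counit/assembly map.

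The cleanest way to organize this is via the triangle identities. One has the commuting square relating the left Kan extension $\Ind_{\cF}$ along $j\colon G_{\cF}\Orb\to G\Orb$, the Yoneda embeddings, and the left Kan extension $\wt{(-)}$ along Yoneda: concretely, for a presheaf $P\in\PSh(G_{\cF}\Orb)$ one has $\wt E(\Ind_{\cF}P)\simeq \wt{(E\circ j)}(P)$, because both sides are limit-preserving (contravariant) functors of $P$ agreeing on representables $\yo(T)$ (where both give $E(j(T))=E(T)$). Applying this with $P=*_{\cF}$ gives $\wt E(E_{\cF}G)\simeq \wt{(Ej)}(*_{\cF})$, and the universal property of $*_{\cF}$ together with the colimit-to-limit property yields $\wt{(Ej)}(*_{\cF})\simeq\lim_{T\in G_{\cF}\Orb}E(T)$. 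Naturality of this whole chain in the structure map $E_{\cF}G\to\pt=\Ind_{\cF}(\yo(G/G))$... — here I must be careful: $\pt$ is not literally $\Ind_{\cF}$ applied to something unless $G\in\cF$, so instead I compare directly with $\wt E(\pt)=E(\pt)$ and observe the map $*_{\cF}\to \Res_{\cF}(\pt)=*_{\cF}$ is the identity, so the counit $\Ind_{\cF}\Res_{\cF}(\pt)\to\pt$ is $\Ind_{\cF}(*_{\cF})=E_{\cF}G\to\pt$, which is exactly the map in the statement. The main obstacle is purely the coherence of these adjunction manipulations in the $\infty$-categorical setting: one must ensure that "restricting to representables" genuinely characterizes the limit-preserving extensions (this is \cite[Thm.~5.1.5.6]{htt}, already invoked in the paper) and that the counit of $\Ind_{\cF}\dashv\Res_{\cF}$ at $\pt$ really is the canonical map $E_{\cF}G\to\pt$ — a matter of unwinding that $\Res_{\cF}(\pt)$ is terminal in $\PSh(G_{\cF}\Orb)$. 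Once these identifications are in place, the equivalence of the two maps is immediate, so I would keep the proof short, citing \cref{regpoeregregrgr} and \cite[Thm.~5.1.5.6]{htt} and leaving the routine diagram chase to the reader.
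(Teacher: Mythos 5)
There is a genuine error in your proposal: you have the variance of $\wt E$ backwards. The functor $E\colon G\Orb\to\bC$ is \emph{covariant}, so by the conventions of \cref{regpoeregregrgr} its extension $\wt E$ is the \emph{colimit-preserving} functor $\PSh(G\Orb)\to\bC$ obtained by left Kan extension along the Yoneda embedding; the limit-preserving (right Kan) extension in that remark is reserved for contravariant functors $\bS^{op}\to\bC$. Your claim that ``$\wt E$ turns colimits into limits'' and the resulting identification $\wt E(E_{\cF}G)\simeq \lim_{T\in G_{\cF}\Orb}E(T)$ are therefore wrong; the correct identification is $\wt E(E_{\cF}G)\simeq \colim_{T\in G_{\cF}\Orb}E(T)$. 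This is not a cosmetic slip: you yourself compute $(\Ind_{\cF}\Res_{\cF}E)(\pt)\simeq\colim_{T}E(T)$ via the pointwise formula for the left Kan extension, so your write-up asserts that a colimit agrees with a limit over the same diagram, which is false in general and which the proposal never reconciles before declaring the equivalence of the two maps ``immediate.''

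Apart from this, your strategy is the same as the paper's: write $E_{\cF}G=\Ind_{\cF}(*_{\cF})\simeq\colim_{S\in G_{\cF}\Orb/\pt}\yo(S)$, use $*\simeq\yo(\pt)$, push the colimit through $\wt E$, and match the result with the pointwise formula for the counit of $\Ind_{\cF}\dashv\Res_{\cF}$ at $\pt$. Once you replace ``limit-preserving'' by ``colimit-preserving'' throughout (so that $\wt E(E_{\cF}G)\simeq\colim_{S}E(S)$ and the induced map to $\wt E(*)\simeq E(\pt)$ is given componentwise by $E(S\to\pt)$), the remaining bookkeeping you describe — identifying the indexing category with $G_{\cF}\Orb/\pt\simeq G_{\cF}\Orb$ and recognizing the counit — is exactly the paper's argument and is fine.
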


\begin{proof}
We have an equivalence $*\simeq \yo(pt)$. Moreover, 
$E_{\cF}G=\Ind_{\cF}(*_{\cF})$ can be expressed in terms of a left Kan extension, and the pointwise formula gives
\[E_{\cF}G\simeq \colim_{S\in G_{\cF}\Orb/pt} \yo (S)\ .\]
Since $\wt E$ preserves colimits  the morphism $\wt E(E_{\cF}G)\to \wt E(*)$ is equivalent to the morphism
\[\colim_{S\in G_{\cF}\Orb/pt}  E(S)\to E(pt)\]
induced by the morphisms $S\to pt$ in $G\Orb$. But this is now exactly the formula for the assembly map if one expresses 
$\Ind_{\cF}\circ \Res_{\cF}(E)$ as a left Kan extension of $ \Res_{\cF}(E)$ and again applies the pointwise formula.
\end{proof}

 Recall \cref{verpovevervev} of the $\infty$-category $A^{\eff}(G)$  modeling the effective Burnside category of $G$. 
We have a functor \begin{equation}\label{vervlkrmlvrvervvev} m\colon    G\Fin   \times G \Fin^{op} \to A^\eff(G)\end{equation}
which is characterized by the property that it
sends  a pair $( \psi\colon Q\to R\ ,\phi \colon T\to S)$ of morphisms in $G\Fin \times G\Fin  $ to the morphism
\[\xymatrix{&Q\otimes T\ar[dl]_{ \id\otimes   \phi }\ar[dr]^{\psi \otimes \id   }&\\
	Q\otimes S&&R\otimes T
}\]
in $A^\eff(G)$.  Note that we consider $m$ as a contravariant functor in the second argument.

 We now consider a functor  $ M \colon A^\eff(G)^{op}\to\bC$.
\begin{ddd}\label{jfi3rhfiuofewfewf-fin} We define the functor
	\[ F:=  M \circ m^{op} \colon G\Fin^{op} \times G\Fin \to \bC\ .\qedhere\] 
	 \end{ddd}
Note that the functor $F$ depends on $M$, but this is not reflected in the notation. 

The inclusion \begin{equation}\label{g45gkjnjgrgrgrg3}
r\colon G \Orb \to G\Fin
\end{equation} induces an adjunction
\begin{equation}\label{efkjbnkjffrefw-fin}
r_{!}\colon   \PSh(G\Orb) \leftrightarrows  \PSh(G\Fin):r^{*}\ .
\end{equation}

 We consider a functor $M\colon A^{\eff}(G)^{op}\to \bC$ for a complete and cocomplete target $\bC$, let $F$ be as in \cref{jfi3rhfiuofewfewf-fin}, and use the notation introduced in \cref{regpoeregregrgr}.
The counit \begin{equation}\label{g5k3n4kf34f34f34f1-fin2}
    r_{!}r^{*}\to \id \end{equation} of the adjunction \eqref{efkjbnkjffrefw-fin} induces   transformations 
\begin{equation}\label{rejvbnkjf4f43f-fin}
u\colon \wt F(-,-)\to \wt F(r_{!}r^{*}(-) ,-)  \ , \quad v\colon \wt F(-,r_{!}r^{*}(-))\to \wt F(-,-)
\end{equation}
Recall \cref{geriogjergergrege} of a Mackey functor.
 \begin{lem}
	\label{lem:u-equiv-fin}
	If $ M$ is a Mackey functor, then transformations $u$ and $v$ in \eqref{rejvbnkjf4f43f-fin} are equivalences.
\end{lem}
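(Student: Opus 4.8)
The plan is to reduce both statements to the additivity of $M$ via the orbit decomposition of a finite $G$-set, combined with the distributivity of the cartesian product over disjoint union. First I would record the key computation. Given a finite $G$-set $S$ with its decomposition into orbits $S\cong\coprod_{i}S_{i}$, the inclusion $r\colon G\Orb\to G\Fin$ is fully faithful and every $G$-map from a transitive $G$-set into $S$ factors through the inclusion of exactly one of the $S_{i}$; hence $r^{*}\yo(S)\simeq\coprod_{i}\yo(S_{i})$ in $\PSh(G\Orb)$. Applying the colimit-preserving functor $r_{!}$ (which carries $\yo_{G\Orb}(Z)$ to $\yo_{G\Fin}(Z)$) yields $r_{!}r^{*}\yo(S)\simeq\coprod_{i}\yo(S_{i})$, and a quick check with the adjunction shows that under this identification the counit \eqref{g5k3n4kf34f34f34f1-fin2} at $\yo(S)$ is the canonical map $\coprod_{i}\yo(S_{i})\to\yo(S)$ induced by the orbit inclusions.

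Next I would evaluate $u$ and $v$ on a pair of representables $(\yo(S),\yo(T))$, with orbit decompositions $S\cong\coprod_{i}S_{i}$ and $T\cong\coprod_{l}T_{l}$. Using \cref{regpoeregregrgr} (so that $\wt F$ is limit-preserving in its first variable on representable second argument, and colimit-preserving in its second variable) together with the computation of the counit, $u$ becomes the canonical map $F(S,T)\to\prod_{i}F(S_{i},T)$ and $v$ becomes $\coprod_{l}F(S,T_{l})\to F(S,T)$. By \cref{jfi3rhfiuofewfewf-fin} and the description \eqref{vervlkrmlvrvervvev} of $m$ one has $F(S,T)=M(S\otimes T)$ in $A^{\eff}(G)$, and since $\otimes$ (the cartesian product of finite $G$-sets) distributes over disjoint union, $S\otimes T\simeq\coprod_{i}(S_{i}\otimes T)$ and $S\otimes T\simeq\coprod_{l}(S\otimes T_{l})$. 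Thus $u$ and $v$ at $(\yo(S),\yo(T))$ are precisely the comparison maps $M(\coprod_{i}(S_{i}\otimes T))\to\prod_{i}M(S_{i}\otimes T)$ and $\coprod_{l}M(S\otimes T_{l})\to M(\coprod_{l}(S\otimes T_{l}))$ associated with finite (co)products; since $A^{\eff}(G)$ is semi-additive (and in the relevant situation $\bC$ is stable, so finite products and coproducts agree there as well), the hypothesis that $M$ is a Mackey functor, i.e.\ additive (\cref{geriogjergergrege}), makes both maps equivalences.

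Finally I would promote the representable case to arbitrary arguments. Since $r_{!}r^{*}$ preserves colimits and sends a colimit of representables to the colimit of the associated orbits, both $\wt F(-,-)$ and $\wt F(r_{!}r^{*}(-),-)$ (and likewise for $v$) are built from their values on representables by the same "limit over the first diagram, colimit over the second" recipe of \eqref{ewfwefevvwvw1}. Writing the variable untouched by the counit as a colimit of representables and the other one likewise, $u$ (resp.\ $v$) is thereby exhibited, after the outer colimit, as a limit (resp.\ a colimit) of copies of the representable comparison maps already shown to be equivalences, and (co)limits of equivalences are equivalences.

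The step I expect to be the main obstacle is exactly this last bookkeeping: $\wt F$ is limit-preserving in the first variable but colimit-preserving in the second, and these operations do not commute, so the reduction to representables must be organised so that the representable comparison maps (including the extra product indexed by the orbits produced by $r_{!}r^{*}$) appear levelwise inside the correct iterated (co)limit. Once the orbit decomposition of $r_{!}r^{*}\yo(S)$ and distributivity of $\otimes$ over $\sqcup$ are in place, the only genuine input is the additivity of $M$.
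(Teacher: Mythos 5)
Your proposal is correct and follows essentially the same route as the paper: identify the counit $r_!r^*\yo(S)\to\yo(S)$ with the orbit inclusions $\coprod_{R\in G\backslash S}\yo(r(R))\to\yo(S)$, reduce to representables using the (co)limit-preservation properties of $\wt F$ from \cref{regpoeregregrgr}, translate via $F(S,T)=M(S\otimes T)$ and distributivity of $\times$ over $\sqcup$ into the additivity of $M$, and then extend by \eqref{ewfwefevvwvw1}. The only difference is cosmetic: the paper writes out only the case of $v$ and declares $u$ "similar", whereas you treat both, correctly noting that the $u$ case additionally uses that finite products and coproducts agree in the stable target so that coproduct-preservation of $M$ yields the required product decomposition.
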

\begin{proof}  We show that  $v$ is an equivalence.	The proof for $u$ is similar.
 We first show that
 \begin{equation}\label{f2foi23jof2f32f32f32}
\wt F(\yo(T),r_{!}r^{*}\yo(S))\to \wt F(\yo(T),\yo(S))
\end{equation}
is an equivalence for all $S,T\in G\Fin$.

We observe that     \[  r_! r^*(\yo(S))\to \yo(S)\]    is   equivalent to  the morphism
	\[ \coprod_{R\in G\backslash S}\yo(r(R)) \to  \yo(S)\ ,\] induced by the family of inclusions $(r(R)\to S)_{R\in G\backslash S}$ (see  \cite[Lem.~5.10]{desc} for more details). 
  Using  
the fact that $\wt F$ preserves  colimits  in its second argument, we conclude that  the   morphism 
\eqref{f2foi23jof2f32f32f32} 
   is equivalent to the morphism \begin{equation}\label{f34f3t6t45t4t45t54t4}
\coprod_{R\in G\backslash S}\wt F(\yo(T),\yo(r(R)))\to \wt F(\yo(T),\yo(S))\ .
\end{equation}
	 In view of  the defining  relation between $\wt F$ and $F$	(see \cref{regpoeregregrgr}),
	 the  morphism \eqref{f34f3t6t45t4t45t54t4} is in turn equivalent to
	 the morphism
	\begin{equation}\label{g35g34g34g3}
\coprod_{R\in G\backslash S} F(T,r(R))  \to F(T,S)\ .
\end{equation}
	By \cref{jfi3rhfiuofewfewf-fin},  the morphism  \eqref{g35g34g34g3} is equivalent to the  morphism
	 \begin{equation}\label{vr4lklkmlkemcevev}
	 \coprod_{R\in G\backslash S} M(T\times r(R) ) \to M(T\times S)
	 \end{equation} 
		obtained from the transfers along the inclusions of the orbits of $S$. 
		We now use that these transfers also induce an equivalence 
	  \[   \coprod_{R\in G\backslash S}T\times r(R) \simeq  T\times S   \] in $A^{\eff}(G)^{op}$ and  that $M$ is a Mackey functor, i.e., coproduct preserving. This implies
	 that \eqref{vr4lklkmlkemcevev} and hence \eqref{f2foi23jof2f32f32f32}   is an equivalence.

Finally, using \eqref{ewfwefevvwvw1} and the fact that $r_{!}r^{*}$ preserves colimits, we can extend the equivalence
 \eqref{f2foi23jof2f32f32f32}  to all objects of $\PSh(G\Fin)^{op}\times  \PSh(G\Fin)$.	
	\end{proof}

We consider a functor $M\colon A^{\eff}(G)^{op}\to \bC$ for a complete and cocomplete target $\bC$ and
we let  $S$ be an object $S$ of $G\Fin$.  
Let $F$ be as in \cref{jfi3rhfiuofewfewf-fin} and recall  the notation introduced in \cref{regpoeregregrgr}.

\begin{lem}
	\label{lem:swap-fin}
	There is an equivalence  
	\[s\colon \wt F(-,\yo(S)) \simeq \wt F(-\times  \yo (S),*) \]	in
	 $ \Fun(\PSh(G\Fin)^{op},\bC)$. \end{lem}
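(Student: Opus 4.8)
The plan is to deduce the equivalence from the universal property of presheaf categories recorded in \cref{regpoeregregrgr}. Since $\bC$ is complete, precomposition with the Yoneda embedding gives an equivalence $\Fun^{\lim}(\PSh(G\Fin)^{op},\bC)\xrightarrow{\simeq}\Fun(G\Fin^{op},\bC)$, where the left-hand side denotes the full subcategory of functors sending colimits of presheaves to limits in $\bC$. Hence it suffices to verify two things: first, that both functors $X\mapsto\wt F(X,\yo(S))$ and $X\mapsto\wt F(X\times\yo(S),*)$ lie in $\Fun^{\lim}(\PSh(G\Fin)^{op},\bC)$; and second, that their restrictions along $\yo\colon G\Fin\to\PSh(G\Fin)$ are naturally equivalent as objects of $\Fun(G\Fin^{op},\bC)$. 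The desired $s$ is then obtained by transporting this equivalence back along the equivalence of functor categories.

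For the first point, the functor $X\mapsto\wt F(X,\yo(S))$ sends colimits to limits by the special case of \eqref{ewfwefevvwvw1} in which the second diagram is constant at $S$. For the functor $X\mapsto\wt F(X\times\yo(S),*)$, note first that $*\simeq\yo(pt)$, where $pt$ denotes the one-point $G$-set, since $pt$ is terminal in $G\Fin$. In the presheaf $\infty$-category $\PSh(G\Fin)$ colimits are universal, so $-\times\yo(S)$ preserves all small colimits; composing this with the previous observation shows that $X\mapsto\wt F(X\times\yo(S),\yo(pt))$ again sends colimits to limits.

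For the second point, on a representable $\yo(T)$ the left-hand functor takes the value $\wt F(\yo(T),\yo(S))=F(T,S)$, which by \cref{jfi3rhfiuofewfewf-fin} and the description of $m$ in \eqref{vervlkrmlvrvervvev} equals $M(T\otimes S)$. For the right-hand functor, since $\yo$ preserves finite products we have $\yo(T)\times\yo(S)\simeq\yo(T\otimes S)$, hence $\wt F(\yo(T)\times\yo(S),*)\simeq\wt F(\yo(T\otimes S),\yo(pt))=F(T\otimes S,pt)=M((T\otimes S)\otimes pt)$. As $pt$ is the unit of $\otimes$ on $G\Fin$, the canonical isomorphism $(T\otimes S)\otimes pt\cong T\otimes S$ induces an equivalence $M((T\otimes S)\otimes pt)\simeq M(T\otimes S)$; unwinding the definition of $m$ shows that under this identification the action of a morphism $\beta\colon T'\to T$ on the right-hand side (through $\beta\otimes\id_S$ in the first slot, after the product with $pt$) agrees with its action on the left-hand side (through $\beta$ in the first slot of $F(-,S)$). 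This produces the required natural equivalence of the two restrictions.

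The only step demanding genuine care is the first point for the second functor, namely the interplay between universality of colimits in the presheaf topos and \eqref{ewfwefevvwvw1}; the identification in the second point is a direct unwinding of \cref{jfi3rhfiuofewfewf-fin}, and once both points are in place the conclusion is formal.
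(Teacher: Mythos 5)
Your proof is correct and uses essentially the same ingredients as the paper's: the identification $F(T,S)\simeq F(T\times S,\pt)$ coming from the definition of $m$, the universality of colimits in $\PSh(G\Fin)$ making $-\times\yo(S)$ colimit-preserving, and the uniqueness of limit-preserving extensions from \cref{regpoeregregrgr}. The only difference is organizational: the paper compares the functors $F(-,S)$ and $F(-\times S,\pt)$ on $G\Fin^{op}$ first and then Kan-extends, whereas you verify the extension properties first and compare the restrictions along Yoneda, which is the same argument read in the opposite direction.
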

\begin{proof}
	By definition of $m$ (see \eqref{vervlkrmlvrvervvev}), we have an equivalence \[m(-,S) \simeq m(-\times S,\pt)\]  of functors   $G\Fin \to A^\eff(G)$.
	Composing with $M$ and using the \cref{jfi3rhfiuofewfewf-fin}, we get an equivalence
	\begin{equation}\label{brtbrtvrvrv}
F(-,S)\simeq F(-\times S,pt)
\end{equation}
	of functors $G\Fin^{op}\to \bC$.
 We abbreviate
	\[F_{1}:= F(-,S)\ , \quad F_{2}:= F(-\times  S,pt)\ .\]
	By \eqref{brtbrtvrvrv} we have
	 	 	 an equivalence  
\begin{equation}\label{ferfoijiojf2f2f} \wt{F_1} \simeq  \wt{F_2}  \end{equation}
	 of  contravariant functors from  $\PSh(G\Fin)$ to $ \bC$ which send colimits to limits.
We now observe that, by the definitions given in \cref{regpoeregregrgr},
 \begin{equation}\label{vjkwnjewkvewvwvw}\wt{F_1}(-)\simeq \wt F(-,\yo(S))\ .  \end{equation}
Furthermore, since
$\yo $ preserves products,   for $T$ in $G\Fin $ we have an equivalence 
\begin{equation}\label{g3ggff2f2f}\wt{F_2}(\yo (T))\simeq F(T\times S,pt)\simeq \wt F(\yo (T\times S),*)\simeq \wt F(\yo (T)\times \yo (S),*)\ .\end{equation}
  We now use the general fact that for
  $X$ in $\PSh(G\Fin)$ the functor \[-\times X\colon \PSh(G\Fin)\to \PSh(G\Fin)\] of taking the product with $X$ preserves colimits.\footnote{It is a general property of $\infty$-topoi that colimits are universal, i.e., preserved by fibre products. We note that $\PSh(G\Fin)$ is an $\infty$-topos.}
	This implies that the equivalence \eqref{g3ggff2f2f} extends to an equivalence
\begin{equation}\label{wececwec}
\wt{F_2}(-)\simeq \wt F(-\times \yo(S),*)
\end{equation}	 
	of  contravariant functors  from $\PSh(G\Fin)$ to $\bC$ sending colimits to limits.
	Combining now \eqref{wececwec}, \eqref{vjkwnjewkvewvwvw} and \eqref{ferfoijiojf2f2f} we get the equivalence asserted in the lemma. 
\end{proof}

Let $M\colon A^\eff(G)^{op}\to\bC$ be a functor, $F$ be as in \cref{jfi3rhfiuofewfewf-fin}, and recall   the notation introduced in \cref{regpoeregregrgr} and \eqref{g45gkjnjgrgrgrg3}.
We consider an object  $A$  in $\PSh(G\Fin)$ and  a transitive $G$-set $R$ in $G\Orb$. Let
\begin{equation}\label{VWEVEWVWEV}
p_{R}\colon \wt F(*,\yo(r(R))) \to \wt F(A,\yo(r(R)))
\end{equation}
 be the map induced by $A\to *$ (note that $\wt F$ is contravariant in the first variable).
\begin{prop}\label{guoiuoi34ug34g34g3-fin}
	\label{thm:desc-orbits-fin}
	Assume:
	\begin{enumerate}\item \label{rfjgwiofewf}$R\in G_{\cF}\Orb$;
		\item $M$ is a Mackey functor;
		\item $r^*A$ in $\PSh(G\Orb)$ is equivalent to $E_\cF G$.
	\end{enumerate} Then \eqref{VWEVEWVWEV} is an equivalence.
\end{prop}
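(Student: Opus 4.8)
The plan is to strip off the two formal inputs available to us — the swap equivalence of \cref{lem:swap-fin} and the restriction/induction equivalence of \cref{lem:u-equiv-fin} — and thereby reduce the claim to a pointwise computation with representable presheaves on $G\Orb$, in which the only genuine ingredient is that $\cF$ is a family of subgroups.

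First I would apply \cref{lem:swap-fin} with $S = r(R)$. The equivalence $s\colon \wt F(-,\yo(r(R))) \simeq \wt F(-\times\yo(r(R)),*)$ is natural in the first variable, and $* \times \yo(r(R)) \simeq \yo(r(R))$, so the morphism $A\to *$ in $\PSh(G\Fin)$ is carried to the projection $q\colon A\times\yo(r(R)) \to \yo(r(R))$. Consequently the map $p_R$ from \eqref{VWEVEWVWEV} is identified with
\[ \wt F(q,*)\colon \wt F(\yo(r(R)),*) \longrightarrow \wt F(A\times\yo(r(R)),*)\ , \]
and it suffices to prove that $\wt F(q,*)$ is an equivalence.

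Next I would invoke \cref{lem:u-equiv-fin}: since $M$ is a Mackey functor, the transformation $u\colon \wt F(-,*)\to \wt F(r_!r^*(-),*)$ is an equivalence, and by its naturality in the first variable the square relating $\wt F(q,*)$ with $\wt F(r_!r^*(q),*)$ commutes. Hence it is enough to show that $r^*(q)$ is an equivalence in $\PSh(G\Orb)$ — then $r_!r^*(q)$ is an equivalence, and so is $\wt F$ of it. Now $r^*$ is restriction along a functor, so it preserves products; moreover $r^*\yo_{G\Fin}(r(R)) \simeq \yo_{G\Orb}(R)$ because $r$ is fully faithful; and $r^*A\simeq E_{\cF}G$ by hypothesis. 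So $r^*(q)$ is identified with the projection $E_{\cF}G \times \yo_{G\Orb}(R) \to \yo_{G\Orb}(R)$.

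The last step is the only substantive one, and I would do it by evaluating this projection at each $T$ in $G\Orb$. By \eqref{ewvfwjfwkefwf}, $E_{\cF}G(T) \simeq *$ if $T\in G_{\cF}\Orb$ and $E_{\cF}G(T) \simeq \emptyset$ otherwise. For $T\in G_{\cF}\Orb$ the value of the source is $\Map_{G\Orb}(T,R) \simeq \yo_{G\Orb}(R)(T)$ and the projection is the identity. For $T\notin G_{\cF}\Orb$, write $T \cong G/K$ and $R \cong G/H$ with $K\notin\cF$ and $H\in\cF$; a $G$-map $T\to R$ exists only if $K$ is subconjugate to $H$, which — since $\cF$ is closed under conjugation and passage to subgroups — would force $K\in\cF$, a contradiction. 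Hence $\Map_{G\Orb}(T,R) = \emptyset = (E_{\cF}G\times\yo_{G\Orb}(R))(T)$ and the projection is an equivalence at $T$ for trivial reasons. This is exactly where the hypotheses $R\in G_{\cF}\Orb$ and ``$\cF$ is a family'' are used; everything else is formal. I expect the main obstacle to be nothing more than the bookkeeping in the first two paragraphs: checking that $s$ and $u$, applied to the morphism $A\to *$, fit together so that one really ends up with the plain projection $q$ rather than some twisted variant of it — but no real difficulty arises there.
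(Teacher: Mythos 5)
Your proposal is correct and follows essentially the same route as the paper: swap via \cref{lem:swap-fin}, replace by $r_!r^*$ via \cref{lem:u-equiv-fin}, use that $r^*$ preserves products and $r^*\yo(r(R))\simeq\yo(R)$, and reduce to the equivalence $E_{\cF}G\times\yo(R)\simeq\yo(R)$, which the paper deduces from \eqref{ewvfwjfwkefwf} using exactly the subconjugacy observation you spell out pointwise.
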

\begin{proof}
	 {Recall the equivalences $u$ and $s$ from \cref{lem:u-equiv-fin} and \cref{lem:swap-fin}} and consider the following commutative diagram:
	\[\xymatrix{
		\wt F(*,  \yo(r(R)) )\ar[d]_{s}^\simeq\ar[r]^-{p_{R}}&\wt F(A,\yo(r(R))) \ar[d]_{s}^\simeq\\
		\wt F (\yo ( r(R)),*)\ar[r]\ar[d]_{u}^\simeq&\wt F(A\times \yo (  r(R)),*)\ar[d]_{u}^\simeq\\
		\wt F(  r_!   r^*(\yo (  r(R))),*)\ar[r]\ar[d]_{!}^{\simeq}&\wt F (  r_!  r^*(A\times  \yo (  r(R))),*)\ar[d]_{!}^\simeq\\
		\wt F(  r_!(\yo (R)))\ar[r]\ar@{=}[d] &\wt F(    r_!(  r^*A\times \yo (R)))\ar[d]^{\simeq}\\
		\wt F(  r_!(\yo (R)))\ar[r]^-{!!}_-{\simeq}& \wt F(  r_!(E_{\cF}G\times \yo(R)))	}\]   
	For the equivalences marked by $!$ we use the canonical equivalence
	$  r^{*}\yo (  r(R))\simeq \yo (R)$ and that $  r^{*}$   preserves limits.
	
	Let $S$ be in $G\Orb$. By Assumption \ref{rfjgwiofewf}, the relation $\yo(R)(S)\not=\emptyset$ implies that $S \in G_{\cF}\Orb$. Hence by  \eqref{ewvfwjfwkefwf}
	\[E_{\cF}G\times \yo(R)\simeq \yo(R)\]
	and the map marked by $!!$ is an equivalence as claimed.
	  \end{proof}

In the situation of \cref{thm:desc-orbits-fin}, we can consider the map \begin{equation}\label{eberberblkberb} p_{A}\colon \wt F(*,A)\to \wt F(A,A)\end{equation}
 induced by $A\to *$. 
\begin{kor}
	\label{cor:assembly}
	Assume:
	\begin{enumerate}
		\item $M$ is a Mackey functor;
		\item $r^*A$ in $\PSh(G\Orb)$ is equivalent to $E_\cF G$.
	\end{enumerate} Then \eqref{eberberblkberb} is an equivalence.
\end{kor}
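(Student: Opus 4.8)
The plan is to reduce \cref{cor:assembly} to \cref{thm:desc-orbits-fin} by writing $r_!r^*A$ as a colimit of representables indexed by the orbits occurring in $E_\cF G$, and then using that $\wt F$ preserves colimits in its second argument.

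First I would observe that, since $E_\cF G=\Ind_\cF(*_\cF)$ is a left Kan extension along $G_\cF\Orb\to G\Orb$, the pointwise formula (exactly as in the proof of \cref{cweclkwevcwecw}) gives an equivalence
\[
E_\cF G\simeq \colim_{R\in G_\cF\Orb/pt}\yo(R)
\]
in $\PSh(G\Orb)$. Applying the colimit-preserving functor $r_!$, using the standard identification $r_!\yo(R)\simeq \yo(r(R))$, and invoking the hypothesis $r^*A\simeq E_\cF G$, one obtains
\[
r_!r^*A\simeq \colim_{R\in G_\cF\Orb/pt}\yo(r(R))
\]
in $\PSh(G\Fin)$.

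Next I would invoke \cref{lem:u-equiv-fin}: the transformation $v\colon \wt F(-,r_!r^*(-))\to \wt F(-,-)$, induced by the counit $r_!r^*\to \id$, is an equivalence, and it is natural in the first variable too. For $X$ in $\PSh(G\Fin)$ write $p_X\colon \wt F(*,X)\to \wt F(A,X)$ for the map induced by the canonical map $A\to *$ (recall that $\wt F$ is contravariant in the first slot). Naturality of $v$ in the first variable, applied to $A\to *$ and to the second argument $A$, produces a commuting square with vertical maps the equivalences $v$ and horizontal maps $p_{r_!r^*A}$ (top) and $p_A$ (bottom); hence $p_A$ is an equivalence if and only if $p_{r_!r^*A}$ is. Now, for every fixed first argument the functor $\wt F(?,-)$ preserves colimits (this is built into the construction of $\wt F$ as a left Kan extension in the second variable; cf.\ \eqref{ewfwefevvwvw1}), and $p_{-}$ is a natural transformation between two such colimit-preserving functors; so the displayed identification of $r_!r^*A$ yields $p_{r_!r^*A}\simeq \colim_{R\in G_\cF\Orb/pt}p_{\yo(r(R))}$, and $p_{\yo(r(R))}$ is precisely the map $p_R$ of \eqref{VWEVEWVWEV}. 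By \cref{thm:desc-orbits-fin} each $p_R$ is an equivalence, since $R\in G_\cF\Orb$, the functor $M$ is a Mackey functor, and $r^*A\simeq E_\cF G$. As a colimit of equivalences, $p_{r_!r^*A}$ is an equivalence, and therefore so is $p_A$.

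The only delicate point is the naturality bookkeeping: one must make sure that, under the two reductions (along $v$ and along the colimit decomposition of $r_!r^*A$), the map $p_A$ is identified with $\colim_R p_R$ itself and not merely with something abstractly equivalent to it. This is guaranteed by treating $v$ and the counit $r_!r^*\to\id$ as honest natural transformations throughout. No further genuinely hard step remains once \cref{lem:u-equiv-fin} and \cref{thm:desc-orbits-fin} are in hand.
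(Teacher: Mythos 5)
Your proof is correct and follows essentially the same route as the paper: reduce $p_A$ to the orbitwise maps $p_R$ of \eqref{VWEVEWVWEV} using \cref{lem:u-equiv-fin} together with the fact that $\wt F$ preserves colimits in its second argument, and then invoke \cref{thm:desc-orbits-fin}. The only (harmless) difference is organizational: the paper first writes $A$ in the second slot as a colimit of representables $\yo(S)$ with $S\in G_{\cF}\Fin$ and applies the counit equivalence to each $\yo(S)$, whereas you apply it to $A$ at once and then decompose $r_!r^*A\simeq r_!E_{\cF}G$ as a colimit of $\yo(r(R))$.
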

\begin{proof}
	Since $r^*A$ is equivalent to $E_\cF G$, $A$ is a colimit of objects of the form $\yo(S)$ with $S$ in $G_\cF\Fin$. Since $\wt F$ preserves colimits in its second argument, it suffices to show that
	\begin{equation}\label{g33jkgn35g3iogioj3g}
p_{\yo(S)}\colon \wt F(*,\yo(S))\to \wt F(A,\yo(S))
\end{equation}
	is an equivalence for all $S$ in $G_{\cF}\Fin$. By 
	\cref{lem:u-equiv-fin}, in \eqref{g33jkgn35g3iogioj3g} we can replace $\yo(S)$ by $r_{!}r^{*}\yo(S)$. We have
	\[r_{!}r^{*}\yo(S)\simeq r_{!}\Big(\coprod_{R\in G\backslash S} \yo(R)\Big)\simeq \coprod_{R\in G\backslash S}\yo(r(R))\ .\]
	Since $\tilde F$ preserves colimits in its second argument 
	 the map \eqref{g33jkgn35g3iogioj3g} is equivalent to the map
	\[\coprod_{R\in G\backslash S}\wt F(*,\yo(r(R)))\to \coprod_{R\in G\backslash S}\wt F(A,\yo(r(R)))\ .\]
Since $R\in G\backslash S$ implies $R\in G_{\cF}\Orb$, this map is an equivalence by \cref{thm:desc-orbits-fin}.
\end{proof}

We now consider the comparison morphism
\[c\colon \wt F\to \wt F'\]
introduced in \cref{regpoeregregrgr}.

Let $M\colon A^\eff(G)^{op}\to\bC$ be a functor for a complete and cocomplete target $\bC$, $F$ be as in \cref{jfi3rhfiuofewfewf-fin}, and recall  the notation introduced in \cref{regpoeregregrgr}.
Let $A$ and $B$ be in $\PSh(G\Fin)$.
\begin{lem}\label{lem:c-map}
Assume:
\begin{enumerate}
\item $\bC$ is stable;
\item  $A$ or $B$ is compact.
\end{enumerate}
Then the map
	\[c\colon \wt F(A,B)\to \wt F'(A,B)\]
	is an equivalence.
\end{lem}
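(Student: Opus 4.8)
The plan is to reduce the statement to the case of representable presheaves and then exploit stability. First I would recall that by definition (see \cref{regpoeregregrgr}) the functor $\wt F$ is obtained from $F$ by first right Kan extending in the first variable and then left Kan extending in the second, so that $\wt F(\colim_I \yo(X_i), \colim_J Y_j) \simeq \lim_I \colim_J \wt F(\yo(X_i), Y_j)$, whereas $\wt F'$ is obtained by performing the two Kan extensions in the opposite order, giving $\wt F'(\colim_I \yo(X_i), \colim_J Y_j) \simeq \colim_J \lim_I \wt F'(\yo(X_i), Y_j)$. On representable objects in both variables the two functors agree with $F$, hence $c \colon \wt F(\yo(X),\yo(Y)) \to \wt F'(\yo(X),\yo(Y))$ is an equivalence. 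The comparison morphism $c$ is exactly the canonical exchange morphism $\colim_J \lim_I \to \lim_I \colim_J$.

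Next I would fix a compact object, say $A$ is compact (the case that $B$ is compact is symmetric, swapping the roles of the colimit over $I$ and the limit over $I$ and using that $\wt F(A,-)$ and $\wt F'(A,-)$ preserve colimits in the second variable by construction). Write $A \simeq \colim_{i \in I}\yo(X_i)$ and $B \simeq \colim_{j \in J}\yo(Y_j)$ with $X_i, Y_j$ in $G\Fin$. Since $\wt F$ and $\wt F'$ both preserve colimits in the second variable, it suffices to check that $c \colon \wt F(A, \yo(Y)) \to \wt F'(A,\yo(Y))$ is an equivalence for every $Y$ in $G\Fin$. Now $\wt F(-,\yo(Y))$ sends colimits to limits, so $\wt F(A,\yo(Y)) \simeq \lim_{i} \wt F(\yo(X_i),\yo(Y))$; on the other hand, because $A$ is compact, the functor $\wt F'(A,-)$, which a priori only sends colimits in the first variable to limits, actually factors through a \emph{finite} limit, or more precisely the comparison between $\wt F'(A,\yo(Y))$ and $\lim_i \wt F'(\yo(X_i),\yo(Y))$ becomes an equivalence after we use compactness of $A$ together with stability of $\bC$.

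The key point — and the main obstacle — is to argue that for a compact $A$ in $\PSh(G\Fin)$ the exchange morphism $\colim_J \lim_I \to \lim_I \colim_J$ evaluated on $\wt F(\yo(X_i),Y_j)$ is an equivalence. Compactness of $A$ means that $\Map(A,-)$ commutes with filtered colimits, so $A$ can be built from representables by a \emph{finite} colimit (up to retracts); equivalently, one may present $A$ itself by a diagram indexed by a finite category. In a stable $\infty$-category $\bC$, finite limits commute with all colimits, so $\lim_I$ over such a finite diagram commutes with the colimit $\colim_J$. Thus I would carry out the argument as follows: (1) reduce to $B = \yo(Y)$ representable using colimit-preservation in the second variable; (2) use compactness of $A$ to replace the indexing diagram $I$ for $A$ by a finite one (possibly passing through idempotent completions / retracts, which $c$ respects since it is a natural transformation); (3) invoke stability of $\bC$ to commute the now-finite limit $\lim_I$ past $\colim_J$; (4) conclude that $c$ is an equivalence by comparing term-by-term with the already-established equivalence on representables. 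The subtle step is (2)–(3): one must be careful that ``compact'' in a presheaf $\infty$-category gives a finite colimit presentation only up to retract, but since retracts are both finite limits and finite colimits, stability handles them as well, and $c$ being a natural transformation of functors on $\PSh(G\Fin)$ is automatically compatible with these retract diagrams.
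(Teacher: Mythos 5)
Your proof follows essentially the same route as the paper: a compact presheaf is a retract of a finite colimit of representables, a retract of an equivalence is an equivalence, and stability of $\bC$ lets the resulting finite (co)limit commute past the other (co)limit, so the exchange map $c$ is an equivalence. One small slip: you attach the formula $\lim_I\colim_J$ to $\wt F$ and $\colim_J\lim_I$ to $\wt F'$, which is the reverse of \eqref{ewfwefevvwvw1} and \eqref{ewfwefevvwvw}; this is only a labelling issue and does not affect the argument, since under your hypotheses the exchange morphism is an equivalence regardless of its direction.
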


\begin{proof}
Any compact presheaf is a retract of a finite colimit of representable presheaves.
Since a retract of an equivalence is an equivalence, it suffices to show the assertion under the assumption that $A$  or $B$ is a finite
colimit of representables.
To this end  we use the equivalences \eqref{ewfwefevvwvw1} and \eqref{ewfwefevvwvw} and the  fact that   in a stable $\infty$-category finite colimits commute with all limits and finite limits commute with all colimits. 
 \end{proof}

Let $M\colon A^\eff(G)^{op}\to\bC$ be a functor for a complete and cocomplete target $\bC$, let $F$ be as in \cref{jfi3rhfiuofewfewf-fin}, and recall the notation introduced in the \cref{regpoeregregrgr}. We consider an object~$A$  in $\PSh(G\Fin) $. Let
\[p'_{A}\colon \wt F'(A,A)\to \wt F'(A,*)\]
be the map induced by $A\to *$

Analogously to \cref{cor:assembly}, we obtain the following statement.
\begin{kor}
	\label{cor:coassembly}
	Assume:
	\begin{enumerate}
		\item $M$ is a Mackey functor;
		\item $r^*A$ in $\PSh(G\Orb)$ is equivalent to $E_\cF G$.
	\end{enumerate} Then $p'_A$ is an equivalence.
\end{kor}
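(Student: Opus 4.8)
The plan is to mimic the proof of \cref{cor:assembly} almost verbatim, with the roles of the two variables of $F$ swapped, using the companion collection of lemmas (\cref{lem:u-equiv-fin}, \cref{lem:swap-fin}, \cref{thm:desc-orbits-fin}) in their versions for $\wt F'$ together with the comparison \cref{lem:c-map}. Concretely, since $r^{*}A\simeq E_{\cF}G$, the object $A$ is a colimit of representables $\yo(S)$ with $S$ in $G_{\cF}\Fin$. The functor $\wt F'$ preserves colimits in its \emph{first} argument (this is the variable that was left-Kan extended), so $p'_{A}$ is obtained from the maps $p'_{\yo(S)}\colon \wt F'(\yo(S),A)\to \wt F'(\yo(S),*)$ by passing to the colimit over $S$. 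Hence it suffices to show each $p'_{\yo(S)}$ is an equivalence.

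First I would reduce, exactly as in \cref{cor:assembly}, from $\yo(S)$ to $r_{!}r^{*}\yo(S)\simeq \coprod_{R\in G\backslash S}\yo(r(R))$ using the analogue of \cref{lem:u-equiv-fin} for $\wt F'$ in its first variable (the proof of \cref{lem:u-equiv-fin} already treats both transformations $u$ and $v$, and the same Mackey-functor argument applies to $\wt F'$). Since $\wt F'$ preserves colimits in the first argument, this reduces the claim to showing that
\[
p'_{\yo(r(R))}\colon \wt F'(\yo(r(R)),A)\to \wt F'(\yo(r(R)),*)
\]
is an equivalence for every transitive $G$-set $R$ with $R\in G_{\cF}\Orb$. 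This is the $\wt F'$-analogue of \cref{thm:desc-orbits-fin}: run through the same commutative ladder built from the swap equivalence $s$ (\cref{lem:swap-fin}, whose proof is symmetric in the two slots and so has an evident $\wt F'$-version), the unit/counit equivalence ($u$ for $\wt F'$), the identity $r^{*}\yo(r(R))\simeq \yo(R)$, and finally the equivalence $E_{\cF}G\times\yo(R)\simeq\yo(R)$ coming from \eqref{ewvfwjfwkefwf} together with $R\in G_{\cF}\Orb$.

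The one genuinely new ingredient is bridging between $\wt F$ and $\wt F'$. Here I would invoke \cref{lem:c-map}: at the representable inputs appearing above — $\yo(r(R))$ is compact, and so is each $\yo(S)$ — the comparison morphism $c\colon \wt F\to \wt F'$ is an equivalence, so every statement already proved for $\wt F$ (in particular \cref{thm:desc-orbits-fin} and \cref{lem:u-equiv-fin}) transports to $\wt F'$ on those inputs. Since $A$ need not be compact, one must be slightly careful to apply $c$ only after the reduction to representable first arguments has been carried out; the colimit-preservation of $\wt F'$ in the first variable is what makes this legitimate. The main obstacle is thus purely bookkeeping: checking that all the intermediate equivalences ($s$, $u$, the Kan-extension identities) really are available for $\wt F'$ with the variables interchanged, and that the compactness hypothesis of \cref{lem:c-map} is met at each step — but none of this requires a new idea beyond what is already in the proofs of \cref{lem:swap-fin}, \cref{thm:desc-orbits-fin} and \cref{cor:assembly}.
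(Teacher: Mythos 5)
Your proposal is correct and is exactly the route the paper intends: the paper's own justification for \cref{cor:coassembly} is simply that it follows ``analogously to \cref{cor:assembly}'' (equivalently, formally by passing to opposite categories), and your plan fleshes out precisely that dualized reduction and ladder. Two small corrections that do not affect validity: the first variable of $\wt F^{\prime}$ is the one that was \emph{right}-Kan extended, so $\wt F^{\prime}(-,B)$ sends colimits to \emph{limits} and $p'_{A}$ is a limit (not a colimit) of the maps $p'_{\yo(S)}$ --- harmless, since a limit of equivalences is still an equivalence; and \cref{lem:c-map} is not actually needed for this corollary (it only enters later, in the proof of \cref{ihwiuhvewvwevewv}), while transporting \cref{thm:desc-orbits-fin} along $c$ would in any case yield the statement with the map $A\to *$ in the wrong variable, so the argument should rest on the direct $\wt F^{\prime}$-versions of \cref{lem:u-equiv-fin}, \cref{lem:swap-fin} and \cref{thm:desc-orbits-fin} with the two slots interchanged, exactly as you also propose.
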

One can even formally deduce this statement  from  \cref{cor:assembly} by going over to opposite categories in the appropriate way.

There is a canonical morphism \begin{equation}\label{f34kjnjk3v3rvr3}
i\colon G\Orb\to A^{\eff}(G)^{op}
\end{equation} 
which is the obvious inclusion on objects and sends the morphism $f\colon S\to T$ to
the span
\[\xymatrix{&S\ar[dl]_{f}\ar[dr]^{\id_{S}}&\\T&&S}\]

Let $M\colon A^\eff(G)^{op}\to\bC$ be a functor for a complete and cocomplete target $\bC$, let $F$ be as in \cref{jfi3rhfiuofewfewf-fin}, and recall  the notation introduced in \cref{regpoeregregrgr}. We set $E:=i^{*}M$.

\begin{lem} 
	\label{lem:assemblymap}
	The assembly map \eqref{ewoij3wvwev}
	  is equivalent to the morphism 
	\[\alpha\colon \wt F(*, r_{!}E_{\cF}G)\to  \wt F(*, *)\]
	induced by the projection $r_{!}E_{\cF}G\to *$.
	 	 \end{lem}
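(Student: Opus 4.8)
The plan is to unwind both sides of the claimed equivalence and identify them via the general machinery of \cref{regpoeregregrgr}. Recall that $E = i^{*}M$ means $E(S) = M(i(S)) = M(S)$ on objects, and on a morphism $f\colon S\to T$ in $G\Orb$ the value $E(f)$ is $M$ applied to the span $T\xleftarrow{f} S\xrightarrow{\id} S$. On the other hand $F = M\circ m^{op}$, so for $S,T$ in $G\Fin$ we have $F(T,S) = M(T\times S)$, and in particular $F(\pt, S)\simeq M(S) \simeq E(S)$ when $S$ is in $G\Orb$. More precisely, the functor $m(\pt,-)\colon G\Fin\to A^{\eff}(G)$ agrees with the opposite of $i$ on $G\Orb$: a morphism $f\colon S\to T$ is sent by $m(\pt,-)$ to the span $\pt\otimes T \xleftarrow{\id\otimes f}\pt\otimes S\xrightarrow{\id}\pt\otimes S$, which under the identification $\pt\otimes(-)\simeq \id$ is exactly $i(f)$. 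Hence $F(\pt,r(-))\colon G\Orb^{op}\to \bC$ is canonically equivalent to $i^{*}M = E$ as a contravariant functor on $G\Orb$ — equivalently, $F(\pt, r(-))$ corresponds to $E$ viewed covariantly on $G\Orb$ via the identification of $G\Orb^{op}$-presheaves we are using.

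First I would make this identification precise and then feed it through the Kan-extension formalism. By \cref{regpoeregregrgr}, $\wt F(\pt,-)\colon \PSh(G\Fin)\to \bC$ is the colimit-preserving extension (in the second variable) of $T\mapsto F(\pt,T) = M(\pt\times T)\simeq M(T)$. Restricting along $r_{!}\colon \PSh(G\Orb)\to \PSh(G\Fin)$ and using $r^{*}r_{!}\simeq \id$ together with the fact that $\wt F(\pt,-)\circ r_{!}$ is again colimit-preserving and restricts on $G\Orb$ to $S\mapsto F(\pt,r(S))\simeq E(S)$, I conclude that $\wt F(\pt, r_{!}(-))\simeq \wt E(-)$, where $\wt E\colon \PSh(G\Orb)\to \bC$ is the colimit-preserving extension of $E$ along the Yoneda embedding in the sense of \cref{regpoeregregrgr}. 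Under this equivalence, the object $\wt F(\pt, r_{!}E_{\cF}G)$ becomes $\wt E(E_{\cF}G)$, and $\wt F(\pt,*)$ — noting $* \simeq r_{!}(\pt_{G\Orb})$ since $r_{!}$ preserves the terminal object up to the identification $r_{!}\yo(\pt)\simeq \yo(r(\pt))\simeq *$ — becomes $\wt E(*)$. The morphism $\alpha$ induced by $r_{!}E_{\cF}G\to *$ then corresponds precisely to the morphism $\wt E(E_{\cF}G)\to \wt E(*)$ induced by $E_{\cF}G\to *$.

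The proof is then completed by invoking \cref{cweclkwevcwecw}, which already identifies the assembly map \eqref{ewoij3wvwev} with exactly this morphism $\wt E(E_{\cF}G)\to \wt E(*)$. So the logical skeleton is: (1) identify $F(\pt,r(-))$ with $E$ as functors on $G\Orb$, using the definition of $m$ and the identification $\pt\otimes(-)\simeq\id$ in $A^{\eff}(G)$; (2) deduce $\wt F(\pt, r_{!}(-))\simeq \wt E(-)$ by uniqueness of colimit-preserving extensions (both sides preserve colimits and agree on representables via $r^{*}r_{!}\simeq\id$); (3) match $r_{!}E_{\cF}G\to *$ with $E_{\cF}G\to *$ under this equivalence; (4) quote \cref{cweclkwevcwecw}. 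The main obstacle I anticipate is step (1)–(2): one has to be careful that the various "opposite" conventions (the contravariance of $m$ in its second slot, the contravariance of Mackey functors on $A^{\eff}(G)$, and the variance built into $\wt F$ in \cref{regpoeregregrgr}) line up so that $\wt F(\pt, r_{!}(-))$ really is the \emph{covariant} colimit-preserving functor $\wt E$ and not some dual, and that the induced maps agree on the nose rather than merely up to an automorphism; checking that $m(\pt,-)|_{G\Orb}$ equals $i^{op}$ (not some twist thereof) and that $r_{!}$ sends $E_{\cF}G$ and $*$ correctly is where the bookkeeping lives, but it is all formal.
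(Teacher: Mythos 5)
Your proposal is correct and follows essentially the same route as the paper: the paper's proof also reduces to \cref{cweclkwevcwecw} via the relation $i(-)\simeq m(*,r(-))$, which gives $E(-)\simeq F(\pt,r(-))$ and hence $\wt E(-)\simeq \wt F(*,r_{!}(-))$ as colimit-preserving functors on $\PSh(G\Orb)$. Your extra bookkeeping in steps (1)–(3) just spells out details the paper leaves implicit.
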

\begin{proof}
By \cref{cweclkwevcwecw} the assembly map is equivalent to the morphism
\[ \wt E(E_{\cF}G)\to \wt E(*) \] induced by the projection $ E_{\cF}G\to *$. 
The relations
$i(-)\simeq m(*,r(-))$  and   $E\simeq i^{*}M$ now imply that
\[E(-)\simeq F(pt,r(-))\ .\]
We therefore get an equivalence
\[\wt E(-)\simeq \wt F(*,r_{!}(-))\] of colimit-preserving functors from $\PSh(G\Orb)$ to $\bC$.
The assertion is now obvious. 
\end{proof}

Let   $E\colon G\Orb\to \bC$ be a functor and $i$ be as in \eqref{f34kjnjk3v3rvr3}.
\begin{ddd}\label{efioewfwfwefewff}
We say that $E$ \emph{extends to a Mackey functor} if there exists a Mackey functor
$M\colon A^{\eff}(G)^{op}\to \bC$ such that
$i^{*}M\simeq E$.
\end{ddd}

\begin{proof}[Proof of \cref{ihwiuhvewvwevewv}]
Let $M\colon A^{\eff}(G)^{op}\to \bC$ be a Mackey functor 
such that $E\simeq i^{*}M$.  Let $F$ be as in \cref{jfi3rhfiuofewfewf-fin} and recall the notation introduced in \cref{regpoeregregrgr}. 

We define 	the object   $A:=r_!E_\cF G$ of $\PSh(G\Fin)$.  Because the functor $r\colon G\Orb\to G\Fin$ is fully faithful,  we have an equivalence  $r^{*}r_{!} \simeq \id$. 
In particular, we get the equivalence 
  $r^*A\simeq r^*r_! E_\cF G\simeq E_\cF G$.    Since $r_!$ is left adjoint to $r^*$ and $r^*$ preserves colimits, $r_!$ preserves compacts. Therefore, $A$ is a compact object.
	
	We now consider   the diagram
	\[\xymatrix{
		\wt F(*,A)\ar[r]^\alpha \ar[d]^{\simeq}_{p_A}&\wt F(*,*)\ar [dr]_{\simeq}^{c}&\\
		\wt F(A,A)\ar [dr]^{\simeq}_{c}&&\wt F'(*,*)\ar [d]\\
		&\wt F'(A,A)\ar [r]_{\simeq}^{p'_A}&\wt F'(A,*)}	
	\]
	The maps labeled with $p_A$, $p'_A$ and $c$ are equivalences by \cref{cor:assembly}, \cref{cor:coassembly} and \cref{lem:c-map}.
	 The diagram yields a left-inverse of $\alpha$. \cref{ihwiuhvewvwevewv} now follows from \cref{lem:assemblymap}.
	 \end{proof}

To apply the \cref{ihwiuhvewvwevewv} we have to verify the assumption on  the compactness of $E_{\cF}G$.
 Following \cite{oliver}, we introduce the following condition on the family $\cF$.
\begin{ddd}
We call $\cF$ \emph{separating} if for every two subgroups $H$ and $K$ of $G$ such that $H$ is normal in $K$ and $K/H$ is prime cyclic, either both $K$ and $H$ belong to $\cF$, or both are not contained in $\cF$.
\end{ddd}

 \begin{ex}\label{wefiowefwfweewf}
 The family $\Sol$ of solvable subgroups  of $G$ is separating.
 \end{ex}

\begin{theorem}[{\cite[Thm.~4]{oliver}}]
If $\cF$ is a separating family, then there exists a finite $G$-CW-complex of the homotopy type of $E^{top}_{\cF}G$.
\end{theorem}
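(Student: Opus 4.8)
The plan is to reduce the statement to Oliver's classification of the homotopy types that arise as fixed point sets of finite group actions on finite complexes; the quoted result is \cite[Thm.~4]{oliver}, and I would only indicate the strategy behind it. First I would recall the standard characterization of the classifying space: a $G$-CW complex $X$ is $G$-homotopy equivalent to $E^{top}_{\cF}G$ if and only if the fixed point space $X^{H}$ is contractible for every subgroup $H$ in $\cF$ and empty for every subgroup $H$ not in $\cF$. Hence it suffices to construct a \emph{finite} $G$-CW complex realizing exactly this pattern of fixed point sets.

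Such a complex is assembled by attaching finitely many equivariant cells, working upwards through the subgroups of $G$ and at each stage correcting the homotopy type of the fixed point sets of the subgroups currently under consideration. Two kinds of obstruction could a priori force infinitely many cells. The first comes from P.A.~Smith theory: for a finite $G$-CW complex and a pair $H\trianglelefteq K$ with $K/H$ cyclic of prime order $p$, the inclusion $X^{K}\hookrightarrow X^{H}$ must be compatible with $\mathbb{F}_{p}$-cohomology, which rules out prescribing $X^{H}$ contractible while $X^{K}$ is empty (or the reverse); this incompatibility is precisely what the separating hypothesis excludes. The second is the equivariant Wall finiteness obstruction encountered when one passes from a finite-dimensional model to a genuinely finite one, and Oliver's computations with projective class groups of finite groups show that, under the separating hypothesis, this obstruction vanishes.

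The main obstacle is therefore entirely contained in Oliver's Smith-theoretic and $K$-theoretic analysis, which I would not reproduce here. All that is needed downstream is the qualitative conclusion: since the family $\Sol$ of solvable subgroups is separating by \cref{wefiowefwfweewf}, the space $E^{top}_{\Sol}G$ admits a finite $G$-CW model, whence via Elmendorf's theorem the object $E_{\Sol}G$ of $\PSh(G\Orb)$ is compact --- which is exactly the compactness hypothesis needed to invoke \cref{ihwiuhvewvwevewv}.
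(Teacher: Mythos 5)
Your proposal is correct and matches the paper, which likewise defers the entire content to Oliver's Theorem~4; your gloss on the Smith-theoretic constraints (encoded by the separating condition) and the finiteness-obstruction analysis is an accurate description of what that theorem involves. The only point worth adding is the one the paper itself records after the statement: Oliver literally produces a disc with a $G$-action realizing the prescribed fixed-point homotopy types, and one then invokes Illman's equivariant triangulation theorem to replace it by a finite $G$-CW-complex in the same $G$-homotopy type.
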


Note that Oliver's theorem actually states that there exists a disc with a $G$-action
with the correct  homotopy types of fixed point spaces. 
By a theorem of Illman \cite{illman} one can then find a finite $G$-CW-complex
in the same $G$-homotopy type.

\begin{kor}\label{rgiow32rer32r23r23r}
If $\cF$ is a separating family, then
$E_{\cF}G$ is   compact.
\end{kor}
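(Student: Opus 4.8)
The plan is to reduce the statement to the fact that finite $G$-CW-complexes correspond to compact objects of $\PSh(G\Orb)$. First I would identify $E_{\cF}G$ with the classifying $G$-space of the family $\cF$: by Elmendorf's theorem the $\infty$-category $\PSh(G\Orb)$ models the homotopy theory of $G$-spaces, and the characterization \eqref{ewvfwjfwkefwf} of $E_{\cF}G$ by its values on orbits says exactly that the $G$-space corresponding to $E_{\cF}G$ has contractible $H$-fixed points for $H\in\cF$ and empty $H$-fixed points otherwise. This is the defining property of the classifying $G$-space $E^{top}_{\cF}G$, so $E_{\cF}G$ corresponds to $E^{top}_{\cF}G$ under Elmendorf's equivalence. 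By Oliver's theorem together with Illman's theorem (both cited above), $E^{top}_{\cF}G$ is $G$-homotopy equivalent to a finite $G$-CW-complex $X$, and hence $E_{\cF}G\simeq X$ in $\PSh(G\Orb)$. It therefore suffices to prove that the object of $\PSh(G\Orb)$ associated to a finite $G$-CW-complex is compact.

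For this I would use the cellular filtration. A finite $G$-CW-complex is built from $\emptyset$ by finitely many pushouts of the form
\[
\xymatrix{ (G/H)\times S^{n-1}\ar[r]\ar[d] & X_{k-1}\ar[d]\\ (G/H)\times D^{n}\ar[r] & X_{k}}
\]
Under Elmendorf's equivalence, $(G/H)\times D^{n}$ is modelled by the representable presheaf $\yo(G/H)$ (since $D^{n}$ is contractible), while $(G/H)\times S^{n-1}$ is modelled by a finite colimit of copies of $\yo(G/H)$ (a sphere being a finite colimit of points in $\Spc$). Thus $X$ lies in the smallest full subcategory of $\PSh(G\Orb)$ that contains the representables and is closed under finite colimits. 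Since every representable $\yo(G/H)$ is a compact object of the presheaf $\infty$-category $\PSh(G\Orb)$, and compact objects in any $\infty$-category are closed under finite colimits, $X$ and hence $E_{\cF}G$ is compact.

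The genuinely non-formal input here is Oliver's theorem, which we may assume; once $E^{top}_{\cF}G$ is known to have the $G$-homotopy type of a finite $G$-CW-complex, the remaining work is the standard bookkeeping that translates the topological cell structure into a finite iterated pushout of representables in $\PSh(G\Orb)$. The only point requiring a little care will be making precise that $-\times D^{n}$ and $-\times S^{n-1}$ correspond to $\yo(-)$ and to a finite colimit of copies of $\yo(-)$ respectively; everything else is immediate from the compact generation of $\PSh(G\Orb)$ by representables.
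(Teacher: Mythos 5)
Your argument is correct and takes essentially the same route as the paper: identify $E_{\cF}G$ with the classifying $G$-space under Elmendorf's equivalence, invoke Oliver--Illman to get a finite $G$-CW model, and conclude compactness in $\PSh(G\Orb)$. The only difference is one of detail: you justify by cell induction (representables are compact, compacts are closed under finite colimits) the fact that a finite $G$-CW complex yields a compact object, which the paper simply asserts as compactness in $G\Top[W^{-1}]$.
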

\begin{proof}
We let $G\Top[W^{-1}]$ denote the $\infty$-category obtained from the category of  topological spaces by inverting $G$-weak homotopy equivalences, i.e.,  $G$-maps which induce weak equivalences on the fixed points spaces for all subgroups of $G$. By Elmendorf's theorem we have  the   equivalence  
\[ G\Top[W^{-1}]\simeq  \PSh(G\Orb)\ .\]
Under this equivalence a $G$-CW-complex of the homotopy type of $E^{top}_{\cF}G$
goes to a presheaf equivalent to $E_{\cF}G$. We now note that
a finite $G$-CW-complex represents a compact object in $G\Top[W^{-1}]$
and therefore in  $\PSh(G\Orb) $.
\end{proof}

Therefore, \cref{ihwiuhvewvwevewv} has the following corollary.
 
 Let $G$ be a finite group, let $\cF$ be a family of subgroups of $G$, and let
 $E\colon G\Orb\to \bC$ be a functor.
\begin{kor}\label{wefiowefwfweewf1}
 Assume: 
\begin{enumerate}
\item $\bC$ is stable, complete and cocomplete;
\item  $E$ extends to a Mackey functor;
\item $\cF$ is separating.
\end{enumerate}
Then the assembly map $\Ind_{\cF}\circ \Res_{\cF}(E)(pt)\to E(pt)$ is split injective.
  \end{kor}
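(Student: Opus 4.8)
The plan is to obtain the statement as a direct combination of the two preceding results \cref{ihwiuhvewvwevewv} and \cref{rgiow32rer32r23r23r}. I would begin by matching hypotheses: assumptions (1) and (2) of the corollary — that $\bC$ is stable, complete and cocomplete, and that $E$ extends to a Mackey functor in the sense of \cref{efioewfwfwefewff} — are literally the first two hypotheses of \cref{ihwiuhvewvwevewv}. So the only thing left to supply is the third hypothesis of that theorem, namely that the classifying space $E_{\cF}G\in\PSh(G\Orb)$ is a compact object.

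For this I would invoke \cref{rgiow32rer32r23r23r}: since assumption (3) of the corollary says that $\cF$ is separating, that corollary immediately gives that $E_{\cF}G$ is compact. (Internally this goes through Oliver's theorem, which produces a finite $G$-CW-complex of the homotopy type of $E^{top}_{\cF}G$, Illman's theorem to upgrade the disc with $G$-action to a finite $G$-CW-complex, and Elmendorf's equivalence $G\Top[W^{-1}]\simeq\PSh(G\Orb)$, under which a finite $G$-CW-complex corresponds to a compact presheaf equivalent to $E_{\cF}G$; a concrete separating family to keep in mind is $\Sol$, see \cref{wefiowefwfweewf}.)

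With all three hypotheses verified, \cref{ihwiuhvewvwevewv} applies and yields that the assembly map \eqref{ewoij3wvwev}, that is the counit map $\alpha_{\cF}\colon (\Ind_{\cF}\circ\Res_{\cF}(E))(pt)\to E(pt)$ of the adjunction $(\Ind_{\cF},\Res_{\cF})$, is split injective. Since this is precisely the map $\Ind_{\cF}\circ\Res_{\cF}(E)(pt)\to E(pt)$ named in the statement, the corollary follows. I do not expect a genuine obstacle here: the argument is pure bookkeeping, and the only points requiring attention are the correct alignment of hypotheses and the recollection that ``separating'' is exactly the input needed by the compactness criterion \cref{rgiow32rer32r23r23r}.
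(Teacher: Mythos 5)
Your proposal is correct and matches the paper's own argument: the corollary is stated there precisely as the combination of \cref{ihwiuhvewvwevewv} with the compactness of $E_{\cF}G$ supplied by \cref{rgiow32rer32r23r23r} (via Oliver/Illman and Elmendorf), exactly as you lay out. The hypothesis matching and the identification of the assembly map with the counit \eqref{ewoij3wvwev} are all as in the paper, so nothing is missing.
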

By \cref{wefiowefwfweewf} this corollary applies to the family $\cF=\Sol$.

\begin{rem}\label{hioehwvjoewvwevewvew} 
There  should be  a simple proof of  
\cref{wefiowefwfweewf1} in the case of $\bC=\Sp$  based on known facts in equivariant stable homotopy theory.
Let $\beins$ be the tensor unit of the symmetric monoidal category of spectral Mackey functors. We use that
$\pi_{0}(\beins(*))$ is   the Burnside ring $A(G)$ of $G$. 

Following \cite{MR0394711} every finite $G$-CW-complex $X$ represents an element $[X]$ in $A(G)$ with $[X\sqcup Y]=[X]+[Y]$ and $[X\times Y]=[X][Y]$. 
The element $[X]$ only depends on the $G$-homotopy type of $X$.

Let now $E^{top}_{\cF}G$ be a $G$-CW-complex of the homotopy type of the classifying space of the family~$\cF$.
Then   we have a $G$-homotopy equivalence $E_{\cF}^{top}G\times E^{top}_{\cF}G\simeq  E^{top}_{\cF}G$. If
$E^{top}_{\cF}G$ is in addition finite, then $[E^{top}_{\cF}G] $ is a projection in $A(G)$. 

Hence, if there exists a finite $G$-CW-complex in the homotopy type of the classifying space of the family $\cF$,
then we  get a decomposition
$\beins\simeq \beins^{\cF}\oplus \beins_{\cF}$, where $\beins_{\cF}$ is the image of the projection $[E^{top}_{\cF}G]$.
The decomposition of the tensor unit naturally induces a   decomposition of   every  spectral Mackey functor $M\simeq M^{\cF}\oplus M_{\cF}$. 
 In order to relate this with \cref{wefiowefwfweewf1}
   one now has  to check   that the inclusion $M_{\cF}(pt)\to M(pt)$ is equivalent to the assembly map. \end{rem}

\bibliographystyle{amsalpha}
\bibliography{born-transbas}
\end{document}